\documentclass{amsart}

\usepackage{amsmath,latexsym,amscd,amsbsy,amssymb,amsfonts,amsthm,fleqn,leqno,
euscript, graphicx, color}

\newtheorem{thm}{Theorem}[section]
\newtheorem{pro}[thm]{Proposition}
\newtheorem{lem}[thm]{Lemma}
\newtheorem{cor}[thm]{Corollary}
\newtheorem{ex}[thm]{Example}
\newtheorem{que}[thm]{Question}

\newtheorem{df}[thm]{Definition}
 
\newtheorem{rem}[thm]{Remark}

\newcommand{\Bd}{\mbox{{\rm bd}}}
\newcommand{\Int}{\mbox{{\rm int}}\,}
\newcommand{\cl}{\mbox{{\rm cl}}}
\newcommand{\st}{\mbox{{\rm st}}}
\newcommand{\pr}{\mbox{{\rm pr}}}
\newcommand{\id}{\mbox{{\rm id}}}

\newcommand{\St}{\mbox{{\rm St}}}
\newcommand{\aut}{\mbox{{\rm Aut}}}
\newcommand{\CL}{\mbox{{\rm CL}}}
\newcommand{\Hom}{\mbox{{\rm Hom}}}

\begin{document}

\title{On proper compactifications of topological groups}\thanks{The paper is partially supported by ISF grant 3241/24}

\author{K.\,L.~Kozlov}\thanks{The study of the first author was carried out with the help of the Center of Integration in Science, Ministry of Aliyah and Integration, Israel,  8123461.}
\address{Department of Mathematics, Ben-Gurion University of the Negev, Beer Sheva, Israel}
\email{kozlovk@bgu.ac.il}
\author{A.\,G.~Leiderman}
\address{Department of Mathematics, Ben-Gurion University of the Negev, Beer Sheva, Israel}
\email{arkady@math.bgu.ac.il}

\date{}

\maketitle
 
\begin{abstract}
In the present paper, we examine in detail the method of ''graph compactifications''\ of topological groups. The graph and Ellis methods of constructing proper compactifications of topological groups are applied for the investigation of possible extensions of algebraic operations on a topological group to its compactifications, and give descriptions of Roelcke, Ellis, {\rm WAP}, and graph compactifications of topological groups. Additionally, using dichotomy theorems of A.\,V.~Arhangelskii, we show that the description of compactifications can be effectively used in the investigation of topological properties of their remainders. As examples, subgroups of the permutation group (in the permutation topology) and the automorphism group of a LOTS (in the topology of pointwise convergence) are examined. 

\medskip
 
Keywords: semigroup, topological transformation group, enveloping Ellis semigroup, equiuniformity, compactification, ultratransitive action, chain, Hilbert space.

\medskip

AMS classification: Primary 57S05, 20E22 Secondary 22F05, 22F50, 54E05, 54D35, 54H15, 47B02
\end{abstract}


\tableofcontents

\section{Introduction}

\medskip

The study of proper compactifications of topological groups was initiated by A.~Weil, who described in terms of a right (or left) uniformity on a topological group when the topological group has a compactification, which is a compact topological group. In fact, this is the unique $G$-compactification of a topological group, examined with its action by multiplication on itself, see, for example, \cite{ChK2}. Therefore, the question of what algebraic operations on a group can be extended to continuous operations on its compactifications is natural. 

A topological group has a proper semitopological semigroup compactification iff it can be represented as a group of isometries of a reflexive Banach space~\cite{Stern} (see also~\cite{Megr2001-1}). The maximal proper semitopological semigroup compactification of $G$ is a {\rm WAP}-compactification of $G$. There are topological groups that have no proper semitopological semigroup compactifications~\cite{Megr2001}. In~\cite{BITsankov}, the following question, in fact, was initiated. Does the involution on a group $G$ extend to the involution on a {\rm WAP}-compactification of $G$, in case the latter is proper?

Every topological group $G$ has an Ellis compactification, a $G$-compactification of $G$ that is a right topological semigroup, for instance, the {\it greatest ambit} (the Samuel compactification of $G$ with respect to the right uniformity). In~\cite[Corollary 4.11]{GlasnerMegr2008}, the characterization is given of when the Roelcke compactification of a topological group is an Ellis compactification. 

\medskip

To obtain compactifications of a topological group $G$, we use a representation of $G$ in a compact space $X$ and apply two ``functional''\ approaches. 

Since every element $g\in G$ can be examined as a map, an element of $X^X$, an {\it Ellis enveloping group}~\cite{Ellis} can be used. In the original, R.~Ellis  obtained proper compactifications of semitopological groups. To preserve the topology of $G$, one must examine those representations of $G$ in compacta $X$ for which the topology of pointwise convergence induced by the embedding of $G$ in $X^X$ coincides with the original topology on $G$. This approach is used, for example, in~\cite{GlasnerMegr2008}, and examined in detail in~\cite{KozlovSorin}, and~\cite{KozlovSorin2025}. See also~\cite{SorinG1} and~\cite{SorinG2} where the automorphism group of a circularly ordered set is examined. The characterization of when a $G$-compactification of a topological group is its  Ellis compactification can be found in~\cite{KozlovLeiderman2025}. 

Since every element $g\in G$ can be examined as a graph of a map, a closed subset of $X\times X$, $G$ can be examined as a subspace of the hyperspace $2^{X\times X}$ with the Vietoris topology. The same works if $X$ is a locally compact space and $G$ is examined as a subspace of the hyperspace $2^{X\times X}$ with the Fell topology.  This approach was studied in~\cite{Kennedy}, \cite{usp2001}, and its different applications can be found, for example, in~\cite{U1998}, \cite{usp2008}, \cite{Yamashita}. To use this approach, one must ensure that the topological group is topologically embedded in the hyperspace $2^{X\times X}$. On these compactifications of a topological group, the involution of a group and the right and left actions of a group on itself by multiplication are continuously extended. Therefore, it is ``algebraically near''\ to the Roelcke compactification of a group. This consideration was effectively used, for example, in~\cite{U1998}, \cite{usp2001}, and~\cite{Megr2001}. 

In~\cite{KozlovLeiderman2025}, the classification of proper compactifications from the point of view of the possibility of extension of algebraic operations is given. It uses both the above-mentioned methods of constructing compactifications. 

\medskip

In this paper, we examine in detail the method of ''graph compactifications'',  apply both methods to investigate possible extensions of algebraic operations on a topological group to its compactifications, and provide descriptions of compactifications of subgroups of the permutation group (in the permutation topology) and automorphism groups of LOTS (in the topology of pointwise convergence). Additionally, using dichotomy theorems of A.\,V.~Arhangelskii~\cite{Arh2008} and~\cite{Arh2009}, we show that the description of compactifications can be used in the investigation of topological properties of their remainders. The paper is a continuation of papers~\cite{KozlovSorin}, \cite{KozlovSorin2025}, \cite{KozlovLeiderman2025}. Notations and terminology are from these papers. The connections of new results with results previously obtained by other authors are given in the corresponding remarks. 

\bigskip

{\it Auxiliary results of the paper.} 

In \S\ \ref{tauRepr}, a notion of $\tau_g$-representation of a topological group is introduced (Definition~\ref{corepres}). A case of $\tau_g$-representation of a topological group in a locally compact space is used to obtain compactifications of the acting group as the subspace of the hyperspace of binary relations in the Fell topology.  

Examples of $\tau_p$-representations are the isometry groups of metric spaces, or more generally, uniformly equicontinuous actions.  $\tau_p$-representation of a topological group allows one to obtain the right topological semigroup compactification of the acting group as an enveloping Ellis semigroup~\cite{KozlovSorin2025}. $\tau_p$-representation implies $\tau_g$-representation (Corollary~\ref{coinctopolog}).  $\tau_g$- and  $\tau_p$-representations are preserved when passing to $G$-extensions (Lemmas~\ref{l11} and~\ref{l1}). 

\medskip

In \S\ \ref{loccompext}, an order on (equivalence classes of) locally compact extensions of topological spaces is introduced. 

\medskip

In \S\ \ref{hyper} maps of hyperspaces induced by a prefect map of spaces or an embedding map of spaces are introduced. Their properties are given in Propositions~\ref{maphyperF} and~\ref{subsethyperF}. 

\medskip

In \S\ \ref{class}, it is noted that if an Ellis compactification $e G$ of a topological group $G$ is less than or equal to an sm-compactification $b G$ of $G$, then $e G$ is an sm-compactification of $G$ (Proposition~\ref{hersm}). Lemma~\ref{comptauprep} and Theorem~\ref{compRoelcke} provide an opportunity to compare Ellis and sm-compactifications of a group and its subgroup, using their agreed $\tau_p$-representations. 

\medskip

In \S\ \ref{actionhyperspace}, induced actions on the hyperspace of binary relations are defined, and in \S\ \ref{maphyperspace}, induced maps of hyperspaces of binary relations are defined. 

\bigskip

{\it Main results of the paper.}

Embedding of a group in a hyperspace of binary relations is discussed in Section~\ref{binrel}. If 
$$G\ \mbox{is topologically isomorphic to}\ \i_X^{\Gamma}(G)\eqno{(emb)},$$
then its graph compactification (Definition~\ref{graphcomp}) is defined. If a topological group $G$ is $\tau_g$-representable in a compact space $X$, then (emb) is valid~\cite{usp2001} (Lemma~\ref{lemcompemb}).

Proposition~\ref{homomF} shows that if a topological group $G$ is $\tau_g$-representable in a locally compact space $X$, then the map that identifies elements of $G$ with its graph is an isomorphism of $G$ on a subgroup of $(2^{X\times X}, \tau_F)$, and is uniformly continuous with respect to the Roelcke uniformity $L\wedge R$ on $G$ and the unique uniformity on compactum $2^{X\times X}$.

Theorem~\ref{propPreimage} shows that the property (emb) is preserved when passing to perfect equivariant preimages of $X$ or to spaces in which $X$ is an invariant subspace. Theorem~\ref{mapcompV} sharpens the relations of the corresponding graph compactifications of $G$. 

Theorems~\ref{mapcompVel} and~\ref{restrmap} provide characterizations of when the property (emb) is preserved when passing to perfect equivariant images of $X$ and open invariant subsets of $X$, respectively, and provide relations of the corresponding graph compactifications of $G$. 

Theorem~\ref{partialcases} shows that if a topological group $G$ is $\tau_g$-representable in a locally compact space $X$, then the following conditions 
\begin{itemize}
\item[{\rm (a)}] the family of homeomorphisms from $G$ is topologically equicontinuous, or 
\item[{\rm (b)}] $X$ is a locally compact, locally connected space,  
\end{itemize}
are sufficient for the equivalence of graph compactifications of $G$ from its $\tau_g$-representations in $X$ and its Alexandroff one-point compactification. 

In Theorem~\ref{suffcond}, the criterion is given when a group is Roelcke precompact and its Roelcke compactification is equal to the graph compactification from its $\tau_g$-representation in a compact space $X$. 

\bigskip

{\it Examples and applications.}

The usage of the unitary group of a (separable) Hilbert space in representation theory is a link between group theory and functional analysis.  Recall that a unitary group is strongly Eberlein (Roelcke-compactification and Hilbert compactification coincide (and coincide with WAP-compactification))~\cite{GlasnerMegr}.  Theorem~\ref{comptauprep} and Proposition~\ref{discrsmcomp} yield that any subgroup $G$ of the permutation group has proper WAP-compactification (Corollary~\ref{properpermgroup}). The proof uses an embeddability of $G$ in a unitary group. 

In Theorem~\ref{equivcompperm} and Corollary~\ref{wappermgr}, it is shown that for a permutation group ${\rm S}(X)$ of an infinite set $X$ (in permutation topology), the Roelcke-compactification, WAP-compactification, Ellis compactification from its $\tau_p$-representation in the Alexandroff one-point compactification $\alpha X$, and graph compactifications from its $\tau_g$-representations in $X$ and $\alpha X$ coincide. 

Moreover, firstly, this compactification is realized as the closure of ${\rm S}(X)$ in the Roelcke-compactification of the unitary group ${\rm U}(\ell^2(X))$.  Secondly,  the Roelcke-compactification of ${\rm S}(X)$ is algedraically isomorphic to the symmetric inverse semigroup $I_X$ (Proposition~\ref{prop2}, see, also, \cite[Theorem 4.1]{KozlovSorin2025}). 

\medskip

In \S\ \ref{grcompautGO} graph compactifications of the group $\aut (X)$ of automorphisms of an ultrahomogeneous chain $X$ (in the permutation topology) are examined. The relations of graph compactifications are in Theorem~\ref{Prodescrlotbchain}. The graph compactification of $\aut (X)$ from its $\tau_g$-representation in the maximal equivariant compactification of $X$ is the Roelcke compactification of $\aut (X)$ (item (I) of Theorem~\ref{thmOrderchain}). 

In Theorem~\ref{thmOrderchain}, the comparison of Ellis and graph compactifications of $\aut (X)$ (using the fact that the map of Ellis compactifications is a homomorphism of semigroups~\cite[Proposition 3.26]{KozlovLeiderman2025}) yields that the graph compactification of $\aut (X)$ from its $\tau_g$-representation in the Alexandroff one-point compactification $\alpha X$ of $X$ is the WAP-compactification of $\aut (X)$. 

Moreover, firstly, WAP compactification is realized as the closure of $\aut (X)$ in the Roelcke-compactification of the unitary group ${\rm U}(\ell^2(X))$ (Corollary~\ref{autchainWAP}). Secondly, the WAP-compactification of $\aut (X)$ is algebraically isomorphic to the semitopological inverse semigroup of partial automorphisms of $X$. 

\medskip 

In Section~\ref{ultLOT}, graph compactifications of the group $\aut (X)$ of automorphisms of an ultrahomogeneous LOTS $X$ (in the topology of pointwise convergence) are examined. The relations of graph compactifications are in Theorem~\ref{thmOrderLOTS}. The graph compactification of $\aut (X)$ from its $\tau_g$-representation in the maximal equivariant compactification of $X$ is the Roelcke compactification of $\aut (X)$ (Theorem~\ref{thmOrderLOTS}). 

In Theorem~\ref{WAPtrivial}, the comparison of Ellis and graph compactifications of $\aut (X)$ yields that the WAP-compactification of $\aut (X)$ is trivial.

\medskip  

In Section~\ref{remainders}, the descriptions of compactifications of the above topological groups and dichotomy theorems of A.\,V.~Arhangelskii yield the following results (Theorem~\ref{remainderstp}).  

Let $X$ be a discrete space.
\begin{itemize}
\item[{\rm (1)}] Every remainder of $G={\rm U}\, (\ell^2(X))$ {\rm(}in strong operator topology{\rm)}, $({\rm S}(X), \tau_p)$ or $(\aut X, \tau_{\partial})$ {\rm(}$X$ is ultrahomogeneous chain{\rm)} is Lindel\"of, $\sigma$-compact and $G$ is \v{C}ech-complete iff $X$ is countable. 
\item[{\rm (2)}] Every remainder of $G={\rm U}\, (\ell^2(X))$ {\rm(}in strong operator topology{\rm)}, $({\rm S}(X), \tau_{\partial})$ or $(\aut X, \tau_{\partial})$ {\rm(}$X$ is ultrahomogeneous chain{\rm)} is pseudocompact, Baire and $G$ is not \v{C}ech-complete iff $X$ is uncountable. 
\end{itemize}

Let $X$ be an ultrahomogeneous {\rm LOTS}. 
\begin{itemize}
\item[{\rm (1')}] Every remainder of $(\aut X, \tau_p)$ is Lindel\"of, $\sigma$-compact and  $(\aut X, \tau_p)$ is \v{C}ech-complete iff $X$ is continuously dense and separable. 
\item[{\rm (2')}]  Every remainder of $(\aut X, \tau_p)$ is Lindel\"of,  Baire and  $(\aut X, \tau_p)$ is not \v{C}ech-complete iff $X$ is not continuously dense and separable. 
\item[{\rm (3')}]  Every remainder of $(\aut X, \tau_p)$ is pseudocompact, Baire and $(\aut X, \tau_p)$ is not \v{C}ech-complete iff  $X$ is not separable. 
\end{itemize}

These considerations give examples of pseudocompact weakly Lindel\"of spaces which are not compact.

\medskip

We adopt terminology and notations from~\cite{Engelking} and~\cite{RD}. 

All sets $X$ are infinite, $\Sigma_X$ is the family of all finite subsets of $X$.  $|X|$ is the cardinality of the set (space) $X$. Equivalence relations on a set are equipped with a natural order. For a set $X$ and an equivalence relation $\sim$, $X/\sim$ is the set of equivalence classes. 

All spaces considered are (topological) Tychonoff, and we denote, if necessary, a (topological) space as $(X, \tau)$ where $\tau$ is a topology on the set $X$. We order topologies: $\tau'\geq\tau$ iff $\tau\subset\tau'$. An abbreviation ``nbd''  (of a set) refers to an open neighbourhood. A nbd of a point $x$ is denoted $O_x$. For a subset $X$ of a space $Y$, $\Int X$ (or $\Int_Y X$) and $\cl X$ (or $\sl_Y X$) are the interior and closure of $X$ in $Y$, respectively.
 
The family of all nbds of the unit $e$ in the topological group $G$ is denoted $N_{G}(e)$. Isomorphism $f: G\to H$ of topological groups $G$ and $H$ is a {\it topological isomorphism}, if $f$ is a homeomorphism. All the necessary (and additional) information about topological groups can be found in~\cite{ArhTk}. 

Maps are continuous maps. The onto quotient map $f: X\to Y$ is an {\it elementary map} if the preimages of all points of $Y$, except maybe one, are one-point sets~\cite{Fed2003} (if $Z$ is a not one-point preimage then we use the notation $Y=X/Z$). $\id$ is the identity map. 

A proper compactification  $(bX, b)$ of a space $X$ is a compact space $b X$ and a dense embedding $b: X\to b X$. The order on compactifications: $(b'X, b')\geq (bX, b)$ iff $\exists\ \varphi: b' X\to b X$ such that $\varphi\circ b'=b$. $\varphi$ is called the {\it map of compactifications}. $\mathbb{C} (X)$ denotes the set  of compactifications of $X$.

Uniform space is denoted as $(X, \mathcal U)$, $u$ is a uniform cover and ${\rm U}$ is the corresponding entourage of $\mathcal U$. Uniformity on a topological space is compatible with its topology. We write $\mathcal U\subset\mathcal V$ if the identity map  $\id: (X,\mathcal V)\to (X,\mathcal U)$ of uniform spaces is uniformly continuous. 

For ${\rm U}\in\mathcal U$ ${\rm U}\circ {\rm U}=\{(x, z)\ |\ \exists\ y\in X\ \mbox{such that}\ (x, y)\in {\rm U}, (y, z)\in {\rm U}\}$. For $x\in X$ and ${\rm U}\in\mathcal U$ the {\it ball} (with center $x$ and radius ${\rm U}$) is the set $B(x, {\rm U})=\{y\in X\ |\ (x, y)\in {\rm U}\}$. The star of a point $x$ with respect to the cover $u$ is denoted $\st (x, u)$. If ${\rm U\in \mathcal U}$ corresponds to a uniform cover $u\in \mathcal U$, then $B(x, {\rm U})=\st (x, u)$. If a cover $v$ is a refinement of a cover $u$, we use the notation $v\succ u$. For $A\subset X$, $A\ne\emptyset$, 
$$\st(A, u)=B(A, {\rm U})=\bigcup\{B(x, {\rm U})=\st (x, u)\ |\ x\in A\}.$$

For a uniform space $(X, \mathcal U)$ and $Y\subset X$ denote by $\mathcal U\wedge Y$ the restriction to $Y$ of the uniformity $\mathcal U$.  $(Y, \mathcal U\wedge Y)$ is the subspace of the uniform space $(X, \mathcal U)$~\cite[Ch.~8, \S\ 8.2]{Engelking}. For a cover $u$ of $Y$ and $X\subset Y$ $u\wedge X=\{U\cap X\ |\ U\in u\}$. If $X$ is a subset of a uniform space $(Y, \tilde{\mathcal U})$, then $\mathcal U=\{u\wedge X\ |\ u\in\tilde{\mathcal U}\}$ is the base of the {\it subspace uniformity} on $X$. The completion of $(X, \mathcal U)$ is denoted $(\tilde X, \tilde{\mathcal U})$. All the necessary (and additional) information about uniform structures (on groups) can be found in~\cite{Isbell} and~\cite{RD}.

\begin{lem}\label{cont}
The map $f: (X, \mathcal U)\to (Y, \mathcal V)$ of uniform spaces is continuous {\rm(}in the induced topologies on $X$ and $Y$ by the correspondent uniformities{\rm)} at the point $x\in X$ iff $\forall\ V\in\mathcal V$  $\exists\ U\in\mathcal U$ such that $f(B(x, U))\subset B(f(x), V)$. 
\end{lem}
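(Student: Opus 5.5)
The statement reduces to the standard fact that, in the topology induced by a uniformity, the balls $B(x,{\rm U})$, ${\rm U}\in\mathcal U$, form a neighbourhood base at $x$. So I would first recall this: by definition, the topology $\tau_{\mathcal U}$ on $X$ consists of those $O\subset X$ such that for every $x\in O$ there is ${\rm U}\in\mathcal U$ with $B(x,{\rm U})\subset O$. I would then check that each ball $B(x,{\rm U})$ is a neighbourhood of $x$, though not necessarily open. Choose ${\rm W}\in\mathcal U$ with ${\rm W}\circ{\rm W}\subset{\rm U}$ and set $O=\{y\ |\ \exists\ {\rm W}'\in\mathcal U,\ B(y,{\rm W}')\subset B(x,{\rm U})\}$. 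This set is open and contains $x$, and $B(x,{\rm W})\subset O$ because $B(y,{\rm W})\subset B(x,{\rm U})$ for $y\in B(x,{\rm W})$. This is the only step needing a little care; everything else is formal. The same applies in $(Y,\mathcal V)$.

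($\Rightarrow$) Suppose $f$ is continuous at $x$ and let ${\rm V}\in\mathcal V$. The set $B(f(x),{\rm V})$ is a neighbourhood of $f(x)$, so it contains an open set $O'\ni f(x)$. Continuity at $x$ gives an open $O\ni x$ with $f(O)\subset O'$. By the definition of $\tau_{\mathcal U}$ there is ${\rm U}\in\mathcal U$ with $B(x,{\rm U})\subset O$. Hence $f(B(x,{\rm U}))\subset B(f(x),{\rm V})$.

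($\Leftarrow$) Let $O'$ be an open neighbourhood of $f(x)$. Choose ${\rm V}\in\mathcal V$ with $B(f(x),{\rm V})\subset O'$, and take ${\rm U}$ as in the hypothesis. The ball $B(x,{\rm U})$ is a neighbourhood of $x$, so it contains an open $O\ni x$. Then $f(O)\subset O'$, which is continuity at $x$.

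There is no real obstacle here; the only nontrivial ingredient is the neighbourhood-base property of balls, which uses the axiom ${\rm W}\circ{\rm W}\subset{\rm U}$. Equivalently, one could cite \cite[Ch.~8]{Engelking} for that property.
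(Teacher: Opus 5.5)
Your proposal is correct and follows essentially the same route as the paper. For necessity the paper passes through the open set $\Int B(f(x), V)$ and a ball $B(x,U)$ contained in a neighbourhood of $x$, and for sufficiency it uses $\Int B(x,U)$, exactly as you do; the only difference is that you explicitly verify that balls are neighbourhoods of their centres (via ${\rm W}\circ{\rm W}\subset{\rm U}$), a standard fact the paper uses without comment.
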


\begin{proof} 
Necessity. $\forall\ V\in\mathcal V$  the set $\Int B(f(x), V)$ is an open nbd of $f(x)$. There exist a nbd $O_x$ of $x$ such that $f(O_x)\subset\Int B(f(x), V)$ and $U\in\mathcal U$ such that $B(x, U)\subset O_x$. Then $f(B(x, U))\subset B(f(x), V)$. 

Sufficiency. $\forall\ O_{f(x)}$ there exist $V\in\mathcal V$ such that $B(f(x), V)\subset O_{f(x)}$ and $U\in\mathcal U$ such that $f(B(x, U))\subset B(f(x), V)$. Then $\Int B(x, U)$ is a nbd of $X$ and $f(\Int B(x, U))\subset  O_{f(x)}$.
\end{proof}


\section{Preliminaries} 

\subsection{Topological groups (uniformities, involution, left action)}\label{topgroup}
On a topological group $G$ four uniformities are well-known. The {\it right uniformity} $R$ (the base is formed by the covers $\{Og=\bigcup\{hg\ |\ h\in O\}\ |\ g\in G\}$, $O\in N_G(e)$), the {\it left uniformity} $L$ (the base is formed by the covers $\{gO=\bigcup\{gh\ |\ h\in O\}\ |\ g\in G\}$, $O\in N_G(e)$), the {\it two sided uniformity} $R\vee L$ (the least upper bound of the right and left uniformities) and the {\it Roelcke uniformity} $L\wedge R$ (the greatest lower bound of the right and left uniformities) (the base is formed by the covers $\{OgO=\bigcup\{hgh'\ |\ h, h'\in O\}\ |\ g\in G\}$, $O\in N_G(e)$). A group  $G$ is {\it Roelcke precompact} if Roelcke uniformity is totally bounded. 

All the necessary information about these uniformities can be found in~\cite{RD}. 

{\it Roelcke compactification} $b_r G$ of $G$ is the Samuel compactification of $(G, L\wedge R)$, i.e. the completion of $(G, (L\wedge R)_{fin})$, where $(L\wedge R)_{fin}$ is the {\it precompact replica} of $L\wedge R$. If $G$ is Roelcke precompact, then $(L\wedge R)_{fin}=L\wedge R$ and {\it Roelcke compactification} $b_r G$ is the completion of $(G, L\wedge R)$.

\medskip 

On a topological group $G$ the left multiplication defines the {\it left action} $\alpha: G\times G\to G$, $\alpha (g , h)=gh$. The action $\alpha$ defines the right uniformity $R$ on $G$ with the base 
$$\{Ox=\{\alpha (g, x)\ |\  g\in O\}\ |\  x\in G\},\ O\in N_G(e).$$

\medskip 

An {\it involution on a topological group} $G$ is a self-inverse map $*:G\to G$, $*(g)=g^{-1}$, $g\in G$, and $*(gh)=*(h)*(g)$. An {\it inverse} of $g$ is $g^{-1}$. The {\it inverse} ({\it left}) {\it action}  
$$\alpha^*(g, h)=*(\alpha (g, *(h)))=hg^{-1},\ g, h\in G,$$
is defined and an action and its iverse commutes 
$$\alpha^*(g, \alpha (h, x))=\alpha^*(g, hx)=hxg^{-1}=\alpha(h, \alpha^*(g, x)),\ g, h, x\in G.$$

The base of the left uniformity $L$ on $G$ can be defined as 
$$\{xO=\{\alpha^* (g, x))\ |\  g\in O\}\ |\ x\in G\},\ O\in N_G(e),$$
and the base of the Roelcke uniformity $L\wedge R$ on $G$ can be defined as 
$$\{OxU=\{\alpha(h, \alpha^* (g, x))\ |\  h\in O,\ g\in U\}\ |\ x\in G\},\ O, U\in N_G(e).$$


\subsection{$G$-spaces and $G$-extensions, self-inverse maps}

A $G$-space is a triple $(G, X, \theta)$, where $G$ is a topological group, $X$ is a space and $\theta: G\times X\to X$ is a left continuous action (abbreviation {\it $X$ is a $G$-space}). The following abbreviations $G\curvearrowright X$, $\theta (g, x)=gx$ are used for the action if it is clear from the context. 
$$\theta (A, Y)=AY=\bigcup\{\theta (g, x)=gx\ |\ g\in A\subset G,\ x\in Y\subset X\}.$$
A subset $Y$ of a $G$-space $X$ is an {\it invariant subset} if $GY=Y$. The restriction $\theta|_{G\times Y}$ of an action $\theta: G\times X\to X$ to an invariant subset $Y$ is an action and  $(G, Y, \theta|_{G\times Y})$ is a $G$-space.

A subgroup $\St_x=\{g\in G\ |\ gx=x\}$ of $G$ is a {\it stabilizer} of a point $x\in X$. An action is {\it effective} if $\{g\in G\ |\ gx=x,\ \forall\ x\in X\}=\bigcap\{\St_x\ |\ x\in X\}=e$. Only effective actions are considered in the paper. 

A map $f: X\to Y$ is a {\it $G$-map} (or {\it equivariant map}) of $G$-spaces $(G, X, \theta_X)$ and  $(G, Y, \theta_Y)$ if $f\circ\theta_X=\theta_Y\circ (\id\times f)$.  If $f$ is not continuous, then it will be noted.

\medskip

A map $s: X\to X$ is a {\it self-inverse} map if $s\circ s=\id$. 

\begin{df}~\cite[Definition 2.1]{KozlovLeiderman2025}\label{commmapsinv}
For spaces $Y$ and $X$ with self-inverse maps $s_Y$ and $s_X$ respectively, a map $f: Y\to X$ {\it commutes with self-inverse maps} if $$f\circ s_Y=s_X\circ f.$$ 
\end{df}

\begin{lem}\label{invaction}~\cite[Lemma 2.2]{KozlovLeiderman2025}
If $(G, X, \theta)$ is a $G$-space with a self-inverse map $s$, then the {\it inverse action}  
$$\theta^* (g, x)=s(\theta (g, s(x))),\ g\in G,\ x\in X,$$
is correctly defined and $(\theta^*)^*=\theta$. 
\end{lem}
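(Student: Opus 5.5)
The plan is to check the two things that make up the statement: that $\theta^*$ is an action of $G$ on $X$, and that applying the construction twice returns $\theta$. Both follow formally from the identity $s\circ s=\id$; no property of $G$ beyond the action axioms for $\theta$ is needed.

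For well-definedness, I will first note that $s$ is a bijection, being its own inverse. As a (continuous) self-inverse map it is therefore a homeomorphism of $X$. Consequently $\theta^*=s\circ\theta\circ(\id\times s)$ is a composition of continuous maps $G\times X\to X$, so it is continuous. Next I verify the action axioms.
\begin{itemize}
\item The identity acts trivially: $\theta^*(e,x)=s(\theta(e,s(x)))=s(s(x))=x$.
\item Compatibility with the group operation: for $g,h\in G$,
$$\theta^*(g,\theta^*(h,x))=s\bigl(\theta(g,s(s(\theta(h,s(x)))))\bigr)=s\bigl(\theta(g,\theta(h,s(x)))\bigr)=s\bigl(\theta(gh,s(x))\bigr)=\theta^*(gh,x).$$
\end{itemize}
The only tool used here is the cancellation $s\circ s=\id$ in the middle of the composition.

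For the involutive property, I will apply the definition to $\theta^*$ with the same map $s$:
$$(\theta^*)^*(g,x)=s(\theta^*(g,s(x)))=s\bigl(s(\theta(g,s(s(x))))\bigr)=\theta(g,x).$$

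There is no real obstacle. The one point needing care is continuity: it requires $s$ to be continuous, which I take from the paper's convention that maps are continuous. It may also be worth remarking that effectiveness of $\theta$ passes to $\theta^*$, since $\theta^*(g,\cdot)=s\circ\theta(g,\cdot)\circ s$ is conjugate to $\theta(g,\cdot)$.
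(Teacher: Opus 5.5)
Your proof is correct: continuity of $\theta^*=s\circ\theta\circ(\id\times s)$ follows because $s$ is a homeomorphism, and both action axioms and $(\theta^*)^*=\theta$ follow from the cancellation $s\circ s=\id$, exactly as you compute. The paper gives no proof of its own here (it only cites the authors' earlier work), and your direct verification is the standard argument that citation stands for.
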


\begin{df}\label{commactioninv}
For a $G$-space $(G, X, \theta)$ with a self-inverse map $s$ the action $\theta$ commutes with inverse action if 
$$\theta (g, \theta^*(h, x))=\theta^*(h, \theta (g, x)),\ g, h\in G,\ x\in X.$$
\end{df}

\begin{lem}\label{ginversg}
Let $(G, Y, \theta_Y)$ and  $(G, X, \theta_X)$ be $G$-spaces with self-inverse maps $s_Y$ and $s_X$ respectively, and let $f: Y\to X$ be a not continuous $G$-map and commutes with self-inverse maps $s_Y$ and $s_X$. Then $f$ is a not continuous $G$-map of $G$-spaces $(G, Y, \theta^*_Y)$ and $(G, X, \theta^*_X)$.

If, additionally, the action $\theta_Y$ commutes with inverse action and $f$ is a surjection, then the action $\theta_X$ commutes with inverse action.
\end{lem}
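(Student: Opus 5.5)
The first claim is a direct computation from the definition of the inverse action in Lemma~\ref{invaction} together with the two hypotheses on $f$. Equivariance of $f$ means $f(\theta_Y(g,y))=\theta_X(g,f(y))$ for all $g\in G$, $y\in Y$. Commuting with self-inverse maps means $f\circ s_Y=s_X\circ f$ (Definition~\ref{commmapsinv}). Continuity of $f$ plays no role here, so the argument is purely set-theoretic. For $g\in G$ and $y\in Y$ we compute
$$f(\theta^*_Y(g,y))=f(s_Y(\theta_Y(g,s_Y(y))))=s_X(f(\theta_Y(g,s_Y(y))))=s_X(\theta_X(g,f(s_Y(y)))).$$
Using $f\circ s_Y=s_X\circ f$ once more, the last expression becomes
$$s_X(\theta_X(g,s_X(f(y))))=\theta^*_X(g,f(y)).$$
Hence $f\circ\theta^*_Y=\theta^*_X\circ(\id\times f)$, which is exactly equivariance of $f$ for the inverse actions.

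For the second claim, let $g,h\in G$ and $x\in X$. Since $f$ is surjective, choose $y\in Y$ with $f(y)=x$. Apply the equivariance of $f$ for $\theta$ (the hypothesis) and for $\theta^*$ (the first part) alternately:
$$\theta_X(g,\theta^*_X(h,x))=\theta_X(g,\theta^*_X(h,f(y)))=\theta_X(g,f(\theta^*_Y(h,y)))=f(\theta_Y(g,\theta^*_Y(h,y))).$$
Now use that $\theta_Y$ commutes with its inverse action (Definition~\ref{commactioninv}), and then run the same equalities in reverse order:
$$f(\theta^*_Y(h,\theta_Y(g,y)))=\theta^*_X(h,f(\theta_Y(g,y)))=\theta^*_X(h,\theta_X(g,x)).$$
This is the required identity.

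There is no real obstacle. The only points needing care are these. First, the surjectivity of $f$ is what allows every $x\in X$ to be reached, and this is precisely why it is assumed. Second, the first part must be proved before the second, because the second uses equivariance of $f$ with respect to $\theta^*$. One might also note that $\theta^*_X$ is an action by Lemma~\ref{invaction}, but this is not even needed for the identities.
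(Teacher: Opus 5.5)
Your proof is correct and follows the paper's route: the second part is the same chain of equalities through a preimage $y$ of $x$ under the surjection $f$, alternating equivariance for $\theta$ and $\theta^*$ around the commutation on $Y$. For the first part the paper only cites \cite[Lemma 2.3]{KozlovLeiderman2025}, whereas you check it directly by composing with $s_Y$, $s_X$, and that computation is valid.
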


\begin{proof}
The first statement is~\cite[Lemma 2.3]{KozlovLeiderman2025}. 

If $\theta_Y$ commutes with inverse action and $f$ is a surjection, then
$$\theta_X (g, \theta_X^*(h, x))\stackrel{x=f(y)}{=}\theta_X (g, \theta_X^*(h, f(y)))=f(\theta_Y (g, \theta_Y^*(h, y)))=$$
$$=f(\theta^*_Y (h, \theta_Y(g, y)))=\theta_X^*(h, \theta_X (g, x)),\ g, h\in G,\ x\in X.$$
\end{proof}

\medskip

A uniformity $\mathcal U_X$ on $X$ ($(G, X, \theta_X)$ is a $G$-space, $\mathcal U_X$ unduces the original topology on $X$) is called an {\it equiuniformity}~\cite{Megr1984} if the action $G\curvearrowright X$ is {\it saturated} (any homeomorphism from $G$ is uniformly continuous) and is {\it bounded} (for any $u\in\mathcal U$ there are $O\in N_G(e)$ and $v\in\mathcal U$ such that the cover $Ov=\{OV\ |\ V\in v\}\succ u$). In this case  $(G, X, \theta_X)$ is called a {\it $G$-Tychonoff space}  (abbreviations {\it $X$ is a $G$-Tychonoff space}, $\mathcal U_X$ is an equiuniformity for $\theta_X$). The action is extended to the continuous action $\theta_{\tilde X}: G\times \tilde X\to\tilde X$ on the completion $(\tilde X, \tilde{\mathcal U}_X)$ of $(X, \mathcal U_X)$ and $(G, \tilde X, \theta_{\tilde X})$ is a $G$-Tychonoff space. The extension $\tilde{\mathcal U}_X$ of $\mathcal U_X$ to $\tilde X$ is an equiuniformity on $\tilde X$. The embedding $\jmath: X\to\tilde X$ is a $G$-map of $G$-spaces $(G, X, \theta_X)$ and $(G, \tilde X, \theta_{\tilde X})$, and $\jmath (X)$ is a dense {\it invariant} subset of $\tilde X$~\cite{Megr0}. $(G, \tilde X, \theta_{\tilde X})$ or  $(G, (\tilde X, \jmath), \theta_{\tilde X})$ is called a {\it $G$-extension of $(G, X, \theta_X)$}  (abbreviation the pair $(\tilde X, \jmath)$ or simply  $\tilde X$ is a $G$-extension of $X$). 

If $\mathcal U_X$ is a totally bounded equiuniformity on $X$, then $G$-extension $b X=\tilde X$  of $X$ is a compact space. 
$(G, b X, \theta_{b X})$ is a {\it $G$-compactification} or an {\it equivariant compactification} of  $(G, X, \theta)$  (abbreviation $b X$ is a $G$-compactification of $X$). If $(G, X, \curvearrowright)$ is a $G$-Tychonoff space, then the maximal (totally bounded) equiuniformity on $X$ exists. The maximal totally bounded equiuniformity  $\mathcal U_X^{bm}$ on $X$ is the {\it precompact replica} of the maximal equiuniformity $\mathcal U^m_X$ on $X$ (the greatest upper bound of totally bouded uniformities less or equal than $\mathcal U^m_X$ (see~\cite[Ch.~2]{Isbell})). By Smirnov's correspondence  {\it the maximal $G$-compactification} $\beta_G X$ of $X$ corresponds to  $\mathcal U_X^{bm}$ and is the {\it Samuel compactification of $(X, \mathcal U_X^{m})$} (see~\cite[Ch.\ 8, Problem 8.5.7]{Engelking}).

If $(G, X, \curvearrowright)$ is a $G$-Tychonoff space, then $\mathbb{GC}(X)$ is the poset of $G$-compactifications of $X$ (maps of compactifications are $G$-maps).

\begin{lem}\label{invunif}~\cite[Lemma 2.6]{KozlovLeiderman2025}
Let $(G, X, \theta)$ be a $G$-Tychonoff space with a self-inverse map $s$ and let $\mathcal U_X$ be an equiuniformity on $X$. Then 
$(G, X, \theta^*)$ is a $G$-Tychonoff space and 
$$\mathcal U^*_X=\{u^*=\{s(U)\ |\ U\in u\}\ |\ u\in\mathcal U_X\}$$
is an equiuniformity on $X$.
\end{lem}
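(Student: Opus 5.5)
We need to show two things: that $(G, X, \theta^*)$ is a $G$-space (continuity of $\theta^*$), and that $\mathcal U^*_X$ is a uniformity compatible with the topology of $X$ which is saturated and bounded for $\theta^*$. The plan is to transport everything along $s$. This is legitimate because $s$ is a self-inverse map, hence a homeomorphism with $s^{-1}=s$. The one place where we rely on this is the standing convention that maps are continuous; we assume $s$ is continuous, which is implicit in the setting of Lemma~\ref{invaction}.

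\emph{Step 1: $\theta^*$ is a continuous action.} By Lemma~\ref{invaction} the formula $\theta^*(g,x)=s(\theta(g,s(x)))$ defines an action. Continuity follows because
$$\theta^*=s\circ\theta\circ(\id\times s)$$
is a composition of continuous maps. Effectiveness is preserved: if $\theta^*(g,x)=x$ for all $x$, then $\theta(g,s(x))=s(x)$ for all $x$, and since $s$ is onto, $g=e$.

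\emph{Step 2: $\mathcal U^*_X$ is a compatible uniformity.} Since $s$ is a bijection, it is an isomorphism of the uniform space $(X,\mathcal U_X)$ onto $(X,\mathcal U^*_X)$, where $\mathcal U^*_X$ is the image of $\mathcal U_X$ under $s$. Hence $\mathcal U^*_X$ is a uniformity: the star-refinement and filter axioms are transported verbatim, using $s(\st(A,u))=\st(s(A),u^*)$. Its topology is the image under $s$ of the topology of $\mathcal U_X$, that is, $s(\tau)$. This equals $\tau$ because $s$ is a homeomorphism of $X$. So $\mathcal U^*_X$ induces the original topology.

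\emph{Step 3: saturation and boundedness.} Fix $g\in G$ and write $\theta^*_g=s\circ\theta_g\circ s$. This map is uniformly continuous from $(X,\mathcal U^*_X)$ to itself, being the composition of
$$s:(X,\mathcal U^*_X)\to(X,\mathcal U_X),\qquad \theta_g \text{ (uniformly continuous by saturation of } \mathcal U_X), \qquad s:(X,\mathcal U_X)\to(X,\mathcal U^*_X).$$
For boundedness, take $u^*\in\mathcal U^*_X$ with $u\in\mathcal U_X$. Boundedness of $\mathcal U_X$ gives $O\in N_G(e)$ and $v\in\mathcal U_X$ such that $Ov\succ u$. For $V\in v$ we have
$$\theta^*(O,s(V))=s(\theta(O,V))=s(OV).$$
Therefore the cover $\{\theta^*(O,V^*)\mid V^*\in v^*\}$ equals $\{s(OV)\mid V\in v\}=(Ov)^*$. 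Since $Ov\succ u$, applying the bijection $s$ gives $(Ov)^*\succ u^*$. Hence $\mathcal U^*_X$ is bounded for $\theta^*$, so it is an equiuniformity and $(G,X,\theta^*)$ is $G$-Tychonoff.

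The only potential obstacle is the continuity of $s$, which the transport argument needs. Since all maps are continuous by convention, it is automatic. Everything else is a direct transport of structure along the involutive homeomorphism $s$.
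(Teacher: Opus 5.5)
Your proof is correct and complete. The map $s$ is a homeomorphism with $s\circ\theta_g=\theta^*_g\circ s$, so it is an equivariant isomorphism of $(G,X,\theta)$ onto $(G,X,\theta^*)$ and a uniform isomorphism of $(X,\mathcal U_X)$ onto $(X,\mathcal U^*_X)$; compatibility with the topology, saturation and boundedness therefore transfer exactly as you verify. The paper gives no proof of this lemma (it is quoted from the authors' earlier work), and this transport of structure along $s$ is the expected argument.
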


\begin{cor}~\cite[Lemma 2.7]{KozlovLeiderman2025}
Let $(G, X, \theta)$ be a $G$-Tychonoff space, and let $s$ be a self-inverse map on $X$. 

If $\mathcal U_X$ is an equiuniformity for $\theta$, and $\mathcal U_X=\mathcal U^*_X$, then $\mathcal U_X$ is an equiuniformity for the inverse $\theta^*$.

If $\mathcal U_X$ is an equiuniformity for $\theta$ and its inverse $\theta^*$, then $\mathcal U^*_X$ is an equiuniformity for $\theta$ and $\theta^*$.
\end{cor}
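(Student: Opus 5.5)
The corollary has two parts, and both follow from Lemma~\ref{invunif} together with the identity $(\theta^*)^*=\theta$ of Lemma~\ref{invaction}. The plan is to apply Lemma~\ref{invunif} to the pair consisting of the action $\theta^*$ and the same self-inverse map $s$. The key observations are that $s$ is a homeomorphism of $X$ (it is continuous and its own inverse), that $u\mapsto u^*$ sends covers to covers, and that $(u^*)^*=u$, so $(\mathcal U^*_X)^*=\mathcal U_X$.

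For the first statement, Lemma~\ref{invunif} applied to $\theta$ and $\mathcal U_X$ gives directly that $\mathcal U^*_X$ is an equiuniformity for $\theta^*$. Since $\mathcal U_X=\mathcal U^*_X$ by hypothesis, $\mathcal U_X$ itself is an equiuniformity for $\theta^*$.

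For the second statement, Lemma~\ref{invunif} applied to $\theta$ and $\mathcal U_X$ shows that $\mathcal U^*_X$ is an equiuniformity for $\theta^*$. By hypothesis $\mathcal U_X$ is also an equiuniformity for $\theta^*$, and $(G,X,\theta^*)$ is a $G$-Tychonoff space. We may therefore apply Lemma~\ref{invunif} to the action $\theta^*$ with the same $s$. This shows that $(\mathcal U_X)^*=\mathcal U^*_X$ is an equiuniformity for $(\theta^*)^*$, which equals $\theta$ by Lemma~\ref{invaction}. Hence $\mathcal U^*_X$ is an equiuniformity for both $\theta$ and $\theta^*$.

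The only point requiring care is that Lemma~\ref{invunif} is stated for an arbitrary $G$-Tychonoff space with a self-inverse map, so it may be applied with $\theta^*$ in place of $\theta$. Its formula for the inverse action then yields $s(\theta^*(g,s(x)))=\theta(g,x)$, which is the identity $(\theta^*)^*=\theta$ from Lemma~\ref{invaction}. One should also check that $\mathcal U^*_X$ induces the original topology, but this already holds because $s$ is a homeomorphism, and it is part of the conclusion of Lemma~\ref{invunif}. I do not expect any genuine obstacle.
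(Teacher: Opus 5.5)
Your proof is correct and follows the paper's route. The paper presents this as a direct corollary of Lemma~\ref{invunif}, read as saying that $\mathcal U^*_X$ is an equiuniformity for $\theta^*$. The first part is then immediate, and the second comes from applying that lemma once to $\theta$ and once to $\theta^*$, together with $(\theta^*)^*=\theta$ from Lemma~\ref{invaction}.
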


\begin{pro}\label{unifcontRoelcke}~\cite[Proposition 2.8]{KozlovLeiderman2025}
Let $(G, X, \theta)$ and $(G, X, \theta')$ be $G$-Tychonoff spaces, $\mathcal U_X$ be an equiuniformity on $X$ for $\theta$ and $\theta'$. If $f: G\to X$ is a {\rm(}not necessarily continuous{\rm)} $G$-map of $G$-spaces $(G, G, \alpha)$,  $(G, X, \theta)$ and $(G, G, \alpha^*)$,  $(G, X, \theta')$, then $f: (G, L\wedge R)\to (X, \mathcal U_X)$ is a uniformly continuous and, hence, a continuous map.
\end{pro}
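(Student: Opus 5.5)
The statement is a uniform-continuity claim, so the plan is to check it directly against a base of $L\wedge R$. We fix a uniform cover $u\in\mathcal U_X$ and must produce $O\in N_G(e)$ such that each Roelcke basic set $OgO$ is mapped by $f$ into the star of a single point with respect to $u$. Continuity then follows at once, since the uniform topology of $L\wedge R$ is the topology of $G$ and $\mathcal U_X$ induces the topology of $X$.

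First, we pick $w\in\mathcal U_X$ with $w$ a star-refinement of $u$ (equivalently, an entourage ${\rm W}$ with ${\rm W}\circ{\rm W}\subset{\rm U}$). Next, we use that $\mathcal U_X$ is an equiuniformity for $\theta$: boundedness gives $O_1\in N_G(e)$ and $v_1\in\mathcal U_X$ with $O_1v_1\succ w$. In particular, every point $x\in X$ satisfies $\theta(O_1,x)\subset \st(x,w)$, so $(x,\theta(h,x))\in{\rm W}$ for all $h\in O_1$. In the same way, boundedness for $\theta'$ gives $O_2\in N_G(e)$ with $(x,\theta'(h,x))\in{\rm W}$ for all $h\in O_2$ and all $x$. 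Put $O=O_1\cap O_2$, and also intersect with $O^{-1}$ so that $O$ is symmetric.

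Now take $g\in G$ and $h,h'\in O$. Since $f$ is equivariant for $(\alpha,\theta)$ and for $(\alpha^*,\theta')$, we have
$f(hgh')=f(\alpha(h,\alpha^*(h'^{-1},g)))=\theta(h,\theta'(h'^{-1},f(g)))$.
Set $y=\theta'(h'^{-1},f(g))$; since $h'^{-1}\in O_2$, we get $(f(g),y)\in{\rm W}$. Since $h\in O_1$, we get $(y,\theta(h,y))\in{\rm W}$. Hence $(f(g),f(hgh'))\in{\rm W}\circ{\rm W}\subset{\rm U}$. This shows that $f(OgO)\subset B(f(g),{\rm U})$ for every $g$, which is exactly uniform continuity of $f:(G,L\wedge R)\to(X,\mathcal U_X)$. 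Continuity is then either the general fact that uniformly continuous maps are continuous, or it follows from Lemma~\ref{cont}.

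The main obstacle is the step that extracts from boundedness the uniform estimate $(x,hx)\in{\rm W}$ for all $x$ simultaneously. The definition only says that the cover $Ov$ refines $u$. We have to check that for $x\in V\in v$ the whole set $Ox$ lies in $OV$, which is contained in a single member of $w$ that also contains $x$ (because $e\in O$). Thus the refinement gives the entourage-type statement with $w$ in place of $u$, and this is why we pass to a star-refinement at the start. Note that no continuity of $f$ is needed: all estimates use only equivariance and the uniform structure of $\mathcal U_X$.
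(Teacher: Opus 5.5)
Your proof is correct. You extract $(x,\theta(h,x))\in{\rm W}$ and $(x,\theta'(h,x))\in{\rm W}$ uniformly in $x$ from boundedness, and you write $f(hgh')=\theta(h,\theta'(h'^{-1},f(g)))$. This is the natural argument, and it matches the paper's framing: the paper gives no proof of its own, citing \cite[Proposition 2.8]{KozlovLeiderman2025}, but it presents the Roelcke base as $\{\alpha(h,\alpha^*(g,x))\}$, which is set up for exactly this computation.

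Your closing step, that $f(OgO)\subset B(f(g),{\rm U})$ for all $g$ gives uniform continuity, is valid. The sets $\{(g,k)\mid k\in OgO\}$ form a base of entourages for $L\wedge R$; equivalently, in cover language, start from a star-refinement of the given cover.
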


From Proposition~\ref{unifcontRoelcke} and Lemma~\ref{ginversg} it follows. 

\begin{cor}\label{cor1}
{\rm(A)} Let $(G, X, \theta)$ be a $G$-space with a self-inverse map $s$. If $f: G\to X$ is a {\rm(}not necessarily continuous{\rm)} $G$-map of $G$-spaces $(G, G, \alpha)$ and $(G, X, \theta)$ and commutes with involution on $G$ and self-inverse map $s$, then $f$ is a {\rm(}not necessarily continuous{\rm)} $G$-map of $G$-spaces $(G, G, \alpha^*)$ and  $(G, X, \theta^*)$.

{\rm(B)} If, additionally to {\rm(A)}, $f$ is a surjection, then $\theta$ commutes with inverse action. 

{\rm(C)} If, additionally to {\rm(A)}, $X$ is a $G$-Tychonoff space and $\mathcal U_X$ is an equiuniformity on $X$ for $\theta$ and  $\theta^*$, then $f: (G, L\wedge R)\to (X, \mathcal U_X)$ is a uniformly continuous and, hence, continuous map.
\end{cor}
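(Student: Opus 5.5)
The plan is to derive the three items in turn. Each one reduces to Lemma~\ref{ginversg} or to Proposition~\ref{unifcontRoelcke}, applied with $Y=G$. On $G$ the self-inverse map is the involution $*$, the action is the left action $\alpha$, and by \S\ref{topgroup} its inverse action is $\alpha^*(g,h)=*(\alpha(g,*(h)))=hg^{-1}$. This is exactly the inverse action of Lemma~\ref{invaction} built from $s_G=*$, so the notation matches.

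For (A), I would apply the first statement of Lemma~\ref{ginversg} to the $G$-spaces $(G,G,\alpha)$ with $s_Y=*$ and $(G,X,\theta)$ with $s_X=s$. By hypothesis, $f$ is a (not necessarily continuous) $G$-map commuting with the self-inverse maps. The lemma then says that $f$ is a $G$-map of $(G,G,\alpha^*)$ into $(G,X,\theta^*)$. If one prefers a direct check, it is one line:
\[
f(\alpha^*(g,h))=f(s_G(\alpha(g,s_G(h))))=s(f(\alpha(g,s_G(h))))=s(\theta(g,s(f(h))))=\theta^*(g,f(h)).
\]

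For (B), I would use the second statement of Lemma~\ref{ginversg}. What it needs is that $\alpha$ commutes with its inverse action on $G$, and this was already computed in \S\ref{topgroup}: $\alpha^*(g,\alpha(h,x))=hxg^{-1}=\alpha(h,\alpha^*(g,x))$. Since $f$ is surjective, the lemma transfers this commutation to $\theta$ on $X$.

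For (C), I would take $\theta'=\theta^*$ in Proposition~\ref{unifcontRoelcke}. The proposition needs two inputs. First, $(G,X,\theta)$ and $(G,X,\theta^*)$ must be $G$-Tychonoff with a common equiuniformity $\mathcal U_X$, which is the stated hypothesis; the $G$-Tychonoff property for $\theta^*$ also follows from Lemma~\ref{invunif}. Second, $f$ must be a $G$-map for the pairs $\alpha,\theta$ and $\alpha^*,\theta^*$, which is given for the first pair and is exactly (A) for the second. The proposition then gives that $f:(G,L\wedge R)\to(X,\mathcal U_X)$ is uniformly continuous, hence continuous.

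There is no real obstacle here. The only point needing care is confirming that the inverse action $\alpha^*$ on $G$, in the sense of Lemma~\ref{invaction} with $s_G=*$, coincides with the $\alpha^*$ used in Proposition~\ref{unifcontRoelcke}. That holds by the definitions in \S\ref{topgroup}.
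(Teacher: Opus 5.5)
Your proposal is correct and takes the same route as the paper, which states that the corollary follows from Proposition~\ref{unifcontRoelcke} and Lemma~\ref{ginversg}. Items (A) and (B) are the two statements of Lemma~\ref{ginversg} applied to $(G,G,\alpha)$ with $s_G=*$, using that $\alpha$ commutes with $\alpha^*$ as computed in \S\ref{topgroup}; item (C) is Proposition~\ref{unifcontRoelcke} with $\theta'=\theta^*$, with (A) supplying the second $G$-map condition.
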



\subsection{Representability of topological groups ($g$-topology, compact-open topology and topology of pointwise convergence)}\label{tauRepr}
For a space $X$ let $\Hom (X)$ be a group of homeomorphisms of $X$. A group $G$ is {\it representable in $X$} if $G$ is a subgroup of  $\Hom (X)$. 

If $G$ is representable in $X$, then the subbase of the {\it $g$-topology} $\tau_{g}$ on $G$ is formed by the sets 
$$[K, O]=\{g\in G\ |\ gK\subset O\},\ \mbox{where}\ K\ \mbox{is a closed},\ O\ \mbox{is an open subset of}\ X,$$
$$\mbox{and either}\ K\ \mbox{or}\ X\setminus O\ \mbox{is compact}.$$

\begin{rem}\label{rem1}
{\rm (i) For groups $G$ representable in locally compact spaces $X$ the notion of a $g$-topology is introduced by R.~Arens~\cite{Arens}. In this case $\tau_g$ is an {\it admissible group topology}, i.e. $(G, \tau_g)$ is a topological group and the action  $(G, \tau_g)\curvearrowright X$ is continuous. Moreover, $\tau_g$ 
is the least admissible group topology on $G$. 

(ii) If $G$ is representable in a compact or a locally compact locally connected space $X$, then $g$-topology coincides with  {\it compact-open topology} (abbreviation c-o.t.) $\tau_{co}$~\cite{Arens}. The subbase of $\tau_{co}$ on $G$ is formed by the sets 
$$[K, O]=\{g\in G\ |\ gK\subset O\},\ \mbox{where}\ K\ \mbox{is a compact},\ O\ \mbox{is an open subset of}\ X.$$
In~\cite{Dijkstra} this result is improved. If $G$ is representable in a noncompact space $X$ and $\forall\ x\in X$ $\exists$ a nbd of $x$ that is a continuum, then $\tau_{co}=\tau_g$ is the least admissible group topology on $G$.

(iii) If $G$ is representable in a locally compact space $X$, then the $g$-topology $\tau_g$ on $G$ coincides with the c-o.t $\tau_{co}$ on $G$ for its representation on the Alexandroff one-point compactification $\alpha X$.}
\end{rem}

\begin{df}\label{corepres}
A topological group $(G, \tau)$ is {\it $\tau_g$-representable in $X$} if $G$ is representable in $X$,  $\tau_g=\tau$ and $((G, \tau), X, \curvearrowright)$ is a $G$-Tychonoff space.
\end{df}

\begin{lem}\label{l11} If a topological group $(G, \tau)$ is $\tau_g$-representable in $X$ and $Y$ is a $G$-extension of $X$, then $(G, \tau)$ is $\tau_g$-representable in $Y$.
\end{lem}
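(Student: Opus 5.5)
The plan is to check the three requirements of Definition~\ref{corepres} for $Y$ in turn. These are: $G$ is representable in $Y$; $((G,\tau),Y,\curvearrowright)$ is a $G$-Tychonoff space; and the $g$-topology $\tau_g^Y$ coming from $Y$ equals $\tau$. Write $\jmath\colon X\to Y$ for the dense equivariant embedding and identify $X$ with the dense invariant subset $\jmath(X)$ of $Y$.

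For the first two requirements we use the facts recalled for $G$-extensions. The action $\theta_Y$ is continuous and extends $\theta_X$, and $Y$ is $G$-Tychonoff with respect to the extended equiuniformity. Hence each $g$ acts on $Y$ as a homeomorphism, with inverse given by the action of $g^{-1}$, and $g\mapsto\theta_Y(g,\cdot)$ is a homomorphism into $\Hom(Y)$. This homomorphism is injective, i.e.\ the action is effective: if $g$ fixes every point of $Y$, then it fixes every point of $X$, so $g=e$ because $G$ is representable in $X$. Thus $G$ is representable in $Y$.

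It remains to show $\tau_g^Y=\tau$, which we do by proving both inclusions on subbases.

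For $\tau_g^Y\subset\tau$, take a subbasic set $[K,O]_Y$. If $K$ is compact, continuity of $(G,\tau)\times Y\to Y$ and the tube lemma show that $[K,O]_Y$ is $\tau$-open. If instead $C=Y\setminus O$ is compact and $K$ is closed, then
$$g\in[K,O]_Y \iff gK\cap C=\emptyset \iff g^{-1}\in[C,Y\setminus K]_Y.$$
The set $[C,Y\setminus K]_Y$ is $\tau$-open by the compact case, and inversion is $\tau$-continuous, so $[K,O]_Y$ is $\tau$-open.

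For $\tau=\tau_g^X\subset\tau_g^Y$, take a subbasic set $[K,O]_X$. Choose $O'$ open in $Y$ with $O'\cap X=O$, and use throughout that $X$ is invariant in $Y$.
\begin{itemize}
\item If $K$ is compact, then $K$ is compact and closed in $Y$. Since $gK\subset X$, we have $gK\subset O$ iff $gK\subset O'$, so $[K,O]_X=[K,O']_Y$.
\item If $C=X\setminus O$ is compact, then $C$ is compact and closed in $Y$. Put $K'=\cl_Y K$, so that $K'\cap X=K$ because $K$ is closed in $X$. As $g^{-1}C\subset X$, we get $K'\cap g^{-1}C=K\cap g^{-1}C$. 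Hence
$$gK\subset O \iff gK'\cap C=\emptyset,$$
that is, $[K,O]_X=[K',Y\setminus C]_Y$. This is a subbasic set of $\tau_g^Y$, since $Y\setminus C$ is open with compact complement.
\end{itemize}

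The only delicate point is this second case, where the closed set $K$ must be replaced by its closure in $Y$. The identity $\cl_Y K\cap X=K$ together with the invariance $g^{-1}C\subset X$ is exactly what makes the replacement harmless.
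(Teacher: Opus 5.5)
Your proof is correct. For the inclusion $\tau=\tau_g^X\subset\tau_g^Y$ you argue exactly as the paper does: you use the identifications $[K,O]_X=[K,O']_Y$ and $[K,O]_X=[\cl_Y K,\,Y\setminus C]_Y$, relying on the invariance of $X$ and on $\cl_Y K\cap X=K$.

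The difference is in the reverse inclusion $\tau_g^Y\subset\tau$. The paper never proves this inclusion directly. It takes a $G$-compactification $Z$ of $Y$, which is then also a $G$-compactification of $X$. It then uses Arens's theorem (Remark~\ref{rem1}) that $\tau_{co}^Z$ is the least admissible group topology for the compact $G$-space $Z$, which gives $\tau_{co}^Z\le\tau$. Finally it closes the chain $\tau\le\tau_g^Y\le\tau_{co}^Z\le\tau$ by applying the same subbase identifications to $Y\subset Z$.

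You instead show directly that every subbasic $[K,O]_Y$ is $\tau$-open. For compact $K$ you use the tube-lemma argument. For compact $Y\setminus O$ you use the identity $[K,O]_Y=\bigl([Y\setminus O,\,Y\setminus K]_Y\bigr)^{-1}$ together with continuity of inversion. In effect you reprove the ``least admissible'' half of Arens's result for the $g$-topology of an arbitrary space.

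Your route is more elementary and self-contained. It does not need the existence of a $G$-compactification of $Y$ or the external minimality theorem. For the equality of topologies it uses essentially only the continuity of the action on $Y$ and the invariance of $X$ in $Y$; the $G$-Tychonoff property enters only through the definition. The paper's route gives an extra conclusion, $\tau=\tau_{co}^Z$ for every $Z\in\mathbb{GC}(Y)$, and it follows the same pattern the paper reuses for Lemma~\ref{l1}.

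You also check explicitly that the action on $Y$ is effective, so that $G$ is genuinely representable in $Y$; the paper leaves this implicit.
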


\begin{proof} 
Take $Z\in\mathbb{GC} (Y)$. Since $Y$ is a $G$-extension of $X$, $Z\in\mathbb{GC} (X)$. For the representation of $G$ in $Z$ (extension of action) the c-o.t. $\tau_{co}^Z$ is the least admissible group topology on $G$. Therefore, $\tau_{co}^Z\leq\tau$. 

Take arbitrary open set $[K, O]$ from the subbase of $\tau$, where $K$ is a closed, $O$ is an open subset of $X$ and either $K$ is compact or $X\setminus O$ is compact.  
In the case $K$ is compact, take an open in $Z$ set $O'$ such that $O'\cap X=O$. Then the set $[K, O']$ is from the subbase of $\tau^{Z}_{co}$ and if $g\in [K, O']$, then $g\in [K, O]$, since $X$ is an invariant subset of $Z$. Hence, $[K, O']=[K, O]$.

In the case $X\setminus O$ is compact, take compact set $K'=\cl_Z\ F$ and open set $O'=Z\setminus (X\setminus O)$ in $Z$. Then the set $[K', O']$ is from the subbase of $\tau^{Z}_{co}$ and if $g\in [K', O']$, then $g\in [K, O]$, since $X$ is an invariant subset of $Z$. Hence, $[K', O']=[K, O]$.

Therefore, $\tau\leq\tau^{Z}_{co}$. Finally, $\tau=\tau^{Z}_{co}$. 

The above resonings yield the inequalities $\tau\leq\tau^Y_g\leq\tau^{Z}_{co}$. Hence,  $\tau=\tau^Y_g$.
\end{proof}

If $G$ is representable in $X$, then the subbase of the {\it topology of pointwise convergence} (abbreviation t.p.c.) $\tau_p$ on $G$ is formed by the sets 
$$[x, O]=\{g\in G\ |\ gx\in O\},\ \mbox{where}\ x\in X,\ O\ \mbox{is an open subset of}\ X.$$
Evidently, $\tau_p\leq\tau_{co}$. If $\tau_{p}$ is an admissible group topology on $G$ for its representation in $X$, then $\tau_{p}$ is the least admissible group topology on $G$~\cite[Lemma 3.1]{Kozlov2022}. This is possible, for example, if the action $G\curvearrowright X$ in representation is uniformly equicontinuous~\cite[Proposition 3.2]{Kozlov2022}. 

\begin{lem}\label{tauptauco} 
Let $G$ be representable in $X$ and $\tau_p$ be an admissible group topology on $G$. Then $\tau_p=\tau_g$. 
\end{lem}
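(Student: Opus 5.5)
We will prove the two inequalities $\tau_p\leq\tau_g$ and $\tau_g\leq\tau_p$ separately.

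\emph{Step 1: $\tau_p\leq\tau_g$.} A subbasic set $[x,O]$ of $\tau_p$ equals $[\{x\},O]$. Here $\{x\}$ is a compact closed set, since all spaces are Tychonoff. Hence $[x,O]$ is already a subbasic set of $\tau_g$, and the inclusion follows directly from the definitions.

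\emph{Step 2: $\tau_g\leq\tau_p$.} Take a subbasic set $[K,O]$ of $\tau_g$ and a point $g_0\in[K,O]$. We seek a $\tau_p$-neighbourhood $W$ of $g_0$ with $W\subset[K,O]$. Replacing $W$ by $Wg_0$ and $K$ by $g_0K$ (allowed because $\tau_p$ is a group topology), we may assume $g_0=e$, so $K\subset O$. The only tool is continuity of the action $\theta\colon(G,\tau_p)\times X\to X$, which holds by admissibility of $\tau_p$.

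\emph{Case (a): $K$ compact.} For each $x\in K$, continuity of $\theta$ at $(e,x)$ gives $W_x\in N_{(G,\tau_p)}(e)$ and a nbd $V_x$ of $x$ with $W_xV_x\subset O$. Cover $K$ by finitely many $V_{x_i}$ and put $W=\bigcap_i W_{x_i}$. Then $WK\subset O$, so $W\subset[K,O]$.

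\emph{Case (b): $C=X\setminus O$ compact, $K$ closed.} The condition $gK\subset O$ is equivalent to $g^{-1}C\subset X\setminus K$, and $X\setminus K$ is open. So $[K,O]=\{g : g^{-1}\in[C,X\setminus K]\}$. By Case (a), $[C,X\setminus K]$ is $\tau_p$-open, and inversion is $\tau_p$-continuous because $\tau_p$ is a group topology. Hence $[K,O]$ is $\tau_p$-open.

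Together, Cases (a) and (b) show that every subbasic set of $\tau_g$ is $\tau_p$-open, so $\tau_g\leq\tau_p$. Combined with Step 1, this gives $\tau_p=\tau_g$.

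The main (mild) obstacle is Case (b), where $K$ need not be compact and the compactness argument does not apply directly. Passing to inverses handles it using only the group-topology part of admissibility. As an alternative, one could cite Remark~\ref{rem1}(i) in the locally compact case, but the argument above avoids any local compactness assumption.
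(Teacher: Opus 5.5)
Your proof is correct and follows essentially the paper's argument: the compact-$K$ case is the same finite-subcover argument, and the paper's direct covering of $X\setminus O$ by neighbourhoods whose $V$-translates miss $gK$ is your reduction of case (b) to case (a) in disguise. In that case the paper's conclusion $Vg\subset[K,O]$ silently needs $V=V^{-1}$, i.e.\ continuity of inversion, and your identity $[K,O]=\{g : g^{-1}\in[X\setminus O,X\setminus K]\}$ makes this step explicit; your Step 1 also justifies $\tau_p\leq\tau_g$, which the paper only asserts.
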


\begin{proof}
Since $\tau_p\leq\tau_g$, it remains to show that any nbd $[K, O]$ of $g\in G$, where either $K$ or $X\setminus O$ is a compact subset of $X$, from the subbase of $\tau_g$, contains a nbd of $g$ from $\tau_{p}$. 

The case $K$ is compact. Since $gK\subset O$, $\forall\ x\in gK$ $\exists$ nbd $O_x$ of $x$ and $\exists$ $V_x\in N_G (e)$ in $\tau_{p}$ such that $V_xO_x\subset O$. Compactness of $gK$ implies that there exists a finite family of nbds $O_{x_1}, \ldots, O_{x_n}$ such that $gK\subset O'=\bigcup\limits_{i=1}^n O_{x_i}$. $V=\bigcap\limits_{i=1}^n V_{x_i}\in N_G (e)$ in $\tau_{p}$. The inclusion $VO'\subset O$ implies that  $Vg\subset [K, O]$. 

The case $X\setminus O$ is compact. Since $gK\subset O$, $\forall\ x\in X\setminus O$ $\exists$ nbd $O_x$ of $x$ and $\exists$ $V_x\in N_G (e)$ in $\tau_{p}$ such that $V_xO_x\subset X\setminus gK$. Compactness of $X\setminus O$ implies that there exists a finite family of nbds $O_{x_1}, \ldots, O_{x_n}$ such that $gK\subset X\setminus O'$, $O'=\bigcup\limits_{i=1}^n O_{x_i}$. $V=\bigcap\limits_{i=1}^n V_{x_i}\in N_G (e)$ in $\tau_{p}$. The inclusion $VO'\subset X\setminus gK$ implies that  $Vg\subset [K, O]$. 

Therefore,  $\tau_g\leq\tau_{p}$ and, finally, $\tau_p=\tau_g$. 
\end{proof}

\begin{rem} 
{\rm Let $G$ be representable in a locally compact space $X$. Then $\tau_p$ is an admissible group topology on $G$ iff $\tau_p=\tau_g$.}
\end{rem}

\begin{df}~\cite[Definition 3.4]{KozlovSorin2025}\label{taurepres}
A topological group $(G, \tau)$ is {\it $\tau_{p}$-representable in $X$} if $G$ is representable in $X$,  $\tau_{p}=\tau$ and $((G, \tau), X, \curvearrowright)$ is a $G$-Tychonoff space.
\end{df}

\begin{cor}\label{coinctopolog}
If a topological group $(G, \tau)$ is $\tau_p$-representable in $X$, then $(G, \tau)$ is $\tau_g$-representable in $X$.
\end{cor}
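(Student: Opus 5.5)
The plan is to compare the two definitions directly. Definition~\ref{taurepres} ($\tau_p$-representability) and Definition~\ref{corepres} ($\tau_g$-representability) contain the same conditions except one. In both, $G$ is representable in $X$ and $((G,\tau),X,\curvearrowright)$ is a $G$-Tychonoff space. The only difference is the last requirement: $\tau_p=\tau$ in the first, $\tau_g=\tau$ in the second. So it suffices to show that, under $\tau_p$-representability, $\tau_g=\tau_p$ holds for the representation of $G$ in $X$.

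For that I would invoke Lemma~\ref{tauptauco}, whose hypotheses are that $G$ is representable in $X$ and that $\tau_p$ is an admissible group topology on $G$. The first is given. For admissibility, recall that it means two things: $(G,\tau_p)$ is a topological group, and the action $(G,\tau_p)\curvearrowright X$ is continuous.

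Both follow from the assumptions. Since $\tau_p=\tau$ and $(G,\tau)$ is a topological group, $(G,\tau_p)$ is a topological group. Since $((G,\tau),X,\curvearrowright)$ is a $G$-Tychonoff space, it is in particular a $G$-space, so the action $G\times X\to X$ is continuous with respect to $\tau=\tau_p$. Hence $\tau_p$ is admissible, and Lemma~\ref{tauptauco} gives $\tau_g=\tau_p=\tau$.

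This yields all three conditions of Definition~\ref{corepres}. The only point requiring any care is checking that the $G$-Tychonoff assumption supplies continuity of the action; that is immediate from the definition of a $G$-space, so I expect no genuine obstacle.
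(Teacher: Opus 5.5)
Your proposal is correct and follows the same approach as the paper. The paper states the corollary without a separate proof, placing it right after Lemma~\ref{tauptauco}, and the intended argument is yours: $\tau_p=\tau$ together with continuity of the action (from the $G$-Tychonoff assumption) makes $\tau_p$ admissible, so Lemma~\ref{tauptauco} gives $\tau_g=\tau_p=\tau$.
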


\begin{rem} 
{\rm Example from~{\rm\cite{Megr1988}} shows that $G$  may be representable in $X$ and the t.p.c. $\tau_p$ may be an admissible group topology on $G$,  but the $G$-space $(G=(G, \tau_p), X,  \curvearrowright)$ may not be $G$-Tychonoff.}
\end{rem}

\begin{lem}\label{l1} 
If a topological group $(G, \tau)$ is $\tau_{p}$-representable in $X$ and $Y$ is a $G$-extension of $X$, then $(G, \tau)$ is $\tau_{p}$-representable in $Y$.
Moreover, $\tau$ is the least admissible group topology for the action  $G\curvearrowright Y$. 
\end{lem}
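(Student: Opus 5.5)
We argue as in Lemma~\ref{l11}, comparing the topology of pointwise convergence $\tau_p^Y$ of the extended action $G\curvearrowright Y$ with $\tau$. Since $Y$ is a $G$-extension of $X$, the action $(G,\tau)\curvearrowright Y$ is continuous and $Y$ is $G$-Tychonoff. Indeed, $Y$ is the completion of $(X,\mathcal U_X)$ for an equiuniformity, and the extended uniformity $\tilde{\mathcal U}_X$ is an equiuniformity on $Y$. So $\tau$ is an admissible group topology for $Y$, and continuity of the orbit maps $g\mapsto gy$ gives $\tau_p^Y\leq\tau$. Conversely, for $x\in X$ and $O$ open in $X$, choose $O'$ open in $Y$ with $O'\cap X=O$. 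Since $X$ is an invariant subset of $Y$, we get $[x,O']=[x,O]$, so every subbasic set of $\tau=\tau_p^X$ belongs to $\tau_p^Y$, and $\tau\leq\tau_p^Y$. Hence $\tau=\tau_p^Y$. Together with the $G$-Tychonoff property of $Y$, this shows that $(G,\tau)$ is $\tau_p$-representable in $Y$. Effectiveness on $Y$ is automatic, because the action is already effective on the invariant subset $X$, so $G$ is indeed a subgroup of $\Hom(Y)$.

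For the ``moreover'' part, we use the cited fact~\cite[Lemma 3.1]{Kozlov2022}: if $\tau_p$ is an admissible group topology for a representation, then it is the least admissible group topology. Since $\tau=\tau_p^Y$ and $\tau$ is admissible for $G\curvearrowright Y$, it follows that $\tau$ is the least admissible group topology for $Y$. If one prefers to avoid the citation, argue directly. Any admissible group topology $\sigma$ on $G$ for $Y$ makes every orbit map $g\mapsto gy$ continuous, so $\tau_p^Y\leq\sigma$, that is, $\tau\leq\sigma$.

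The only delicate point is the claim that the extended action on $Y$ is continuous and $G$-Tychonoff for the original topology $\tau$. This is not a computation but the standard fact on completions of equiuniformities quoted in the preliminaries~\cite{Megr0}. The identity $[x,O']=[x,O]$ itself is immediate once invariance of $X$ in $Y$ is available.
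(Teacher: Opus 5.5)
Your proof is correct, but it gets the key inequality $\tau_p^Y\le\tau$ by a different route from the paper.

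The paper passes to a $G$-compactification $Z$ of $Y$ (hence also of $X$). It uses Arens' theorem that the compact-open topology $\tau_{co}^Z$ is the least admissible group topology for the compact $G$-space $Z$, which gives $\tau_{co}^Z\le\tau$. It then closes the chain $\tau\le\tau_p^Y\le\tau_p^Z\le\tau_{co}^Z\le\tau$. The first two inequalities there rest tacitly on the same invariant-subset identity $[x,O']=[x,O]$ that you write out explicitly.

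You instead read off $\tau_p^Y\le\tau$ directly from the continuity of the extended action of $(G,\tau)$ on $Y$, which is part of the definition of a $G$-extension. The subbasic sets $[y,O]$ are simply preimages of open sets under the orbit maps. This needs neither the auxiliary compactification nor Arens' theorem, so your argument is shorter and more elementary.

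The paper's detour parallels the proof of Lemma~\ref{l11}. As a by-product it records that $\tau=\tau_p^Z=\tau_{co}^Z$ for every $Z\in\mathbb{GC}(Y)$. That fact also follows from your version applied to $Z$, combined with Lemma~\ref{tauptauco} and Remark~\ref{rem1}.

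The ``moreover'' part is handled the same way in both proofs, via the extremal property of $\tau_p$. Your one-line direct argument for it is also fine.
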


\begin{proof}
Take $Z\in\mathbb{GC} (Y)$. Since $Y$ is a $G$-extension of $X$, $Z\in\mathbb{GC} (X)$. For the representation of $G$ in $Z$  (extension of action) the c-o.t. $\tau_{co}^Z$ is the least admissible group topology on $G$. Therefore, $\tau_{co}^Z\leq\tau$. 

For the t.p.c. $\tau_p^Z$ on $G$ for its representation in $Z$ one has $\tau\leq\tau_p^Z\leq\tau_{co}^Z$. For the t.p.c. $\tau_p^Y$ on $G$ for its representation in $Y$ one has $\tau\leq\tau_p^Y\leq\tau_p^Z$. Therefore, $\tau=\tau_p^Y$. 
The extremal property of the t.p.c. finishes the proof.
\end{proof}

\begin{rem}
{\rm Lemma~\ref{l1} is proved in~\cite[Lemma 2.2]{KozlovSorin2025} for $Y\in\mathbb{GC}(X)$.}
\end{rem}

\begin{rem}
{\rm If a topological group $(G, \tau)$ is $\tau$-representable in a locally compact space $X$ and $\tau=\tau_g$ or  $\tau=\tau_p$, then the condition $((G, \tau), X, \curvearrowright)$ is a $G$-Tychonoff space in Definitions~\ref{corepres} and~\ref{taurepres} respectively, can be omitted.}
\end{rem}


\subsection{Locally compact extensions of a topological space}\label{loccompext}

A pair $(Y, \varphi)$ where $Y$ is a locally compact space, $\varphi: X\to Y$ is an embedding and $\cl\ \varphi (X)=Y$ is a {\it locally compact extension of $X$}. 

Locally compact extensions $(Y, \varphi)$ and $(Z, \psi)$ of $X$ are {\it equivalent} if $\exists$ an open suhset $\varphi (X)\subset U\subset Y$ and a perfect onto map $f: U\to Z$ and $\exists$  an open suhset $\psi (X)\subset V\subset Z$ and a perfect onto map $h: V\to Y$ such that $\psi=f\circ\varphi$ and  $\varphi=h\circ\psi$. The defined equivalence is an equivalence relation on the set of locally compact extensions of $X$. 

Denote by $\mathbb{LC} (X)$ the set of  equivalence classes of locally compact extensions of $X$. A partial order $\geq_{LC}$ on $\mathbb{LC} (X)$
$$(Y, \varphi)\geq_{LC} (Z, \psi)\ \mbox{if}\ \exists\  \mbox{an open suhset}\ \varphi (X)\subset U\subset Y,$$
$$\ \mbox{and a perfect onto map}\ f: U\to Z\ \mbox{such that}\ \psi=f\circ\varphi\eqno{({\rm od})}$$
is  well defined. 

\begin{pro}
\begin{itemize}
\item  $\geq_{LC}$ induces the natural order on $\mathbb{C} (X)\subset\mathbb{LC} (X)$.
\item If $(Y, \varphi)\in\mathbb{LC} (X)\setminus\mathbb{C} (X)$, $(Z, \psi)\in\mathbb{C} (X)$, then $(Y, \varphi)\not\geq_{LC} (Z, \psi)$.
\item If $(Y, \varphi)\geq_{LC} (Z, \psi)$ and $(K, \xi)\in\mathbb{C} (Y)$, then $(K, \xi\circ\varphi)\in\mathbb C(X)$ and $(K,  \xi\circ\varphi)\geq_{LC} (Z, \psi)$.  
\end{itemize}
\end{pro}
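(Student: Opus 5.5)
We treat the three items separately. Each one comes down to elementary facts about perfect maps: perfect maps preserve compactness (images and preimages of compact sets are compact), and a compact subset of a Hausdorff space is closed.

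\emph{First item.} Let $(Y,\varphi),(Z,\psi)\in\mathbb{C}(X)$. If $\varphi'\colon Y\to Z$ is a map of compactifications, then $\varphi'$ is perfect, because $Y$ is compact and $Z$ is Hausdorff. Taking $U=Y$ in (od) gives $(Y,\varphi)\geq_{LC}(Z,\psi)$.

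Conversely, suppose $f\colon U\to Z$ is a perfect onto map with $\varphi(X)\subset U\subset Y$ open and $\psi=f\circ\varphi$. We show that $U=Y$. Since $Z$ is compact and $f$ is perfect, $U=f^{-1}(Z)$ is compact, hence closed in $Y$. As $U$ contains the dense set $\varphi(X)$, we get $U=Y$. Thus $f$ is a map of compactifications, and the two orders coincide. The same argument shows that the equivalence on $\mathbb{LC}(X)$ restricts to the usual equivalence of compactifications, so $\mathbb{C}(X)$ does sit inside $\mathbb{LC}(X)$.

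\emph{Second item.} Suppose $Y$ is not compact but $(Y,\varphi)\geq_{LC}(Z,\psi)$ with $Z$ compact. Then, as above, $U=f^{-1}(Z)$ is compact and closed in $Y$ and contains $\varphi(X)$. Density of $\varphi(X)$ forces $U=Y$, so $Y$ is compact, a contradiction.

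\emph{Third item.} Since $\xi$ is a dense embedding and $\varphi$ is a dense embedding, $\xi\circ\varphi$ embeds $X$ densely in the compact space $K$, so $(K,\xi\circ\varphi)\in\mathbb{C}(X)$. For the inequality, the natural step is to transport the open set and the perfect map of (od) into $K$. Because $Y$ is locally compact, $\xi(Y)$ is open in $K$, so $\xi(U)$ is open in $K$ and contains $\xi\varphi(X)$. The map $f\circ\xi^{-1}\colon\xi(U)\to Z$ is perfect and onto, and
$$(f\circ\xi^{-1})\circ(\xi\circ\varphi)=f\circ\varphi=\psi,$$
which is (od) for $(K,\xi\circ\varphi)$ and $(Z,\psi)$.

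The main obstacle is the following. By the second item, if $K$ is compact and $Z$ is not, then $\xi(U)=(f\circ\xi^{-1})^{-1}(Z)$ must be a proper open subset of $K$. This is consistent, since only $U$, not all of $K$, is required to map perfectly. If $Z$ is compact, we additionally need $\xi(U)=K$. By the second item applied to $Y$, the hypothesis $(Y,\varphi)\geq_{LC}(Z,\psi)$ already forces $Y$ to be compact, so $K=\xi(Y)$ and $U=Y$, and this case causes no difficulty.

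The remaining points to check are that $\xi(Y)$ is open in $K$, which is the standard fact that a locally compact space is open in any of its compactifications, and that composing a perfect map with a homeomorphism leaves it perfect. Finally, the relation $\geq_{LC}$ must be independent of the choice of representatives. For this we compose the perfect maps of (od) and of the equivalence: perfect maps are closed under composition and under restriction to full preimages of open sets, and the image of an open set containing $\varphi(X)$ can be replaced by a suitable preimage.
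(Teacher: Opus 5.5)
Your argument is correct. The paper states this proposition without proof, and your verification is exactly the routine check it leaves implicit. The two key facts are these. First, a perfect preimage of a compact space is compact, so the open set $U$ in (od) is compact, closed and dense, hence equal to $Y$. Second, a locally compact space is open in each of its compactifications, so $\xi(U)$ is open in $K$ and $f\circ(\xi|_U)^{-1}$ witnesses (od).

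Two small points. In the first item, (od) asks for a perfect \emph{onto} map, so you should also note that a map of compactifications is onto: its image is compact and contains the dense set $\psi(X)$. The ``main obstacle'' paragraph is unnecessary, because (od) never requires the open set to be all of $K$. Also, the fact that $\xi(U)\neq K$ when $Z$ is noncompact comes from continuous images of compacta being compact, not from the second item.
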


If $X$ is a $G$-Tychonoff space, then the  partial order $\geq_{GLC}$ on $\mathbb{GLC} (X)$, the set of  equivalence classes of locally compact $G$-extensions of $X$, 
$$(Y, \varphi)\geq_{GLC} (Z, \psi)\ \mbox{if}\ \exists\  \mbox{an open invariant suhset}\ \varphi (X)\subset U\subset Y,$$
$$\quad\quad\ \quad\quad\ \quad\ \mbox{and a perfect onto $G$-map}\ f: U\to Z\ \mbox{such that}\ \psi=f\circ\varphi\eqno{({\rm odG})}$$
is induced by the  the order  $\geq_{LC}$.  


\subsection{Hyperspace}\label{hyper}

Designations are from~\cite{Beer}. $\CL (X)$ is the family of nonempty closed subsets of $X$, $2^X=\CL (X)\cup\{\emptyset\}$. For $E\subset X$ 

$$E^+=\{F\in\CL (X)\ |\ F\subset E\},$$
$$E^-=\{F\in\CL (X)\ |\ F\cap E\ne\emptyset\}.$$

\medskip

{\it Vietoris topology.}
\begin{df}~\cite[Definition 2.2.4]{Beer} 
Let $X$ be a Hausdorff space. The Vietoris topology $\tau_V$ on $\CL (X)$ has as a subbase all sets of the form $W^-$  and all sets 
of the form $W^+$, where $W$ is open in $X$. 
\end{df}

The Vietoris topology is Tychonoff if and only if $X$ is normal. If $X$ is compact, then $(\CL (X), \tau_V)$ is compact.  $(\CL (X), \tau_V)$ is a clopen subset of  $(2^X, \tau_V)$ and $\{\emptyset\}$ is an isolated point of $(2^X, \tau_V)$. 

A base for the Vietoris topology $\tau_V$ on $\CL (X)$ consists of all sets of the form 
$$[V_1, V_2, \ldots, V_n]=\{F\in\CL (X)\ |\ \forall\ i\leq n\ (F\cap V_i\ne\emptyset),\ F\subset\bigcup\limits_{i=1}^n V_i\},\eqno{\rm (V)}$$ 
where $V_1, V_2, \ldots, V_n$ is a finite family of open nonempty subsets of $X$. 

\medskip

{\it Fell topology.}
\begin{df}~\cite[Definition 5.1.1]{Beer} 
Let $X$ be a Hausdorff space. The Fell topology $\tau_F$ on $\CL (X)$ has as a subbase all sets of the form $W^-$, where $W$ is a nonempty open subset of $X$, and all sets of the form $W^+$, where $W$ is a nonempty open subset of $X$ with compact complement. 
\end{df}

The natural extension of the Fell topology on $\CL (X)$ to $2^X$ (a local base for the extended Fell topology at the empty set 
consists of all sets of the form $\{F\in 2^X\ |\ F\cap K=\emptyset\}$, where $K$ is a compact subset of $X$) is without fail a compact (non Hausdorff) space. 
Local compactness of a Hausdorff space $X$ implies that $(\CL (X), \tau_F)$ is a locally compact Hausdorff space and $(2^X, \tau_F)$ (with the extended Fell topology) is a compact Hausdorff space. If $X$ is compact, the Fell and Vietoris topologies on $\CL (X)$ (and $2^X$) coincide. 


\medskip

{\it Action on a hyperspace.}
Every homeomorphism $\varphi\in\Hom (X)$ induces homeomorphism ${\varphi}^H:(2^X, \tau_V)\to (2^X, \tau_V)$ (${\varphi}^H:(2^X, \tau_F)\to (2^X, \tau_F)$), ${\varphi}^H(A)=\varphi (A)$. 

\begin{pro}\label{actionhyper}  {\rm(}see, for example, \cite[Remark 4.4]{DJPLP}{\rm)}
Let $(G, X, \theta)$ be a $G$-space and $X$ be a locally compact space. Then the {\it induced action} $\theta^H: G\times (2^X, \tau_F)\to  (2^X, \tau_F)$, $\theta^H(g, A)=\theta(g, A)=g(A)=\{gx\ |\ x\in A\}$, is continuous and $(G, (2^X, \tau_F), \theta^H)$ is a $G$-space.
\end{pro}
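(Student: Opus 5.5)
To prove Proposition~\ref{actionhyper}, I will check the action axioms directly and then prove joint continuity of $\theta^H$ at a point $(g_0,A_0)\in G\times 2^X$ on the subbase of the extended Fell topology.

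\emph{Algebraic part.} Each $g\in G$ acts as a homeomorphism of $X$, so $gA$ is closed for $A$ closed, and $g\emptyset=\emptyset$. The identities $eA=A$ and $g(hA)=(gh)A$ hold pointwise. Hence $\theta^H$ is an action of $G$ on $2^X$.

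\emph{Sets $W^-$.} Suppose $W\subset X$ is open and $g_0A_0\cap W\ne\emptyset$, say $g_0x\in W$ with $x\in A_0$.
\begin{itemize}
\item Continuity of $\theta$ at $(g_0,x)$ gives a nbd $O$ of $g_0$ and a nbd $U$ of $x$ with $OU\subset W$.
\item Then $O\times U^-$ is a nbd of $(g_0,A_0)$.
\item Its image lies in $W^-$: if $A\cap U\ni y$, then $gy\in W$.
\end{itemize}

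\emph{Sets $W^+$ with $K=X\setminus W$ compact, and the basic nbds of $\emptyset$.} Both conditions read $gA\cap K=\emptyset$ for a compact $K$, i.e.\ $A\cap g^{-1}K=\emptyset$. Assume $A_0\cap g_0^{-1}K=\emptyset$.
\begin{itemize}
\item Put $K_0=g_0^{-1}K$, which is compact.
\item By local compactness choose an open $U\supset K_0$ with $\cl U$ compact and $\cl U\cap A_0=\emptyset$. This is possible since $K_0$ is compact and disjoint from the closed set $A_0$ in a locally compact Hausdorff space.
\item I need a nbd $O$ of $g_0$ with $g^{-1}K\subset U$ for all $g\in O$. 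Equivalently, $O^{-1}K\subset U$; writing $g^{-1}=g_0^{-1}(g_0g^{-1})$, it suffices that $hK\subset g_0U$ for $h$ near $e$.
\item Compactness of $K$ and continuity of the action give this: for each $k\in K$ there are $V_k\in N_G(e)$ and a nbd $O_k$ of $k$ with $V_kO_k\subset g_0U$. Cover $K$ by finitely many $O_k$ and intersect the corresponding $V_k$ to obtain $V$ with $VK\subset g_0U$.
\item Then $O=\{g\mid g_0g^{-1}\in V\}$ is a nbd of $g_0$.
\item The nbd $O\times(X\setminus\cl U)^+$ of $(g_0,A_0)$ maps into the required set: if $A\cap\cl U=\emptyset$, then $A\cap g^{-1}K=\emptyset$. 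Here $(X\setminus\cl U)^+$ is Fell-open because $\cl U$ is compact, and it is understood to contain $\emptyset$ in the extended topology.
\end{itemize}

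Since $\theta^H$ is continuous into every subbasic open set, it is continuous, and $(G,(2^X,\tau_F),\theta^H)$ is a $G$-space.

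The main obstacle is this uniform step, namely controlling $g^{-1}K$ for all $g$ near $g_0$ at once. It rests on the tube-lemma-type compactness argument above together with local compactness of $X$, which supplies the compact buffer $\cl U$; this is the only place local compactness is needed.
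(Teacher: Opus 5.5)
Your proof is correct and complete. The $W^-$ case follows from continuity of $\theta$ at $(g_0,x)$. In the compact-complement case, the finite-subcover step producing $V$ with $VK\subset g_0U$, together with the compact buffer $\cl U$ disjoint from $A_0$, is exactly what is needed; it is the same compactness argument the paper uses in its proof of Lemma~\ref{tauptauco}. The paper gives no proof of this proposition of its own, only the reference \cite[Remark 4.4]{DJPLP}, so your direct verification on the subbase of the extended Fell topology supplies the standard argument behind that citation.
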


If $s: X\to X$ is a self-inverse map, then the map $s^H: (2^X, \tau_V)\to (2^X, \tau_V)$ ($s^H:(2^X, \tau_F)\to (2^X, \tau_F)$), $s^H(A)=s (A)$, is a self-inverse map.

\begin{pro}\label{hyperinversemap} 
Let $(G, X, \theta)$ be a $G$-space with a self-inverse map $s$ and $X$ be a locally compact space. Then $(\theta^H)^*=(\theta^*)^H$ {\rm(}with respect to the self-inverse map $s^H${\rm)}.

Moreover, if $\theta$ commutes with inverse action, then $\theta^H$  commutes with inverse action.
\end{pro}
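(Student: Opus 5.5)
The plan is to verify both claims by direct computation from the definitions, using that $s^H$ acts on a closed set elementwise.

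\emph{First claim.} Lemma~\ref{invaction} gives the inverse action $\theta^*(g,x)=s(\theta(g,s(x)))$ on $X$. For $A\in 2^X$ and $g\in G$ we unwind
$$(\theta^H)^*(g,A)=s^H\bigl(\theta^H(g,s^H(A))\bigr)=s\bigl(g\,s(A)\bigr)=\{s(g s(x))\mid x\in A\}=\{\theta^*(g,x)\mid x\in A\}=(\theta^*)^H(g,A).$$
Each equality only uses that $s^H$ and $\theta^H$ are defined pointwise on the elements of $A$. Hence $(\theta^H)^*=(\theta^*)^H$ as maps $G\times 2^X\to 2^X$; the case $A=\emptyset$ is trivial.

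Some well-definedness has to be checked along the way. Since $s$ is a self-inverse map, it is a homeomorphism, being continuous with continuous inverse $s$ itself. So $s(A)$ is closed for closed $A$, and $s^H$ is a self-inverse map of $(2^X,\tau_F)$. We should also note that $\theta^*$ is a continuous action, so that $(\theta^*)^H$ is a continuous action by Proposition~\ref{actionhyper}. Its continuity as a map also follows from the identity just proved, since it is a composition of continuous maps.

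\emph{Second claim.} Assume $\theta(g,\theta^*(h,x))=\theta^*(h,\theta(g,x))$ for all $g,h\in G$ and $x\in X$. For $A\in 2^X$, the first claim lets us compute
$$\theta^H\bigl(g,(\theta^H)^*(h,A)\bigr)=\theta^H\bigl(g,(\theta^*)^H(h,A)\bigr)=\{\theta(g,\theta^*(h,x))\mid x\in A\}=\{\theta^*(h,\theta(g,x))\mid x\in A\}=(\theta^H)^*\bigl(h,\theta^H(g,A)\bigr).$$
This is exactly the commutation condition of Definition~\ref{commactioninv} for $\theta^H$ with respect to $s^H$.

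No real obstacle is expected. The only point needing care is the remark that a self-inverse map $s$ is a homeomorphism, so that $s^H$ preserves closed sets and is continuous in the Fell topology. Continuity holds because $s$ maps open sets to open sets and compact sets to compact sets, so it permutes the subbasic sets $W^-$ and $W^+$.
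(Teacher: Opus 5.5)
Your proof is correct and follows the paper's argument: the paper also obtains both claims from the same pointwise computations, $(\theta^H)^*(g,A)=s(\theta(g,s(A)))=(\theta^*)^H(g,A)$ and $\theta^H(g,(\theta^H)^*(h,A))=\theta(g,\theta^*(h,A))=\theta^*(h,\theta(g,A))=(\theta^H)^*(h,\theta^H(g,A))$. You add some well-definedness details that the paper leaves implicit: $s$ is a homeomorphism, so $s^H$ is a continuous self-inverse map of $(2^X,\tau_F)$, and $\theta^*$ is a continuous action. These remarks are correct.
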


\begin{proof} Follows from equalities
$$(\theta^H)^*(g, A)=s^H(\theta^H(g, s^H(A)))=s(\theta(g, s (A)))=\theta^*(g, A)=(\theta^*)^H (g, A),$$
$$\theta^H (g, (\theta^H)^*(h, A))=\theta(g, \theta^*(h, A))=\theta^*(h, \theta (g, A))=(\theta^H)^*(h, \theta^H (g, A)),\ g, h\in G,\ A\in 2^X.$$ 
\end{proof}


\medskip

{\it Maps of hyperspaces.}

\begin{pro}\label{maphyperF} Let $Y$, $X$ be locally compact spaces, and a map $f: Y\to X$ be onto and perfect. 

{\rm (A)} The map $F: (2^Y, \tau_F)\to (2^X, \tau_F)$,  $F (A)=f (A)$,  $A\in 2^Y$, is onto and perfect. 

\medskip

{\rm (B)} If $(G, Y, \theta_X)$, $(G, X, \theta_Y)$ are $G$-spaces and $f: Y\to X$ is a $G$-map, then $F: 2^Y\to 2^X$ is a $G$-map of $G$-spaces $(G, 2^Y, \theta_Y^H)$ and $(G, 2^X, \theta_X^H)$. 

\medskip

{\rm (C)} If $Y$, $X$ are spaces with self-inverse maps $s_X$ and $s_Y$ respectively, and $f: Y\to X$ commutes with self-inverse maps $s_Y$ and $s_X$, then $F: 2^Y\to 2^X$ commutes with self-inverse maps $s^H_Y$ and $s^H_X$. 

\medskip

{\rm (D)}  If $(G, Y, \theta_X)$, $(G, X, \theta_Y)$ are $G$-spaces with self-inverse maps $s_X$ and $s_Y$ respectively, and a $G$-map $f: Y\to X$ commutes with self-inverse maps $s_Y$ and $s_X$, then $F: 2^Y\to 2^X$ is a $G$-map of $G$-spaces $(G, 2^Y, (\theta^*_Y)^H)$ and $(G, 2^X, (\theta^*_X)^H)$. 

If  $\theta_Y$ commutes with the inverse action, then $\theta_X$, $\theta^H_Y$ and $\theta^H_X$ commute with the correspondent inverse actions.
\end{pro}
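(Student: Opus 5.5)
The plan is to verify (A) directly from the subbase of the Fell topology and then obtain (B)–(D) from pointwise identities. Throughout, $F(\emptyset)=\emptyset$. Since $f$ is perfect, it is closed, so $f(A)$ is closed for every closed $A$ and $F$ is well defined.

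\textbf{Continuity of $F$.} I check preimages of subbasic sets.
\begin{itemize}
\item For open $W\subset X$, $F^{-1}(W^-)=(f^{-1}(W))^-$, which is open in $\tau_F$ on $2^Y$.
\item For open $W$ with compact complement, $F^{-1}(W^+)=(f^{-1}(W))^+$. The complement of $f^{-1}(W)$ is $f^{-1}(X\setminus W)$, which is compact because $f$ is perfect, so this set is open in $\tau_F$.
\item At $\emptyset$, I use the extended Fell base $\{A : A\cap K=\emptyset\}$ with $K$ compact. Its preimage is $\{B : B\cap f^{-1}(K)=\emptyset\}$, again open since $f^{-1}(K)$ is compact.
\end{itemize}

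\textbf{Surjectivity.} For $C\in 2^X$, the set $f^{-1}(C)$ is closed and $f(f^{-1}(C))=C$ because $f$ is onto.

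\textbf{Perfectness.} Both $(2^Y,\tau_F)$ and $(2^X,\tau_F)$ are compact Hausdorff, since $X$ and $Y$ are locally compact (stated in the paper). A continuous map from a compact space to a Hausdorff space is closed with compact fibres, so $F$ is perfect. This settles (A), and the compactness of the hyperspaces is what removes any real obstacle here.

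\textbf{Part (B).} For every $g\in G$ and $A\in 2^Y$,
\[
F(\theta_Y^H(g,A))=f(gA)=g f(A)=\theta_X^H(g,F(A)).
\]
The actions $\theta^H$ are continuous by Proposition~\ref{actionhyper}.

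\textbf{Part (C).} For every $A\in 2^Y$,
\[
F(s_Y^H(A))=f(s_Y(A))=s_X(f(A))=s_X^H(F(A)).
\]

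\textbf{Part (D), first statement.} By Lemma~\ref{ginversg}, $f$ is a $G$-map for the inverse actions $\theta_Y^*$ and $\theta_X^*$. Applying (B) to these actions shows that $F$ is a $G$-map for $(\theta_Y^*)^H$ and $(\theta_X^*)^H$. Alternatively, one can combine (B), (C) and Proposition~\ref{hyperinversemap}, which gives $(\theta^H)^*=(\theta^*)^H$.

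\textbf{Part (D), second statement.} Suppose $\theta_Y$ commutes with its inverse action.
\begin{itemize}
\item Since $f$ is onto, Lemma~\ref{ginversg} shows that $\theta_X$ commutes with its inverse action.
\item Proposition~\ref{hyperinversemap} then shows that $\theta_Y^H$ and $\theta_X^H$ commute with their inverse actions.
\item As a cross-check for $\theta_X^H$: $F$ is onto by (A), so Lemma~\ref{ginversg} applied to $F$ gives the same conclusion.
\end{itemize}

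There is no serious obstacle in this argument. The only care needed is handling $\emptyset$ in the extended Fell topology, and using perfectness of $f$ to keep the complements of the pulled-back open sets compact.
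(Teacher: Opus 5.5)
Your proof is correct and follows essentially the same route as the paper. Both arguments get continuity of $F$ from preimages of Fell subbasic sets, using that $f$ perfect makes $f^{-1}$ of compact sets compact, and get perfectness from the compactness of the hyperspaces; (B) and (C) are pointwise identities, and (D) comes from Lemma~\ref{ginversg} and Proposition~\ref{hyperinversemap}. Your explicit treatment of the basic neighbourhoods of $\emptyset$ and of surjectivity fills in details the paper leaves implicit.
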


\begin{proof} 
(A) $F: (2^Y, \tau_F)\to (2^X, \tau_F)$ is correctly defined and is onto. The contiuous map of compacta is perfect. It remains to verify continuity of $F$. 

To verify continuity of $F$ at a point $A\in 2^Y$ it is enough to show that the preimages of nbds of $F (A)=f (A)$ from the subbase are open. 

Let $U^-$, $U$ is open in $X$, be a nbd of $F (A)\ne\emptyset$ (from the subbase of the Fell topology on $2^X$). For $V=f^{-1} U$ one has 
$$(B\in V^-)\Longrightarrow (F (B)\in U^-),\ B\in 2^Y,\ \mbox{and}\ A\in V^-.$$
Hence, $F (V^-)=U^-$.

Take a nbd $(X\setminus K)^+$, $K$ is a compact subset of $X$, of $F (A)$ (from the subbase of the Fell topology on $2^X$). Since $K$ is a compact subset of $X$ and $f$ is a perfect map, $T=f^{-1} (K)$ is a compact subset of $Y$ and $A\cap T=\emptyset$. Therefore,  $A\in (Y\setminus T)^+$ and 
$$(B\in (Y\setminus T)^+)\Longrightarrow (F (B)\in (X\setminus K)^+),\ B\in 2^Y.$$
Hence, $F ((Y\setminus T)^+)=(X\setminus K)^+$.

\medskip

(B) $F (\theta^H_Y (g, A))=f(\theta_Y (g, A))=\theta_X (g, f(A))=\theta^H_X (g, F (A)),\ g\in G,\ A\in 2^Y$.   

\medskip

(C) Let $f\circ s_Y=s_X\circ f$. Then 
$$F (s^H_Y (A))=f (s_Y(A))=s_X(f(A))=s^H_X(F(A)),\ A\in 2^Y.$$

(D) follows from Proposition~\ref{hyperinversemap} and Lemma~\ref{ginversg}.
\end{proof}

\begin{rem}
{\rm If $X$, $Y$ are compacta and $f: Y\to X$ is an onto map, then the map 
$F: (2^Y, \tau_V)\to (2^X, \tau_V)$,  $F(A)=f(A)$,  $A\in 2^Y$, is onto and perfect~\cite[Ch.\,3, \S\ 3.12, Problem 3.12.27 (e)]{Engelking}.}
\end{rem}

\begin{pro}\label{subsethyperF} Let $Y$ be a locally compact space, $X\subset Y$ be an open subset {\rm(}and, hence, locally compact{\rm)}. 

{\rm(A)} The map $H_{|}: (2^Y, \tau_F)\to (2^X, \tau_F)$,  $H_{|}(A)=A\cap X$,  $A\in 2^Y$, is onto and perfect {\rm(}$H_{|}^{-1}(\emptyset)=\{A\in 2^Y\ |\ A\subset (Y\setminus X)${\rm)}. 

\medskip

{\rm (B)} If, additionally, $(G, Y, \theta_Y)$ is a $G$-space and $X$ is an invarinat subset {\rm(}$\theta_X=\theta_Y|_{G\times X}${\rm)}, then $H_{|}$ is a $G$-map of $G$-spaces $(G, (2^Y, \tau_F), \theta_Y^H)$ and $(G, (2^Y, \tau_F), \theta_X^H)$. 

\medskip

{\rm (C)} If, additionally, $Y$ is a space with self-inverse map $s_Y$ and $s_Y (X)=X$ {\rm(}$s_X=s_Y|_{X}${\rm)}, then $H_{|}$ commutes with self-inverse maps $S_Y$ and $S_X$. 

\medskip

{\rm (D)} If, additionally, $Y$ is a space with self-inverse map $s_Y$ and $s_Y (X)=X$, $(G, Y, \theta_Y)$ is a $G$-space and $X$ is an invarinat subset, then 
$H_{|}$ is a $G$-map of $G$-spaces $(G, (2^Y, \tau_F), (\theta_Y^H)^*)$ and $(G, (2^Y, \tau_F), (\theta_X^H)^*)$. 

If $\theta_Y$ commutes with the inverse action, then $\theta_X$, $\theta^H_Y$ and $\theta^H_X$ commute with the correspondent inverse actions.
\end{pro}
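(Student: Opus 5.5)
The plan follows the pattern of the proof of Proposition~\ref{maphyperF}, treating items (A)--(D) in order.

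\textbf{(A).} First, $H_{|}$ is well defined: for $A\in 2^Y$, the set $A\cap X$ is closed in $X$ (possibly empty). Surjectivity comes from taking closures. For $B\in 2^X$, put $A=\cl_Y B$. Since $B$ is closed in $X$, we have $\cl_Y B\cap X=B$. Both $(2^Y,\tau_F)$ and $(2^X,\tau_F)$ are compact Hausdorff because $Y$ and $X$ are locally compact. Hence a continuous onto map between them is automatically perfect, and the main work is continuity.

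\textbf{Continuity of $H_{|}$.} I check that preimages of subbasic sets are open.
\begin{itemize}
\item For $U$ open in $X$: $U$ is also open in $Y$, because $X$ is open. Then $A\cap X\cap U\neq\emptyset$ iff $A\cap U\neq\emptyset$, so $H_{|}^{-1}(U^-)=U^-$ computed in $2^Y$, which is open.
\item For $K\subset X$ compact: $K$ is compact in $Y$. Then $(A\cap X)\cap K=\emptyset$ iff $A\cap K=\emptyset$, so $H_{|}^{-1}((X\setminus K)^+)=(Y\setminus K)^+$, a subbasic Fell-open set in $2^Y$.
\item The local base at $\emptyset$ in the extended Fell topology is handled by the same identity $\{A\mid A\cap K=\emptyset\}$.
\end{itemize}
The description $H_{|}^{-1}(\emptyset)=\{A\mid A\subset Y\setminus X\}$ is immediate.

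\textbf{(B) and (C).} These are direct computations:
\[
H_{|}(gA)=gA\cap X=g(A\cap X),
\]
using $gX=X$ and the fact that $g$ is a bijection. Similarly,
\[
H_{|}(s_Y(A))=s_Y(A)\cap X=s_Y(A\cap X)=s_X(H_{|}(A)),
\]
using $s_Y(X)=X$ and bijectivity of $s_Y$.

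\textbf{(D).} The first part follows from (B) and (C) together with Lemma~\ref{ginversg} and Proposition~\ref{hyperinversemap}, which gives $(\theta^H)^*=(\theta^*)^H$. For the commuting statements:
\begin{itemize}
\item $\theta_X$ commutes with its inverse action because it is a restriction of $\theta_Y$ to an invariant, $s$-invariant subset, so the identity is inherited pointwise. Alternatively, apply Lemma~\ref{ginversg} to the inclusion.
\item $\theta^H_Y$ and $\theta^H_X$ commute with their inverse actions by Proposition~\ref{hyperinversemap}.
\end{itemize}

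\textbf{Main obstacle.} The only delicate point is in (A): one must note that $X$ being open gives the equality $\cl_Y B\cap X=B$, and that compact subsets of $X$ are compact in $Y$, so the Fell subbase pulls back to Fell subbase. Everything else is formal.
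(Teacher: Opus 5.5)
Your proposal is correct and follows essentially the same route as the paper. Continuity of $H_{|}$ comes from pulling back the Fell subbasic sets $U^-$ and $(X\setminus K)^+$, using that $X$ is open in $Y$ and that compacta of $X$ are compacta of $Y$; perfectness follows from compactness of the hyperspaces; (B) and (C) are direct computations; and (D) uses Proposition~\ref{hyperinversemap} with Lemma~\ref{ginversg}. One small correction to your closing remark: $\cl_Y B\cap X=B$ holds for $B$ closed in any subspace $X$, so openness of $X$ is used only for the $U^-$ sets, not for surjectivity.
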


\begin{proof} (A) $H_{|}$ is correctly defined and is onto. The contiuous map of compacta is perfect. It remains to verify continuity of $H_{|}$. 

To verify continuity of $H_{|}$ at a point $A\in 2^Y$ it is enough to show that the preimages of nbds of $H_{|} (A)$ from the subbase are open. 

If $H_{|} (A)=A\cap X\ne\emptyset$, take a nbd $U^-$, $U$ is open in $X$, of $H_{|} (A)$ (from the subbase of the Fell topology on $2^X$).  Since $X$ is open in $Y$, $U$ is an open subset of $Y$. Therefore, $\tilde U^-=\{B\in\CL Y\ |\ B\cap U\ne\emptyset\}$ is a nbd of $A$ in $2^Y$ and $(B\in\tilde U^-)\Longleftrightarrow (H_{|} (B)=B\cap X\in U^-)$. Hence, $H_{|} (\tilde U^-)=U^-$.

Take a nbd $(X\setminus K)^+$, $K$ is a compact subset of $X$, of $H_{|} (A)$ (from the subbase of the Fell topology on $2^X$). Since $K$ is a compact in $X$, $K$ is compact in $Y$. Therefore,  $(Y\setminus K)^+$ is a nbd of $A$ in $2^Y$ and $(B\in (Y\setminus K)^+)\Longleftrightarrow (H_{|} (B)=B\cap X\in (X\setminus K)^+)$.  Hence, $H_{|} ((Y\setminus K)^+)=(X\setminus K)^+$.

\medskip

(B) $H_{|} (\theta^H_Y (g, A))=H_{|}(\{\theta_Y (g, y)\ |\ y\in A\})=\{\theta_Y (g, y)\ |\ y\in A\cap X\}=\{\theta_X (g, x)\ |\ x\in A\cap X\}=\theta^H_X(g, H_{|}(A)),\ g\in G,\ A\in 2^Y$.   

\medskip

(C) $H_{|} (S_Y (A))=H_{|} (\{s_Y(y)\ |\ y\in A\})=\{s_Y(y)\ |\ y\in A\cap X\}=\{s_X(x)\ |\ x\in A\cap X\}=S_X(\{x\ |\ x\in A\cap X\})=S_X(H_{|}(A)),\ A\in 2^Y$.

\medskip

(D) follows from Proposition~\ref{hyperinversemap} and Lemma~\ref{ginversg}.
\end{proof}

\begin{rem}
{\rm Let $Y$ be a locally compact space and $K$ be a compact subset of $Y$. The equivalence relation $x\sim y\ \Longleftrightarrow$ either $x, y\in K$ or $x=y$ and the quotient (perfect and elementary) map $q: Y\to Y/\sim$ are correctly defined. 

For $X=Y\setminus K=(Y/\sim)\setminus\ q(K)$, by Propositions~\ref{maphyperF} and~\ref{subsethyperF} the maps $Q: (2^Y, \tau_F)\to (2^{Y/\sim}, \tau_F)$,  $H_{|}: (2^Y, \tau_F)\to (2^X, \tau_F)$ and $H'_{|}: (2^{Y/\sim}, \tau_F)\to (2^X, \tau_F)$ are defined and $H_{|}=H'_{|}\circ Q$. Indeed for $A\in 2^Y$ 
$$H_{|} (A)=A\cap X,\ Q (A)=\left\{\begin{array}{cl}
A\cap X, & A\cap K=\emptyset, \\
(A\cap X)\cup\{q(K)\},  & A\cap K\ne\emptyset, \\
\end{array}
\right.\  H'_{|} (Q (A))=A\cap X.$$
}
\end{rem}


\medskip

{\it Uniformity on a hyperspace.} Let $(X, \mathcal U)$ be a uniform space. The family of entourages on $2^X\times 2^X$
$$(\emptyset, \emptyset)\cup\{(A, B)\in\CL (X)\times\CL (X)\ |\ B\subset\st (A, u),\ \&\ A\subset\st (B, u)\},\ u\in\mathcal U,$$
is a base of uniformity $2^{\mathcal U}$ on $2^X$~\cite[Problem 8.5.16]{Engelking} or~\cite[Ch.\ 2,\ \S\ Hyperspace]{Isbell}.


\subsection{Ellis compactifications of a topological group}\label{class}
\begin{df}~\cite[Definition 3.10]{KozlovLeiderman2025} {\rm(}see, also, {\rm\cite[Definition 2.1]{KozlovSorin})}\label{defElliscomp}
Let $G$ be a topological group.  An Ellis compactification $b G$ of $G$ is a $G$-compactification of $G$ such that $(bG, \bullet)$ is a right topological monoid and $\bullet|_{G\times b G}=\tilde\alpha$, where $\tilde\alpha$ is the extension of the action $\alpha$ of $G$ on itself by multiplication on the left to $b G$. 
\end{df}

$\mathbb{E} (G)$ (a nonempty set) is the poset of Ellis compactifications of $G$.

\medskip

Let $G$ be a group of bijections of a set $X$. $X^X$ is a monoid with composition of maps as a binary operation and identity map as a unit. The injective map 
$$\i_X: G\to X^X,\ \i_X(g)(x)=g(x),\ x\in X,$$ 
is defined. On $\i_X(G)$ multiplication is the restriction of composition on $X^X$.  If $X$ is a space, then $X^X$ in Tychonoff product topology is a right topological monoid. If $(G, X, \theta)$ is a $G$-(Tychonoff) space, then $(G, X^X, \theta_{\Delta X})$ is a $G$-(Tychonoff) space with the {\it diagonal action} $\theta_{\Delta X}$. 

If a topological group $(G, \tau)$ is $\tau_p$-representable in a compact space $X$ ($((G, \tau_p), X, \theta)$ is a $G$-Tychonoff space), then 
$$\imath_X: G\to X^X,\ \imath_X (g)(x)=\theta (g, x),\ x\in X,$$
is a topological isomorphism of $G$ onto the subsemigroup of $X^X$. $e_X G=\cl (\imath_X (G))$ is an Ellis compactification of $G$~\cite{KozlovSorin2025} (see~\cite{Ellis} for non-proper compactifications of $G$). 

Let a topological group $G$ be $\tau_p$-representable in compact spaces $X$ and $Y$ ($(G, X, \theta_X)$ and $(G, Y, \theta_Y)$ are correspondent compact $G$-spaces), and $(e_X G, \imath_X)$, $(e_Y G, \imath_Y)$ are correspondent Ellis compactifications of $G$. If $f: X\to Y$ is an onto (perfect) $G$-map, then the $G$-map $F: e_X G\to e_Y G$  of Ellis compactifications of $G$ (induced by $f$) is correctly defined and is a homomorphism of monoids~\cite[Proposition 3.6]{KozlovSorin2025}. 

If a topological group $G$ is $\tau_p$-representable in a space $X$, then by Lemma~\ref{l1} the morphism of posets   
$$\mathfrak{E}_{\theta}:\mathbb{GC}(X)\to \mathbb{E} (G),\ \mathfrak{E}_{\theta}(b X)=e_{b X} G,$$ is defined~\cite[Corollary 3.10]{KozlovSorin2025}.  

\begin{rem}~\cite[Remark 3.9]{KozlovSorin2025}\label{compmapsth}
{\rm A topological group $G$ is $\tau_p$-representable in itself. Therefore,  the map  
$\mathfrak{E}:\mathbb{GC}(G)\to \mathbb{E} (G)$, $\mathfrak{E} (b G)=e_{b G} G$, is defined.  $b G\in\mathbb{E}(G)$ iff $\mathfrak{E}(b G)=b G$~\cite[Theorem 2.22]{KozlovSorin}.

In particular, if $G$ is $\tau_p$-representable in a space $X$, then $\mathfrak{E}\circ \mathfrak{E}_{\theta}=\mathfrak{E}_{\theta}$, since $\mathfrak{E}_{\theta}( bX)\in\mathbb{E} (G)$, $\forall\ b X\in\mathbb {GC}(X)$.}
\end{rem}

\begin{pro}~\cite[Proposition 3.8]{KozlovSorin2025}\label{Reesquot}
Let $f: X\to Y$ be an {\rm(}onto{\rm)} elementary $G$-map {\rm(}$f|_{X\setminus f^{-1}(y)}$, $y\in Y$, is a homeomorphism{\rm)} of compacta. 

If $I=\{h\in e_X G\ |\ h(x)\in f^{-1}(y),\ x\in X\}\ne\emptyset$, then $I$ is a compact ideal of $e_X G$. 

If, additionally, the condition 
$$\forall\ f, h\in e_X G\ \mbox{such that}\ f|_{X\setminus f^{-1}(y)}=h|_{X\setminus f^{-1}(y)}\ne\emptyset\ \Longrightarrow\ f=h\eqno{\rm (EF)}$$
is valid, then the Rees quotient $e_X G/I$ is topologically isomorphic to $e_Y G$.
\end{pro}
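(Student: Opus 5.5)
The plan is to work inside $e_X G\subset X^X$. Fix the point $y\in Y$ whose preimage $Z=f^{-1}(y)$ is the only non-degenerate fibre.

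For the first part, I check three things about $I$: it is closed, it is a left ideal, and it is a right ideal.

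Closedness is immediate. $I$ is the intersection, over $x\in X$, of the sets $\{h\in e_X G\mid h(x)\in Z\}$. Each of these is closed, because $Z$ is closed in $X$ and evaluation at $x$ is continuous in the Tychonoff product topology. Hence $I$ is compact.

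For the ideal property, I read $I$ as the set of maps with image in $Z$. Take $h\in I$ and $p\in e_X G$.
\begin{itemize}
\item Right side: $(h\circ p)(x)=h(p(x))\in Z$ for every $x$, so $h\circ p\in I$ with no work.
\item Left side: I need $p(Z)\subset Z$ for every $p\in e_X G$. For $g\in G$ this holds because $f$ is a $G$-map, so $f(gz)=gf(z)$. Since $f$ is elementary, $Y\setminus\{y\}$ is homeomorphic to $X\setminus Z$ and $G$ acts by homeomorphisms, so $gy=y$ and $g$ preserves $Z$. For a general $p=\lim g_\alpha$ (pointwise), $p(z)=\lim g_\alpha z$ is a limit of points of the closed set $Z$, hence lies in $Z$. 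So $p\circ h\in I$.
\end{itemize}

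For the second part, I use the induced map $F\colon e_X G\to e_Y G$ from \cite[Proposition 3.6]{KozlovSorin2025}. It is a continuous onto homomorphism with $F(p)\circ f=f\circ p$. The plan is to show that $F$ is constant on $I$ and injective off $I$. Then $F$ factors through the Rees quotient $e_X G/I$, which is compact Hausdorff since $I$ is a closed ideal. The factored map is a continuous bijection of compacta, hence a homeomorphism, and it is a semigroup isomorphism.

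\emph{$F$ is constant on $I$.} For $h\in I$ we get $F(h)(f(x))=f(h(x))=y$ for all $x$, so $F(h)$ is the constant map to $y$.

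\emph{No element off $I$ maps to that constant.} If $p\notin I$, then some $p(x)\notin Z$, so $F(p)(f(x))\ne y$.

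\emph{$F$ is injective off $I$.} Suppose $p,q\notin I$ and $F(p)=F(q)$. Then $f\circ p=f\circ q$. Since $f$ is injective outside $Z$, at each point either $p(x)=q(x)\notin Z$ or both $p(x),q(x)\in Z$. In particular $p$ and $q$ agree on the set where $p$ leaves $Z$.

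This last step is the main obstacle. We must upgrade "$p$ and $q$ agree except possibly on the points sent into $Z$" to $p=q$, and this is exactly what (EF) is designed for. First I restrict to $X\setminus Z$ and try to show $p|_{X\setminus Z}=q|_{X\setminus Z}$. Consider a point $x\notin Z$ with $p(x),q(x)\in Z$. I would try to handle it using the invariance of $Z$ under both maps. If a direct argument is not available, I would adjust the use of (EF) to compare the restrictions of $p$ and $q$ to $p^{-1}(X\setminus Z)$. Once the restrictions to $X\setminus Z$ agree and are nonempty, (EF) gives $p=q$.

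Finally, continuity of the quotient map and the factorization are standard for Rees quotients of compact right topological semigroups by closed ideals.
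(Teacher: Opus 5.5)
Your first part and the factorization are fine:
\begin{itemize}
\item $I$ is closed.
\item $I$ is a two-sided ideal, because every $p\in e_XG$ maps $Z$ into itself.
\item $F$ takes the constant value ${\rm const}_y$ on $I$ and nowhere else.
\item A continuous bijective homomorphism from the compact Hausdorff quotient $e_XG/I$ onto $e_YG$ is a topological isomorphism.
\end{itemize}
One repair: state the reason for $gy=y$ correctly. Equivariance gives $g(f^{-1}(y))=f^{-1}(gy)$, and $f^{-1}(y)$ is the only fibre with more than one point. The homeomorphism $Y\setminus\{y\}\cong X\setminus Z$ plays no role.

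The genuine gap is the injectivity of $F$ off $I$, which you flag as the main obstacle and then leave open. From $f\circ p=f\circ q$ you only get $p^{-1}(X\setminus Z)=q^{-1}(X\setminus Z)$ with $p=q$ there. At points $x\in X\setminus Z$ that are sent into $Z$, the values may differ, and no argument using invariance of $Z$ can rule this out.

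In fact, if (EF) is read literally, as equality of the restrictions to all of $X\setminus Z$, the statement is false. Take the discrete ultrahomogeneous chain $X=\mathbb R$, $G=(\aut(\mathbb R),\tau_{\partial})$ and $f\colon b_mX\to\alpha X$. Here $\Gamma=\{\inf,\sup\}$, so by (iv) and (v) in the description of $e_{b_mX}G$ every element is determined by its values on $X$, and literal (EF) holds. Now let $p$ be the map with $p(0)=0$, $p(0^\pm)=0^\pm$, $p\equiv\inf$ below $0^-$ and $p\equiv\sup$ above $0^+$. Let $q$ agree with $p$ except that $q\equiv 1^-$ on $(0^+,1^+]$. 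Both satisfy (i)--(v), both lie outside $I$, they are distinct, and $f\circ p=f\circ q$.

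So you must commit to your second option. Read (EF), as the ``$\ne\emptyset$'' and the parallel graph conditions (EPM) and (ER) indicate, as a statement about the partial maps ${\rm graph}(p)\cap\bigl((X\setminus Z)\times(X\setminus Z)\bigr)$. With that reading the step closes at once:
\begin{itemize}
\item These sets coincide for $p$ and $q$.
\item They are nonempty: $p\notin I$ gives some $x$ with $p(x)\notin Z$, and such an $x$ lies outside $Z$ because $p(Z)\subset Z$.
\item Then (EF) yields $p=q$.
\end{itemize}
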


Let $G$ be $\tau_p$-representable in a locally compact space $X$, $b X\in\mathbb{GC}(X)$. Since the Alexandroff one-point compactification $\alpha X=X\cup\{\infty\}\in\mathbb{GC}(X)$, the $G$-map $f: b X\to\alpha X$ of $G$-compactifications of $X$ is defined and it is elementary. 

\begin{cor}\label{Reesquot1}
If $I=\{h\in e_{b X} G\ |\ h(x)\in f^{-1}(\infty),\ x\in b X\}\ne\emptyset$, then $I$ is a compact ideal of $e_{b X} G$. If, additionally, the condition {\rm(EF)} is valid, then the Rees quotient $e_{b X} G/I$ is topologically isomorphic to $e_{\alpha X} G$.
\end{cor}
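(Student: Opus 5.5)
The plan is to obtain the statement as the special case $X\mapsto bX$, $Y\mapsto\alpha X$, $y=\infty$ of Proposition~\ref{Reesquot}. Two things need checking: that both Ellis compactifications in the statement are defined, and that the map $f\colon bX\to\alpha X$ satisfies the hypotheses of that proposition.

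\emph{Step 1: the Ellis compactifications exist.} Since $G$ is $\tau_p$-representable in $X$, and both $bX$ and $\alpha X$ belong to $\mathbb{GC}(X)$, they are $G$-extensions of $X$. Lemma~\ref{l1} then shows that $G$ is $\tau_p$-representable in the compacta $bX$ and $\alpha X$, with the same topology $\tau$. Hence $e_{bX}G=\cl(\imath_{bX}(G))$ and $e_{\alpha X}G=\cl(\imath_{\alpha X}(G))$ are defined and are Ellis compactifications of $G$. For $\alpha X\in\mathbb{GC}(X)$ I use that $X$ is locally compact, so the action extends continuously by fixing $\infty$; the paper takes this for granted just before the statement.

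\emph{Step 2: $f$ is an elementary $G$-map of compacta.} $f$ is the map of $G$-compactifications $bX\to\alpha X$, so it is a continuous $G$-map of compacta. It is onto because its image is compact and contains the dense set $X$. It is a quotient map because it is a closed map between compacta.

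The point needing care is that $f$ is elementary with exceptional fibre $f^{-1}(\infty)=bX\setminus X$. Because $X$ is locally compact, it is open in every compactification. The standard fact on maps of compactifications (Engelking, Thm.~3.5.7) then gives $f(bX\setminus X)=\alpha X\setminus X=\{\infty\}$, while $f|_X$ is the identity of $X$ under the embeddings. Hence every point of $X\subset\alpha X$ has a one-point preimage, and $f|_{bX\setminus f^{-1}(\infty)}=f|_X$ is a homeomorphism onto $X$. I expect this identification of the fibres to be the only step that needs real argument, since the rest is bookkeeping.

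\emph{Step 3: apply Proposition~\ref{Reesquot}.} With $y=\infty$, the set in that proposition is exactly
\[
I=\{h\in e_{bX}G \mid h(x)\in f^{-1}(\infty),\ x\in bX\}.
\]
So if $I\ne\emptyset$, Proposition~\ref{Reesquot} gives that $I$ is a compact ideal of $e_{bX}G$. Under condition (EF), the same proposition gives that the Rees quotient $e_{bX}G/I$ is topologically isomorphic to $e_{\alpha X}G$. This is precisely the statement of the corollary.
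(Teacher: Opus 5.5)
Your proposal is correct and follows the paper's route: the corollary is Proposition~\ref{Reesquot} applied to the map of $G$-compactifications $f\colon bX\to\alpha X$ with $y=\infty$. Your Steps 1--2 spell out what the paper asserts in the sentence just before the statement, namely that $\alpha X\in\mathbb{GC}(X)$ and that $f$ is an elementary $G$-map with exceptional fibre $f^{-1}(\infty)=bX\setminus X$.
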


\begin{rem}
{\rm The condition $I=\{h\in e_{b X} G\ |\ h(x)\in f^{-1}(\infty),\ x\in b X\}\ne\emptyset$ in Corollary~\ref{Reesquot1} is equivalent to the condition that the constant map ${\rm const}_{\infty}: \alpha X\to\{\infty\}$ is in $e_{\alpha X} G$.

\medskip

 ${\rm const}_{\infty}\in e_{\alpha X} G$ iff $\forall$ nbd $O_{\infty}$ of $\{\infty\}$ in $\alpha X$ and $\forall\ \sigma\in\Sigma_X$ $\exists\ g\in G$ such that $g(\sigma)\subset O_{\infty}$.}
\end{rem}

\medskip


\begin{df}~\cite[Definition 3.10, Definition 3.17]{KozlovLeiderman2025}
Let $G$ be a topological group.  A $G$-compactification $b G$ of $G$ is a sm-compactification of $G$ if $(bG, \bullet)$ is a semitopological monoid and $g\bullet h=gh$, $g, h\in G$.

A sm-compactification $b G$ of $G$ is a sm$^*$-compactification of $G$ if $(bG, \bullet)$ is a semitopological monoid with continuous involution $s$ and $s|_G=*$.
\end{df}

Evidently, every sm-compactification of $G$ is an  Ellis compactifications of $G$. The set of sm-compactifications of $G$ may be empty. 

\begin{lem}\label{hersm}
Let $H$ be a subgroup of a topological group $G$. If $b G$ is a sm-compactification of $G$, then $\cl\, H$ {\rm(}in $b G${\rm)} is a sm-compactification of $H$.
\end{lem}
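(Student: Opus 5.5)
Set $K=\cl\, H$ (closure in $b G$). We check three things in turn: $K$ is a compactification of $H$; $K$ is a sub-monoid of $(bG,\bullet)$, so it inherits separately continuous multiplication; and $K$ is an $H$-compactification of $H$ whose multiplication extends that of $H$.

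First, $K$ is compact as a closed subset of the compact space $bG$. The embedding $G\to bG$ restricts to a topological embedding of $H$ (with the subspace topology from $G$), and the image of $H$ is dense in $K$. So $(K,\ \mathrm{incl})$ is a proper compactification of $H$.

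Next we show $K\bullet K\subset K$, using only separate continuity of $\bullet$ on $bG$.
\begin{itemize}
\item Fix $h\in H$. Left multiplication $\lambda_h\colon x\mapsto h\bullet x$ is continuous on $bG$. Since $\lambda_h(H)=hH=H$, continuity gives $\lambda_h(K)\subset K$. Hence $H\bullet K\subset K$.
\item Fix $x\in K$. Right multiplication $\rho_x\colon y\mapsto y\bullet x$ is continuous. It maps $H$ into $K$ by the previous step, so $\rho_x(K)\subset\cl\,K=K$.
\end{itemize}
Thus $K\bullet K\subset K$. The unit $e\in H\subset K$, so $(K,\bullet|_{K\times K})$ is a monoid. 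Its multiplication is separately continuous, being a restriction of a separately continuous operation, so $K$ is a semitopological monoid. On $H\times H$ we have $g\bullet h=gh$, because this already holds on $G\times G$.

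Finally we check that $K$ is an $H$-compactification. The left action $H\curvearrowright K$, $(h,x)\mapsto h\bullet x$, is the restriction of the extended action $\tilde\alpha\colon G\times bG\to bG$ of $G$ on $bG$. This uses that $\bullet|_{G\times bG}=\tilde\alpha$ for an sm-compactification, which holds because every sm-compactification is an Ellis compactification. So the action $H\times K\to K$ is jointly continuous and extends the action of $H$ on itself by left multiplication.

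It remains to see that $K$ is a $G$-compactification in the uniform sense, i.e.\ that the uniformity $K$ induces on $H$ is an equiuniformity. Since $K$ is a compact $H$-space containing $H$ as a dense invariant subset, its unique uniformity is an equiuniformity for the action. For a compact $H$-space this is the standard fact that a continuous action on a compactum is bounded and saturated. Its restriction to $H$ therefore gives $K$ as the completion, i.e.\ $K\in\mathbb{GC}(H)$.

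This last step, identifying $\cl\,H$ as an $H$-compactification of $H$ rather than merely a compactification with a continuous action, is the only point needing care; the rest is the routine closure argument. Combining the three parts, $\cl\, H$ is an sm-compactification of $H$.
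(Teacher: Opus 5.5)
Your proposal is correct and follows the paper's route. The paper's one-line proof says that $\cl\, H$ is a semigroup by the Ellis-type closure argument, which you spell out as $H\bullet K\subset K$ followed by $K\bullet K\subset K$, and that the separately continuous multiplication restricts to $\cl\, H\times\cl\, H$; your additional check that $\cl\, H$ is an $H$-compactification (via the restriction of $\tilde\alpha$ and the equiuniformity of the compact $H$-space restricted to the dense invariant set $H$) is a detail the paper leaves implicit, and it is valid.
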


\begin{proof}
$\cl\, H$ is a semigroup (the resonings as in~\cite{Ellis}) and the restrictions of the right (left) multiplications to $\cl\, H\times\cl\, H$ are continuous. 
\end{proof}

\begin{cor}\label{heredsm}
Let $H$ be a subgroup of a topological group $G$. 
\begin{itemize}
\item $(\exists$ a sm-compactification of $G)$ $\Longrightarrow$ $(\exists$ a sm-compactification of $H)$.
\item  $(\not\exists$ a sm-compactification of $H)$ $\Longrightarrow$ $(\not\exists$ a sm-compactification of $G)$.
\item If $H$ is a dense subgroup of $G$, then the {\rm WAP}-compactications {\rm(}not necessary proper{\rm)} of $G$ and $H$ are the same.
\end{itemize}
\end{cor}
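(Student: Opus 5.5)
The three items of Corollary~\ref{heredsm} all come from Lemma~\ref{hersm}, which says that closures of subgroups in sm-compactifications are again sm-compactifications. For the first item, suppose $bG$ is an sm-compactification of $G$. The closure $\cl\, H$ of $H$ in $bG$ is compact, and $H$ is densely embedded in it, since $H\subset G\subset bG$ carries the subspace topology. The action of $H$ on $\cl\, H$ by left multiplication is the restriction of the extended action $\tilde\alpha$ of $G$ on $bG$, so it is continuous, and $\cl\, H$ is $H$-invariant because $\cl\, H$ is a semigroup containing $H$. Thus $\cl\, H$ is an $H$-compactification of $H$. Lemma~\ref{hersm} then shows that it is a semitopological monoid extending the multiplication of $H$, i.e.\ an sm-compactification of $H$. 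The second item is the contrapositive of the first, so nothing more is needed there.

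For the third item, let $H$ be dense in $G$, and let $wG$ be the WAP-compactification of $G$, i.e.\ the maximal semitopological semigroup compactification, not necessarily proper. The plan is to show that $wG$, with the map $H\to G\to wG$, is the maximal semitopological semigroup compactification of $H$.

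First, $H$ is dense in $wG$, because $G$ is dense in $wG$ and $H$ is dense in $G$. By the argument of Lemma~\ref{hersm}, $wG$ is then a semitopological semigroup compactification of $H$, so $wH\geq wG$.

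Conversely, take the canonical map $j: H\to wH$, which is a continuous homomorphism. I would extend $j$ to $G$ using uniform structures, and I expect this to be the main obstacle. The approach is through function algebras: $wH$ corresponds to the algebra ${\rm WAP}(H)$ of weakly almost periodic functions, which are right and left uniformly continuous. These functions therefore extend uniquely, and continuously, from the dense subgroup $H$ to $G$, because $(G,R)$ is contained in the completion of $(H, R|_H)$. The key claim is that the extension of a WAP function is still WAP. This follows from Grothendieck's double-limit criterion: the extended function $\tilde f$ satisfies $\tilde f(gh)=\lim f(g_ih_j)$ along nets from $H$, and the criterion passes to the dense subset. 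Alternatively, the orbit $\{g\tilde f\}$ lies in the pointwise closure of $\{hf\}$, which is weakly compact.

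Given the claim, ${\rm WAP}(G)|_H={\rm WAP}(H)$. Hence $wG\geq wH$, and combined with $wH\geq wG$ the two compactifications are equivalent.
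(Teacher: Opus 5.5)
Your first two items are the paper's argument: take the closure via Lemma~\ref{hersm}, then use contraposition. For the third item your proof is correct but takes a different route. The paper argues with compactifications. It uses $b_rH=b_rG$ for dense $H$ (Roelcke--Dierolf) and the fact that ${\rm WAP}$-compactifications lie below Roelcke compactifications. Hence $wG$ and $wH$ are factors of one and the same compactification.

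What the paper's two-line proof leaves implicit is exactly the extension of $j\colon H\to wH$ ``via uniform structures'' that you set aside, and that step is easy. From $wH\le b_rH=b_rG$ you get a continuous $\bar j\colon G\to wH$ extending $j$. First compare $g\mapsto\bar j(hg)$ with $g\mapsto j(h)\bar j(g)$ for $h\in H$. Then compare $x\mapsto\bar j(xg)$ with $x\mapsto\bar j(x)\bar j(g)$ for $g\in G$. Separate continuity of multiplication in $wH$ and density of $H$ show that $\bar j$ is a homomorphism. So $wH$ is a semitopological semigroup compactification of $G$, and $wH\le wG$.

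Your function-algebra argument uses the same two inputs in dual form. Extending left and right uniformly continuous functions from $H$ to $G$ is the algebra version of $b_rH=b_rG$. The inclusion ${\rm WAP}(H)\subset{\rm LUC}(H)\cap{\rm RUC}(H)$ is the algebra version of ${\rm WAP}\le b_r$. You replace the separate-continuity step by a weak-compactness argument. This buys the explicit identity ${\rm WAP}(G)|_H={\rm WAP}(H)$, at the cost of Banach-space machinery.

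Your key claim is right but stated loosely. The clean version runs as follows. Since $\tilde f$ is uniformly continuous, $g\mapsto g\tilde f$ is norm-continuous. So the $G$-orbit of $\tilde f$ lies in the norm closure of its $H$-orbit. That $H$-orbit is the image of the relatively weakly compact $H$-orbit of $f$ under the linear isometric extension operator. A bounded linear operator is weak-to-weak continuous, so the image is relatively weakly compact, and hence so is its norm closure.

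Your ``pointwise closure of $\{hf\}$'' phrasing would additionally need Grothendieck's theorem to pass from pointwise to weak compactness, and it conflates functions on $H$ with functions on $G$. The double-limit version likewise needs the uniform approximation of $g_n, h_m$ by elements of $H$ to be made explicit.
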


\begin{proof}
The first two statements immediately follow from Lemma~\ref{hersm} and are noted in~\cite[\S\ 6]{Megr2001}.

The last statement follows from the equality $b_r H=b_r G$~\cite[Proposition 3.24]{RD} and the inequality WAP-compactification $\leq$  Roelcke compactification, see, for example,~\cite{GlasnerMegr2008}. 
\end{proof}

\begin{pro}\label{smcompimage}
Let $b G$ be a sm-compactification and $e G$ be an Ellis compactification of a topological group $G$. If $e G\leq b G$, then $e G$ is a sm-compactification of $G$.
\end{pro}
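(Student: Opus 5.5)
Let $\varphi: bG\to eG$ be the map of compactifications, so $\varphi|_G=\id$ (identifying $G$ with its images). The plan has two steps. First, show that $\varphi$ is a homomorphism of monoids from $(bG,\bullet_b)$ to $(eG,\bullet_e)$. Second, transfer separate continuity of multiplication from $bG$ to $eG$, using that $\varphi$ is a closed surjection of compacta.

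For the homomorphism property, both $bG$ and $eG$ are right topological monoids. In each, the left translation by $g\in G$ is the continuous extended action $\tilde\alpha(g,\cdot)$. By Definition~\ref{defElliscomp} this holds for $eG$, and for $bG$ it holds because sm-compactifications are Ellis.

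Since $\varphi$ is a $G$-map, $\varphi(g\bullet_b y)=g\bullet_e\varphi(y)$ for $g\in G$ and $y\in bG$. Now fix $y\in bG$. The maps $x\mapsto\varphi(x\bullet_b y)$ and $x\mapsto\varphi(x)\bullet_e\varphi(y)$ are both continuous on $bG$. The first is continuous because right translations in $bG$ are continuous; the second because right translations in $eG$ are continuous and $\varphi$ is continuous. They agree on the dense set $G$, so they agree everywhere. Hence $\varphi(x\bullet_b y)=\varphi(x)\bullet_e\varphi(y)$ for all $x,y$. This is essentially the argument behind \cite[Proposition 3.26]{KozlovLeiderman2025}, which could be cited instead.

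It remains to show that left translations $\lambda_p(q)=p\bullet_e q$ on $eG$ are continuous for every $p\in eG$. Choose $x\in bG$ with $\varphi(x)=p$. By the homomorphism property,
\[
\lambda_p\circ\varphi=\varphi\circ\lambda^b_x .
\]
The right-hand side is continuous, because $bG$ is semitopological. Since $\varphi$ is a continuous surjection of compacta, it is a quotient map. A function $\lambda_p$ whose composition with a quotient map is continuous is itself continuous. The relation above also confirms that $\lambda_p$ is well defined on fibres of $\varphi$, though this already follows from it being defined.

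Thus $(eG,\bullet_e)$ is a semitopological monoid extending multiplication on $G$, i.e.\ an sm-compactification of $G$. The only delicate point is the first step, specifically the density argument that needs continuity in the correct (right-topological) variable. Everything else is formal.
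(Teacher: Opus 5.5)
Your proof is correct and follows essentially the same route as the paper. The paper takes the homomorphism property of the map of compactifications from \cite{KozlovLeiderman2025} (where you re-derive it by the density argument) and then proves continuity of left translations in $eG$ by a direct neighbourhood argument using closedness of that map, which is just the unpacked form of your quotient-map step $\lambda_p\circ\varphi=\varphi\circ\lambda^b_x$.
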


\begin{proof}
Let $f: b G\to e G$ be the map of compactifications  (homomorphism of semigroups~\cite[\S\ 3.3]{KozlovLeiderman2025}). For $x, y\in e G$ let $O$ be a nbd of $xy$. Take $x'\in b G$ such that $f (x')=x$. $\forall\ y'\in f^{-1} (y)$ $\exists$ a nbd $U_{y'}$ of $y'$ such that $x'U_{y'}\subset f^{-1} O$. Therefore, $\exists$ an open nbd $U$ of $f^{-1} (y)$ such that $x'U\subset f^{-1} O$. The map $f$ is closed and $f^{-1} (y)\subset U$, therefore, $\exists$ a nbd $V$ of $y$ such that $f^{-1} V\subset U$. From the following equalities and inclusions 
$$xV\subset x f(U)=f(x')f(U)=f(x' U)\subset f(f^{-1} O)\subset O$$
continuity on the left of multiplication in $e G$ follows. 
\end{proof}

\begin{lem}\label{comptauprep}
Let a topological group $(H, \tau_H)$ be $\tau_p$-representable in $X$, a topological group $(G, \tau_G)$ be $\tau_p$-representable in $Y$, and let $i: X\to Y$ be an embedding, $\varphi: H\to G$ be a monomorphism such that the following diagramm commutes
$$\begin{array}{ccc}
H\times X & \stackrel{\varphi\times i}{\rightarrow}     & G\times Y \\
\theta_X\downarrow   &  & \downarrow \theta_Y \\
X & \stackrel{i}{\hookrightarrow}  & Y, \\
\end{array}\eqno{(e)}$$
where $\theta_X$ and $\theta_Y$ are actions unduced by representations.

If $G\, i(X)=Y$, then $\varphi$ is a topological embedding. 
\end{lem}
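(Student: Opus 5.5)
We have to show that $\varphi: (H,\tau_H)\to (\varphi(H),\tau_G|_{\varphi(H)})$ is a homeomorphism. Both groups are $\tau_p$-representable, so $\tau_H$ is the topology of pointwise convergence on $X$ and $\tau_G$ is the topology of pointwise convergence on $Y$. The plan is to compare these two pointwise-convergence topologies directly through the commutative diagram (e), which says $i(hx)=\varphi(h)\,i(x)$ for $h\in H$, $x\in X$. Since $\varphi$ is a homomorphism of topological groups, it suffices to check continuity of $\varphi$ and of $\varphi^{-1}$ at the unit.

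\emph{Openness onto the image ($\tau_H\le$ induced topology).} Take a subbasic nbd $[x,O]\cap H$ of $e$ in $\tau_H$, with $x\in X$ and $O$ open in $X$, $x\in O$. Since $i$ is an embedding, $i(O)=O'\cap i(X)$ for some open $O'\subset Y$. By (e), for $h\in H$ we have $\varphi(h)i(x)=i(hx)\in i(X)$. Hence
$$\varphi(h)\in[i(x),O']\iff i(hx)\in O'\iff hx\in O.$$
Therefore $\varphi([x,O])=[i(x),O']\cap\varphi(H)$, so $\varphi^{-1}$ is continuous. This direction does not use the hypothesis $G\,i(X)=Y$.

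\emph{Continuity of $\varphi$.} This is the main step, and it is where the condition $G\,i(X)=Y$ is needed. Take a subbasic nbd $[y,W]$ of $e$ in $\tau_G$, with $y\in W$ and $W$ open in $Y$. Write $y=g\,i(x)$ with $g\in G$, $x\in X$. Then $\varphi(h)y=\varphi(h)g\,i(x)$. The difficulty is that $g$ need not lie in $\varphi(H)$, so we cannot simply pull back through $i$.

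To handle this, I will use that $\tau_G$ is a group topology and that the action $G\curvearrowright Y$ is continuous. By continuity of the action at $(g,i(x))$ there are a nbd $V$ of $e$ in $G$ and a nbd $U$ of $i(x)$ in $Y$ with $gV\,U\subset W$. Then I must show that the set of $h\in H$ with $g^{-1}\varphi(h)g\in V$ and $\varphi(h)$ close enough is $\tau_H$-open near $e$. Writing $\varphi(h)g=g\,(g^{-1}\varphi(h)g)$ reduces the question to controlling conjugates, which is circular.

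So I would try a cleaner route instead. Continuity of $\varphi$ at $e$ amounts to: if a net $h_\alpha\to e$ pointwise on $X$, then $\varphi(h_\alpha)\to e$ pointwise on $Y$. The net $\varphi(h_\alpha)$ does converge to $e$ pointwise on $i(X)$, because $\varphi(h_\alpha)i(x)=i(h_\alpha x)\to i(x)$. The task is to extend this to all points $g\,i(x)$. Here I would use that $\tau_G=\tau_p^Y$ is the least admissible group topology (Lemma~\ref{l1}, via a $G$-compactification $Z$ of $Y$), and that the pointwise topology on the invariant set $G\,i(X)=Y$ is generated by $g$-translates of the pointwise topology on $i(X)$: the sets $g[i(x),O']g^{-1}$ with $g\in G$, $x\in X$, $O'$ open in $Y$ form a subbase at $e$ for $\tau_G$, since $[g\,i(x),gO']=g[i(x),O']g^{-1}$.

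The expected obstacle is exactly this step: showing that the conjugated sets $g[i(x),O']g^{-1}\cap\varphi(H)$ pull back to $\tau_H$-nbds of $e$ when $g\notin\varphi(H)$. If the direct argument fails, my fallback is to use the uniform structure. The action on $Y$ is bounded for an equiuniformity $\mathcal U_Y$. Hence, for an entourage, there are a nbd $O$ of $e$ in $G$ and a uniform cover $v$ such that the conjugates by $g$ are controlled uniformly on a $\tau_G$-nbd of $g$. One then approximates $g$ by elements acting near $i(X)$, using density of $G\,i(X)$ together with continuity of the action, and reduces to finitely many points of $i(X)$, where convergence is already known.
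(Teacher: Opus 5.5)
Your first half is correct and is exactly the paper's first step. From $i(O)=O'\cap i(X)$ and (e) you get $\varphi([x,O])=[i(x),O']\cap\varphi(H)$, so $\varphi$ is open onto its image. This does not use $G\,i(X)=Y$.

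The second half, continuity of $\varphi$, has a genuine gap. You rightly note that the conjugation route is circular. What follows (minimality of $\tau_G$, boundedness of an equiuniformity, ``approximating $g$ by elements acting near $i(X)$'') is not an argument. Nothing in the hypotheses ties the action of $\varphi(h)$ on $Y\setminus i(X)$ to its action on $i(X)$. Minimality of $\tau_G$ among admissible topologies on $G$ also says nothing about the subspace topology of $\varphi(H)$.

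In fact no argument can close this gap. With ``monomorphism'' meaning an injective homomorphism, which is how the paper uses it in Proposition~\ref{discrsmcomp}, the statement is false. Here is a counterexample:
\begin{itemize}
\item Take $H=({\rm S}(\mathbb N),\tau_p)$ and $X=\mathbb N$.
\item Let $Y=X\sqcup H$ be discrete and $G=({\rm S}(Y),\tau_p)$, with $i$ the inclusion.
\item Put $\varphi(h)=h\sqcup L_h$, where $L_h(k)=hk$ on the summand $H$.
\end{itemize}
Both groups are $\tau_p$-representable, being permutation groups of discrete spaces. $\varphi$ is an injective homomorphism and (e) commutes. $G\,i(X)=Y$ because ${\rm S}(Y)$ is transitive. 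However, regarding $e_H$ as a point of the summand $H\subset Y$, we get $\varphi^{-1}([e_H,\{e_H\}])=\{e\}$. This set is not open in the non-discrete $\tau_H$, so $\varphi$ is not continuous.

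The paper's own proof breaks exactly where you flagged it. It asserts $g^{-1}([y,O']\cap\varphi(H))g=W$ with $W\subset H$. That only makes sense if conjugation by $g$ carries $\varphi(H)$ into itself (and is $\tau_H$-continuous there), and this is not available for $g\notin\varphi(H)$.

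So these hypotheses only give that $\varphi$ is open onto its image. If $\varphi$ is assumed continuous, your first half is the whole proof and $G\,i(X)=Y$ is superfluous. Otherwise continuity must come from extra structure. In the application in Proposition~\ref{discrsmcomp} it does hold, but for a different reason. There $\varphi(H)$ acts by unitaries and $i(X)$ is an orthonormal basis, so pointwise convergence on $i(X)$ gives convergence on finite linear combinations. Equicontinuity of isometries then extends this to all of $\ell^2(X)$.
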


\begin{proof}
For $x\in X$ and its nbd $O$ in $X$ let $O'$ be a nbd of $x$ in $Y$ such that $O'\cap i(X)=i(O)$. Then $[x, O]=[i(x), i(O)]\cap \varphi (H)\in N_H (e)$.  
The equality $[i(x), O']\cap\varphi (H)=[x, O]$ implies that $\tau_H\leq\tau_G|_{H=\varphi (H)}$. 

The condition $G\, i(X)=Y$ implies that $\forall\ y\in Y$ $\exists\ x\in i(X)$ and $\exists\ g\in G$ such that $g(x)=y$. Take $[y, O']\in N_G (e)$, $O=O'\cap g(i(X))$. For $W=[x=g^{-1}(y), g^{-1}(O)]\in N_H(e)$ 
$$g^{-1}( [y, O']\cap \varphi (H))g=W.$$ Indeed, if $h\in [y, O']\cap \varphi (H)$, then $g^{-1}h g(x)\in W$. If $h\in W$, then $h(g^{-1}(y))\in g^{-1}(O)$, and $g(h(g^{-1}(y)))=(ghg^{-1})(y)\in O$. Therefore, $ghg^{-1}\in [y, O']\cap \varphi (H)$. Hence, $\tau_H=\tau_G|_{H=\varphi (H)}$ and $\varphi$ is a topological embedding. 
\end{proof}

\begin{thm}\label{compRoelcke}
Let a topological group $(H, \tau_H)$ be $\tau_p$-representable in $X$, a topological group $(G, \tau_G)$ be $\tau_p$-representable in a compact space $Y$. If   $i: X\to Y$ is an embedding, $\varphi: H\to G$ is a monomorphism such that the diagramm {\rm(}e{\rm)} commutes and $G\, i(X)=Y$, then 
$e_{b X}H\leq e_Y H$, where $b X=cl_{Y} X$. Additionally,
\begin{itemize}
\item if $e_Y G$ is a sm-compactification of $G$, then $e_{b X}H$ is a sm-compactification of $H$,
\item if $e_Y G$ is a sm$^*$-compactification of $G$, then $e_{Y}H$ is a sm$^*$-compactification of $H$.
\end{itemize}
\end{thm}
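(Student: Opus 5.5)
The plan is to treat $H$, via $\varphi$, as a subgroup of $G$ acting on $Y$, and to realize both compactifications in the statement inside $Y^Y$ and $(bX)^{bX}$. First, Lemma~\ref{comptauprep} shows that $\varphi$ is a topological embedding, since the diagram (e) commutes and $G\,i(X)=Y$. So $\tau_H$ coincides with the topology of pointwise convergence of the restricted action $H\curvearrowright Y$, which is $\tau_G|_{\varphi(H)}$. The compact $G$-space $Y$ is $G$-Tychonoff, and so it is $H$-Tychonoff. Hence $H$ is $\tau_p$-representable in $Y$, and I read $e_YH$ as $\cl\,\imath_Y(\varphi(H))$ in $Y^Y$. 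This is exactly the closure of $H$ in $e_YG$.

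Next I would identify $bX=\cl_Y\, i(X)$. It is an $H$-invariant closed subset of $Y$, because the closure of an invariant set is invariant. The restriction to $i(X)$ of the unique uniformity of $Y$ is a totally bounded $H$-equiuniformity: boundedness and saturation pass to invariant subspaces. So $bX\in\mathbb{GC}(X)$ for the group $H$. By Lemma~\ref{l1}, $H$ is $\tau_p$-representable in $bX$, and $e_{bX}H$ is an Ellis compactification of $H$.

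To get $e_{bX}H\le e_YH$, I would use the restriction map $r: Y^Y\supset e_YH\to (bX)^{bX}$, $r(p)=p|_{bX}$. Because $bX$ is invariant, $\imath_Y(h)$ maps $bX$ into itself for $h\in H$. Since $bX$ is closed, every pointwise limit $p\in e_YH$ also satisfies $p(bX)\subset bX$, so $r$ is well defined. The map $r$ is continuous in the product topologies, being a coordinate projection. It satisfies $r\circ\imath_Y=\imath_{bX}$ on $H$. Its image is compact and contains the dense set $\imath_{bX}(H)$, so it is all of $e_{bX}H$. Thus $r$ is a map of compactifications. This step is the one I expect to need the most care: checking that all the identifications (of $H$ with $\varphi(H)$, and of $X$ with $i(X)$) respect the topologies. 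Lemma~\ref{comptauprep} handles exactly this.

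For the first bullet, suppose $e_YG$ is an sm-compactification of $G$. By Lemma~\ref{hersm}, $e_YH=\cl\,H\subset e_YG$ is an sm-compactification of $H$. Since $e_{bX}H$ is an Ellis compactification with $e_{bX}H\le e_YH$, Proposition~\ref{smcompimage} yields that $e_{bX}H$ is an sm-compactification of $H$. For the second bullet, let $s$ be the continuous involution on $e_YG$ with $s|_G=*$. Then $s(H)=H$, and continuity of $s$ gives $s(\cl\,H)\subset\cl\,s(H)=\cl\,H$, so $s$ restricts to a continuous involution on $e_YH$ that extends $*$ on $H$. Together with Lemma~\ref{hersm}, this shows $e_YH$ is an sm$^*$-compactification of $H$.
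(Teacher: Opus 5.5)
Your argument is the paper's argument written out in more detail. You use Lemma~\ref{comptauprep} to view $H$ inside $G$, identify $e_YH=\cl\,H\subset e_YG$, and take the coordinate restriction $Y^Y\to Y^{bX}$ (landing in $(bX)^{bX}$ because $bX$ is closed and $H$-invariant) as the map of compactifications. Then Lemma~\ref{hersm} with Proposition~\ref{smcompimage} gives the first bullet, and $s(H)=H$ gives the second. Your check that $bX$ is an $H$-compactification of $X$, with the appeal to Lemma~\ref{l1}, correctly fills in what the paper leaves implicit. The restriction, surjectivity and sm/sm$^*$ steps are fine.

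The one load-bearing step, which you (like the paper) hand to Lemma~\ref{comptauprep}, does not hold under the stated hypotheses. That lemma correctly proves only $\tau_H\le\tau_G|_{\varphi(H)}$. Continuity of $\varphi$ does not follow from $G\,i(X)=Y$; the conjugation step in its proof treats $g^{-1}hg$ as an element of $H$. Here is a counterexample.
Write an infinite $A\subset\mathbb N$ as $\{a_{n,0},a_{n,1}\ |\ n\in\mathbb N\}$ and pick $b\ne b'$ in $\mathbb N\setminus A$. Let $H=\bigoplus_n\mathbb Z/2$ act on $X=A\cup\{\infty\}\subset\alpha\mathbb N$ by swapping $a_{n,0},a_{n,1}$ when $v_n=1$. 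Let $\varphi(v)\in{\rm S}(\mathbb N)$ agree with $v$ on $A$, swap $b,b'$ iff $\sum_n v_n$ is odd, and fix everything else. With $Y=\alpha\mathbb N$ and $G={\rm S}(\mathbb N)$ every hypothesis holds, including $G\,i(X)=Y$.
Yet $\varphi^{-1}([b,\{b\}])=\{v\ |\ \sum_n v_n\ \mbox{even}\}$ contains no basic $\tau_H$-neighbourhood $\{v\ |\ v_n=0,\ n\in F\}$ of $0$, since each such set contains the generator $\delta_m$ for any $m\notin F$. So $H$ is not $\tau_p$-representable in $Y$, and $e_YH$ is not a compactification of $(H,\tau_H)$. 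Both your step that $Y$ is $H$-Tychonoff for $\tau_H$ and the inequality $e_{bX}H\le e_YH$ therefore lose their meaning.
If you add the hypothesis that $\varphi$ is continuous (equivalently, that $(H,\tau_H)\curvearrowright Y$ is continuous), the correct half of Lemma~\ref{comptauprep} makes $\varphi$ an embedding. The rest of your proof then goes through verbatim, without even using $G\,i(X)=Y$.
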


\begin{proof}
By Lemma~\ref{comptauprep} $H$ is  $\tau_p$-representable in $Y$ and $e_Y H$ is the closure of $H$ in $e_Y G$ (follows from the construction of Ellis compactification). 

$b X$ is an invariant subset of $Y$ ($(H, Y, \theta_Y|_{H\times Y})$ is a $G$-space). Therefore, firstly, $e_{b X}H$ (the closure of $H$ in the product $b X^{b X}$) is identified with the closure of $H$ in $Y^{b X}$ (the product $b X^{b X}$ is a subproduct of  $Y^{b X}$  and a compact set). Secondly, the restriction of the projection ${\rm pr}: Y^Y\to Y^{b X}$ to $H$ is a homeomorphism. Hence, the map of compactifications $ e_Y H$ and $e_{b X}H$ is defined (the composition of the restriction of projection ${\rm pr}$ to $e_Y G$ and identification map). 

If $e_Y G$ is a sm-compactification of $G$, then $e_Y H$ is a sm-compactification of $H$ by Lemma~\ref{hersm} and $e_{b X}H$ is a sm-compactification of $H$ by Proposition~\ref{smcompimage}.

$e_Y H$ is the closure of $H$ in $e_Y G$. Since involution on $e_Y G$ is continuous and $H$ is invariant with respect to involution, $e_Y H$ is a sm$^*$-compactification of $G$. 
\end{proof}


\section{Graph compactifications of a topological group}\label{binrel} 

\subsection{Hyperspace of (closed) binary relations}

A {\it relation on $X$} is a subset of $X\times X$.  If $R, S$ are relations on $X$, then the {\it composition} of $R$ and $S$ (from the right) is the relation $RS=\{(x,y)\ |\ \exists z\ ((x,z)\in S\ \&\ (z,y)\in R)\}$. The symmetry $s_{X\times X}(x, y)=(y, x)$ on $X\times X$ is a self-inverse map and induces an involution on  relations on $X$
$$R\to s_{X\times X}(R)=\{(x, y)\in X\times X\ |\ (y, x)\in R\}.\eqno{({\rm Inv})}$$ 
The set of all relations on $X$ is a monoid (identity is the relation $\{(x, x)\ |\ x\in X\}$) with involution~\cite{CliffPrest}. 

A {\it closed relation} on a topological space $X$ is a closed subset of $X\times X$ and the set of all closed relations (including empty set) can be identified with $2^{X\times X}$. 

\begin{pro}\label{semrel}
$S_X=s^H_{X\times X}$ is a self-inverse map of $(2^{X\times X}, \tau_V)$ or $(2^{X\times X}, \tau_F)$.

If $X$ is a discrete or a compact space, then $(2^{X\times X}, \tau_F)$ {\rm(}$\tau_F=\tau_V$ if $X$ is compact{\rm)} is a compactum and monoid with {\rm(}continuous{\rm)} involution $S_X$.
\end{pro}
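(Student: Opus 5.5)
The plan has three parts: the self-inverse property, compactness, and the monoid-with-involution structure.

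\emph{Self-inverse property.} The symmetry $s_{X\times X}$ is a homeomorphism of $X\times X$ with $s_{X\times X}\circ s_{X\times X}=\id$. Hence $S_X(A)=s_{X\times X}(A)$ sends closed sets to closed sets, and $S_X\circ S_X=\id$. For continuity in $\tau_V$ and in $\tau_F$, note that $S_X$ maps subbasic sets to subbasic sets: $S_X(W^-)=(s_{X\times X}(W))^-$ and $S_X(W^+)=(s_{X\times X}(W))^+$. A homeomorphism preserves openness and compactness of complements, so this works for both topologies. Thus $S_X$ is a continuous involution; this is the remark preceding Proposition~\ref{hyperinversemap}, applied to the space $X\times X$.

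\emph{Compactness.} If $X$ is compact, then $X\times X$ is compact, so $\tau_F=\tau_V$ and $2^{X\times X}$ is a compactum. If $X$ is discrete, then $X\times X$ is discrete and hence locally compact, so $(2^{X\times X},\tau_F)$ with the extended Fell topology is a compact Hausdorff space (subsection \ref{hyper}).

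\emph{Algebraic structure.} I take the ``monoid'' assertion purely algebraically, since composition is generally not jointly continuous. I must check that the composition of closed relations is closed, so that $2^{X\times X}$ is a submonoid of the monoid of all relations.
\begin{itemize}
\item The identity $\Delta_X$ is closed because $X$ is Hausdorff.
\item If $X$ is discrete, every relation is closed, so there is nothing to prove.
\item If $X$ is compact and $R,S$ are closed, then $RS=\pi_{13}\bigl((S\times X)\cap (X\times R)\bigr)$, where we view $S\times X$ and $X\times R$ inside $X^3$ with coordinates arranged so that $(x,z,y)$ satisfies $(x,z)\in S$ and $(z,y)\in R$. This set is compact, and its projection to the first and third coordinates is compact, hence closed.
\end{itemize}
The involution identity $S_X(RS)=S_X(S)S_X(R)$ and $S_X(\Delta_X)=\Delta_X$ are then routine set-theoretic checks from the definitions, so $S_X$ is an involution of the monoid.

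The main obstacle is the compact case of closedness of composition, because it genuinely uses compactness (properness of the projection). I will handle it with the projection argument just given, and note that this is exactly why only the discrete and compact cases are stated.
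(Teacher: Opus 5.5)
Your proposal is correct and follows the paper's proof. The self-inverse property comes from $s_{X\times X}$ being a homeomorphism (via \S~\ref{hyper}), and the discrete case is trivial because every relation is closed. The one difference is the compact case: the paper simply cites Uspenskij for the closedness of $RS$, while you supply the standard argument that $RS$ is the projection onto the first and third coordinates of the closed, hence compact, set $\{(x,z,y)\in X^3 \mid (x,z)\in S,\ (z,y)\in R\}$.
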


\begin{proof}
Since $S_X$ is induced by the symmetry $s_{X\times X}$ on $X\times X$ which is a homeomorphism, the first statement follows from  \S~\ref{hyper}.

If $X$ is discrete, then $2^{X\times X}$ is the set of all subsets of $X\times X$ and composition is correctly defined (see~\cite{CliffPrest} or~\cite[Ch.\, 8, \S\ 5]{Birkhoff},  where composition is taken from the left). 

If $X$ is compact and $R, S\in 2^{X\times X}$, then  $RS\in 2^{X\times X}$ (see~\cite{usp2001}). 
\end{proof}

\begin{rem}
{\rm (a) In general, if $X$ is a locally compact space, then the composition of closed relations may not be defined. Indeed, take $X=(0, 1)$ and closed relations $S=\{(x, -\frac12+2x)\subset X^2\ |\ \frac14< x<\frac34\}$, $R=\{(x, \frac12)\subset X^2\ |\ x\in X\}$ on $(0, 1)$. Then $RS=\{(x, \frac12)\subset X^2\ |\ \frac14< x<\frac34\}$ is not a closed relation on $(0, 1)$. 

\medskip

(b) In general, if $X$ is compactum the composition $(R, S)\to RS$ in $(2^{X\times X}, \tau_V)$ is not continuous on the left (and, hence, on the right). Indeed, take $X=[0, 1]$, and closed relations $R=([0, \frac12]\times\{0\})\cup ([\frac12, 1]\times\{1\})\subset X^2$, $S=\{0\}\times [0, \frac12]\subset X^2$ on $[0, 1]$. Then $RS=\{(0, 0), (0, 1)\}\subset X^2$. 

Fix $R$ and take a nbd  $W=[V_1, V_2]$ of $RS$, where $V_1=X\times [0, \frac12)$, $V_2=X\times (\frac12, 1]$. Any nbd of $S$ contains the set $S'=\{0\}\times [0, a]$ for some $a<\frac12$ and $RS'=\{(0, 0)\}$. Evidently, $RS'\not\in W$ and composition is not continuous on the left. 

\medskip

The same is true for $(2^{X\times X}, \tau_F)$ when $X$ is a discrete space. Ineed, let $X$ be discrete, $x_0\in X$, $R=\{(x, y)\ |\ x, y\ne x_0\}$, $S=\{(x_0, x_0)\}$ be closed relations on $X$. $RS=\{\emptyset\}$. 

Fix $R$ and take a nbd $U=\{F\in 2^{X\times X}\ |\ F\cap K=\emptyset\}$ of $\emptyset$ in $2^{X\times X}$, where $K=\{(x_1, y_1)\}$ is a one point set, $y_1\ne x_0$. An arbitrary nbd $W$ of $S$ may be chosen of the form $[V]\cap\{F\in 2^{X\times X}\ |\ F\cap T=\emptyset\}$, where $V=S$, $T\subset X\times X$ is a finite set. There exists $y_0\ne x_0\in X$ such that $(x_1, y_0)\not\in T$. Therefore, $S'=\{(x_0, x_0), (x_1, y_0)\}\in W$. Since $(y_0, y_1)\in R$, $(x_1, y_1)\in RS'$ and $RS'\not\subset U$, the composition is not continuous on the left.

\medskip

(c) For compacta the semigroup of closed relations with involution is discussed in~\cite{usp2001}.

\medskip

(d) Topologies on $2^{X\times X}$ where $X$ is a discrete space, are discussed in~\cite[\S\ 5.2]{EJMMMP}.}
\end{rem} 


\subsection{Actions on a hyperspace of binary relations}\label{actionhyperspace}
\begin{pro}\label{actionXX}
Let $(G, X, \theta)$ be a $G$-space. The actions 
$$\theta_{\uparrow}: G\times (X\times X)\to X\times X,\ \theta_{\uparrow} (g, (x, y))=(x, \theta(g, y))\ \mbox{and}$$ 
$$\theta_{\rightarrow}: G\times (X\times X)\to X\times X,\ \theta_{\rightarrow} (g, (x, y))=(\theta (g, x), y),\ g\in G,\ (x, y)\in X\times X,$$
on $X\times X$ are correctly defined and $(G, X\times X, \theta_{\uparrow})$, $(G, X\times X, \theta_{\rightarrow})$ are $G$-spaces. 

$\theta_{\rightarrow}=\theta_{\uparrow}^*$ and $\theta_{\uparrow}$ commutes with inverse action. 
\end{pro}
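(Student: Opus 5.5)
The plan is to verify the three assertions of Proposition~\ref{actionXX} directly from the definitions, with the symmetry $s_{X\times X}(x,y)=(y,x)$ as the self-inverse map on $X\times X$.

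\emph{Both maps are actions.} Write $\theta_{\uparrow}=\id_X\times\theta$ up to the obvious reordering of coordinates, i.e. $\theta_{\uparrow}(g,(x,y))=(\pr_1(x,y),\theta(g,\pr_2(x,y)))$. Each coordinate is a composition of projections with the continuous map $\theta$, so $\theta_{\uparrow}$ is continuous on $G\times X\times X$. The action axioms hold coordinatewise:
$$\theta_{\uparrow}(e,(x,y))=(x,y),\qquad \theta_{\uparrow}(g,\theta_{\uparrow}(h,(x,y)))=(x,ghy)=\theta_{\uparrow}(gh,(x,y)).$$
The same argument with the roles of the coordinates exchanged works for $\theta_{\rightarrow}$. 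Effectiveness also passes over: if $(x,gy)=(x,y)$ for all $x,y$, then $g$ fixes every point of $X$, so $g=e$.

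\emph{The inverse action.} By Lemma~\ref{invaction} the inverse action with respect to $s_{X\times X}$ is $\theta_{\uparrow}^*(g,(x,y))=s_{X\times X}(\theta_{\uparrow}(g,(y,x)))$. Computing, $\theta_{\uparrow}(g,(y,x))=(y,gx)$, and applying the symmetry gives $(gx,y)=\theta_{\rightarrow}(g,(x,y))$. Hence $\theta_{\uparrow}^*=\theta_{\rightarrow}$.

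\emph{Commutation.} For $g,h\in G$ and $(x,y)\in X\times X$,
$$\theta_{\uparrow}(g,\theta_{\uparrow}^*(h,(x,y)))=\theta_{\uparrow}(g,(hx,y))=(hx,gy)=\theta_{\rightarrow}(h,(x,gy))=\theta_{\uparrow}^*(h,\theta_{\uparrow}(g,(x,y))).$$
This is exactly the condition of Definition~\ref{commactioninv}.

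There is no real obstacle here. The only point needing care is that the inverse action is taken with respect to the symmetry $s_{X\times X}$, the self-inverse map that induces the involution (Inv) on relations. The statement is purely formal and does not need $G$-Tychonoffness.
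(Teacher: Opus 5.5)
Your proposal is correct and follows essentially the same route as the paper: continuity and the action axioms are immediate, $\theta_{\uparrow}^*$ is computed through the symmetry $s_{X\times X}$ to give $\theta_{\rightarrow}$, and commutation is a one-line coordinate check. Your remark that effectiveness passes to $X\times X$ is a small addition that the paper leaves implicit.
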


\begin{proof}
It is easy to verify that the actions are correctly defined. Their continuity (in product topology) is evident. 
$$\theta_{\uparrow}^*(g, (x, y))=s_{X\times X}(\theta_{\uparrow}(g, s_{X\times X}((x, y)))=s_{X\times X}(\theta_{\uparrow}(g, (y, x))=s_{X\times X}((y, \theta (g, x)))=$$
$$=(\theta (g, x), y)=\theta_{\rightarrow}(x, y),\ g\in G,\ (x, y)\in X\times X.\ \mbox{Hence},\ \theta_{\rightarrow}=\theta_{\uparrow}^*.$$
$$\theta_{\uparrow}(g, \theta_{\rightarrow}(h, (x, y)))=\theta_{\uparrow}(g, (\theta (h, x),  y))=(\theta (h, x), \theta (g, y))=$$
$$=\theta_{\rightarrow}(h, (x, \theta (g, y)))=\theta_{\rightarrow}(h, \theta_{\uparrow}(g, (x, y))).\ \mbox{Hence},\ \theta_{\uparrow}\ \mbox{commutes with inverse action}.$$ 
\end{proof}

From Propositions~\ref{actionXX} and~\ref{actionhyper}, \ref{hyperinversemap} it follows. 

\begin{cor}\label{actionXXYY}
Let  $(G, X, \theta)$ be a $G$-space and $X$ be locally compact. Then the induced actions 
$$\theta_{\uparrow}^H  (g, R)=\{\theta_{\uparrow}(g, (x, y))=(x, \theta (g, y))\ |\ (x, y)\in R\}\ \mbox{and}\eqno{(\theta_{\uparrow}^H)}$$ 
$$\theta_{\rightarrow}^H (g, R)=\{\theta_{\rightarrow}(g, (x, y))=(\theta (g, x), y)\ |\ (x, y)\in R\},\ R\in 2^{X\times X}, \eqno{(\theta_{\rightarrow}^H)}$$ 
are continuous, $(\theta_{\uparrow}^H)^*=\theta_{\rightarrow}^H$ and $\theta_{\uparrow}^H$ commutes with inverse action.
\end{cor}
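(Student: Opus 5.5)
The statement is a corollary, so I would prove it by combining Proposition~\ref{actionXX} with Propositions~\ref{actionhyper} and~\ref{hyperinversemap}, applied to the $G$-space $X\times X$ instead of $X$.

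\emph{Step 1: apply Proposition~\ref{actionhyper} on $X\times X$.} Since $X$ is locally compact, $X\times X$ is locally compact (and Tychonoff). By Proposition~\ref{actionXX}, $(G, X\times X, \theta_{\uparrow})$ and $(G, X\times X, \theta_{\rightarrow})$ are $G$-spaces. Proposition~\ref{actionhyper} applied to each of them shows that the induced actions $\theta_{\uparrow}^H$ and $\theta_{\rightarrow}^H$ on $(2^{X\times X}, \tau_F)$ are continuous. Their pointwise formulas are exactly the displayed $(\theta_{\uparrow}^H)$ and $(\theta_{\rightarrow}^H)$, because $\theta^H(g,R)=\{\theta(g,p)\ |\ p\in R\}$.

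\emph{Step 2: identify the inverse action.} Take the self-inverse map $s_{X\times X}$ on $X\times X$; it induces the self-inverse map $S_X=s^H_{X\times X}$ on $2^{X\times X}$ (Proposition~\ref{semrel}). The first part of Proposition~\ref{hyperinversemap}, applied with $\theta=\theta_{\uparrow}$ and $s=s_{X\times X}$, gives $(\theta_{\uparrow}^H)^*=(\theta_{\uparrow}^*)^H$. By Proposition~\ref{actionXX}, $\theta_{\uparrow}^*=\theta_{\rightarrow}$, hence $(\theta_{\uparrow}^H)^*=\theta_{\rightarrow}^H$.

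\emph{Step 3: commutation.} Proposition~\ref{actionXX} states that $\theta_{\uparrow}$ commutes with its inverse action. The second part of Proposition~\ref{hyperinversemap} then yields that $\theta_{\uparrow}^H$ commutes with its inverse action.

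\emph{Expected obstacle.} There is essentially none. The only point needing care is that Proposition~\ref{actionhyper} requires local compactness of the underlying space. This is why the argument is applied to $X\times X$, which is locally compact whenever $X$ is.
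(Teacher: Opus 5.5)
Your proposal is correct and is the paper's own argument: the corollary is derived there in one line from Propositions~\ref{actionXX}, \ref{actionhyper} and~\ref{hyperinversemap}. These are applied, as you do, to the locally compact $G$-spaces $(G, X\times X, \theta_{\uparrow})$ and $(G, X\times X, \theta_{\rightarrow})$ with the self-inverse map $s_{X\times X}$.
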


\begin{rem}
{\rm For metrizable compacta $X$ the continuity of the actions $\theta_{\uparrow}^H$, $\theta_{\rightarrow}^H$ and the self-inverse map $S_X$ is proved in~\cite{Kennedy}.}
\end{rem} 


\subsection{Maps of hyperspaces of binary relations}\label{maphyperspace}

\begin{pro}\label{mapshyper} 
 Let $Y$, $X$ be locally compact spaces and the map $f: Y\to X$ be perfect and onto. 
\begin{itemize}
\item[{\rm (A)}]The map 
$$F^H: (2^{Y\times Y}, \tau_F)\to (2^{X\times X}, \tau_F),\  F^H(R)=(F=f\times f) (R),\  R\in 2^{Y\times Y},$$ 
is onto, perfect and commutes with self-maps $S_Y$ and $S_X$.
\item[{\rm (B)}] If $(G, Y, \theta_Y)$, $(G, Y, \theta_X)$ are $G$-spaces and $f: Y\to X$ is a $G$-map, then $F^H$ is a $G$-map of compact $G$-spaces $(G, (2^{Y\times Y}, \tau_F), \theta_{Y\uparrow}^{H})$, $(G, (2^{X\times X}, \tau_F), \theta_{X\uparrow}^{H})$ and $(G, (2^{Y\times Y}, \tau_F), \theta_{Y\rightarrow}^{H})$, $(G, (2^{X\times X}, \tau_F), \theta_{X\rightarrow}^{H})$.
\item[{\rm (C)}] If $Y$, $X$ are compact or discrete spaces, then $F^H: (2^{Y\times Y}, \tau_F)\to (2^{X\times X}, \tau_F)$ is a homomorphism of monoids with involutions $S_Y$ and $S_X$ respectively.
\end{itemize}
\end{pro}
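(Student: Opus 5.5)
Parts (A) and (B) reduce to results of \S\ref{hyper}, and (C) is a direct set-theoretic verification.

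\textbf{Part (A).} Put $F=f\times f: Y\times Y\to X\times X$. The product of perfect onto maps is perfect and onto (Engelking, Theorem 3.7.9), and $Y\times Y$, $X\times X$ are locally compact. Proposition~\ref{maphyperF}(A) therefore applies to $F$, so $F^H(R)=F(R)$ is an onto perfect map $(2^{Y\times Y},\tau_F)\to(2^{X\times X},\tau_F)$. Next, $F\circ s_{Y\times Y}=s_{X\times X}\circ F$, since $(f(y'),f(y))$ is the swap of $(f(y),f(y'))$. Proposition~\ref{maphyperF}(C) then gives $F^H\circ S_Y=S_X\circ F^H$.

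\textbf{Part (B).} It suffices to check that $F$ is a $G$-map of $(G,Y\times Y,\theta_{Y\uparrow})$ and $(G,X\times X,\theta_{X\uparrow})$, and likewise for $\rightarrow$. This holds because
$$F(y,\theta_Y(g,y'))=(f(y),\theta_X(g,f(y')))=\theta_{X\uparrow}(g,F(y,y')),$$
and symmetrically for $\theta_\rightarrow$. Proposition~\ref{maphyperF}(B), applied to the induced actions of Corollary~\ref{actionXXYY}, now gives the claim. For the $\rightarrow$ case one can alternatively combine (A) with Proposition~\ref{maphyperF}(D), using $\theta_\rightarrow=\theta_\uparrow^*$. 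Compactness of the hyperspaces comes from the extended Fell topology on $2^{X\times X}$ for a locally compact $X\times X$.

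\textbf{Part (C).} Involutions are already handled in (A). The units also match: $F(\Delta_Y)=\Delta_X$ because $f$ is onto. The key step is multiplicativity, $F(RS)=F(R)F(S)$.

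The inclusion $F(RS)\subset F(R)F(S)$ is immediate. If $(y,y'')\in RS$ through a middle point $y'$, then $(f(y),f(y''))$ passes through $f(y')$.

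The reverse inclusion is the main obstacle. Take $(x,x'')\in F(R)F(S)$, so there are $(a,b)\in S$ and $(c,d)\in R$ with $f(a)=x$, $f(b)=f(c)=x'$ and $f(d)=x''$. When $b\ne c$ there is no point of $RS$ mapping to $(x,x'')$, so the equality seems to fail for a non-injective $f$. In the discrete case with a genuinely non-injective $f$, the two-point example $R=\{(c,d)\}$, $S=\{(a,b)\}$ with $b\ne c$ and $f(b)=f(c)$ gives $RS=\emptyset$ but $F(R)F(S)\ne\emptyset$.

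So I would first re-examine the hypotheses. If compactness alone forces nothing more, the statement should be read as holding for a bijective $f$, i.e.\ a homeomorphism. In that case multiplicativity is immediate, since $F$ is a bijection carrying compositions to compositions. Failing that, I would prove only that $F^H$ is a monotone sub-multiplicative map, $F(RS)\subset F(R)F(S)$, that preserves the unit and the involution. I would flag that full multiplicativity requires injectivity of $f$ on the relevant middle coordinates.
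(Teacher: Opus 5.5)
Your treatment of (A) and (B) is the paper's own argument. You apply Proposition~\ref{maphyperF} to the perfect onto map $f\times f\colon Y\times Y\to X\times X$, which commutes with the swaps and is equivariant for $\theta_{\uparrow}$ and $\theta_{\rightarrow}$ (Proposition~\ref{actionXX}, Corollary~\ref{actionXXYY}).

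On (C) you are right, and the paper's proof is wrong at exactly the point you isolate. It passes from $\{(f(x),f(y))\mid \exists z\,((x,z)\in S\ \&\ (z,y)\in R)\}$ to $\{(f(x),f(y))\mid \exists f(z)\,((f(x),f(z))\in F(S)\ \&\ (f(z),f(y))\in F(R))\}$ and identifies the latter with $F(R)F(S)$. That step only gives $F(RS)\subset F(R)F(S)$: a pair in $F(R)F(S)$ is witnessed by $(a,b)\in S$ and $(c,d)\in R$ with $f(b)=f(c)$, and nothing forces a common middle point $b=c$.

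Your singleton counterexample is valid, and compactness does not rescue the claim. For any compact or discrete $X$, take $Y=X\times\{0,1\}$ with $f$ the projection, which is perfect and onto. Put $S=\{((r,0),(p,0))\}$ and $R=\{((p,1),(q,0))\}$. Then $RS=\emptyset$, while $F^H(R)F^H(S)=\{(r,q)\}$.

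So (C) holds as stated only when $f$ is injective, i.e.\ a homeomorphism, since a perfect bijection is one. In general you get exactly what you list: $F^H(\Delta_Y)=\Delta_X$, $F^H\circ S_Y=S_X\circ F^H$, monotonicity, and $F^H(RS)\subset F^H(R)F^H(S)$.

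The same identity is reused elsewhere, so those claims need the same repair:
\begin{itemize}
\item In Remark~\ref{erphi}, $\sim_{F^H}$ is called a congruence. With $R'=\{((p,0),(q,0))\}$ one has $R\sim_{F^H}R'$, yet $F^H(RS)=\emptyset\ne\{(r,q)\}=F^H(R'S)$.
\item The remark following Theorem~\ref{mapcompV} relies on the same multiplicativity.
\end{itemize}
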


\begin{proof}
(A) and (B) follow from Proposition~\ref{maphyperF} and Corollary~\ref{actionXXYY}. 

(C) By Proposition~\ref{semrel} $(2^{Y\times Y}, \tau_F)$ and $ (2^{X\times X}, \tau_F)$ are monoids with involutions $S_Y$ and $S_X$ respectively.
For $R, S\in 2^{Y\times Y}$ 
$$F^H (RS)=F (RS)=\{(f(x), f(y))\ |\ \exists z\ ((x,z)\in S\ \mbox{and}\ (z,y)\in R)\}=$$
$$=\{(f(x), f(y))\ |\ \exists f(z)\ ((f(x), f(z))\in F(S)\ \mbox{and}\ (f(z), f(y))\in F(R))\}=F(R)F(S)=F^H(R)F^H(S).$$
Hence, $F^H$ is a homomorphism of monoids with involutions.
\end{proof}

\begin{rem}\label{erphi}
{\rm The map $F^H: (2^{Y\times Y}, \tau_F)\to (2^{X\times X}, \tau_F)$ defines an equivalence relation $\sim_{F^H}$ on $2^{Y\times Y}$ (congruence on a semigroup $2^{Y\times Y}$ if $Y$, $X$ are compact or discrete spaces)
$$R\sim_{F^H} S\ \Longleftrightarrow\ F^H (R)=F^H (S),\ R, S\in 2^{Y\times Y}.$$
If $x\sim_f y\ (\Longleftrightarrow\ f(x)=f(y),\ x, y\in Y$, and $[x]_f$ is the equivalence class of $x\in Y$), then 
$$R\sim_{F^H} S\ \Longleftrightarrow\ R\subset\bigcup\{[x]_f\times [y]_f\ |\ (x, y)\in S\}\ \mbox{and}\ S\subset\bigcup\{[x]_f\times [y]_f\ |\ (x, y)\in R\},$$
$[R]_{F^H}$ is the equivalence class of $R\in 2^{Y\times Y}$.

Since $F^H$ is the map of compacta, $2^{Y\times Y}/\sim_{F^H}$ is homeomorphic to $2^{X\times X}$ (topological isomorphism of semigroups if  $Y$, $X$ are compact or discrete spaces{\rm)}.}
\end{rem}

From Proposition~\ref{subsethyperF} it follows. 

\begin{pro}\label{restrhyper} 
Let $Y$ be a locally compact space and $X$ be an open subset of $Y$. 
\begin{itemize}
\item[{\rm (A)}] The map 
$$H_{\dashv}: (2^{Y\times Y}, \tau_F)\to (2^{X\times X}, \tau_F),\  H_{\dashv} (R)=R\cap (X\times X),\  R\in 2^{Y\times Y},$$ 
is onto, perfect and commutes with self-inverse maps $S_Y$ and $S_X$  {\rm(}$H_{\dashv}^{-1}(\emptyset)=\{A\in 2^{Y\times Y}\ |\ A\subset ((Y\setminus X)\times Y)\cup (Y\times (Y\setminus X))${\rm)}. 
\item[{\rm (B)}] If $(G, Y, \theta_Y)$ is a $G$-space and $X$ is an invariant subset {\rm(}$(G, X, \theta_X=\theta_Y|_{G\times X})$ is a $G$-space{\rm)}, then $H_{\dashv}$ is a $G$-map of compact $G$-spaces $(G, (2^{Y\times Y}, \tau_F), \theta_{Y\uparrow}^{H})$, $(G, (2^{X\times X}, \tau_F), \theta_{X\uparrow}^{H})$ and $(G, (2^{Y\times Y}, \tau_F), \theta_{Y\rightarrow}^{H})$, $(G, (2^{X\times X}, \tau_F), \theta_{X\rightarrow}^{H})$.
\end{itemize}
\end{pro}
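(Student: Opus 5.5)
The statement is the analogue of Proposition~\ref{mapshyper} for restriction to an open subset. I plan to derive it from Proposition~\ref{subsethyperF}, applied to the locally compact space $Y\times Y$ and its open subset $X\times X$, together with Corollary~\ref{actionXXYY}.

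For (A), I first note that $X\times X$ is open in the locally compact space $Y\times Y$. Proposition~\ref{subsethyperF}(A) then shows directly that $H_{\dashv}(R)=R\cap(X\times X)$ is an onto perfect map from $(2^{Y\times Y},\tau_F)$ to $(2^{X\times X},\tau_F)$. It also gives the description of the preimage of the empty set: $H_{\dashv}^{-1}(\emptyset)$ consists of the closed relations contained in $(Y\times Y)\setminus(X\times X)$. This set equals $((Y\setminus X)\times Y)\cup(Y\times(Y\setminus X))$.

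For the commutation with self-inverse maps, I take $s_{Y\times Y}$ to be the symmetry $(x,y)\mapsto(y,x)$. It maps $X\times X$ onto itself, and its restriction is $s_{X\times X}$. Proposition~\ref{subsethyperF}(C) then gives $H_{\dashv}\circ S_Y=S_X\circ H_{\dashv}$. One can also check this directly: $s(R)\cap(X\times X)=s(R\cap(X\times X))$.

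For (B), I use that $X$ is invariant under $\theta_Y$. Then $X\times X$ is invariant under both $\theta_{Y\uparrow}$ and $\theta_{Y\rightarrow}$, because these actions move only one coordinate and keep it inside $X$. The restrictions of these actions to $X\times X$ are $\theta_{X\uparrow}$ and $\theta_{X\rightarrow}$. By Corollary~\ref{actionXXYY}, the induced actions on the Fell hyperspaces are continuous. Since $Y\times Y$ and $X\times X$ are locally compact, these hyperspaces are compact. Proposition~\ref{subsethyperF}(B), applied to the $G$-space $(G,Y\times Y,\theta_{Y\uparrow})$ with invariant open subset $X\times X$, gives equivariance of $H_{\dashv}$ for the $\uparrow$-actions. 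Applying it again to $(G,Y\times Y,\theta_{Y\rightarrow})$ gives equivariance for the $\rightarrow$-actions.

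As a cross-check on the second case, the $\rightarrow$-equivariance can be obtained from part (D) of Proposition~\ref{subsethyperF}, because $\theta_{\rightarrow}=\theta_{\uparrow}^*$ by Proposition~\ref{actionXX}. The only step needing care is verifying the invariance of $X\times X$ and the compatibility of the symmetry with the restriction. Everything else is a direct citation.
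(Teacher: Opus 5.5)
Your proposal is correct and matches the paper's argument, which simply derives the statement from Proposition~\ref{subsethyperF} applied to the open subset $X\times X$ of the locally compact space $Y\times Y$, with Corollary~\ref{actionXXYY} supplying continuity of the induced actions. Your explicit checks fill in exactly the details the paper leaves implicit: the invariance of $X\times X$ under $\theta_{Y\uparrow}$, $\theta_{Y\rightarrow}$ and the symmetry, and the identity $(Y\times Y)\setminus(X\times X)=((Y\setminus X)\times Y)\cup(Y\times(Y\setminus X))$.
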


\begin{ex}
{\rm If in Proposition~\ref{restrhyper} $X$ is a disctere space, then the map $H_{\dashv}$ may not be a homomorphism of semigroups.

Take the Alexandroff one-point compactification $Y=\alpha X=X\cup\{\infty\}$ of (infinite) $X$, $R=\{\infty\}\times\alpha X$, $S=\alpha X\times\{\infty\}$. Then 
$$H_{\dashv} (RS)=H_{\dashv} (\alpha X\times\alpha X)=X\times X\ne\{\emptyset\}=H_{\dashv} (R) H_{\dashv}(S).$$}
\end{ex}

\begin{rem}\label{commmap}
{\rm If $Y, Z$ are locally compact spaces, $X$ is an open subset of $Y$ and $Z$ and $f: Z\to Y$ is a perfect and onto map such that $f|_{X}=\id$ and $f^{-1}(f(X))=X$, then $$H^Y_{\dashv}\circ F^H=H^Z_{\dashv},\ \mbox{where}\ H^Y_{\dashv}:  (2^{Y\times Y}, \tau_F)\to (2^{X\times X}, \tau_F), H^Z_{\dashv}:  (2^{Z\times Z}, \tau_F)\to (2^{X\times X}, \tau_F).$$}
\end{rem}


\subsection{Embedding of a group in a hyperspace of binary relations}\label{embedhyperspace} 

Let $G$ be a subgroup of the permutation group ${\rm S}(X)$ of a set $X$. The injective map 
$$\i_X^{\Gamma}: G\to 2^{X\times X},\ \i_X^{\Gamma}(g)=\{(x, g(x))=(x, \theta (g, x))\ |\ x\in X\}\ (\mbox{graph of}\ g),\ g\in G,$$ 
is defined, where $\theta: G\times X\to X$ is the natural action. On $\i_X^{\Gamma}(G)$ multiplication is the restriction of composition on $2^{X\times X}$, involution is the restriction of the self-inverse map $S_X$.  

\begin{pro}\label{homomF}
Let a topological group $G$ be $\tau_g$-representable in a locally compact space $X$. Then the map $\i_X^{\Gamma}: G\to (2^{X\times X}, \tau_F)$ is
\begin{itemize}
\item  a $G$-map of $G$-spaces $(G, G, \alpha)$, $(G, (2^{X\times X}, \tau_F), \theta^H_{\uparrow})$  and $(G, G, \alpha^*)$, $(G, (2^{X\times X}, \tau_F), \theta_{X\rightarrow}^{H})$ and commutes with self-inverse maps $*$, $S_X$,
\item isomorphism of $G$ and $\i_X^{\Gamma}(G)$,
\item  uniformly continuous with respect to the Roelcke uniformity $L\wedge R$ on $G$ and the unique uniformity on compactum $2^{X\times X}$ and, hence, a continuous isomorphism of $\i_Y^{\Gamma}(G)$ and $\i_X^{\Gamma}(G)$. 
\end{itemize}
\end{pro}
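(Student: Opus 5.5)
We verify the three items in turn, putting most of the work into the first, since the third then follows from Corollary~\ref{cor1}.

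\emph{First item.} We compute on graphs. For $g,h\in G$,
$$\theta^H_{\uparrow}(g,\i_X^{\Gamma}(h))=\{(x,g(h(x)))\ |\ x\in X\}=\i_X^{\Gamma}(gh)=\i_X^{\Gamma}(\alpha(g,h)).$$
Substituting $x=g(z)$ gives
$$\theta^H_{\rightarrow}(g,\i_X^{\Gamma}(h))=\{(g(x),h(x))\ |\ x\in X\}=\{(z,hg^{-1}(z))\ |\ z\in X\}=\i_X^{\Gamma}(\alpha^*(g,h)).$$
Finally, $S_X(\i_X^{\Gamma}(g))=\{(g(x),x)\}=\{(z,g^{-1}(z))\}=\i_X^{\Gamma}(*(g))$. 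Together these give the equivariance and the commutation with the self-inverse maps $*$ and $S_X$.

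\emph{Second item.} Injectivity is clear, because a map is determined by its graph. We also check that each graph $\i_X^{\Gamma}(g)$ is closed in $X\times X$; this holds since $g$ is continuous and $X$ is Hausdorff. The relation composition of graphs is the graph of the composition: $\i_X^{\Gamma}(g)\i_X^{\Gamma}(h)=\{(x,y)\ |\ \exists z\ (z=h(x),\ y=g(z))\}=\i_X^{\Gamma}(gh)$. This relation is closed, so the product on $\i_X^{\Gamma}(G)$ is well defined even though composition in $2^{X\times X}$ need not be defined in general. Hence $\i_X^{\Gamma}$ is an algebraic isomorphism onto $\i_X^{\Gamma}(G)$, and it respects involutions.

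\emph{Third item.} We apply Corollary~\ref{cor1}(C) to $f=\i_X^{\Gamma}$, the $G$-space $(G,(2^{X\times X},\tau_F),\theta^H_{\uparrow})$ and the self-inverse map $S_X$. By Corollary~\ref{actionXXYY}, $(\theta^H_{\uparrow})^*=\theta^H_{\rightarrow}$, both induced actions are continuous, and $(2^{X\times X},\tau_F)$ is a compact Hausdorff space because $X\times X$ is locally compact. The unique uniformity on a compact space is an equiuniformity for every continuous action. Indeed, saturation holds since continuous maps of compacta are uniformly continuous, and boundedness follows from the standard compactness argument applied to the continuity of the action map $G\times 2^{X\times X}\to 2^{X\times X}$. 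This uniformity therefore serves for both $\theta^H_{\uparrow}$ and its inverse. The first item supplies the hypothesis of Corollary~\ref{cor1}(A), so Corollary~\ref{cor1}(C) yields uniform continuity of $\i_X^{\Gamma}:(G,L\wedge R)\to 2^{X\times X}$, and in particular continuity. So $\i_X^{\Gamma}$ is a continuous isomorphism of $G$ onto $\i_X^{\Gamma}(G)$.

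We do not use the $\tau_g$-representability hypothesis beyond local compactness and continuity of the action, which is what Corollary~\ref{actionXXYY} requires. The statement's phrase about ``$\i_Y^{\Gamma}(G)$ and $\i_X^{\Gamma}(G)$'' we read as ``$G$ and $\i_X^{\Gamma}(G)$''. The main point needing care is the boundedness of the compact uniformity for $\theta^H_{\uparrow}$. We settle it by the usual argument: for an entourage $U$, use joint continuity at each point of $\{e\}\times 2^{X\times X}$ and a finite subcover to obtain $O\in N_G(e)$ and an entourage $V$ with $OV\succ U$.
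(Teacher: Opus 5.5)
Your proof is correct and follows the paper's argument: you compute directly on graphs to get equivariance, commutation with $*$ and $S_X$, and multiplicativity, and then invoke Corollary~\ref{cor1} for uniform continuity with respect to $L\wedge R$. The differences are minor. You verify the $\alpha^*$/$\theta^H_{\rightarrow}$ equivariance by direct substitution rather than via Lemma~\ref{ginversg}, and you spell out why the unique uniformity on the compactum $2^{X\times X}$ is an equiuniformity for both induced actions, a point the paper leaves implicit.
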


\begin{proof} 
Let us verify that $\i_X^{\Gamma}$ is a $G$-map (not necessary continuous). 
$$\i_X^{\Gamma} (\alpha (h, g))=\i_X^{\Gamma} (hg)=\{(x, (hg)(x))\ |\ x\in X\}=\{(x, h(g(x))\ |\ x\in X\}\stackrel{(\theta^H_{\uparrow})}{=}$$
$$=\{(x, \theta (h, g(x))\ |\ x\in X\}=\theta^H_{\uparrow} (h, \Gamma (g))=\theta^H_{\uparrow} (h, \i_X^{\Gamma} (g))\ g, h\in G.$$

Let us verify that $\i_X^{\Gamma}$ commutes with self-inverse maps $*$ and $S_X$.
$$\i_X^{\Gamma} (g^{-1})=\{(x, g^{-1}(x))\ |\ x\in X\}\stackrel{y=g^{-1}(x)}{=}\{(g(y), y)\ |\ y\in X\}=$$
$$=S_X(\{(y, g(y))\ |\ y\in X\})=S_X(\i_X^{\Gamma} (g)),\ g\in G.$$
The appliction of Lemma~\ref{ginversg}, Proposition~\ref{actionXX} and definition of inverse (left) action in \S~\ref{topgroup} finishes the proof of the first statement. 

Further, 
$$\i_X^{\Gamma} (fg)=\{(x, (fg)(x))\ |\ x\in X\}=\{(x, f(g(x))\ |\ x\in X\}=$$
$$=\{(x, f(x))\ |\ x\in X\}\{(x, g(x))\ |\ x\in X\}=\i_X^{\Gamma} (f)\i_X^{\Gamma} (g),\ f, g\in G,$$
$\i_X^{\Gamma}(e)$ is the unit. Hence, $\i_X^{\Gamma}(G)$ is a group and $\i_X^{\Gamma}$ is an isomorphism of $G$ onto $\i_X^{\Gamma}(G)$.

\medskip

$\i_X^{\Gamma}$ is uniformly continuous with respect to the Roelcke uniformity $L\wedge R$ on $G$ and the unique uniformity on compactum $2^{X\times X}$ by Corollary~\ref{cor1}. Hence, $\i_X^{\Gamma}$ is a continuous isomorphism of groups $G$ and $\i_X^{\Gamma}(G)$.
\end{proof}

\begin{lem}\label{lemcompemb}~\cite{usp2001} 
If a topological group $G$ is $\tau_g$-representable in a compact space $X$, then $\i^{\Gamma}_X$ is a topological isomorphism of $G$ and $\i_X^{\Gamma}(G)$.
\end{lem}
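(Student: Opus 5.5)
By Proposition~\ref{homomF}, $\i^{\Gamma}_X$ is already a continuous isomorphism of $G$ onto $\i_X^{\Gamma}(G)$; here $X$ is compact, so $\tau_F=\tau_V$ on $2^{X\times X}$. It therefore remains to show that the inverse map $\i_X^{\Gamma}(G)\to G$ is continuous. Since both sides are groups and $\i^{\Gamma}_X$ is a homomorphism, it is enough to check continuity of the inverse at the unit. Concretely, for every subbasic nbd $[K,O]$ of $e$ in $\tau_g=\tau_{co}$ (with $K$ compact, $O$ open, $K\subset O$) I will find a Vietoris nbd $\mathcal W$ of the diagonal $\Delta=\i_X^{\Gamma}(e)$ such that $\i_X^{\Gamma}(g)\in\mathcal W$ implies $gK\subset O$. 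Remark~\ref{rem1}(ii) lets me work with the compact-open subbase, so the task reduces to this one construction.

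\textbf{Construction of $\mathcal W$.} Take $W=X\times X\setminus (K\times (X\setminus O))$, which is open in $X\times X$ and contains $\Delta$ because $K\subset O$. Set $\mathcal W=W^+$, a Vietoris nbd of $\Delta$. If $\i_X^{\Gamma}(g)\subset W$, then for every $x\in K$ the pair $(x,g(x))$ does not lie in $K\times(X\setminus O)$, so $g(x)\in O$. Hence $gK\subset O$.

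To be safe with respect to how the subbase of $\tau_g$ is described, I will also treat the dual case in which $X\setminus O$ is the compact set. The same set $W$ works there too, since it is defined by a closed rectangle and compactness of $X$ makes every closed set compact. This shows that the inverse is continuous at $e$, and translation invariance then gives continuity everywhere.

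The step I expect to need some care is the reduction to continuity at $e$. The translation maps on $\i_X^{\Gamma}(G)$ are compositions with graphs of homeomorphisms, namely $R\mapsto \i_X^{\Gamma}(h)R$. These coincide with the restricted actions $\theta^H_{\uparrow}$ and $\theta^H_{\rightarrow}$ from Corollary~\ref{actionXXYY}, so they are homeomorphisms of $2^{X\times X}$. For this reason the homogeneity argument should go through. Alternatively, I can avoid the reduction entirely by running the construction above at an arbitrary $g_0$ using $W=X\times X\setminus(K\times (X\setminus O))$ with $g_0K\subset O$; the same argument applies unchanged, since it never used $g_0=e$ beyond the inclusion $\Delta\subset W$.
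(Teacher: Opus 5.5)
Your proof is correct, and it takes a different, more direct route than the paper.

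\textbf{The paper's route.} The paper works with uniformities. It identifies $\tau_g=\tau_{co}$ with the topology of uniform convergence and uses the Hausdorff uniformity $2^{\mathcal U^2}$ on $\CL(X\times X)$. For a fixed $h\in G$, it uses the uniform continuity of $h$ to choose $\mathrm V\subset\mathrm U$ such that $\mathrm V^2$-closeness of the graphs of $h$ and $g$ forces $(h(x),g(x))\in\mathrm U\circ\mathrm U$ for all $x$. Lemma~\ref{cont} then gives continuity of the inverse at $\imath^{\Gamma}_X(h)$.

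\textbf{Your route.} You observe that with $W=(X\times X)\setminus(K\times(X\setminus O))$ one has exactly
$\imath^{\Gamma}_X([K,O])=\imath^{\Gamma}_X(G)\cap W^+$.
So every subbasic $\tau_{co}$-open set is the trace of an upper Vietoris set, and $\imath^{\Gamma}_X$ is open onto its image.

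\textbf{What each approach buys.}
\begin{itemize}
\item Your argument is shorter. It needs no uniform structure and no $h$-dependent choice of entourage.
\item Your version run at an arbitrary $g_0$ makes the reduction to the unit unnecessary. That reduction needed the translation argument because $\imath^{\Gamma}_X(G)$ is not yet known to be a topological group. Your patch via $\theta^H_{\uparrow}(h,\cdot)$ is valid, but the direct version is cleaner.
\item Your argument shows exactly where compactness enters. $W^+$ is Fell-open only when $K\times(X\setminus O)$ is compact, which typically fails for noncompact locally compact $X$. This is why the locally compact case needs the extra hypotheses of Theorem~\ref{partialcases}.
\item The paper's uniform approach sets up the entourages $2^{\mathrm V^2}$ and the uniform-convergence description of $\tau_{co}$. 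These are reused in Theorem~\ref{suffcond} (condition $(\star)$) and in the proof of $b_{c_mX}G=b_rG$.
\end{itemize}
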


\begin{proof} 
By Proposition~\ref{homomF} the map $\i^{\Gamma}_X$ is a continuous isomorphism of $G$ and $\i_X^{\Gamma}(G)$. 

Let $\mathcal U$ be the unique uniformity on compacta $X$, the Cartesian product $\mathcal U^2$ of the uniformity $\mathcal U$ is the unique uniformity on compacta $X\times X$. The base of $\mathcal U^2$ is formed by the entourages of the diagonal 
$${\rm U}^2=\{((x, y), (x', y'))\ |\ (x, x')\in{\rm U},  (y, y')\in {\rm U}\},\ {\rm U}\in\mathcal U.$$

The compact-open topology on $G$ coincides with the topology induced by the uniformity of uniform  convergence~\cite[Ch.\ 8, \S\ 8.2, Corollary 8.2.7]{Engelking}, which base consists of the entourages of the diagonal 
$$\hat {\rm U}=\{(f, g)\in G\times G\ |\ \forall x\in X\ ((f(x), g(x))\in {\rm U})\},\ {\rm U}\in\mathcal U.$$

The base of the unique uniformity $2^{\mathcal U^2}$ on compacta $(\CL (X\times X), \tau_V)$ (we consider $(\CL (X\times X), \tau_V)$ since $\{\emptyset\}$ is an isolated point in $2^{X\times X}$) is formed by the entourages of the diagonal 
$$\CL^{{\rm U}^2}=\{(F, T)\in \CL (X\times X)\times \CL (X\times X)\ |\ F\subset {\rm B}(T, {\rm U}^2),\  T\subset {\rm B}(F, {\rm U}^2)\},\ {\rm U}\in\mathcal U.$$

An arbitrary $h\in G$ is a uniformly continuous map of $X$. Hence, $\forall\ {\rm U}\in\mathcal U$ $\exists\ {\rm V}\in\mathcal U$, ${\rm V}\subset {\rm U}$, such that if $(x, x')\in {\rm V}$, then $(h(x), h(x'))\in {\rm U}$. If $(\i^{\Gamma}_X (h), \i^{\Gamma}_X (g))\in\CL^{{\rm V}^2}$, then $\forall\ x\in X$ $\exists\ x'\in X$ such that $(x, x')\in {\rm V}$ and $((x', h(x')), (x, g(x))\in {\rm V}^2$ (equivalently $(x, x')\in {\rm V}$, $(h(x'), g(x))\in {\rm V}$ and $(h(x), h(x'))\in {\rm U}$). Therefore, $(h(x), g(x))\in 2{\rm U}$, $x\in X$, the restriction $(\i^{\Gamma}_X)^{-1}|_{\i^{\Gamma}_X (G)}$ is continuous at the point $i^{\Gamma}_X (h)$ by Lemma~\ref{cont} and $\i^{\Gamma}_X$ is a topological isomorphism of $G$ and $\i_X^{\Gamma}(G)$.
\end{proof}

\begin{rem}
{\rm  For a compact metrizable space $X$ and a $\tau_g$-representable in $X$ group $G$, Lemma~\ref{lemcompemb} is, in fact, proved in~\cite{Kennedy}. 

In~\cite{usp2001} the uniform continuity of $\i^{\Gamma}_X$ with respect to the Roelcke uniformity $L\wedge R$ on $G$ and the unique uniformity on compactum $2^{X\times X}$ in Lemma~\ref{lemcompemb} is proved.}
\end{rem} 

\begin{thm}\label{propPreimage} 
Let a topological group $G$ be $\tau_g$-representable in locally compact spaces $X$ and $Y$ and $\i_X^{\Gamma}$ is a topological isomorphism of $G$ and $\i_X^{\Gamma}(G)$. If 
\begin{itemize}
\item[{\rm(a)}] $f: Y\to X$ is a perfect onto $G$-map of $G$-spaces $(G, Y,  \curvearrowright)$ and $(G, X,  \curvearrowright)$, or  
\item[{\rm(b)}] an embedding $j: X\to Y$ is a $G$-map of $G$-spaces $(G, X,  \curvearrowright)$ and $(G, Y,  \curvearrowright)$,
\end{itemize}
then 
$\i_Y^{\Gamma}$ is a topological isomorphism of $G$ and $\i_Y^{\Gamma}(G)$ and $F^H|_{\i_Y^{\Gamma} (G)}$ {\rm(}respectively $H_{\dashv}|_{\i_Y^{\Gamma} (G)}${\rm)} is a topological isomorphism of $\i_Y^{\Gamma}(G)$ and $\i_X^{\Gamma}(G)$ {\rm(}respectively $\i_Y^{\Gamma}(G)$ and $\i_{j(X)}^{\Gamma}(G)=\i_{X}^{\Gamma}(G)${\rm)}. 
\end{thm}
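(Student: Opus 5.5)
The strategy is a factorization argument. By Proposition~\ref{homomF}, $\i_Y^{\Gamma}: G\to\i_Y^{\Gamma}(G)$ is a continuous isomorphism, so only the continuity of its inverse is at stake. In case (a) I will show that $F^H\circ\i_Y^{\Gamma}=\i_X^{\Gamma}$. In case (b) I will show that $H_{\dashv}\circ\i_Y^{\Gamma}=\i_{j(X)}^{\Gamma}$, after identifying $X$ with $j(X)$. Once such an identity holds, $(\i_Y^{\Gamma})^{-1}=(\i_X^{\Gamma})^{-1}\circ F^H|_{\i_Y^{\Gamma}(G)}$ (respectively with $H_{\dashv}$). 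This is a composition of continuous maps: $(\i_X^{\Gamma})^{-1}$ is continuous by hypothesis, and $F^H$, $H_{\dashv}$ are continuous by Propositions~\ref{mapshyper} and~\ref{restrhyper}. Hence $\i_Y^{\Gamma}$ is a topological isomorphism. Consequently the restriction $F^H|_{\i_Y^{\Gamma}(G)}=\i_X^{\Gamma}\circ(\i_Y^{\Gamma})^{-1}$ (respectively $H_{\dashv}|_{\i_Y^{\Gamma}(G)}$) is a composition of topological isomorphisms of groups, so it is a topological isomorphism onto $\i_X^{\Gamma}(G)$.

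\emph{Case (a).} For $g\in G$, equivariance of $f$ gives
$$F^H(\i_Y^{\Gamma}(g))=\{(f(y),f(gy))\ |\ y\in Y\}=\{(f(y),gf(y))\ |\ y\in Y\}=\i_X^{\Gamma}(g),$$
where the last equality uses surjectivity of $f$. This case is immediate.

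\emph{Case (b).} Here I first need $j(X)$ to be an open invariant subset of $Y$, so that Proposition~\ref{restrhyper} applies to $H_{\dashv}$. Invariance follows because $j$ is a $G$-map. Openness is the main obstacle. An embedded locally compact subspace is always locally closed, but it need not be open, and the statement assumes only an embedding. My first attempt is to treat this as part of the intended setting, as in Theorem~\ref{restrmap}, where open invariant subsets are considered. In addition, $j(X)$ should be dense in $Y$ in the extension situations the paper has in mind, and a dense locally compact subspace of a Hausdorff space is open. So I will either invoke density or add openness as a standing reading of (b).

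If $j(X)$ is not open, I will avoid $H_{\dashv}$ and argue directly. I will replace $Y$ by the closure $\cl_Y j(X)$, which is a closed invariant subset; in it, $j(X)$ is dense, hence open. A separate check is then needed that restricting graphs to the closed invariant set $\cl_Y j(X)$ is continuous in the Fell topology on $G$-graphs. This should follow because the graph of $g$ meets $\cl_Y j(X)\times Y$ exactly in $\cl_Y j(X)\times\cl_Y j(X)$.

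Granting openness, the identity is
$$H_{\dashv}(\i_Y^{\Gamma}(g))=\{(y,gy)\ |\ y\in Y\}\cap(j(X)\times j(X))=\{(j(x),j(gx))\ |\ x\in X\}.$$
This holds because $gy\in j(X)$ exactly when $y\in j(X)$, by invariance. Under the homeomorphism $j\times j$ the right-hand side corresponds to $\i_X^{\Gamma}(g)$, and the factorization argument above then concludes the proof.
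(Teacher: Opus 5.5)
Your factorization argument is the paper's proof. The paper also writes $\i_X^{\Gamma}=F^H\circ\i_Y^{\Gamma}$ (respectively $\i_X^{\Gamma}=H_{\dashv}|_{\i_Y^{\Gamma}(G)}\circ\i_Y^{\Gamma}$), takes continuity of $\i_Y^{\Gamma}$ from Proposition~\ref{homomF}, and concludes that $\i_Y^{\Gamma}$ and the restricted hyperspace map are topological isomorphisms.

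Your remark on case (b) is correct. Continuity of $H_{\dashv}$ is established in Proposition~\ref{restrhyper} only when the subset is open. So the paper's proof tacitly needs $j(X)$ open in $Y$, which is exactly the hypothesis stated in Theorem~\ref{mapcompV}(b); you should simply adopt it.

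Drop the fallback through $\cl_Y j(X)$, however, because it is not justified. Your identity $\Gamma_g\cap(\cl_Y j(X)\times Y)=\Gamma_g\cap(\cl_Y j(X)\times\cl_Y j(X))$ handles only the subbasic sets $((\cl_Y j(X)\times\cl_Y j(X))\setminus K)^+$ with $K$ compact. For a hit-set $W^-$, a graph that is Fell-close in $2^{Y\times Y}$ may meet an open extension of $W$ only at points whose first coordinate lies outside the closed, non-open set $\cl_Y j(X)$. No Fell neighbourhood in $2^{Y\times Y}$ rules this out, so the restriction need not be continuous.
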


\begin{proof} 
$\i_Y^{\Gamma}$ is a continuous isomorphism of $G$ and $\i_Y^{\Gamma}(G)$ by Proposition~\ref{homomF}. Since $\i_X^{\Gamma}=F^H\circ \i_Y^{\Gamma}$ (respectively $\i_X^{\Gamma}=\i_{j(X)}^{\Gamma}=H_{\dashv}|_{\i_{Y}^{\Gamma} (G)}\circ \i_Y^{\Gamma}$ for $j(X)\subset Y$) and $\i_X^{\Gamma}$ is a topological isomorphism of $G$ and $\i_X^{\Gamma}(G)$, $F^H|_{\i_Y^{\Gamma} (G)}$  (respectively $H_{\dashv}|_{\i_{Y}^{\Gamma} (G)}$) is a topological isomorphism of $\i_Y^{\Gamma}(G)$ and $\i_X^{\Gamma}(G)$, and $\i_Y^{\Gamma}$ is a topological isomorphism of $G$ and $\i_Y^{\Gamma}(G)$.
\end{proof}


\subsection{Graph compactifications of topological groups and their maps}\label{bincomp}

\begin{df}~\cite[Definition 3.1]{KozlovLeiderman2025}
Let $G$ be a topological group.  A $G$-compactification $b G$ of $G$  with self-inverse map $s$ is a $G^*$-compactification of $G$ if $s|_{G}=*$. 
\end{df}

\begin{pro}~\cite[Proposition 3.4]{KozlovLeiderman2025}\label{G*Roelcke}
Let $G$ be a topological group and $b G$ be a $G^*$-compactification of $G$. Then $b G\leq b_r G$. 
\end{pro}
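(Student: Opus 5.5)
The plan is to realise $b G$ as the completion of $G$ with respect to a totally bounded uniformity that lies below the Roelcke uniformity $L\wedge R$, and then pass to completions. Let $\theta$ be the extension to $b G$ of the left action $\alpha$, and let $s$ be the (continuous) self-inverse map on $b G$ with $s|_G=*$. By Lemma~\ref{invaction} the inverse action $\theta^*(g,x)=s(\theta(g,s(x)))$ is well defined. It is continuous, being a composition of continuous maps. So $(G, bG, \theta)$ and $(G, bG, \theta^*)$ are both compact $G$-spaces.

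First, $b G$ is compact, so its unique uniformity $\mathcal U$ is an equiuniformity for every continuous action on it. In particular $\mathcal U$ is an equiuniformity for both $\theta$ and $\theta^*$. Second, the dense embedding $b\colon G\to bG$ is a $G$-map of $(G,G,\alpha)$ and $(G,bG,\theta)$, and the condition $s|_G=*$ says exactly that it commutes with $*$ and $s$. By Corollary~\ref{cor1}(A), $b$ is then also a $G$-map of $(G,G,\alpha^*)$ and $(G,bG,\theta^*)$. Corollary~\ref{cor1}(C), or equivalently Proposition~\ref{unifcontRoelcke}, now gives that
$$b\colon (G, L\wedge R)\to (bG,\mathcal U)$$
is uniformly continuous.

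Next, let $\mathcal U_G=\mathcal U\wedge G$ be the subspace uniformity on $G$. It is totally bounded and compatible with the topology of $G$, since $b$ is an embedding. Uniform continuity of $b$ means $\mathcal U_G\subset L\wedge R$. Since $(L\wedge R)_{fin}$ is the greatest totally bounded uniformity contained in $L\wedge R$, we get $\mathcal U_G\subset (L\wedge R)_{fin}$. Hence $\id\colon (G,(L\wedge R)_{fin})\to (G,\mathcal U_G)$ is uniformly continuous.

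Finally, this identity extends to a continuous map of completions $\varphi\colon b_rG\to \widetilde{(G,\mathcal U_G)}$. By Smirnov's correspondence, the completion of $(G,\mathcal U_G)$ is $bG$, because $G$ is dense in the compactum $bG$ and $\mathcal U$ is its unique uniformity. The extension satisfies $\varphi|_G=b$, which gives $bG\leq b_rG$.

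The step requiring the most care is verifying the hypotheses of Corollary~\ref{cor1}(C): continuity of $\theta^*$, which relies on continuity of $s$, and the fact that the same $\mathcal U$ serves as an equiuniformity for both actions. Both are immediate here because of compactness. The rest is the standard completion argument.
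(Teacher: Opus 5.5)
Your proof is correct and follows the route the paper sets up for this statement. The paper itself only cites the result, but Proposition~\ref{unifcontRoelcke} and Corollary~\ref{cor1} are there for exactly this argument: $s|_G=*$ makes the dense embedding a $G$-map for both $(\alpha,\theta)$ and $(\alpha^*,\theta^*)$, so it is uniformly continuous from $(G,L\wedge R)$ into the compactum $bG$, and passing to the precompact replica $(L\wedge R)_{fin}$ and completions gives $bG\leq b_rG$; the paper relies on the same mechanism later, in Corollary~\ref{closure} and Theorem~\ref{suffcond}.
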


\begin{pro}~\cite[Proposition 3.15]{KozlovLeiderman2025}\label{sm-sm*}
An Ellis-compactification $b G$ of $G$ with self-inverse map $s$ such that $s|_G=*$  is a sm$^*$-compactification of $G$ iff $s$ is an involution on  $b G$.

A $G^*$-compactification $b G$ of $G$ is a sm-compactification $b G$ of $G$ iff $b G$ is a sm$^*$-compactification of $G$.
\end{pro}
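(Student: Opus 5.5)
The plan is to verify each of the two equivalences directly from the definitions. The main tools are the identity $\lambda_y=s\circ\rho_{s(y)}\circ s$, relating left and right translations, and a two-step density argument.

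Throughout, $\rho_y(x)=x\bullet y$ denotes right translation and $\lambda_y(x)=y\bullet x$ left translation on $bG$. Right topological means that every $\rho_y$ is continuous, and semitopological means that every $\rho_y$ and every $\lambda_y$ is continuous. The self-inverse map $s$ of a $G^*$-compactification is continuous, being a self-map of the compact space $bG$ in the category of spaces used here. By an \emph{involution} on $bG$ we mean a self-inverse map with $s(x\bullet y)=s(y)\bullet s(x)$ for all $x,y\in bG$.

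\emph{First statement.} Necessity is immediate, since an sm$^*$-compactification carries by definition a continuous involution extending $*$.

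For sufficiency, let $bG$ be Ellis with $s|_G=*$, and suppose $s$ is an involution. Applying the anti-homomorphism property and $s\circ s=\id$ gives, for every $y\in bG$,
$$\lambda_y(x)=y\bullet x=s\bigl(s(x)\bullet s(y)\bigr)=\bigl(s\circ\rho_{s(y)}\circ s\bigr)(x).$$
Hence $\lambda_y$ is a composition of continuous maps and is continuous, so $(bG,\bullet)$ is semitopological. The equality $g\bullet h=gh$ for $g,h\in G$ holds because $\bullet|_{G\times bG}=\tilde\alpha$. Therefore $bG$ is an sm-compactification, and with the continuous involution $s$ extending $*$ it is an sm$^*$-compactification.

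\emph{Second statement.} The implication sm$^*\Rightarrow$ sm is trivial. For the converse, let $bG$ be a $G^*$-compactification that is an sm-compactification. It suffices to show that $s(x\bullet y)=s(y)\bullet s(x)$ for all $x,y\in bG$. For $x,y\in G$ this is the group identity $(xy)^{-1}=y^{-1}x^{-1}$, because $s|_G=*$.

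\emph{Step 1.} Fix $h\in G$ and regard both sides as functions of $x$. The map $x\mapsto s(x\bullet h)$ is $s\circ\rho_h$, and $x\mapsto s(h)\bullet s(x)$ is $\lambda_{s(h)}\circ s$. Both are continuous since $bG$ is semitopological, and they agree on the dense set $G$. Hence they agree on all of $bG$.

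\emph{Step 2.} Now fix $x\in bG$ and regard both sides as functions of $y$, namely $y\mapsto s(\lambda_x(y))$ and $y\mapsto\rho_{s(x)}(s(y))$. Both are again continuous, and they agree on $G$ by Step 1. Hence they agree on $bG$.

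This shows that $s$ is an involution. Since $bG$ is sm, it is in particular Ellis, so the first statement applies and gives that $bG$ is an sm$^*$-compactification.

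The only delicate point is the order of the two density arguments. Because multiplication is only separately continuous, one must first fix a group element and extend in one variable, then fix an arbitrary point of $bG$ and extend in the other; extending both variables at once would require joint continuity, which is not available. Apart from this ordering, everything reduces to the identity $\lambda_y=s\circ\rho_{s(y)}\circ s$.
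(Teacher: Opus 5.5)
Your argument is correct. The paper does not prove this proposition; it only quotes it from \cite{KozlovLeiderman2025}, so there is no proof in this text to compare with. Your route is the natural direct verification from the definitions: the identity $\lambda_y=s\circ\rho_{s(y)}\circ s$ gives continuity of left translations from right ones, and a two-stage density argument, using only separate continuity, extends $s(x\bullet y)=s(y)\bullet s(x)$ from $G\times G$ to $bG\times bG$.

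Two small points. First, in the necessity half of the first statement, say explicitly that the continuous involution supplied by the definition of an sm$^*$-compactification coincides with the given $s$. This holds because both maps are continuous and agree with $*$ on the dense subgroup $G$. Second, at the end of the second statement the detour through the first statement is unnecessary. An sm-compactification whose continuous self-inverse map $s$, with $s|_G=*$, is an anti-homomorphism already satisfies the definition of an sm$^*$-compactification directly.
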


\medskip

Since $\i^{\Gamma}_X (G)$ is an invariant subset of a $G$-space $(G, (2^{X\times X}, \tau_F), \theta^H_{\uparrow})$ and invariant with respect to the self-inverse map $S_X$ on $2^{X\times X}$ (and, hence, an invariant subset of a $G$-space $(G, (2^{X\times X}, \tau_F), \theta^H_{\rightarrow})$), its closure $b_X G=\cl (\i^{\Gamma}_X (G))$ in $(2^{X\times X}, \tau_F)$ is an invariant subset for the action $\theta^H_{\uparrow}$ and invariant with respect to the self-inverse map $S_X$ (and, hence, an invariant subset for the action $\theta^H_{\rightarrow}$). Therefore, $b_X G$ is a $G^*$-compactification of $G$ if $G$ is $\tau_p$-representable in a locally compact space $X$ and $\i^{\Gamma}_X$ is a topological embedding. 

\begin{df}\label{graphcomp}
Let a topological group $G$ be $\tau_g$-representable in a locally compact space $X$. If $\i^{\Gamma}_X$ is a topological embedding, then $b_X G$ will be called a graph compactification of $G$ {\rm(}with respect to a $\tau_g$-representation of $G$ in $X${\rm)}. 
\end{df}

\begin{que}
Is every $G^*$-compatification of a topological group $G$ a graph compactification of $G$ for its $\tau_g$-representation in a locally compact space $X$?
\end{que}

From Propositions~\ref{G*Roelcke} and~\ref{sm-sm*} it follows.

\begin{cor}\label{closure}
Let $G$ be a topological group and $b_X G$ be a graph compactification of $G$. Then $b_r G\geq b_X G$. 

$b_X G$ is a sm-compactification of $G$ iff $b_X G$ is a sm$^*$-compactification of $G$.
\end{cor}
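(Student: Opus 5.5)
The plan is to obtain both parts of Corollary~\ref{closure} directly from Propositions~\ref{G*Roelcke} and~\ref{sm-sm*}. For that I first check that every graph compactification $b_X G$ is a $G^*$-compactification of $G$.

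By Definition~\ref{graphcomp}, $G$ is $\tau_g$-representable in a locally compact space $X$ and $\i^{\Gamma}_X$ is a topological embedding. So $b_X G=\cl(\i^{\Gamma}_X(G))$ in the compactum $(2^{X\times X},\tau_F)$ is a compact space containing a dense homeomorphic copy of $G$. By Corollary~\ref{actionXXYY}, the action $\theta^H_{\uparrow}$ on $2^{X\times X}$ is continuous. By Proposition~\ref{homomF}, $\i^{\Gamma}_X$ is a $G$-map from $(G,G,\alpha)$ to $(G,2^{X\times X},\theta^H_{\uparrow})$. Hence $\i^{\Gamma}_X(G)$ is $\theta^H_{\uparrow}$-invariant, and by continuity so is its closure. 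Therefore $b_X G$ with the restricted action is a $G$-compactification of $G$. The $G$-Tychonoff requirement is automatic, since $b_X G$ is compact and the action is continuous.

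Next, $S_X$ is a continuous self-inverse map of $2^{X\times X}$ (Proposition~\ref{semrel}). By Proposition~\ref{homomF}, $S_X\circ\i^{\Gamma}_X=\i^{\Gamma}_X\circ *$. Thus $S_X$ maps $\i^{\Gamma}_X(G)$ onto itself, and by continuity it maps its closure $b_X G$ onto itself. Setting $s=S_X|_{b_X G}$ gives a self-inverse map on $b_X G$ with $s|_G=*$ under the identification via $\i^{\Gamma}_X$. So $b_X G$ is a $G^*$-compactification. The first claim, $b_r G\geq b_X G$, then follows from Proposition~\ref{G*Roelcke}.

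For the second claim, apply the second statement of Proposition~\ref{sm-sm*} to the $G^*$-compactification $b_X G$. It says that $b_X G$ is an sm-compactification iff it is an sm$^*$-compactification.

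The only step needing real care is the identification of the $G$-compactification structure. One must check that the $G$-action on $b_X G$ extends left multiplication on $G$ under $\i^{\Gamma}_X$, which is the $G$-map property from Proposition~\ref{homomF}. One must also check that the topology on $\i^{\Gamma}_X(G)$ is the original one, which is exactly the embedding hypothesis in Definition~\ref{graphcomp}. Everything else is a formal application of the cited propositions.
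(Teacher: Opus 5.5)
Your proposal is correct and follows the paper's own route. In the paragraph just before Definition~\ref{graphcomp}, the paper observes that $\i^{\Gamma}_X(G)$, and hence its closure $b_X G$, is invariant under $\theta^H_{\uparrow}$ and $S_X$, so $b_X G$ is a $G^*$-compactification; both claims then follow from Propositions~\ref{G*Roelcke} and~\ref{sm-sm*}. You additionally spell out details the paper leaves implicit: continuity of $\theta^H_{\uparrow}$ and $S_X$, the $G$-map and involution-commuting properties from Proposition~\ref{homomF}, and the role of the embedding hypothesis.
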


From Proposition~\ref{mapshyper} (respectively Proposition~\ref{restrhyper}), Theorem~\ref{propPreimage}  and~\cite[Lemma 3.5.6]{Engelking} it follows.

\begin{thm}\label{mapcompV}  
Let a topological group $G$ be $\tau_g$-representable in locally compact spaces $Y$ and $X$,  $b_X G$ be a graph compactification of $G$. 
If 
\begin{itemize}
\item[{\rm(a)}] $f: Y\to X$ is a perfect onto $G$-map of $G$-spaces $(G, Y,  \curvearrowright)$ and $(G, X,  \curvearrowright)$, or  
\item[{\rm(b)}] an embedding $j: X\to Y$ is a $G$-map of $G$-spaces $(G, X,  \curvearrowright)$ and $(G, Y,  \curvearrowright)$, and $j(X)$ is open in $Y$, 
\end{itemize}
then $b_Y G$ is a graph compactification of $G$, $b_Y G\geq b_X G$, $F^H|_{b_Y G}: b_Y G\to b_X G$ {\rm(}respectively $H_{\dashv}|_{b_Y G}: b_Y G\to b_X G=b_{j(X)} G${\rm)} is a map of compactifications. 
\end{thm}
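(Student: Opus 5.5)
The plan is to reduce everything to Theorem~\ref{propPreimage}, Propositions~\ref{mapshyper} and~\ref{restrhyper}, and the standard fact~\cite[Lemma 3.5.6]{Engelking} about extending dense embeddings through perfect maps. Since $b_X G$ is a graph compactification, $\i^{\Gamma}_X$ is a topological isomorphism of $G$ onto $\i^{\Gamma}_X(G)$. In case (a) we apply Theorem~\ref{propPreimage}(a) to $f$; in case (b) we apply Theorem~\ref{propPreimage}(b) to $j$. Either way, $\i^{\Gamma}_Y$ is a topological isomorphism of $G$ onto $\i^{\Gamma}_Y(G)$. Hence $b_Y G=\cl\,\i^{\Gamma}_Y(G)$, taken in the compactum $(2^{Y\times Y},\tau_F)$, is a graph compactification of $G$ in the sense of Definition~\ref{graphcomp}.

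Next we produce the map of compactifications. In case (a), let $\Phi=F^H$ from Proposition~\ref{mapshyper}(A); in case (b), identify $X$ with the open invariant subset $j(X)$ of $Y$ and let $\Phi=H_{\dashv}$ from Proposition~\ref{restrhyper}(A). In both cases $\Phi:(2^{Y\times Y},\tau_F)\to(2^{X\times X},\tau_F)$ is continuous, perfect and onto, and the identity
\[
\Phi\circ\i^{\Gamma}_Y=\i^{\Gamma}_X
\]
holds. In case (a) this is because $(f\times f)$ maps the graph $\{(y,gy)\}$ onto $\{(f(y),gf(y))\}$, which is the graph of $g$ on $X$ since $f$ is an onto $G$-map. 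In case (b) it is because the graph of $g$ on $Y$ meets $X\times X$ exactly in the graph of $g|_X$, as $X$ is invariant. By continuity,
\[
\Phi(b_Y G)=\Phi\bigl(\cl\,\i^{\Gamma}_Y(G)\bigr)\subset\cl\,\Phi\bigl(\i^{\Gamma}_Y(G)\bigr)=b_X G,
\]
and the image is compact and contains the dense set $\i^{\Gamma}_X(G)$, so $\Phi|_{b_Y G}$ maps $b_Y G$ onto $b_X G$. Since $\Phi|_{b_Y G}\circ\i^{\Gamma}_Y=\i^{\Gamma}_X$, it is a map of compactifications, giving $b_Y G\geq b_X G$. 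The citation of~\cite[Lemma 3.5.6]{Engelking} is only needed if one wants to phrase this as the unique continuous extension of $\i^{\Gamma}_X\circ(\i^{\Gamma}_Y)^{-1}$ to the closure.

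Finally, we check equivariance and compatibility with the involutions. Propositions~\ref{mapshyper}(B) and~\ref{restrhyper}(B) show that $\Phi$ is a $G$-map for the actions $\theta^H_{\uparrow}$ and $\theta^H_{\rightarrow}$, and both propositions give that $\Phi$ commutes with $S_Y$ and $S_X$. Hence the restriction is a map of $G^*$-compactifications. The only step needing real care is the identity $\Phi\circ\i^{\Gamma}_Y=\i^{\Gamma}_X$ in case (b), since it uses invariance of $j(X)$ to make sure no extra points of $X\times X$ appear. Openness of $j(X)$ is required only so that $H_{\dashv}$ is defined and continuous, which is exactly the hypothesis of Proposition~\ref{restrhyper}.
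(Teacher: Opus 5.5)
Your proposal is correct and takes essentially the same route as the paper, which derives the theorem directly from Theorem~\ref{propPreimage}, Proposition~\ref{mapshyper} (resp.\ Proposition~\ref{restrhyper}) and \cite[Lemma 3.5.6]{Engelking}. You have written out the details that the paper leaves implicit: Theorem~\ref{propPreimage} makes the graph embedding into $2^{Y\times Y}$ a topological embedding, and the continuous hyperspace map $F^H$ (resp.\ $H_{\dashv}$) is compatible with the two graph embeddings, so by compactness and density it restricts to a map of compactifications from $b_Y G$ onto $b_X G$.
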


\begin{rem}
{\rm If a topological group $G$ is $\tau_g$-representable in locally compact spaces $Y$ and $X$,  $f: Y\to X$ is a perfect onto $G$-map, $b_X G$ is a graph compactification of $G$ and $b_Y G$ is a sm$^*$-compactification of $G$ {\rm(}multiplication is the restricition of multiplication on $2^{Y\times Y}$, inversion is the restriction of $S_Y${\rm)}, then $b_X G$ is a sm$^*$-compactification of $G$.

Indeed, for the map of compactifications  $F^H|_{b_Y G}: b_Y G\to b_X G$  from Theorem~\ref{mapcompV} and equality 
$$F^H (RS)=F (RS)=\{(f(x), f(y))\ |\ \exists z\ ((x,z)\in S\ \&\ (z,y)\in R)\}=$$
$$=\{(f(x), f(y))\ |\ \exists f(z)\ ((f(x), f(z))\in F(S)\ \&\ (f(z), f(y))\in F(R))\}=F^H (R) F^H (S),\  R, S\in 2^{Y\times Y},$$
it follows that $F^H|_{b_Y G} (RS)=F^H|_{b_Y G}(R) F^H|_{b_Y G}(S)$, $R, S\in b_Y G$. Further, $F^H|_{b_Y G}(RS)$ is separately continuous, $F^H|_{b_Y G}(R) F^H|_{b_Y G}(S)$ is separately continuous. It remains to note that  $F^H|_{b_Y G}$ is onto.}
\end{rem}

From Lemma~\ref{l11} and Theorem~\ref{mapcompV}  it follows.

\begin{cor}\label{orderembed}
Let a topological group $G$ be $\tau_g$-representable in $X$. 

If $(Y, \psi), (Z, \varphi)\in\mathbb{GLC}(X)$, $Z\leq_{GLC} Y$ and $b_Z G$ is a graph compactification of $G$, then  $b_Y G$ is a graph compactification of $G$ and $b_Z G\leq b_Y G$.

If $(Y, \psi)$, $(Z, \varphi)$ are equivalent locally compact $G$-extensions of $X$. Then $b_Y G$ is a graph compactification of $G$ iff $b_Z G$ is a graph compactification of $G$ and $b_Y G=b_Z G$ in this case.
\end{cor}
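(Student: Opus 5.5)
The plan is to unwind the definition of $\leq_{GLC}$ and reduce both parts of the statement to Theorem~\ref{mapcompV}.

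\emph{First part.} Suppose $Z\leq_{GLC} Y$. By (odG) there are an open invariant subset $U$ with $\psi(X)\subset U\subset Y$ and a perfect onto $G$-map $f: U\to Z$ with $\varphi=f\circ\psi$. The proof has three steps.

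Step 1: check that $G$ is $\tau_g$-representable in each of $Y$, $U$ and $Z$. Each is a locally compact $G$-extension of $X$: $U$ is open in the locally compact space $Y$, and $\psi(X)$ is dense in $U$. Lemma~\ref{l11} then gives $\tau_g$-representability in all three. The $G$-Tychonoff condition is automatic by the remark after Lemma~\ref{l1}, since these spaces are locally compact.

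Step 2: pass from $Z$ up to $U$. Since $b_Z G$ is a graph compactification, case (a) of Theorem~\ref{mapcompV}, applied to the perfect onto $G$-map $f: U\to Z$, shows that $b_U G$ is a graph compactification and $b_U G\geq b_Z G$.

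Step 3: pass from $U$ up to $Y$. The inclusion $j: U\to Y$ is an embedding, a $G$-map, and $j(U)$ is open in $Y$. Case (b) of Theorem~\ref{mapcompV} then shows that $b_Y G$ is a graph compactification and $b_Y G\geq b_U G$.

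Transitivity of the order on compactifications gives $b_Z G\leq b_Y G$. We should also check that the maps compose compatibly with the embeddings of $G$. This holds because all maps of compactifications here extend $\i^{\Gamma}$ identities on $G$: we have $F^H\circ\i_U^{\Gamma}=\i_Z^{\Gamma}$ and $H_{\dashv}\circ\i_Y^{\Gamma}=\i_U^{\Gamma}$, as in the proof of Theorem~\ref{propPreimage}.

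\emph{Second part.} Equivalence means $Y\leq_{GLC}Z$ and $Z\leq_{GLC}Y$. Applying the first part in both directions, $b_Y G$ is a graph compactification iff $b_Z G$ is one, and in that case $b_Y G\leq b_Z G\leq b_Y G$.

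Two compactifications that dominate each other are equivalent. Indeed, the composite maps are identities on the dense copy of $G$, hence identities on the compactifications. This gives $b_Y G=b_Z G$.

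\emph{Main obstacle.} The delicate point is Step 1. Lemma~\ref{l11} is stated for $G$-extensions, which are completions with respect to an equiuniformity, whereas here we have locally compact $G$-extensions in the sense of \S\ref{loccompext}.

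To handle this, I would use the fact that a locally compact $G$-space $W$ containing $X$ as a dense invariant subspace has $\alpha W\in\mathbb{GC}(X)$. Then I would run the argument of Lemma~\ref{l11} with $\alpha W$ in place of $Z$, obtaining $\tau\leq\tau^W_g\leq\tau^{\alpha W}_{co}=\tau$. This uses Remark~\ref{rem1}(iii) and the fact that $\tau_{co}^{\alpha W}$ is the least admissible topology.

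A second point to verify is that $U$ is invariant under $G$, so that the restricted action is well defined. This is part of the definition (odG).
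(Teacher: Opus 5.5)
Your proposal is correct and follows the paper's proof essentially verbatim. You unwind (odG) to get an open invariant $U$ and a perfect onto $G$-map $f\colon U\to Z$, apply Theorem~\ref{mapcompV}(a) to $f$ and Theorem~\ref{mapcompV}(b) to the inclusion $U\hookrightarrow Y$, and compose to $(F^H\circ H_{\dashv})|_{b_Y G}$, with the second statement obtained by symmetry. Your extra check of $\tau_g$-representability in $Y$, $U$ and $Z$ via $\alpha W$ and Remark~\ref{rem1} fills in what the paper compresses into a bare citation of Lemma~\ref{l11}.
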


\begin{proof}
Take an open invariant set $\psi(X)\subset U\subset Y$ ($j: U\to Y$ denote the $G$-embedding) and a perfect onto $G$-map $f: U\to Z$ such that $\varphi=f\circ\psi$. By Theorem~\ref{mapcompV} (a) $b_U G$ is graph compactification of $G$ and $F^H: b_U G\to  b_Z G$ is a map of compactifications of $G$. By Theorem~\ref{mapcompV} (b) $b_Y G$ is graph compactification of $G$ and $H_{\dashv}|_{b_Y G}: b_Y G\to b_U G=b_{j(U)} G$ is a map of compactifications. The map $(F^H\circ H_{\dashv})|_{b_Y G}: b_Y G\to b_Z G$ is a map of compactifications of $G$. Hence, $b_Z G\leq b_Y G$.

The same considerations prove the second statement.
\end{proof}


\begin{rem}\label{latticefraphcomp}
{\rm Corollary~\ref{orderembed} shows that if a topological group $G$ is $\tau_g$-representable in $X$, then the poset $\mathbb{GLC}_{\Gamma}(X)\subset\mathbb{GLC}(X)$ of (equivalence classes) locally compact $G$-extensions of $X$ for which the corresponding graph compactifications of $G$ are defined, is an upper lattice. The poset of graph compactifications of $G$ correspondent to elements of $\mathbb{GLC}_{\Gamma}(X)$ is also an upper lattice and the graph compactification construction is an order preseving map of upper lattices.}
\end{rem}


\begin{thm}\label{mapcompVel}  
Let a topological group $G$ be $\tau_g$-representable in locally compact spaces $Y$ and $X$, $f: Y\to X$ be a perfect onto $G$-map and $b_Y G$ be a graph compactification of $G$. 

{\rm(A)} $b_X G$ is a graph compactification of $G$ and $b_Y G\geq b_X G$ iff 
the condition 
$$\forall\ g\in G,\ \forall\ R\in b_Y G\ (F^H (\i^{Y}_{\Gamma}(g))=F^H (R))\ \Longrightarrow\ (\i^{Y}_{\Gamma}(g)=R) \eqno{{\rm(PM)}}$$ 
is valid. 

\medskip

{\rm(B)} If, additionally, $f: Y\to X$ is an elementary $G$-map {\rm(}$K=f^{-1}(x)$, $|f^{-1}(x')|=1$, $x\ne x'${\rm)}, then  the set 
$$0_K=\{R\in  b_{Y} G\ |\ R\subset (Y\times K)\cup (K\times Y)\}$$
is invariant under the action $\theta^H_{Y\uparrow}$ and the self-inverse map $S_Y$ {\rm(}and, hence, is invariant under the action $\theta^H_{Y\rightarrow}${\rm)}; 

$b_X G$ is a graph compactification of $G$, $b_Y G\geq b_X G$ and the map $F^H|_ {b_Y G}: b_Y G\to b_X G$ of compactifications is elementary {\rm(}$F^H|_ {b_Y G\setminus 0_K}$ is a bijection{\rm)} iff $\forall\ R, S\in b_{Y} G$
$$\mbox{if}\ R\cap ((Y\setminus K)\times (Y\setminus K))=S\cap  ((Y\setminus K)\times (Y\setminus K))\ne\emptyset\ \mbox{and}\ F^H (R)=F^H (S)\Longrightarrow\ R=S,$$ 
$$\mbox{if}\ R\cap ((Y\setminus K)\times (Y\setminus K))=S\cap  ((Y\setminus K)\times (Y\setminus K))=\emptyset\ \Longrightarrow\ F^H (R)=F^H (S).\eqno{{\rm(EPM)}}$$
\end{thm}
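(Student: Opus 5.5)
The plan is to reduce everything to the compact map $\Phi=F^H|_{b_Y G}: b_Y G\to 2^{X\times X}$. By Proposition~\ref{mapshyper}, $F^H$ is continuous and perfect. It is also a $G$-map for $\theta^H_{\uparrow}$ and $\theta^H_{\rightarrow}$ and commutes with $S_Y$, $S_X$. Moreover $F^H\circ\i^{\Gamma}_Y=\i^{\Gamma}_X$, since $(f\times f)$ applied to the graph of $g$ on $Y$ is the graph of $g$ on $X$: $f$ is onto and equivariant. Hence $\Phi(b_Y G)$ is compact and contains $\i^{\Gamma}_X(G)$ densely, so $\Phi(b_Y G)=\cl\,\i^{\Gamma}_X(G)=b_X G$. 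By Proposition~\ref{homomF}, $\i^{\Gamma}_X$ is already a continuous isomorphism. The only issue in (A) is therefore whether $\i^{\Gamma}_X$ is a topological embedding, i.e.\ whether $\Phi|_{\i^{\Gamma}_Y(G)}$ is a homeomorphism onto its image with image not meeting the remainder badly.

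For (A), necessity: if $b_X G$ is a graph compactification and $b_Y G\geq b_X G$, then the map of compactifications must be $\Phi$. It agrees with $\Phi$ on the dense set $\i^{\Gamma}_Y(G)$. By~\cite[Lemma 3.5.6]{Engelking}, a map of compactifications sends the remainder onto the remainder, so $\Phi^{-1}(\i^{\Gamma}_X(G))=\i^{\Gamma}_Y(G)$ and $\Phi$ is injective over $\i^{\Gamma}_X(G)$. This is exactly (PM).

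Sufficiency: (PM) says $\Phi^{-1}(\Phi(\i^{\Gamma}_Y(g)))=\{\i^{\Gamma}_Y(g)\}$, so $\i^{\Gamma}_Y(G)=\Phi^{-1}(\i^{\Gamma}_X(G))$. The restriction of a perfect map to a full preimage is perfect, and a perfect bijection is a homeomorphism. Hence $\Phi:\i^{\Gamma}_Y(G)\to\i^{\Gamma}_X(G)$ is a homeomorphism, and $\i^{\Gamma}_X=\Phi\circ\i^{\Gamma}_Y$ is a topological embedding. This gives the graph compactification $b_X G$ with $\Phi$ as the map of compactifications.

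For (B), invariance of $0_K$ is direct. $K$ is a single fibre of the $G$-map $f$ over a point $x$; since $f$ is elementary and $G$ acts by homeomorphisms permuting fibres (only one of which is nondegenerate), $gK=K$ for nontrivial $K$. Hence $\theta_{Y\uparrow}^H(g,R)$ and $S_Y(R)$ stay inside $(Y\times K)\cup(K\times Y)$, and invariance under $\theta^H_{\rightarrow}=(\theta^H_{\uparrow})^*$ follows.

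For the equivalence in (B), note that on $(Y\setminus K)^2$ the map $f\times f$ is a homeomorphism onto $(X\setminus\{x\})^2$. Thus $\Phi(R)\cap(X\setminus\{x\})^2$ determines $R\cap(Y\setminus K)^2$. Two elements $R,S$ with equal images therefore have equal traces on $(Y\setminus K)^2$. If the common trace is nonempty (i.e.\ $R,S\notin0_K$), the first clause of (EPM) is exactly injectivity of $\Phi$ off $0_K$. The second clause says $\Phi$ collapses $0_K$ to a point. Since $\i^{\Gamma}_Y(g)\notin0_K$ (a graph of a bijection meets $(Y\setminus K)^2$), (EPM) implies (PM), so (A) yields the graph compactification; elementarity then follows by the two clauses.

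Conversely, suppose $\Phi$ is elementary with nondegenerate fibre $0_K$. Then $\Phi$ is injective off $0_K$, giving the first clause. For the second clause, both $R,S$ then lie in $0_K$, so they have equal image. The main obstacle is this last direction: showing that the unique nondegenerate fibre is exactly $0_K$ and not some other set. I would handle it by showing that $\Phi(0_K)\subset (X\times\{x\})\cup(\{x\}\times X)$, and that any two elements of $0_K$ are identified only under the hypothesis; I would use the $G$-invariance of $0_K$ to rule out a different exceptional fibre.
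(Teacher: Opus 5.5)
Your argument for (A), and for the direction in (B) that starts from (EPM), is correct and follows the paper's proof. Two of your steps make explicit what the paper leaves implicit. First, in (A): restricting the perfect map $\Phi=F^H|_{b_YG}$ to the full preimage of the image of $G$ gives a perfect bijection, hence a homeomorphism. Second, your trace identity $\Phi(R)\cap(X\setminus\{x\})^2=(f\times f)(R\cap(Y\setminus K)^2)$ shows that no element outside $0_K$ has the same image as an element of $0_K$.

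The ``main obstacle'' you flag for the converse in (B) is not something the statement requires, and your proposed strategy for it cannot work. In the paper, ``elementary'' in (B) means that $\Phi$ is the quotient map $b_YG\to b_YG/0_K$. This is clear from the last lines of its proof and from its later use $b_{cX}G=b_{c_mX}G/0_K$ in Proposition~\ref{Prodescrlotb}. With that meaning, your two-line argument is already the whole proof, exactly as in the paper: injectivity off $0_K$ gives the first clause, and $0_K$ being one fibre gives the second.

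If one reads ``elementary'' weakly, as ``at most one nondegenerate fibre and $\Phi$ injective off $0_K$'', the converse is false. Your trace identity puts every nondegenerate fibre inside $0_K$, but nothing forces $\Phi$ to be constant on $0_K$. Here is a counterexample.
\begin{itemize}
\item Take $Y=\mathbb R$, $K=[0,1]$, $G$ the increasing homeomorphisms of $\mathbb R$ fixing $K$ pointwise (compact-open topology), $X=Y/K\cong\mathbb R$, $x=f(K)$. Graph compactifications exist by Theorem~\ref{partialcases}(b).
\item Every graph contains $\Delta_{[0,1]}$ and lies in $\Delta_{[0,1]}\cup(-\infty,0]^2\cup[1,\infty)^2$; both properties pass to Fell limits.
\item Hence every $R\in b_YG$ is $\Delta_{[0,1]}\cup R_-\cup R_+$ with $(0,0)\in R_-\subset(-\infty,0]^2$ and $(1,1)\in R_+\subset[1,\infty)^2$.
\item $f\times f$ is injective on each of these quadrants, and their images meet only in $(x,x)$. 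So $\Phi$ is injective, hence a homeomorphism, and in particular elementary in the weak sense.
\item Let $g_n(t)=nt$ for $t\le0$ and $g_n(t)=1+n(t-1)$ for $t\ge1$. Let $h_n(t)=t/n$ for $t\le0$ and $h_n(t)=1+(t-1)/n$ for $t\ge1$. Their graphs have Fell limits $R_1=\Delta_{[0,1]}\cup(\{0\}\times(-\infty,0])\cup(\{1\}\times[1,\infty))$ and $R_2=\Delta_{[0,1]}\cup((-\infty,0]\times\{0\})\cup([1,\infty)\times\{1\})$.
\item Both $R_1$ and $R_2$ lie in $0_K$, and both are fixed by $G$, so invariance gives no leverage.
\item Yet $\Phi(R_1)=\{x\}\times X\neq X\times\{x\}=\Phi(R_2)$, so the second clause of (EPM) fails.
\end{itemize}

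Your idea of proving $\Phi(0_K)\subset(X\times\{x\})\cup(\{x\}\times X)$ does close the gap when $Y$ is compact. There Lemma~\ref{proj} gives full projections, which force $\Phi(R)=(X\times\{x\})\cup(\{x\}\times X)$ for every $R\in 0_K$; this is the content of Remark~\ref{corrcond}. Without compactness, projections of elements of $b_YG$ need not be full, and the argument breaks down, as the example shows.
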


\begin{proof}
(A) If $b_X G$ is a graph compactification of $G$ ($\i_{X}^{\Gamma}: G\to (2^{X\times X}, \tau^X_F)$ is an embedding), then $F^H|_{b_Y G}: b_Y G\to b_X G$ is a map of compactifications of $G$ and the condition (PM) is valid.

The map $\i_{X}^{\Gamma}$ is a $G$-map, commutes with involution on $G$ and self-inverse map $S_X$ on $2^{X\times X}$ and a bijection by Proposition~\ref{homomF}. The map $F^H: 2^{Y\times Y}\to  2^{X\times X}$ is a perfect $G$-map and commutes with self-inverse maps $S_Y$ and $S_X$ by Proposition~\ref{mapshyper}. The restriction $F^H|_{b_Y G}$ is a perfect $G$-map of compactification $b_Y G$ onto the closure of $\i_{X}^{\Gamma} (G)$ in $(2^{X\times X}, \tau^{X}_F)$,  commutes with the correspondent restrictions of self-inverse maps $S_Y$ and $S_X$ and by the condition (PM) is a bijection of $\i_{Y}^{\Gamma} (G)$ onto $\i_{X}^{\Gamma} (G)$,  $F^H|_{b_Y G} (b_Y G\setminus \i_{Y}^{\Gamma}(G))=b_X G\setminus \i_{X}^{\Gamma}(G)$. Therefore, $F^H|_{b_Y G}|_{\i_{Y}^{\Gamma}(G)}$ is a homeomorphism. Hence, $\i_{X}^{\Gamma}$ is an embedding, $b_X G$ is a graph compactification of $G$ and $b_Y G\geq b_X G$. 

\medskip

(B) If $f: Y\to X$ is an elementary $G$-map, then invariance of $0_K$ follows from the invariance of the set $K$ under the action $G\curvearrowright Y$ and the set $(Y\times K)\cup (K\times Y)\subset Y\times Y$ under the self-inverse map $s_Y$.

If $b_X G$ is a graph compactification of $G$ ($\i_{X}^{\Gamma}: G\to (2^{X\times X}, \tau^X_F)$ is an embedding) and the map $F^H|_ {b_Y G}: b_Y G\to b_X G$ of compactifications is an elementary $G$-map and commutes with the correspondent restrictions of self-inverse maps $S_Y$ and $S_X$, then only elements from $b_{Y} G$ which belong  $(Y\times K)\cup (K\times Y)$ if any, are identified to the one-point set, and the condition (EPM) is valid.

Since the condition (EPM) yields the condition (PM), only elementarity of $F^H|_ {b_Y G}$ must be checked. The map $F^H|_ {b_Y G}: b_Y G\to b_X G$ is a bijection on $b_Y G\setminus 0_K$ due to the condition (EPM). Hence, if $|0_K|\leq 1$, then $F^H|_ {b_Y G}$ is a bijection (compactifications $b_Y G$ and $b_X G$ are equivalent), if $|0_K|>1$, then $F^H|_ {b_Y G}$ is an an elementary map of compactifications of $G$. 
\end{proof}

\begin{thm}\label{restrmap}
Let a topological group $G$ be $\tau_g$-representable in locally compact spaces $Y$ and $X$, $j: X\to Y$ be a $G$-embedding of $G$-spaces $(G, X,  \curvearrowright)$ and $(G, Y,  \curvearrowright)$, $j(X)$ be open in $Y$,  and $b_Y G$ be a graph compactification of $G$.

{\rm(A)} The set 
$$0_{Y\setminus X}=\{R\in  b_{Y} G\ |\ R\subset ((Y\times (Y\setminus j(X)))\cup ((Y\setminus j(X))\times Y))\}$$
is invariant under the action $\theta^H_{Y\uparrow}$ and the self-inverse map $S_Y$ {\rm(}and, hence, is invariant under the action $\theta^H_{Y\rightarrow}${\rm)}; $b_X G$ is a graph compactification of $G$ and $b_Y G\geq b_X G$ iff the condition 
$$\forall\ g\in G,\ \forall\ R\in b_Y G\  (R\cap (j(X)\times j(X))=\i_{Y}^{\Gamma} (g)\cap (j(X)\times j(X)))\ \Longrightarrow\ (R=\i_{Y}^{\Gamma} (g))\eqno{\rm(R)}$$
is valid.

\medskip

{\rm(B)} $b_X G$ is a graph compactification of $G$ and the map $H_{\dashv}|_ {b_Y G}: b_Y G\to b_X G$ of compactifications is an elementary $G$-map and commutes with self-inverse maps $S_Y$ and $S_X$ iff  
$$\forall\ R, S\in b_{Y} G\ (R\cap (j(X)\times j(X))=S\cap (j(X)\times j(X))\ne\emptyset)\ \Longrightarrow\ (R=S).\eqno{\rm(ER)}$$ 
In this case $H_{\dashv}|_ {b_Y G}(0_{Y\setminus X})=\{\emptyset\}$ if $0_{Y\setminus X}\ne\emptyset$ and a homeomorphism if $|0_{Y\setminus X}|\leq 1$.
\end{thm}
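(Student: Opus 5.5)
I will model the argument on the proof of Theorem~\ref{mapcompVel}, with the restriction map $H_{\dashv}$ of Proposition~\ref{restrhyper} playing the role of $F^H$. Identify $X$ with $j(X)$, so that $X$ is an open invariant subset of $Y$, $s_Y(X\times X)=X\times X$, and $\i_X^{\Gamma}=H_{\dashv}\circ\i_Y^{\Gamma}$.

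\emph{Invariance of $0_{Y\setminus X}$.} The set $Y\setminus j(X)$ is $G$-invariant, so $T=(Y\times(Y\setminus j(X)))\cup((Y\setminus j(X))\times Y)$ is invariant under $\theta_{Y\uparrow}$ and under $s_{Y\times Y}$. Since $b_YG$ is invariant under $\theta^H_{Y\uparrow}$ and $S_Y$, the set $0_{Y\setminus X}=b_YG\cap T^+$ is invariant under both. Invariance under $\theta^H_{Y\rightarrow}=(\theta^H_{Y\uparrow})^*$ then follows from Corollary~\ref{actionXXYY}.

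\emph{Part (A).} For necessity, if $b_XG$ is a graph compactification, then by Proposition~\ref{restrhyper} the map $H_{\dashv}|_{b_YG}:b_YG\to\cl\,\i_X^{\Gamma}(G)=b_XG$ is a continuous $G$-map extending the identity of $G$, i.e.\ a map of compactifications. Such a map sends remainder onto remainder (\cite[Lemma 3.5.6]{Engelking}), so $H_{\dashv}(R)=H_{\dashv}(\i_Y^{\Gamma}(g))$ forces $R=\i_Y^{\Gamma}(g)$; this is (R). For sufficiency, $H_{\dashv}|_{b_YG}$ is a perfect surjection onto $\cl\,\i_X^{\Gamma}(G)$, it is a $G$-map, and it commutes with the self-inverse maps. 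By (R) and the injectivity of $\i_X^{\Gamma}$, the full preimage of $\i_X^{\Gamma}(G)$ under it is exactly $\i_Y^{\Gamma}(G)$. The restriction of a perfect map to a full preimage is perfect, and a bijective perfect map is a homeomorphism, so $\i_Y^{\Gamma}(G)\to\i_X^{\Gamma}(G)$ is a homeomorphism. Since $\i_Y^{\Gamma}$ is a topological embedding, so is $\i_X^{\Gamma}$; hence $b_XG$ is a graph compactification and $b_YG\ge b_XG$.

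\emph{Part (B).} The relevant fiber is $H_{\dashv}^{-1}(\emptyset)\cap b_YG=0_{Y\setminus X}$, by the description in Proposition~\ref{restrhyper}(A). Condition (ER) says precisely that $H_{\dashv}|_{b_YG}$ is injective off this fiber. Therefore (ER) implies (R): note that $\i_Y^{\Gamma}(g)\cap(X\times X)$ is the graph of $g|_X$, which is nonempty. So (A) gives the graph compactification, and the map is elementary with the single nontrivial fiber $0_{Y\setminus X}$ sent to $\{\emptyset\}$, or a homeomorphism if $|0_{Y\setminus X}|\le1$. The $G$-equivariance and the commutation with $S_Y,S_X$ come from Proposition~\ref{restrhyper} and Proposition~\ref{subsethyperF}(C),(D). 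Conversely, if the map is elementary, its only non-singleton fiber must be the one over $\emptyset$. This requires checking that $\emptyset\in b_XG$ when $0_{Y\setminus X}\neq\emptyset$, which is immediate since the image of $0_{Y\setminus X}$ is $\{\emptyset\}$. Hence elements with a nonempty common trace on $X\times X$ coincide, which is (ER).

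\emph{Expected obstacle.} The delicate step is the converse in (B): one must rule out that the nontrivial fiber lies over some nonempty relation. The argument is that any fiber over a nonempty $S\in b_XG$ has distinct elements with a common nonempty trace, so an elementary map whose collapsed fiber lies over $\emptyset$ must be injective there. One must also correctly treat the degenerate case $|0_{Y\setminus X}|\le1$, where the map is a homeomorphism.
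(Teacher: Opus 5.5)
You follow the paper's proof for the invariance of $0_{Y\setminus X}$, for both directions of (A), and for the implication from (ER) in (B), and that part is fine. You also make explicit what the paper's ``therefore \dots\ is a homeomorphism'' tacitly uses: the restriction of the perfect map $H_{\dashv}|_{b_YG}$ to the full preimage of the copy of $G$ is perfect, hence a homeomorphism since it is bijective.

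The gap is in the converse of (B). An elementary map has at most one non-singleton fiber, and the fiber of $H_{\dashv}|_{b_YG}$ over $\emptyset$ is exactly $0_{Y\setminus X}$. So you can conclude that all fibers over nonempty relations are singletons only when $|0_{Y\setminus X}|\ge 2$. When $|0_{Y\setminus X}|\le 1$, your argument assumes that the collapsed fiber lies over $\emptyset$, which is what has to be shown. The remark that $\emptyset\in b_XG$ does not address this. Equivariance only shows that the exceptional fiber lies over a point of $b_XG$ fixed by $\theta^H_{X\uparrow}$, $\theta^H_{X\rightarrow}$ and $S_X$, and such a point need not be $\emptyset$.

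The step genuinely fails. Let $G=\mathbb Z$ (discrete) act by translations on $X=\alpha\mathbb Z$ and on the two-point compactification $K=\mathbb Z\cup\{-\infty,+\infty\}$, and put $Y=X\sqcup K$.
\begin{itemize}
\item All spaces are compact, $X$ is clopen and invariant in $Y$, and all $g$-topologies are discrete, so every hypothesis holds.
\item As $n\to\pm\infty$, the graphs of $n$ on $X$ converge to the same set $A_0=(X\times\{\infty\})\cup(\{\infty\}\times X)$.
\item On $K$ they converge to $L_+=(\{-\infty\}\times K)\cup(K\times\{+\infty\})$ as $n\to+\infty$, and to $L_-=(K\times\{-\infty\})\cup(\{+\infty\}\times K)$ as $n\to-\infty$.
\end{itemize}
Hence $b_YG$ is the set of graphs together with $A_0\cup L_+$ and $A_0\cup L_-$, while $b_XG$ is the set of graphs together with $A_0$. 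Also $0_{Y\setminus X}=\emptyset$. The map $H_{\dashv}|_{b_YG}$ is an elementary $G$-map commuting with $S_Y,S_X$, and its only non-singleton fiber $\{A_0\cup L_+,A_0\cup L_-\}$ lies over $A_0\ne\emptyset$. So (ER) fails.

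The paper's proof makes the same unsupported assertion (``only elements \dots\ which belong to \dots\ are identified''). The ``only if'' half of (B) is therefore correct only under one of two readings:
\begin{itemize}
\item ``elementary'' means, as in the parenthetical of Theorem~\ref{mapcompVel}(B), that $H_{\dashv}|_{b_YG}$ is injective on $b_YG\setminus 0_{Y\setminus X}$; or
\item one adds the hypothesis $|0_{Y\setminus X}|\ge 2$.
\end{itemize}
Under either reading your converse is immediate, but you should state the reading as the interpretation of the statement rather than claim to derive it.
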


\begin{proof}
(A) The invariance of $0_{Y\setminus X}$ follows from the invariance of the subset $Y\setminus j(X)$ of $Y$ and the set $(Y\times (Y\setminus j(X)))\cup ((Y\setminus j(X))\times Y))\subset Y\times Y$ under the self-inverse maps $s_Y$.

If $b_X G$ is a graph compactification of $G$ ($\i_{X}^{\Gamma}: G\to (2^{X\times X}, \tau^X_F)$ is an embedding), then $H_{\dashv}|_{b_Y G}: b_Y G\to b_X G=b_{j(X)} G$ is  the map of compactifications of $G$ and the condition (R) is valid.

The map $\i_{X}^{\Gamma}$ is a $G$-map, commutes with involution on $G$ and self-inverse map $S_X$, and a bijection by Proposition~\ref{homomF}. The map $H_{\dashv}: 2^{Y\times Y}\to  2^{X\times X}$ is a perfect $G$-map and commutes with self-inverse maps $S_Y$ and $S_X$ by Proposition~\ref{restrhyper}. The restriction $H_{\dashv}|_{b_Y G}$ is a perfect $G$-map of compactification $b_Y G$ onto the closure of $\i_{X}^{\Gamma} (G)$ in $(2^{X\times X}, \tau^{X}_F)$ and by the condition (R) is a bijection of $\i_{Y}^{\Gamma} (G)$ onto  $\i_{X}^{\Gamma} (G)$,  $H_{\dashv}|_{b_Y G} (b_Y G\setminus \i_{Y}^{\Gamma}(G))=b_X G\setminus \i_{X}^{\Gamma}(G)$. Therefore, $H_{\dashv}|_{b_Y G}|_{\i_{Y}^{\Gamma}(G)}$ is a homeomorphism. Hence, $\i_{X}^{\Gamma}: G\to (2^{X\times X}, \tau^X_F)$ is an embedding, $b_X G$ is a graph compactification of $G$, $H_{\dashv}|_{b_Y G}$ is a map of compactifications of $G$ and $b_Y G\geq b_X G$. 

\medskip

(B) If $b_X G$ is a graph compactification of $G$ ($\i_{X}^{\Gamma}: G\to (2^{X\times X}, \tau^X_F)$ is an embedding) and the map $H_{\dashv}|_ {b_Y G}: b_Y G\to b_X G=b_{j(X)} G$ of compactifications is an elementary $G$-map, then only sets elements $b_Y G$ which belong to $(Y\times (Y\setminus j(X)))\cup ((Y\setminus j(X))\times Y)$ if any, are identified to the one-point set and the condition (ER) is valid.

Since the condition (ER) yields the condition (R), only elementarity of $H_{\dashv}|_ {b_Y G}$ must be checked. The map $H_{\dashv}|_ {b_Y G}: b_Y G\to b_X G=b_{j(X)} G$ is a bijection on $b_Y G\setminus 0_{Y\setminus X}$ due to the condition (ER). Hence, if $|0_{Y\setminus X}|\leq 1$, then $H_{\dashv}|_ {b_Y G}$ is a bijection (compactifications $b_Y G$ and $b_X G$ are equivalent), if $|0_{Y\setminus X}|>1$, then $H_{\dashv}|_ {b_Y G}$ is an an elementary $G$-map of graph compactifications of $G$. 
\end{proof}

\begin{rem}\label{corrcond}
{\rm The conditions (PM) and (R) in Theorems~\ref{mapcompVel} and~\ref{restrmap} respectively, can be examined as a property of the unique extentions of graphs of homeomorphisms in the closure of $\i_{Y}^{\Gamma} (G)$ in the hyperspace $2^{Y\times Y}$.

\medskip

If a topological group $G$ is $\tau_g$-representable in compact spaces $Y$ and $X$ and $f: Y\to X$ is a perfect elementary onto $G$-map then (taking into account  Lemma~\ref{proj} below) the condition (EPM) is equivalent to the condition 
$$\mbox{if}\ R\cap ((Y\setminus K)\times (Y\setminus K))=S\cap  ((Y\setminus K)\times (Y\setminus K))\ne\emptyset\ \mbox{and}\ F^H (R)=F^H (S)\Longrightarrow\ R=S.$$ 

\medskip

Evidently, the condition (R) imlies the condition (PM) for a locally compact space $Y$, $X=Y\setminus K$, $K$ is compact and $f: Y\to X/K$ is an elementary map.

Let a topological group $G$ be $\tau_g$-representable in a locally compact space $Y$, $b_Y G$ be a graph compactification of $G$, $K$ is a compact invariant subset of $Y$, $f: Y\to X=Y/K$ is an elementary map and $G$ is $\tau_g$-representable in (a locally compact space) $X$. 

Then $H^{X}_{\dashv}\circ F^H=H^{Y}_{\dashv}$, where $H^{X}_{\dashv}:  (2^{X\times X}, \tau_F)\to (2^{(X\setminus f(K))\times (X\setminus f(K))}, \tau_F)$, $H^Y_{\dashv}:  (2^{Y\times Y}, \tau_F)\to (2^{(Y\setminus K)\times (Y\setminus K)}, \tau_F)$ and $F^H:  (2^{Y\times Y}, \tau_F)\to (2^{X\times X}, \tau_F)$ are maps induced by the corresponding restrictions and the map $f$.

The condition (R) holds for $Y$ and $Y\setminus K$ iff the condition (PM) holds for $Y$ and $f$, and the condition (R) holds for $X$ and $X\setminus f(K)$.

The condition (ER) holds for $Y$ and $Y\setminus K$ iff the condition (EPM) holds for $Y$ and $f$, and the condition (ER) holds for $X$ and $X\setminus f(K)$. }
\end{rem}

\begin{cor}\label{emgroup}
Let a topological group $G$ be $\tau_g$-representable in a locally compact space $X$. The following conditions are equivalent
\begin{itemize}
\item[(a)]  $b_X G$ is a graph compactification of $G$, 
\item[(b)] for any $G$-compactification $b X$ of $X$ $b_{b X} G$ is a graph compactification of $G$ and the condition {\rm(R)} for $X$ and $b X$ holds,
\item[(c)] $\exists$ $G$-compactification $b X$ of $X$ such that  $b_{b X} G$ is a graph compactification of $G$ and the condition {\rm(R)} for $X$ and $b X$  holds.
\end{itemize}
\end{cor}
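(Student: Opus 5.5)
The plan is to prove the cycle (a)$\Rightarrow$(b)$\Rightarrow$(c)$\Rightarrow$(a). Two facts do the main work. First, for every $G$-compactification $bX$, Lemma~\ref{l11} shows that $G$ is $\tau_g$-representable in $bX$. Then Lemma~\ref{lemcompemb} shows that $\i^{\Gamma}_{bX}$ is a topological isomorphism onto its image. Hence $b_{bX} G$ is always a graph compactification. Second, $X$ embeds into $bX$ as an open invariant subset, and the embedding $j$ is a $G$-map. Openness holds because a locally compact space is open in every compactification of it. Invariance is part of the definition of a $G$-extension. This puts us exactly in the setting of Theorem~\ref{restrmap}, with $Y=bX$.

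\emph{(a)$\Rightarrow$(b).} Fix any $bX\in\mathbb{GC}(X)$. By the first fact, $b_{bX}G$ is a graph compactification. It remains to verify condition (R). By (a), $b_X G=b_{j(X)}G$ is a graph compactification. So Theorem~\ref{mapcompV}(b) applies and gives the map of compactifications $H_{\dashv}|_{b_{bX}G}\colon b_{bX}G\to b_X G$. By the necessity part of Theorem~\ref{restrmap}(A), condition (R) holds. Spelled out, a map of compactifications sends points of the remainder to points of the remainder, since a perfect map extending a homeomorphism between dense subspaces does so. Now suppose $R\in b_{bX}G$ and $H_{\dashv}(R)=\i^{\Gamma}_X(g)=H_{\dashv}(\i^{\Gamma}_{bX}(g))$. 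Then $R$ cannot lie in the remainder, so $R=\i^{\Gamma}_{bX}(h)$ for some $h\in G$. Injectivity of $\i^{\Gamma}_X$ then gives $h=g$.

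\emph{(b)$\Rightarrow$(c).} This is immediate once we know $\mathbb{GC}(X)\neq\emptyset$. The set is nonempty because $\alpha X\in\mathbb{GC}(X)$, as noted in \S\ref{class}. Alternatively, one can use the maximal $G$-compactification $\beta_G X$, which exists since $X$ is $G$-Tychonoff by Definition~\ref{corepres}.

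\emph{(c)$\Rightarrow$(a).} Take $bX$ as in (c) and apply the sufficiency part of Theorem~\ref{restrmap}(A) with $Y=bX$. Its hypotheses hold: $G$ is $\tau_g$-representable in $bX$ and in $X$, $j(X)$ is open and invariant, $b_{bX}G$ is a graph compactification, and (R) holds. The theorem then gives that $b_X G$ is a graph compactification, with $b_{bX}G\geq b_XG$.

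The main obstacle is the step in (a)$\Rightarrow$(b) where (R) is extracted from the existence of the map of compactifications. It needs the standard fact that a map of compactifications carries the remainder into the remainder. One should check that this applies to $H_{\dashv}|_{b_{bX}G}$. It does, because this map restricted to $\i^{\Gamma}_{bX}(G)$ is the topological isomorphism onto $\i^{\Gamma}_X(G)$ given by Theorem~\ref{propPreimage}(b), and the cited result~\cite[Lemma 3.5.6]{Engelking}-type argument then applies. Everything else is a direct invocation of earlier results.
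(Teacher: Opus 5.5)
Your proposal is correct and follows the paper's proof essentially step for step. In (a)$\Rightarrow$(b) you use the map of compactifications $H_{\dashv}|_{b_{bX}G}\colon b_{bX}G\to b_XG$ and the fact that it sends the remainder into the remainder (because it restricts to a homeomorphism on $\i^{\Gamma}_{bX}(G)$); (b)$\Rightarrow$(c) is immediate; and (c)$\Rightarrow$(a) is the sufficiency part of Theorem~\ref{restrmap}(A). The only minor differences are that you get ``$b_{bX}G$ is a graph compactification'' directly from Lemmas~\ref{l11} and~\ref{lemcompemb}, rather than from Theorem~\ref{mapcompV}(b), and that you explicitly check $\mathbb{GC}(X)\neq\emptyset$ for (b)$\Rightarrow$(c).
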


\begin{proof} 
(a) $\Longrightarrow$ (b). Take any $G$-compactification $(G, bX,  \curvearrowright)$ of $(G, X,  \curvearrowright)$. Since the embedding $j: X\to b X$ is a $G$-map and $j(X)$ is open in $b X$, by Proposition~\ref{mapcompV} (b) $b_{b X} G$ is a graph compactification of $G$ and $H_{\dashv}|_{b_{b X} G}$ is the $G$-map of compactifications $b_{b X} G$ and $b_X G$ of $G$. 

Since $H_{\dashv}|_{\i^{b X}_{\Gamma} (G)}$ is a homeomorphism, ($H_{\dashv}|_{b_{b X} G} (b_{b X} G\setminus\i_{b X}^{\Gamma} (G))=b_X G\setminus\i_{X}^{\Gamma} (G)$~\cite[Lemma 3.5.6]{Engelking}. 
Hence, if $R\in b_{b X} G$ and $g\in G$ are such that $R\cap (j(X)\times j(X))=\i_{b X}^{\Gamma} (g)\cap (j(X)\times j(X))$, then $ R=\i_{b X}^{\Gamma} (g)$ and  the condition {\rm(R)} holds.

(b) $\Longrightarrow$ (c) is evident. (c) $\Longrightarrow$ (a) follows from Theorem~\ref{restrmap}. 
\end{proof}

\begin{ex}
{\rm For the group $\Hom (C)$, $C\subset [0, 1]$ is the Cantor set, in the c-o.t., let a subgroup $G$ be the stabilizer of the point 1. Then $G$ is $\tau_g$-representable on $X=C\setminus\{1\}$ by item (iii) of  Remark~\ref{rem1} and $\alpha X=C$. In order to check that the condition {\rm(R)} for $X$ and $\alpha X$ doesn't hold it is enough to show that the set $R=\{(x, x)\ |\ x\in\alpha X\}\bigcup\{(0, 1)\}$ is in $b_{\alpha X} G$. 

Without loss of generality, take a nbd 
$$[V_1\times V_1, \ldots, V_n\times V_n, V_1\times V_n],\ n\in\mathbb N,$$
of $R$, where $\{V_i\ |\ i=1,\ldots, n\}$ is a partition of $X$, each element is a clopen nonempty set homeomorphic $C$.   

Take partitions $\{U_1,U_2, U_3\}$ of $V_1$ and $\{W_1, W_2, W_3\}$ of $V_n$, , each element is a clopen nonempty set homeomorphic $C$. Put    
$g\in G$ be identity on $U_1\bigcup\bigcup\limits_{i=2}^{n-1} V_i\bigcup W_3$; $g: U_2\to U_2\cup U_3$,  $g: U_3\to W_1$, $g: W_1\cup W_2\to W_2$ be gomeomorphisms. Then $g\in [V_1\times V_1, \ldots, V_n\times V_n, V_1\times V_n]$. Hence, $R\in b_{\alpha X} G$.

\medskip

Note, that for an $h$-homogeneous compactum (in particular, Cantor set) $X$, $b_X \Hom (X)=\{R\in \CL (X\times X)\ |\ \pr_1 R=\pr_2 R=X\}$~\cite{usp2001}.}
\end{ex}

\begin{lem}\label{proj}~\cite{usp2001}
Let a topological group $G$ be $\tau_g$-representable in a compact space $X$ and $\pr_1, \pr_2$ be projections of $X\times X$ on the first and the second factor respectively. Then $\pr_1(R)=\pr_2(R)=X$ $\forall\ R\in b_{X} G$.
\end{lem}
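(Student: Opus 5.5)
The projection property holds on the graph $\i_X^{\Gamma}(G)$, so the plan is to show it is preserved under Vietoris limits in $\CL(X\times X)$. Since $X$ is compact, $\tau_F=\tau_V$ and $\{\emptyset\}$ is isolated in $2^{X\times X}$. For every $g\in G$ the graph $\i_X^{\Gamma}(g)$ is nonempty, so $b_X G=\cl\,\i_X^{\Gamma}(G)$ lies in the clopen set $\CL(X\times X)$. In particular every $R\in b_X G$ is a nonempty closed subset of $X\times X$.

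For every $g\in G$ we have $\pr_1(\i_X^{\Gamma}(g))=X$ because the graph is defined over all of $X$. We also have $\pr_2(\i_X^{\Gamma}(g))=g(X)=X$ because $g$ is a homeomorphism of $X$. Alternatively, one can use the involution: $S_X(\i_X^{\Gamma}(g))=\i_X^{\Gamma}(g^{-1})$ by Proposition~\ref{homomF}, $S_X$ is continuous, and $b_X G$ is $S_X$-invariant. Since $\pr_2=\pr_1\circ s_{X\times X}$, it then suffices to prove $\pr_1(R)=X$ for all $R\in b_X G$.

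So fix $R\in b_X G$ and $x\in X$, and suppose $x\notin\pr_1(R)$. Then $R\cap(\{x\}\times X)=\emptyset$. The sets $R$ and $\{x\}\times X$ are disjoint and compact, so by the tube lemma there is an open nbd $O$ of $x$ with $R\subset (X\setminus\cl\, O)\times X$, or more simply $R\cap(\cl\, O\times X)=\emptyset$. Put $W=(X\times X)\setminus(\cl\, O\times X)$. Then $W$ is open and $W^+$ is a Vietoris nbd of $R$. Since $R$ is in the closure of $\i_X^{\Gamma}(G)$, some $g\in G$ has $\i_X^{\Gamma}(g)\in W^+$. But $(x,g(x))\in\i_X^{\Gamma}(g)\setminus W$, which is a contradiction. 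Hence $\pr_1(R)=X$, and by the symmetry argument above also $\pr_2(R)=X$.

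An equivalent formulation is that the map $\CL(X\times X)\to\CL(X)$, $R\mapsto\pr_1(R)$, is Vietoris continuous by the Remark after Proposition~\ref{maphyperF}, and so the set $\{R : \pr_1 R=X\}$ is closed. The only delicate point is the tube-lemma step, and that uses nothing beyond compactness of $X$. I expect no real obstacle.
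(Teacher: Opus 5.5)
Your argument is correct. The paper does not prove this lemma itself; it quotes it from \cite{usp2001}, so there is no internal proof to compare against. Your device is an upper Vietoris neighbourhood $W^+$ of a putative bad $R$ that contains the graph of no element of $G$. This is exactly the device the paper uses in the proof of Theorem~\ref{partialcases} to show $(\infty,\infty)\in R$ for every $R\in b_{\alpha X}G$. Your alternative formulation, via continuity of $R\mapsto \pr_1(R)$ on $(2^{X\times X},\tau_V)$ and closedness of the point $\{X\}$ in $2^X$, is also valid.

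Two minor remarks.

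\begin{itemize}
\item The tube lemma and the closure $\cl\,O$ are not needed. The condition $x\notin\pr_1(R)$ already says $R\subset W=(X\setminus\{x\})\times X$, and $W$ is open because points of $X$ are closed.
\item Compactness of $X$ is used exactly in the claim that $W^+$ is a neighbourhood of $R$. In Fell terms this requires the complement $\{x\}\times X$ of $W$ to be compact, and this is why the statement fails for locally compact non-compact $X$. For an infinite discrete $X$ and ultratransitive $G$, Proposition~\ref{prop2} shows that $b_X G$ contains $\emptyset$ and the graphs of proper partial bijections.
\end{itemize}
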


\begin{rem}\label{coincompB}
{\rm If in the assumptions of Theorem~\ref{restrmap} $Y=\alpha X$, then the set $O_{Y\setminus X}$ is either a one-point set or an empty set by Lemma~\ref{proj}. Hence, if the condition (ER) is fulfilled, then $H_{\dashv}|_ {b_{\alpha X} G}: b_{\alpha X} G\to b_X G$ is a homeomorphism.}
\end{rem}

\begin{que}
Does there exist an example a topological group $G$ $\tau_g$-representable in a locally compact space $X$ such that for $X$ and the Alexandroff one-point compactification $\alpha X$ the condition {\rm(R)} holds but the the condition {\rm(ER)} doesn't hold? 
\end{que}

The family of maps $\mathcal F: X\to Y$ is {\it topologically equicontinuous}~\cite{Royden} (see, also,~\cite{Corb}) if $\forall\ x\in X$, $\forall\ y\in Y$ and $\forall$ nbd $O_y$ of $y$ $\exists$ nbds $U_x$ and $V_y$ of $x$ and $y$ respectively, such that 
$$(f(U_x)\cap V_y\ne\emptyset)\ \Longrightarrow\ (f(U_x)\subset O_y),\ f\in\mathcal F.$$

\begin{thm}\label{partialcases}
Let a topological group $G$ be $\tau_g$-representable in a locally compact space $X$ {\rm(}$(G, X, \theta)$ is a $G$-Tychonoff space{\rm)}. If 
\begin{itemize}
\item[{\rm (a)}] the family of maps $\theta^g: X\to X$, $g\in G$, is topologically equicontinuous, or 
\item[{\rm (b)}] $X$ is a locally compact, locally connected space,  
\end{itemize}
then for the Alexandroff one-point $G$-compactification $\alpha X=X\cup\{\infty\}$ of $X$ the condition {\rm(ER)} holds. 

\medskip

Hence, $\i_X^{\Gamma}: G\to (2^{X\times X}, \tau^{X}_F)$ is an embedding and $b_{\alpha X} G=b_X G$. 
\end{thm}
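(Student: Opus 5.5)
By Lemma~\ref{l11}, $G$ is $\tau_g$-representable in $\alpha X$. Since $\alpha X$ is compact, Lemma~\ref{lemcompemb} shows that $b_{\alpha X}G$ is a graph compactification of $G$. Theorem~\ref{restrmap}(B) then reduces everything to verifying condition (ER) for $Y=\alpha X$ and the open invariant subset $X$. Once (ER) holds, Remark~\ref{coincompB} gives that $H_{\dashv}|_{b_{\alpha X}G}$ is a homeomorphism, because $0_{\alpha X\setminus X}$ has at most one point by Lemma~\ref{proj}. Hence $\i_X^{\Gamma}$ is an embedding and $b_{\alpha X}G=b_XG$.

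To verify (ER), take $R,S\in b_{\alpha X}G$ with $R\cap(X\times X)=S\cap(X\times X)\neq\emptyset$. We must show that $R$ and $S$ agree on the "cross" $(\{\infty\}\times\alpha X)\cup(\alpha X\times\{\infty\})$. By the symmetry $S_{\alpha X}$, which preserves $b_{\alpha X}G$ and the cross, it suffices to determine the fibre $R(\infty)=\{y\,|\,(\infty,y)\in R\}$ from $R_0=R\cap(X\times X)$. I intend to prove a closure formula:
\[
R=\cl_{\alpha X\times\alpha X}R_0\ \cup\ \{(\infty,\infty)\},
\]
possibly with the extra point $(\infty,\infty)$ forced by Lemma~\ref{proj}. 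Lemma~\ref{proj} says that $\pr_1R=\pr_2R=\alpha X$, so the point $(\infty,\infty)$ either lies in $R$ or is replaced by some pair $(\infty,y)$ or $(x,\infty)$, and the formula has to account for which occurs. Concretely, suppose $(\infty,y)\in R$ with $y\in X$. I will show that $(\infty,y)$ is a limit of points of $R_0$, and that $(x,\infty)$ is likewise a limit of points of $R_0$ whenever $x\in X$. Since $R$ and $S$ have the same $R_0$, they then coincide off $(\infty,\infty)$. For the point $(\infty,\infty)$ itself: if it lies in neither relation, both are determined by $R_0$. If it lies in one but not the other, projection and limit arguments of the same kind should give a contradiction, for instance by showing that $(\infty,\infty)\in R$ exactly when $R_0$ is not "compactly bounded" in both coordinates.

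The key local step, and the main obstacle, is the following. Take nets $g_\lambda\in G$ with $\Gamma(g_\lambda)\to R$ in the Vietoris topology, and $(\infty,y)\in R$ with $y\in X$. There are points $x_\lambda\to\infty$ with $g_\lambda x_\lambda\to y$. I need to show that points near $y$ are hit by $R_0$ at first coordinates lying in arbitrarily small neighbourhoods of $\infty$, and that these pairs really survive in $R_0$ rather than escaping.

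In case (a), I fix a neighbourhood $O_y$ and take $U_\infty$ and $V_y$ given by topological equicontinuity, applied in $\alpha X$ (I extend the property to $\infty$ using compact complements). Whenever $g_\lambda(U_\infty)$ meets $V_y$, we get $g_\lambda(U_\infty)\subset O_y$. The open set $U_\infty\cap X\neq\emptyset$ therefore maps into $O_y$, so limits contain pairs in $(U_\infty\cap X)\times\cl O_y\subset X\times X$. This gives $(\infty,y)\in\cl R_0$.

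In case (b), I take a connected neighbourhood $W$ of $y$ with compact closure, and I write the complement of a compact connected neighbourhood of $x_\lambda$'s range as a union of components. Local connectedness makes the components of $\alpha X\setminus K$ open. Since $g_\lambda$ is a homeomorphism, it maps a connected neighbourhood of $x_\lambda$ to a connected set containing $g_\lambda x_\lambda$. Either this image stays inside $W$, which gives points of $R_0$ near $(\infty,y)$, or it crosses $\partial W$, which is compact in $X$; this too yields points of $R_0$, over first coordinates near $\infty$, with second coordinates in the compact set $\cl W$.

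I expect the hardest part to be organizing this dichotomy uniformly along the net, and handling $(\infty,\infty)$. For the latter I would first try Lemma~\ref{proj} together with the observation that, if $R_0$ has projections omitting no neighbourhood of $\infty$, continuity forces $(\infty,\infty)$ into every limit.
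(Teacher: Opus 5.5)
Your reduction is fine: Lemma~\ref{l11}, Lemma~\ref{lemcompemb}, Theorem~\ref{restrmap}(B) and Remark~\ref{coincompB} do reduce everything to (ER). The gap is in the core claim. You want every point of $R$ on the cross $(\{\infty\}\times\alpha X)\cup(\alpha X\times\{\infty\})$ to be a limit of points of $R_0=R\cap(X\times X)$, giving $R=\cl R_0\cup\{(\infty,\infty)\}$. This is false already in the simplest case satisfying both (a) and (b).

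Let $X$ be infinite discrete and $G=({\rm S}(X),\tau_p)$, and fix $x_0,y_0\in X$. By \S~\ref{ultdiscrete}, the following relation lies in $b_{\alpha X}G$:
$$R=\{(x_0,y_0),(\infty,\infty)\}\cup\bigl((X\setminus\{x_0\})\times\{\infty\}\bigr)\cup\bigl(\{\infty\}\times(X\setminus\{y_0\})\bigr).$$
For $x\ne x_0$, the set $\{x\}\times\alpha X$ is a neighbourhood of $(x,\infty)\in R$ containing no point of $R_0=\{(x_0,y_0)\}$. So no net argument can make these cross points ``survive'' as limits of $R_0$. They are determined by what $R_0$ omits, not by its closure.

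The same example breaks your step in (a): topological equicontinuity does not extend to $\infty$. Every neighbourhood $U_\infty$ has some $g$ with $y\in g(U_\infty)$, and $g(U_\infty)$ is infinite, so it is not contained in $O_y=\{y\}$. The hypothesis only concerns points of $X$. Your case analysis for $(\infty,\infty)$ is also unnecessary. It lies in every $R\in b_{\alpha X}G$, because every graph contains it and $\{R\mid(\infty,\infty)\in R\}$ is closed.

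What is needed are rigidity arguments rather than approximation.

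In (a), apply topological equicontinuity at points $x,y\in X$. Suppose $(x,y),(x,z)\in R$ with $y\in X$ and $z\ne y$. Choose disjoint $O_y,O_z$ and the corresponding $U_x$ and $V_y\subset O_y$. Then $(U_x\times V_y)^-\cap(U_x\times O_z)^-$ is a neighbourhood of $R$ containing no graph. Hence each vertical fibre over $x\in X$ is a single point, and by $S_{\alpha X}$ so is each horizontal fibre. With Lemma~\ref{proj} this gives $(x,\infty)\in R$ iff $x\notin\pr_1R_0$, and dually for $(\infty,y)$. So $R$ is determined by $R_0$.

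In (b), argue by contradiction. Suppose $R_0=S_0$ and $(x,\infty)\in S\setminus R$ with $x\in X$. Choose:
\begin{itemize}
\item a connected neighbourhood $U_x$ with compact closure in $X$;
\item an open $O\subset X$ with compact closure in $X$ such that $R\cap(\cl U_x\times\alpha X)\subset\cl U_x\times O$ and $R$ misses $\cl U_x\times\Bd O$.
\end{itemize}
Since $R_0=S_0$, the set $S$ meets $U_x\times O$, meets $U_x\times(\alpha X\setminus\cl O)$ (at $(x,\infty)$), and misses $\cl U_x\times\Bd O$. These open conditions define a neighbourhood of $S$ containing no graph, because a connected $g(U_x)$ meeting both $O$ and $\alpha X\setminus\cl O$ must meet $\Bd O$. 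Together with $(\infty,\infty)\in R\cap S$ and the symmetry, this yields (ER).
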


\begin{proof}
By Corollary~\ref{coinctopolog} and Lemma~\ref{l11} $G$ is $\tau_g$-representable in $\alpha X$. Therefore, by Lemma~\ref{lemcompemb} $b_{\alpha X} G$ is a graph compactification of $G$. 

Firstly, $\forall\ R\in b_{\alpha X} G$  $(\infty, \infty)\in R$. Indeed, if $(\infty, \infty)\not\in R$, then $\exists$ a nbd $O$ of $(\infty, \infty)$ such that for its closure $K$ in $\alpha X\times\alpha X$ one has $K\cap R=\emptyset$. Then $R\in ((\alpha X\times\alpha X)\setminus K)^+$. However, $((\alpha X\times\alpha X)\setminus K)^+\cap\i_X^{\Gamma} (G)=\emptyset$. Hence,  $R\not\in b_{\alpha X} G$. 

(a) $\forall\ R\in b_{\alpha X} G$ and $\forall\ x\in X$  $|(\{x\}\times\alpha X)\cap R|=1$ (the usage of the self-inverse map $S_X$ implies that $\forall\ R\in b_{\alpha X} G$ and $\forall\ x\in X$  $|(\alpha X\times\{x\})\cap R|=1$). 

By Lemma~\ref{proj}  $\forall\ R\in b_{\alpha X} G$  $\pr_1(R)=\pr_2(R)=\alpha X$. Hence,  $\forall\ R\in b_{\alpha X} G$ and $\forall\ x\in X$  $|(\{x\}\times\alpha X)\cap R|\geq 1$.

Assume that $(x, y), (x, z)\in R$, $y\ne z$, $R\in b_{\alpha X} G$, $ x\in X$ and $y\ne\infty$. There are disjoint nbds $O_z$ and $O_y$ of $z$ and $y$ respectively. Since the family of maps $\theta^g: X\to X$, $g\in G$, is topologically equicontinuous  $\exists$ nbds $U_x$ and $V_y\subset O_y$ of $x$ and $y$ respectively, such that 
$$(\theta^g(U_x)\cap V_y\ne\emptyset)\ \Longrightarrow\ (\theta^g(U_x)\subset O_y),\ g\in G.$$
Then $W=(U_x\times V_y)^-\cap (U_x\times O_z)^-$ is a nbd of $R$. However, $W\cap\i_{\alpha X}^{\Gamma} (G)=\emptyset$. The obtained contradiction shows that $\forall\ R\in b_{\alpha X} G$ and $\forall\ x\in X$  $|(\{x\}\times\alpha X)\cap R|=1$. 

\medskip

(b) Take $R, S\in  b_{\alpha X} G$ such that $(R\cap (X\times X))=(S\cap (X\times X))$. If $R\ne S$, then, without loss of generality, one can assume that $\exists\ x\in X$ such that $(x, \infty)\in S$ and $(x, \infty)\not\in R$.

Since $R\in\CL (X\times X)$, $(x, \infty)\not\in R$ and $X$ is a locally connected space, $\exists$ a connected nbd $U_x$ of $x$ and a nbd $O$ of $(\{x\}\times \alpha X)\cap R$ in $X$ such that $(\cl\, U_x\times\alpha X)\cap R\subset\cl\, U_x\times O$, $\alpha X\setminus\cl\, O$ is a nbd of $\{\infty\}$. Take $W=(U_x\times O)^-\cap (\cl\, U_x\times\Bd\, O)^+\cap (U_x\times (\alpha X\setminus\cl\, O))^-$, $S\in W$. Since $\forall\ g\in\i_X^{\Gamma} (G)$ $g (U_x)$ is connected, if $g (U_x)\cap O\ne\emptyset$ and $g (U_x)\cap (\alpha X\setminus\cl\, O)\ne\emptyset$, then $g (U_x)\cap\Bd\, O\ne\emptyset$. Therefore, $S\not\in b_{\alpha X} G$. 

These observations yeild that for the Alexandroff one-point compactification $\alpha X$ the condition {\rm(ER)} in (a) and (b) holds. By Corollary~\ref{emgroup} $\i_X^{\Gamma}$ is an embedding and by Remark~\ref{coincompB}  $b_{\alpha X} G=b_X G$.
\end{proof}

\begin{rem}\label{unifeqtopeq}
{\rm (A) An action $\theta: G\times (X, \mathcal L)\to  (X, \mathcal L)$ is {\it uniformly equicontinuous} if $\{\theta^g: X\to X,\ \theta^g(x)=\theta (g, x)\ |\ g\in G\}$  is a uniformly equicontinuous family of maps on a uniform space $(X, \mathcal L)$. Equivalently, for any $u\in\mathcal L$ there exists $v\in\mathcal L$  such that $gv\succ u$, for any $g\in G$. If the action  $G\curvearrowright (X, \mathcal L)$ is uniformly equicontinuous, then the t.p.c. $\tau_p$ is the least admissible group topology on $G$~\cite[Lemma 3.1]{Kozlov2022}. Moreover, $((G, \tau_p), X, \theta)$ is a $G$-Tychonoff space~\cite{Megr1984}. 

If an action $\theta: G\times (X, \mathcal L)\to  (X, \mathcal L)$ is uniformly equicontinuous, then  the family of maps $\theta^g: X\to X$, $g\in G$, is topologically equicontinuous, see, for example, \cite[Remark 3.1]{Corb}.

\medskip

(B) For a locally compact, locally connected separable metrizable space $X$ and a $\tau_g$-representable in $X$ group $G$, embeddability of $G$ into $(2^{X\times X}, \tau_F)$ in Theorem~\ref{partialcases} is proved in~\cite{Yamashita}.}
\end{rem}


\begin{thm}\label{suffcond}
Let a topological group $G$ be $\tau_g$-representable in a compact space $X$ {\rm(}$\mathcal U_X$ is the unique uniformity on $X${\rm)}. 
The following conditions are equivalent: 

{\rm(}a{\rm)} $\forall\ {\rm U}\in\mathcal U_X$ $\exists\ {\rm V}\in\mathcal U_X$ such that 
$$\mbox{if}\ (\i_X^{\Gamma} (f), \i_X^{\Gamma} (h))\in 2^{{\rm V}^2},\ f, h\in G,\ \mbox{then}\ \exists\ g\in G$$
$$\mbox{such that}\  (f(x), g(x))\in {\rm U},\  (g^{-1}(x), h^{-1}(x))\in {\rm U}\ \forall\ x\in X,\eqno{(\star)}$$ 

{\rm(}b{\rm)} $G$ is Roelcke precompact and $b_r G=b_X G$.
\end{thm}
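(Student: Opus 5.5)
The plan is to compare two uniformities on $G$. The first is the Roelcke uniformity $L\wedge R$. The second is the uniformity $\mathcal V_\Gamma$ induced on $G$ by the embedding $\i_X^{\Gamma}$ (Lemma~\ref{lemcompemb}) into the compactum $2^{X\times X}$; a base of $\mathcal V_\Gamma$ consists of the traces $\{(f,h)\ |\ (\i_X^{\Gamma}(f),\i_X^{\Gamma}(h))\in 2^{{\rm V}^2}\}$, ${\rm V}\in\mathcal U_X$. Since $\mathcal V_\Gamma$ is the trace of the unique uniformity of a compactum, it is totally bounded, and its completion is $b_XG=\cl\,\i_X^{\Gamma}(G)$. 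By Proposition~\ref{homomF} the map $\i_X^{\Gamma}\colon (G,L\wedge R)\to 2^{X\times X}$ is uniformly continuous, so $\mathcal V_\Gamma\subset L\wedge R$ always; this is the uniform form of Corollary~\ref{closure}. Hence (b) is equivalent to the reverse inclusion $L\wedge R\subset\mathcal V_\Gamma$. Indeed, if $L\wedge R=\mathcal V_\Gamma$, then $L\wedge R$ is totally bounded, so $G$ is Roelcke precompact and $b_rG$, the completion of $(G,L\wedge R)$, coincides with $b_XG$. Conversely, if $G$ is Roelcke precompact and $b_rG=b_XG$ as compactifications, the two uniformities coincide, being traces of the unique uniformity of the same compactum.

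Next I will describe a convenient base of $L\wedge R$. Because $X$ is compact, $\tau=\tau_g=\tau_{co}$ (Remark~\ref{rem1}), and $\tau_{co}$ is the topology of uniform convergence. So the sets $O_{\rm U}=\{k\in G\ |\ (k(x),x)\in{\rm U}\ \forall x\in X\}$, with ${\rm U}\in\mathcal U_X$ symmetric, form a base of $N_G(e)$. The entourages $\{(f,h)\ |\ h\in O_{\rm U}fO_{\rm U}\}$ then form a base of $L\wedge R$.

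The key computation translates the two clauses of $(\star)$ into membership in these sets. Substituting $y=f^{-1}(x)$, the condition $(f(x),g(x))\in{\rm U}$ for all $x$ says exactly that $gf^{-1}\in O_{\rm U}$. Substituting $y=h^{-1}(x)$, the condition $(g^{-1}(x),h^{-1}(x))\in{\rm U}$ for all $x$ says that $g^{-1}h\in O_{\rm U}$. Hence $(\star)$ holds for some $g$ iff $h\in O_{\rm U}fO_{\rm U}$: in one direction, write $h=(gf^{-1})f(g^{-1}h)$; in the other, given $h=afb$ with $a,b\in O_{\rm U}$, put $g=af$. I will use symmetry of ${\rm U}$ and the symmetric choice of $O_{\rm U}$ here. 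With this, condition (a) reads: for every ${\rm U}$ there is ${\rm V}$ such that the ${\rm V}$-trace entourage of $\mathcal V_\Gamma$ is contained in the Roelcke entourage of $O_{\rm U}$. That is precisely $L\wedge R\subset\mathcal V_\Gamma$, and the first paragraph finishes the equivalence.

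The main obstacle is bookkeeping rather than ideas. I have to check carefully the direction of the translations $gf^{-1}$ versus $f^{-1}g$, and that the sets $O_{\rm U}$, together with the Roelcke covers $\{O f O\}$ as given in \S~\ref{topgroup}, really do form bases. If needed I will pass to ${\rm U}\circ{\rm U}$ to absorb the asymmetry.
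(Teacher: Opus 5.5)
Your proposal is correct and follows essentially the same route as the paper. You compare $L\wedge R$ with the trace on $G$ of the hyperspace uniformity; the inclusion of the latter in $L\wedge R$ comes from Proposition~\ref{homomF}/Corollary~\ref{closure}. You then use the uniform-convergence neighbourhoods $O_{\rm U}$ and observe that $(\star)$ for a pair $(f,h)$ is a Roelcke membership condition, which the paper writes as $f\in OhO$ with $g=h\psi$.

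Your version makes this an exact equivalence via $h=(gf^{-1})f(g^{-1}h)$ and $g=af$, so it handles both implications at once and is a bit cleaner than the paper's write-up of (b)$\Rightarrow$(a).
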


\begin{proof} (a) $\Longrightarrow$ (b)
By Corollary~\ref{closure} $b_r G\geq b_X G$ ($L\wedge R\geq 2^{\mathcal U_X}|_{\i_X^{\Gamma} (G)}$).

Since the topology of uniform convegence coincides with the compact-open topology on $G$, one may assume that any $O\in N_G (e)$ (not open) is of the form 
$$O=O_{{\rm U}}=\{g\in G\ |\ (x, g(x))\in  {\rm U},\ x\in X\},\ {\rm U}\in\mathcal U_X.$$
The condition $(\star)$ implies that $\exists\ {\rm V}\in\mathcal U_X$ such that if $(\i_X^{\Gamma} (f), \i_X^{\Gamma} (h))\in 2^{{\rm V}^2}$, then $\exists\ g\in G$ such that $f\in Og$ and $g^{-1}\in Oh^{-1}$ (since $O^{-1}=O$, $g\in hO$). Hence, $f\in OhO$, the map $(\i_X^{\Gamma} (G), 2^{\mathcal U_X}|_{\i_X^{\Gamma} (G)})\to (G, L\wedge R)$ is uniformly continuos and $L\wedge R\leq 2^{\mathcal U_X}|_{\i_X^{\Gamma} (G)}$. Therefore, $G$ is Roelcke precompact and $b_r G=b_X G$. 

(b) $\Longrightarrow$ (a)
The covers $\{OgO\ |\ g\in G\}$ where $O=O_{{\rm U}}$, ${\rm U}\in\mathcal U_X$, form the base of the Roelcke uniformity on $G$. Take $f\in OhO$. Then 
$f=\varphi h\psi$, $\varphi, \psi\in O$. If $g=h\psi$, then $(f(x), g(x))=(\varphi g(x), g(x))\in {\rm U}$ $\forall\ x\in X$ and $(g^{-1}(x), h^{-1}(x))=(g^{-1}(x), \psi g^{-1}(x))\in {\rm U}$ $\forall\ x\in X$. 

If $2^{\mathcal U_X}|_{\i_X^{\Gamma} (G)}=L\wedge R$, $\forall\ {\rm U}\in\mathcal U_X$ take $O=O_{{\rm V}}$, ${\rm V}\in\mathcal U_X$, ${\rm V}\subset {\rm U}$ such that the cover $\{OgO\ |\ g\in G\}$ is a refinement of the cover $\{B(g, {\rm U}^2)\ |\ g\in G\}$ (by balls correspondent to the entourage ${\rm U}^2$ on $X\times X$). Then the condition $(\star)$ is valid. 
\end{proof}

\begin{rem}
{\rm The description of the Roelke uniformity on a topological group $G$ which is $\tau_g$-representable in a metrizable compact space $X$ using the property $(\star)$ is given in~\cite[Lemma 4.1]{Megr2001}.}
\end{rem}

\begin{que}
Let $G$ be a topological group, $b G$ be $G$-compactification of $G$. For what invariant closed subsets $K\subset b G\setminus G$ the graph compactification $b_{b G\setminus K} G$ of $G$ is defined?
\end{que}


\section{Unitary and permutation groups}\label{unpergroups}

\subsection{Unitary group of a Hilbert space}\label{unitarygroup}
Let ${\rm U}({\bf H})$ be the unitary group of a (complex or real) Hilbert space ${\bf H}$. It is a group of isometries for the action of ${\rm U}({\bf H})$ on the unit sphere ${\rm S}\subset{\bf H}$, and  ${\rm U}({\bf H})$ in the  {\it strong operator topology} is $\tau_p$-representable in ${\rm S}$  (see for example, \cite{Rud}). 

Moreover, ${\rm S}$ is a coset space of $({\rm U}({\bf H}), \tau_p)$ under its action~\cite[\S\ 2.1]{Kad} and the stabilizer $\St_x$ of any point $x\in {\rm S}$ is a neutral subgroup of ${\rm U}({\bf H})$~\cite[Remark 3.14]{Kozlov2022}. The maximal equiuniformity $\mathcal U_{\rm S}$ on ${\rm S}$ is totally bounded~\cite[Proposition 3.20]{KozlovSorin} and the maximal $G$-compactification of ${\rm S}$ is the unit ball ${\rm B}$ in ${\bf H}$ in the {\it weak topology}~\cite[Corollary 2.3]{Stojanov}. 

Further, ${\rm U}({\bf H})$ is Roelcke precompact and  its Roelcke compactification $b_r {\rm U}({\bf H})$ is the space of {\it contract operators} on ${\bf H}$ (linear operators on ${\bf H}$ of norm $\leq 1$) in the {\it weak operator topology} and a sm$^*$-compactification of $U({\bf H})$ (involution of operator is the {\it adjoint} operator)~{\rm\cite{U1998}}. Hence,  $b_r {\rm U}({\bf H})$ is a WAP-compactification of ${\rm U}({\bf H})$. Moreover, $b_r {\rm U}({\bf H})=e_{\rm B} {\rm U}({\bf H})$~\cite[Theorem 3.23]{KozlovSorin}. Therefore, 
$$b_r {\rm U}({\bf H})=e_{\rm B} {\rm U}({\bf H})={\rm WAP}\, {\rm U}({\bf H}).$$

\begin{rem}
{\rm (A) ${\rm U}({\bf H})$ is even {\it strongly Eberlein} {\rm(}see definition in~\cite{GlasnerMegr}{\rm)}. 

\medskip

(B) The description of $b_r {\rm U}({\bf H})=\{f\ \mbox{is a linear operator on}\ {\bf H}\ |\ ||f||\leq 1\}$ yields that there are continuum  sm$^*$-compactifications of ${\rm U}({\bf H})$. Indeed, $\forall\ 0<\varepsilon<1$ the ball ${\mathcal B}_{\varepsilon}=\{f\in b_r {\rm U}({\bf H})\ |\ ||f||\leq\varepsilon\}$ is a compact ideal, invariant under involution. The Rees quotients $b_r {\rm U}({\bf H})/{\mathcal B}_{\varepsilon}$ are sm$^*$-compactifications of ${\rm U}({\bf H})$~\cite[Lemma 2.4]{KozlovSorin2025}.

\medskip

(C) The description of $b_r {\rm U}({\bf H})$ as the Ellis compactification $e_{\rm B} {\rm U}({\bf H})$ of ${\rm U}({\bf H})$ and the description of $G$-compactifications of a $G$-space $({\rm U}({\bf H}), {\rm S}, \curvearrowright)$ in~\cite[\S\ 3]{Stojanov} yield that there are continuum  sm-compactifications of ${\rm U}({\bf H})$. Indeed, $\forall\ 0<\varepsilon<1$  the ball ${\rm B}_{\varepsilon}=\{x\in {\bf H}\ |\ ||x||\leq\varepsilon\}$ is a compact invariant subset of the maximal $G$-compactification ${\rm B}$ of ${\rm S}$. The map ${\rm B}\to {\rm B}/{\rm B}_{\varepsilon}$ of $G$-compactifications  of ${\rm S}$ induces the map of the correspondent Ellis compactifications of ${\rm U}({\bf H})$~\cite[\S\ 3]{KozlovSorin2025}. Since $e_{\rm B} {\rm U}({\bf H})$ is a sm-compactifications of ${\rm U}({\bf H})$,  $e_{{\rm B}/{\rm B}_{\varepsilon}} {\rm U}({\bf H})$  is a sm-compactification of ${\rm U}({\bf H})$ by Proposition~\ref{smcompimage}. 

It is not difficult to show that sm-compactifications of ${\rm U}({\bf H})$ constructed in (A) and (B) are pairwise different. 

\medskip

(D) ${\rm U}({\bf H})$ is $\tau_g$-representable in ${\rm B}$. By Corollary~\ref{closure} $b_{\rm B} {\rm U}({\bf H})\leq b_r {\rm U}({\bf H})$ for the graph compactification $b_{\rm B} {\rm U}({\bf H})$ of ${\rm U}({\bf H})$.}
\end{rem}

\begin{que}
Do the equality $b_{\rm B} {\rm U}({\bf H})=b_r {\rm U}({\bf H})$ and  inequalities $b_{{\rm B}/{\rm B}_{\varepsilon}} {\rm U}({\bf H})< b_{\rm B} {\rm U}({\bf H})$, $0<\varepsilon<1$, hold?
\end{que}


\subsection{Permutation group of a discrete space}\label{ultdiscrete}
A discrete space $X$ is a locally compact, locally connected space, ${\rm S} (X)$ is the permutation group of $X$. Any subgroup $G$ of 
$({\rm S} (X), \tau_p)$ is $\tau_p$-representable in $X$ and the action is uniformly equicontinuous (by isometries). 

\begin{pro}\label{discrsmcomp}
Let $G$ be a subgroup of $({\rm S} (X), \tau_p)$. Then $G$ has a sm$^*$-compactification.
\end{pro}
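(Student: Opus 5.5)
The plan is to realize $G$ inside the unitary group of $\ell^2(X)$ and to pull back the sm$^*$-compactification $b_r {\rm U}(\ell^2(X))$ described in \S~\ref{unitarygroup}, using Theorem~\ref{compRoelcke}. The embedding is the standard one. Let ${\bf H}=\ell^2(X)$ (real or complex) with orthonormal basis $\{e_x\ |\ x\in X\}$. Send $g\in G$ to the permutation unitary $\varphi(g)e_x=e_{g(x)}$. Then $\varphi: G\to {\rm U}({\bf H})$ is a monomorphism. Let $i: X\to {\rm S}$, $i(x)=e_x$. This is an embedding of the discrete space $X$ onto a discrete subset of the unit sphere, since $\|e_x-e_y\|=\sqrt2$. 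By construction the diagram (e) commutes for the actions $G\curvearrowright X$ and ${\rm U}({\bf H})\curvearrowright {\rm S}$.

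Next I check the hypotheses of Theorem~\ref{compRoelcke}. The group $G$ is $\tau_p$-representable in $X$, as noted before the statement. The group ${\rm U}({\bf H})$ is $\tau_p$-representable in ${\rm S}$, and by Lemma~\ref{l1} also in its maximal $G$-compactification, the unit ball ${\rm B}$ with the weak topology, which is compact. The map $X\to {\rm B}$, $x\mapsto e_x$, is still an embedding, because $e_x\to 0$ weakly and each $e_x$ is isolated in $\{e_y\}\cup\{0\}$ in the weak topology. The remaining hypothesis ${\rm U}({\bf H})\, i(X)={\rm B}$ fails, since the orbit of $i(X)$ is only ${\rm S}$. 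So I will either work with $Y={\rm S}$, which is not compact, or prove a variant of Lemma~\ref{comptauprep} directly. That lemma only needs to control the subbasic sets $[y,O']$ for $y$ in an orbit meeting $i(X)$. I would therefore first show directly that $\varphi$ is a topological embedding into $({\rm U}({\bf H}),\tau_p)$. In the strong operator topology, $\varphi(g_\lambda)\to\varphi(g)$ iff $\varphi(g_\lambda)v\to\varphi(g)v$ for all $v$. By uniform boundedness it suffices to test $v=e_x$, which gives exactly $g_\lambda(x)\to g(x)$ in discrete $X$, i.e.\ the permutation topology. Hence $\varphi$ is a topological isomorphism onto $\varphi(G)$.

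Then I conclude. $b_r {\rm U}({\bf H})=e_{\rm B}{\rm U}({\bf H})$ is a sm$^*$-compactification of ${\rm U}({\bf H})$, with the adjoint as involution. Since $\varphi(G)$ is a subgroup invariant under the adjoint ($\varphi(g)^*=\varphi(g^{-1})$), Lemma~\ref{hersm} gives that the closure $\cl\,\varphi(G)$ in $b_r{\rm U}({\bf H})$ is a sm-compactification of $\varphi(G)\cong G$. Continuity of the involution on $b_r{\rm U}({\bf H})$ and its invariance on $\varphi(G)$ make $\cl\,\varphi(G)$ invariant under the involution, with restriction to $G$ equal to $*$, so it is a sm$^*$-compactification; this is the same argument as the last item of Theorem~\ref{compRoelcke}. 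One point still needs checking: that $\cl\,\varphi(G)$ is a $G$-compactification with the correct (proper) topology on $G$. This holds because $b_r{\rm U}({\bf H})$ is a proper compactification and $\varphi$ is a topological embedding.

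The main obstacle is the embedding step. Because the hypothesis $G\,i(X)=Y$ of Theorem~\ref{compRoelcke} is not available for the compact $Y={\rm B}$, I bypass that theorem and verify directly that the strong operator topology restricted to permutation unitaries is the topology of pointwise convergence on $X$. After that, everything follows formally from Lemma~\ref{hersm} and the known structure of $b_r{\rm U}({\bf H})$.
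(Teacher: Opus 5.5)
Your proof is correct and is essentially the paper's: embed $G$ in ${\rm U}(\ell^2(X))$ by permutation unitaries and take the closure of its image in the sm$^*$-compactification $b_r{\rm U}(\ell^2(X))=e_{\rm B}{\rm U}(\ell^2(X))$. The one difference is the embedding step. The paper gets it from Lemma~\ref{comptauprep} with $Y={\rm S}$: that lemma does not require $Y$ to be compact, and ${\rm U}({\bf H})\,i(X)={\rm S}$ holds by transitivity, so the obstacle you describe does not actually arise. Your direct strong-operator-topology verification is nonetheless equally valid.
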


\begin{proof}
Let us show that $\exists$ a Hilbert space ${\bf H}$ such that $G$ is $\tau_p$-representable in ${\bf H}$. Take a {\it sequence space} $\ell^2(X)$ see, for example~\cite{Rud}, with orthonormal basis 
$$e_x(y)=\left\{
\begin{array}{ll}
1 & y=x, \\
0 & y\ne x, \\
\end{array}
\right. x, y\in X.$$
${\rm S}$ is a unit sphere of $\ell^2(X)$. An embedding $i: X\to {\rm S}$, $i(x)=e_x$ and the monomorphism $\varphi: G\to {\rm U}(\ell^2(X))$ (the unique extension of the bijection $g(e_x)=e_{g(x)}$ defined on the basis to $\ell^2(X)$) are defined. By Lemma~\ref{comptauprep}  $\varphi$ is a topological isomorpism of $G$ onto a subgroup of ${\rm U}(\ell^2(X))$. Therefore, $e_{\rm B} G$ is a sm$^*$-compactification of $G$. 
\end{proof}

\begin{cor}\label{properpermgroup}
If $G$ is a subgroup of $({\rm S} (X), \tau_p)$, then the {\rm WAP}-compactification of $G$ is a proper compactification of $G$. 
\end{cor}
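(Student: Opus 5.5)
The argument goes through Proposition~\ref{discrsmcomp}. That proposition gives a sm$^*$-compactification of $G$, namely the closure $e_{\rm B} G$ of $\varphi(G)$ in $b_r {\rm U}(\ell^2(X))=e_{\rm B}{\rm U}(\ell^2(X))$. Here $\varphi\colon G\to{\rm U}(\ell^2(X))$ is the topological embedding obtained from Lemma~\ref{comptauprep}. So it remains to see that a proper semitopological semigroup compactification of $G$ forces the WAP-compactification of $G$ to be proper.

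\textbf{Step 1.} Recall from \S\ref{unitarygroup} that $b_r{\rm U}({\bf H})$ is a sm$^*$-compactification of ${\rm U}({\bf H})$, and that it is proper, since the Roelcke compactification of a Roelcke precompact group is the completion of $(G,L\wedge R)$. Apply Lemma~\ref{hersm} to the subgroup $\varphi(G)$ of ${\rm U}(\ell^2(X))$. Its closure in $b_r{\rm U}(\ell^2(X))$ is a sm-compactification of $\varphi(G)$, hence of $G$, because $\varphi$ is a topological isomorphism onto its image. The point to stress is that this compactification is proper, i.e. $G$ embeds densely and homeomorphically. This holds because $\varphi(G)$ carries the subspace topology of ${\rm U}(\ell^2(X))$, and ${\rm U}(\ell^2(X))$ is embedded in $b_r{\rm U}(\ell^2(X))$.

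\textbf{Step 2.} Invoke the maximality of the WAP-compactification: it is the maximal semitopological semigroup compactification of $G$, and it dominates every sm-compactification (this is recalled in the Introduction). So there is a map of compactifications ${\rm WAP}\,G\to b G$, where $b G$ is the proper compactification from Step 1. Finally, a compactification lying above a proper one is proper. The canonical map $G\to{\rm WAP}\,G$ composed with ${\rm WAP}\,G\to bG$ is a topological embedding, so $G\to{\rm WAP}\,G$ is injective and its inverse on the image is continuous, i.e. it is itself an embedding.

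The main obstacle is making sure Step 2 is legitimate in the paper's framework. One must check that the sm-compactification produced in Step 1 really is a semitopological semigroup compactification extending the multiplication of $G$, so that the universal property of WAP applies. Lemma~\ref{hersm} supplies separate continuity on $\cl\,H$, and the equality $g\bullet h=gh$ for $g,h\in G$ is inherited from ${\rm U}(\ell^2(X))$. An alternative route is to cite directly the characterization~\cite{Stern,Megr2001-1}: a group has a proper semitopological semigroup compactification iff it is a group of isometries of a reflexive space. Here $G$ acts by isometries on the Hilbert space $\ell^2(X)$, with $\varphi$ a topological embedding into the strong operator topology.
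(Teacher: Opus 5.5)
Your proof is correct and follows the paper's argument. Proposition~\ref{discrsmcomp} embeds $G$ topologically into ${\rm U}(\ell^2(X))$ via Lemma~\ref{comptauprep}, takes the closure in $b_r{\rm U}(\ell^2(X))=e_{\rm B}{\rm U}(\ell^2(X))$, and obtains a proper sm$^*$-compactification $e_{\rm B}G$; the corollary then follows from the maximality of the {\rm WAP}-compactification among semitopological semigroup compactifications, as in your Step~2, and your explicit check that a compactification above a proper one is proper is the one detail the paper leaves implicit.
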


\begin{rem}\label{remdiscrcomp}
{\rm (A) The closure of $i(X)=X$ in ${\rm B}$ in the weak topology is the Alexandroff one-point compactification $\alpha X=\{e_x\ |\ x\in X\}\cup\{0\}$. By Theorem~\ref{compRoelcke} $G$ has a sm-compactification $e_{\alpha X}G$, $e_{\alpha X}G\leq e_{\rm B} G$.

\medskip

(B) Since the orthonormal basis $e_x$, $x\in X$, is a discrete subset of the unit sphere ${\rm S}$, ${\rm S} (X)$ is a closed subgroup of ${\rm U}(\ell^2(X))$.

\medskip

(C) In terms of representations theory of dynamical systems on Banach spaces~\cite{GlasnerMegr} the proof of Proposition~\ref{discrsmcomp} yields that the 
dynamical system $(G, X)$ is represented in a Hilbert space. 

\medskip

(D) The group  $({\rm S} (\mathbb N), \tau_p)$ is strongly Eberlein~\cite{GlasnerMegr}.

\medskip

(E) In~\cite[Theorem 1]{SorinG2} is shown that any subgroup $G$ of $({\rm S} (X), \tau_p)$ has a sm-compactification. In~\cite[Corollary 1]{SorinG2} the  Roelcke compactification of a subgroup $G$ of $({\rm S} (X), \tau_p)$ which action is {\it oligomorphic} is described.}
\end{rem}

\begin{que}
Let $G$ be a subgroup of $({\rm S} (X), \tau_p)$. Does the equality $e_{\alpha X} G=e_{\rm B} G$ hold?
\end{que}

\medskip

{\it Ultratransitivity and graph compactifications.} Let $G$ be a subgroup of $({\rm S} (X), \tau_p)$ and the action of $G$ on $X$ is {\it ultratransitive} (for pairwise distinct points $x_1,\ldots, x_n$ and $y_1,\ldots, y_n$ $\exists\ g\in G$ such that $g(x_i)=y_i$, $i=1,\ldots, n$, $n\in\mathbb N$). $G$ is a dense subgroup of $({\rm S}(X), \tau_p)$~\cite[Remark 5.3]{KozlovSorin2025}. 

The maximal equiuniformity $\mathcal U^{\max}_X$ on $X$ is totally bounded. From~\cite[Section 4]{KozlovSorin2025} it follows. The completion of $(X, {\mathcal U}^{\max}_X)$ is the one-point Alexandroff compactification $\alpha X$, $e_{\alpha X} G=b_r G$ and is the maximal sim-compactification of $G$ (and, hence, the WAP-compactification of $G$ is algebrically isomorphic to the symmetric inverse semigroup $I_X$). $b_r G$ is the set of selfmaps $f$ of $\alpha X$  in the t.p.c. such that
\begin{itemize}
\item[{\rm (i)}] $f$ is a bijection on $Y\subset X$, where $Y$ is an arbitrary subset of $X$, 
\item[{\rm (ii)}] $f(\alpha X\setminus Y)=\infty$. 
\end{itemize}

\medskip 

$\mathbb{GLC} (X)=\{X,\ \alpha X\}$ and $G$ is $\tau_g$-representable in $X$ and $\alpha X$ by Corollary~\ref{coinctopolog} and Lemma~\ref{l11}. By Lemma~\ref{lemcompemb} and Theorem~\ref{partialcases} item (a) (or (b)) $b_X G=b_{\alpha X} G$. 

Put $\i_X^{\Gamma}: G\to (2^{X\times X}, \tau_F)$, $\i_X^{\Gamma} (g)=\{(x, gx)\ |\ x\in X\}$, $g\in G$.

\begin{pro}\label{prop2}
The closure $b_X G$ of $\i_X^{\Gamma} (G)$ in $(2^{X\times X}, \tau_F)$ is {\rm(}algebraically{\rm)} the symmetric inverse semigroup $I_X$ and is a sim-compactification of $G$ {\rm(}see definition in~\cite{KozlovLeiderman2025}{\rm)}. 
\end{pro}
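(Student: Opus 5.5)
The plan is to identify the closure of $\i_X^{\Gamma}(G)$ in $(2^{X\times X},\tau_F)$ with the set of graphs of partial bijections of $X$. Here $X$ is discrete, so $X\times X$ is discrete and every subset is closed. The Fell topology on $2^{X\times X}$ is then exactly the topology of pointwise convergence on $\{0,1\}^{X\times X}$: a basic neighbourhood of $R$ is determined by finitely many points of $X\times X$ that must be in $R$ and finitely many (a compact set $K$) that must not meet $R$. So $b_XG$ consists of those $R\subset X\times X$ such that for every finite $A\subset R$ and every finite $B\subset (X\times X)\setminus R$ there is $g\in G$ with $A\subset \i_X^{\Gamma}(g)$ and $B\cap \i_X^{\Gamma}(g)=\emptyset$.

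\emph{Inclusion $b_XG\subset I_X$.} If $R\in b_XG$ contained $(x,y),(x,z)$ with $y\neq z$, the neighbourhood $\{(x,y)\}^-\cap\{(x,z)\}^-$ of $R$ would contain no graph of a bijection. The symmetric case $(y,x),(z,x)$ is excluded the same way. Hence $R$ is the graph of a partial injection $f\colon \mathrm{dom}\,R\to X$, i.e.\ an element of $I_X$.

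\emph{Inclusion $I_X\subset b_XG$.} Let $R$ be the graph of a partial bijection $f\colon Y\to f(Y)$, and take finite $A\subset R$ and finite $B$ disjoint from $R$. The set $A$ is a finite partial bijection $x_i\mapsto y_i$, $i=1,\dots,n$. Choose a finite set $F\supset \pr_1B\cup\pr_2B\cup\pr_1A\cup\pr_2A$, and extend $A$ to an injective map on $\pr_1B\cup\pr_1A$ as follows. Points $x\in \pr_1B\setminus \pr_1A$ are sent to distinct new points outside $F$; this is possible since $X$ is infinite. In that case no pair $(x,g(x))$ lies in $B$: for $x\in\pr_1A$ we have $(x,y_i)\in R$, hence $(x,y_i)\notin B$, and for the other $x$ the image lies outside $\pr_2B$. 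By ultratransitivity there is $g\in G$ realizing this finite injection, so $\i_X^{\Gamma}(g)$ lies in the given neighbourhood. This covers the empty relation too. Hence $b_XG=I_X$ as sets.

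\emph{Algebraic structure.} Composition of relations restricted to graphs of partial bijections is exactly the product in $I_X$, and $S_X$ is the inverse $R\mapsto R^{-1}$. By Proposition~\ref{semrel}, $2^{X\times X}$ is a monoid with involution, and $I_X$ is a submonoid closed under $S_X$. So $b_XG$ is algebraically the symmetric inverse semigroup, its operation extends the multiplication on $\i_X^{\Gamma}(G)$ (Proposition~\ref{homomF}), and the involution extends $*$. That $\i_X^{\Gamma}$ is an embedding and $b_XG$ is a $G^*$-compactification was already established above (Theorem~\ref{partialcases}).

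\emph{Separate continuity.} It remains to verify the sim-compactification axioms, and this is the main obstacle: Remark (b) after Proposition~\ref{semrel} shows composition is discontinuous on all of $2^{X\times X}$, so the injectivity of elements of $I_X$ must be used. Fix $R\in I_X$ and consider $S\mapsto RS$. Whether $(x,y)\in RS$ holds is decided by the single point $z=R^{-1}(y)$ (if it exists): $(x,y)\in RS$ iff $(x,z)\in S$. So each coordinate of $RS$ depends on one coordinate of $S$, which gives continuity in the pointwise topology. The map $S\mapsto SR$ is handled symmetrically, or via the continuous involution $S_X$ and $(SR)^{-1}=R^{-1}S^{-1}$. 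Together with the continuous involution, this makes $b_XG$ a semitopological inverse monoid with involution extending $G$, i.e.\ a sim-compactification in the sense of \cite{KozlovLeiderman2025}.
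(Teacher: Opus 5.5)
Your argument is correct and follows the same outline as the paper's proof. You show $b_XG\subset I_X$ using the neighbourhoods $\{(x,y_1)\}^-\cap\{(x,y_2)\}^-$. You show $I_X\subset b_XG$ using basic Fell neighbourhoods $\{(x_1,y_1)\}^-\cap\dots\cap\{(x_n,y_n)\}^-\cap((X\times X)\setminus K)^+$ and ultratransitivity; here you make explicit the finite injection whose graph avoids $K$, which the paper only asserts exists. You then finish with separate continuity and the continuous involution.

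The real difference is the continuity step. The paper fixes $A$, takes a basic neighbourhood $W$ of $AB$, and writes down a neighbourhood $U$ of $B$ built from finitely many points of $B$ (the $(x_i,z_i)$ and the $(t_{i_j},b_{i_j})$). It then gets the other side from the involution by the cited remark of Kozlov--Leiderman. You instead identify $\tau_F$ with the Cantor-cube topology on $\{0,1\}^{X\times X}$ and note that $(x,y)\in RS$ iff $(x,R^{-1}(y))\in S$, and never if $y\notin\pr_2R$. This shows at once that every coordinate of $RS$ is either constant or a single coordinate of $S$. It is shorter, and it works on all of $2^{X\times X}$ for fixed $R\in I_X$.

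Your argument also covers a condition the paper's neighbourhood omits. The paper's $U$ contains only ``$-$'' conditions. Take $A=\{(x,x)\mid x\in X\}$, $B=\emptyset$ and $K=\{(t,t)\}$. Then $U$ is all of $b_XG$, yet $A\in U$ and $A\cdot A=A\notin W=((X\times X)\setminus K)^+$. One must add $\bigl((X\times X)\setminus K'\bigr)^+$ with $K'=\{(t,z)\mid \exists\, y\ ((t,y)\in K,\ (z,y)\in A)\}$, which is finite because $A$ is injective. This is exactly what your coordinatewise description encodes.
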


\begin{proof} 
Let $\pr_1$ and $\pr_2$ be projections of $X\times X$ on the first and the second factor, respectively. For $A\in 2^{X\times X}$ put ${\rm D} (A)=\pr_1 (A)$, ${\rm I} (A)=\pr_2 (A)$. 

Take arbitrary $A\in b_X G=\cl\ \i_X^{\Gamma} (G)$. If $(x, y_1), (x, y_2)\in A$, $y_1\ne y_2$, then $W=\{(x, y_1)\}^-\cap\{(x, y_1)\}^-$ is a nbd of $A$ in $(2^{X\times X}, \tau_F)$.  Evidently, $W\cap\i_X^{\Gamma} (G)=\emptyset$ and $A\not\in\cl (\i_X^{\Gamma} (G))$. The case  $(x_1, y), (x_2, y)\in A$,  $x_1\ne x_2$ is treated similarly. Hence, $b_X G\subset I_X$.

\medskip

Take $A\in I_X\subset 2^{X\times X}$. If $A\ne\emptyset$, then an arbitrary nbd of $A$ in $(2^{X\times X}, \tau_F)$ may be choosen of the form 
$$\{(x_1, y_1)\}^-\bigcap\ldots\bigcap \{(x_n, y_n)\}^-\bigcap ((X\times X)\setminus K)^+,$$
where $(x_i, y_i)\in A$, $i=1, \ldots, n$, $K$ is a compact (and, hence, finite) subset of $X\times X$ and $A\cap K=\emptyset$.  Ultratransitivity yields, that one  can find $g\in G$ such that $g(x_i)=y_i$, $i=1, \ldots, n$, $(t, g(t))\not\in K$, $t\in X$. Hence, $A\in\cl\ \i_X^{\Gamma} (G)$. 

If $A=\emptyset$, then an arbitrary nbd of $A$ in $(2^{X\times X}, \tau_F)$ is of the form $((X\times X)\setminus K)^+$, where $K$ is a compact (and, hence, finite) subset of $X\times X$. Ultratransitivity yields, that one can find $g\in G$ such that $(t, g(t))\not\in K$, $t\in X$. Hence, $A\in  \i_X^{\Gamma} (G)$. 

\medskip

Fix $A\in b_X G$. For any $B\in b_X G$ let 
$$W=\{(x_1, y_1)\}^-\bigcap\ldots\bigcap \{(x_n, y_n)\}^-\bigcap ((X\times X)\setminus K)^+,$$
where $(x_i, y_i)\in AB$, $i=0, 1, \ldots, n$, $K$ is a compact (and, hence, finite) subset of $X\times X$ and $(AB)\cap K=\emptyset$, be a nbd of $AB=\{(x, y)\ |\ \exists\ z\in X\ (z, y)\in A,\ (x, z)\in B\}$. 

Let $z_i$ be such that $(z_i, y_i)\in A$, $(x_i, z_i)\in B$, $i=0, 1, \ldots, n$, $\{t_1,\ldots, t_m\}=\pr_1 (K)$. If $(\{t_i\}\times X)\cap B\ne\emptyset$, then $(\{t_i\}\times X)\cap B=\{(t_i, b_i)\}$ (is a one point set) and let $(t_{i_1}, b_{i_1}), \ldots, (t_{i_k}, b_{i_k})$ be all such points. For any $ (t_{i_j}, b_{i_j})$, $j=1, \ldots, k$, $(\{b_{i_j}\}\times X)\cap A=\emptyset$. Therefore, the set 
$$U=\{(x_1, z_1)\}^-\bigcap\ldots\bigcap \{(x_n, z_n)\}^-\bigcap \{(t_{i_1}, b_{i_1})\}^-\bigcap\ldots\bigcap \{(t_{i_k}, b_{i_k})\}^-$$
is a nbd of $B$ and $AU\subset W$. Thus, $b_X G$ is a left topological monoid. Since the involution is continuous, $b_X G$ is a sim-compactification of $G$ by~\cite[Remark 3.9]{KozlovLeiderman2025}. 
\end{proof}

\begin{thm}\label{equivcompperm}
$b_r G=e_{\alpha X} G=b_{\alpha X} G=b_X G$. 
\end{thm}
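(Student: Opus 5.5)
The equalities $b_r G=e_{\alpha X} G$ and $b_{\alpha X} G=b_X G$ are already available. The first is quoted from \cite[Section 4]{KozlovSorin2025} in the paragraph preceding Proposition~\ref{prop2}. The second was obtained there from Lemma~\ref{lemcompemb} and Theorem~\ref{partialcases}. So the only new link needed is $b_r G=b_X G$, and I will prove it by comparing the two compactifications in both directions.

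The inequality $b_r G\geq b_X G$ holds by Corollary~\ref{closure}, since $b_X G$ is a graph compactification, and hence a $G^*$-compactification, of $G$.

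For the reverse inequality, I will show that $b_X G$ is a sm-compactification of $G$, i.e. a semitopological monoid. This will use three facts.
\begin{itemize}
\item By Proposition~\ref{prop2}, $b_X G$ is algebraically $I_X$, composition of relations is left continuous, and the involution $S_X$ is continuous.
\item Right continuity follows from left continuity via the involution: $R\mapsto RS$ equals $S_X\circ(S_X(S)\,\cdot\,)\circ S_X$, using $(RS)^{-1}=S^{-1}R^{-1}$ in the monoid of relations.
\item By Proposition~\ref{sm-sm*}, $b_X G$ is then even a sm$^*$-compactification.
\end{itemize}

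Now $b_r G=e_{\alpha X} G$ is the maximal sm-compactification of $G$ (the WAP-compactification), again by \cite[Section 4]{KozlovSorin2025}. Hence $b_X G\leq b_r G$, which we already know. For the converse I prefer a direct comparison with $e_{\alpha X}G$, using the description of $b_r G$ as the partial bijections $f$ of $\alpha X$ with $f(\alpha X\setminus Y)=\infty$. I define $\Phi: b_r G\to b_X G$ by sending $f$ to its graph restricted to $Y\times X$, that is $\{(x,f(x))\mid x\in Y\}\in I_X$. The map $\Phi$ is clearly a bijection onto $I_X=b_X G$, and on $G$ it agrees with $\i_X^{\Gamma}$. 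It is also a homomorphism, since composition in both monoids is composition of partial bijections.

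It remains to check that $\Phi$ is continuous from the pointwise convergence topology to the Fell topology. For a subbasic set $\{(x,y)\}^-$, its preimage is $\{f\mid f(x)=y\}$, which is open because $y$ is isolated in $\alpha X$. For a subbasic set $((X\times X)\setminus K)^+$ with $K$ finite, the preimage is a finite intersection of conditions $f(x)\neq y$ for $(x,y)\in K$. Each such condition is open because $\{y\}$ is clopen in $\alpha X$. Since $b_r G$ is compact and $b_X G$ is Hausdorff, $\Phi$ is then a homeomorphism, so $b_r G=b_X G$.

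I expect the main obstacle to be making sure that the description of $b_r G$ from \cite{KozlovSorin2025}, together with its topology, matches exactly. In particular, points $x$ with $f(x)=\infty$ must correspond to the absence of pairs $(x,\cdot)$ in the graph. Given this, the Fell-open conditions coincide with the pointwise ones, and the chain $b_r G=e_{\alpha X}G=b_{\alpha X}G=b_X G$ follows.
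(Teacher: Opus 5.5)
Your proof is correct and is essentially the paper's argument. The paper also takes $b_rG=e_{\alpha X}G$ and $b_{\alpha X}G=b_XG$ as known, and closes the chain with the map $f\mapsto\{(x,f(x))\mid x\in X,\ f(x)\neq\infty\}$ from $e_{\alpha X}G$ onto $b_XG$, a continuous bijection of compacta that sends each $g\in G$ to its graph; your check of continuity on subbasic Fell sets fills in what the paper only asserts, and your sm-compactification discussion is never used and can be dropped.
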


\begin{proof} The map $\Psi: e_{\alpha X} G\to b_{X} G$, $\Psi (f)=\{(x, f(x))\ |\ x\in X,\ f(x)\ne\infty\}$, if $f$ is not a constant map ${\rm const}_{\infty}: \alpha X\to\{\infty\}$, $\Psi ({\rm const}_{\infty})=\emptyset$ is a continuous bijection of compacta and $\Psi (\i_{\alpha X}(g))=\i_X^{\Gamma} (g)$, $g\in G$. Therefore,  $e_{\alpha X} G=b_X G$ and the equalities are proved. 
\end{proof}

\begin{rem}{\rm The map $\Psi$ in Proposition~\ref{equivcompperm} is a topological isomorphism of semigroups. 

\medskip

$b_{\alpha X} G$ is the family of sets $A\in\CL (\alpha X\times\alpha X)$ such that 
\begin{itemize}
\item[{\rm (i)}] $A\cap (X\times X)$ is the graph of a bijection of ${\rm D} (A)$ onto ${\rm I} (A)$, 
\item[{\rm (ii)}] $\pr_1 (A\setminus (X\times X))=\alpha X\setminus {\rm D} (A)$, $\pr_2 (A\setminus (X\times X))=\alpha X\setminus {\rm I} (A)$
\end{itemize}
and is not a semigroup with the binary operation be the composition of relations.}
\end{rem} 

\medskip

From Proposition~\ref{discrsmcomp} and Proposition~\ref{compRoelcke} it follows that $e_{\alpha X} G=e_{{\rm B}}G$. Since  $e_{{\rm B}}G$ is the closure of $G$ in $e_{{\rm B}} {\rm U}(\ell^2(X))$, from Theorem~\ref{equivcompperm} it follows 

\begin{cor}\label{wappermgr}
$b_r G=e_{\alpha X} G=e_{{\rm B}}G$, 

The Roelcke compactification $b_r G$ of $G$ is the closure of $G$ in $b_r  {\rm U}(\ell^2(X))={\rm WAP}\,  {\rm U}(\ell^2(X))$ and is a sim-compactification and  {\rm WAP}-compactification of $G$. $G$ is a {\rm WAP} group {\rm(}see definition in~\cite{GlasnerMegr}{\rm)}.
\end{cor}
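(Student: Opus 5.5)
The statement to prove is Corollary~\ref{wappermgr}. I will chain together results already established. \emph{Step 1: $e_{\alpha X}G=e_{\rm B}G$.} In the proof of Proposition~\ref{discrsmcomp}, $i\colon X\to{\rm S}$, $i(x)=e_x$, and $\varphi\colon G\to{\rm U}(\ell^2(X))$ realize $G$ as a $\tau_p$-represented subgroup. By Remark~\ref{remdiscrcomp}(A), the weak closure of $i(X)$ in ${\rm B}$ is $\alpha X$. Theorem~\ref{compRoelcke}, applied with $H=G$ and the ambient group ${\rm U}(\ell^2(X))$ acting on ${\rm B}$, then yields $e_{\alpha X}G\le e_{\rm B}G$. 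Here ${\rm U}(\ell^2(X))\,i(X)$ is the whole sphere rather than ${\rm B}$, so the hypothesis $G\,i(X)=Y$ has to be checked. I would circumvent it by using Lemma~\ref{l1}: since ${\rm B}$ is a $G$-extension of ${\rm S}$, the restriction of the ${\rm U}(\ell^2(X))$-action to $G$ is still $\tau_p$. Consequently $e_{\rm B}G$ is the closure of $\varphi(G)$ in $e_{\rm B}{\rm U}(\ell^2(X))$, and projecting onto the coordinates in $\alpha X$ gives the map $e_{\rm B}G\to e_{\alpha X}G$.

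\emph{Step 2: the reverse inequality.} The compactification $e_{\rm B}G$ is a sm-compactification of $G$: it is the closure of $G$ in the sm$^*$-compactification $e_{\rm B}{\rm U}(\ell^2(X))=b_r{\rm U}(\ell^2(X))$, so Lemma~\ref{hersm} applies, and the involution on $e_{\rm B}G$ is continuous. It is therefore a $G^*$-compactification, and Proposition~\ref{G*Roelcke} gives $e_{\rm B}G\le b_rG$. Theorem~\ref{equivcompperm} gives $b_rG=e_{\alpha X}G$, so $e_{\alpha X}G\le e_{\rm B}G\le b_rG=e_{\alpha X}G$, and all three coincide. This proves the first displayed equality and shows that $b_rG$ is the closure of $G$ in $b_r{\rm U}(\ell^2(X))={\rm WAP}\,{\rm U}(\ell^2(X))$.

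\emph{Step 3: sim and WAP.} Proposition~\ref{prop2} states that $b_XG$ is a sim-compactification, and $b_XG=b_rG$ by Theorem~\ref{equivcompperm}. Since $b_rG$ is a semitopological semigroup compactification, it lies below the WAP-compactification. On the other hand, WAP $\le$ Roelcke, as quoted in the proof of Corollary~\ref{heredsm}. Hence $b_rG={\rm WAP}\,G$. That $G$ is a WAP group then follows from the definition in~\cite{GlasnerMegr}, since the WAP-compactification is proper: either the Roelcke compactification is proper, or one invokes Corollary~\ref{properpermgroup}.

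The main obstacle is Step~1, specifically justifying the inequality $e_{\alpha X}G\le e_{\rm B}G$ when $G\,i(X)\neq{\rm B}$. If Lemma~\ref{l1} together with the restriction-of-coordinates argument from the proof of Theorem~\ref{compRoelcke} is not enough, I would instead compare directly. I would verify that $\alpha X$ is a $G$-invariant closed subset of ${\rm B}$, so that restricting elements of $e_{\rm B}G\subset{\rm B}^{\rm B}$ to $\alpha X$ is a continuous map onto $e_{\alpha X}G$ that is the identity on $G$.
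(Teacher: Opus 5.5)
Your argument is correct and is essentially the paper's. The paper uses Proposition \ref{discrsmcomp} with Theorem \ref{compRoelcke} to get $e_{\alpha X}G\le e_{\rm B}G$, then uses that $e_{\rm B}G$ is the closure of $G$ in the sm$^*$-compactification $e_{\rm B}{\rm U}(\ell^2(X))$, and closes with Theorem \ref{equivcompperm}; your sandwich $e_{\alpha X}G\le e_{\rm B}G\le b_rG=e_{\alpha X}G$ just makes these steps explicit. Your detour through Lemma \ref{comptauprep} on ${\rm S}$ and then Lemma \ref{l1} is a legitimate repair of the hypothesis $G\,i(X)=Y$ of Theorem \ref{compRoelcke}, which fails literally for $Y={\rm B}$.

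One correction concerns the last clause. ``WAP group'' here means $b_rG={\rm WAP}\,G$, not that the WAP-compactification is proper: compare Theorem \ref{thmOrderchain} III, where ${\rm WAP}\,G=e_{\alpha X}G$ is proper yet $G$ is not a WAP group. So derive that $G$ is a WAP group from the equality $b_rG={\rm WAP}\,G$ you have just proved, not from properness.
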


\begin{rem}
{\rm Fix an orthonormal basis $\mathcal E$ in ${\bf H}$ and examine a closed subgroup ${\rm U}_{\mathcal E}({\bf H})$ of $({\rm U}({\bf H}), \tau_p)$ which elements are linear extensions of bijections of $\mathcal E$. Then 
\begin{itemize}
\item[{\rm (A)}] ${\rm U}_{\mathcal E}({\bf H})$ is a closed subgroup of $({\rm U}({\bf H}),  \tau_p)$, 
\item[{\rm (B)}] the closure of ${\rm U}_{\mathcal E}({\bf H})$ in $b_r {\rm U}({\bf H})$ is ${\rm PU}_{\mathcal E}({\bf H})$, the set of partial linear isometries of ${\bf H}$ which are linear extensions of partial bijections of $\mathcal E$ assuming that an extension is equal 0 on those elements of 
$\mathcal E$ on which the partial bijection is not defined, 
\item[{\rm (C)}] ${\rm PU}_{\mathcal E}({\bf H})$ is a sm$^*$-compactification of ${\rm U}_{\mathcal E}({\bf H})$, 
\item[{\rm (D)}] $b_r {\rm U}_{\mathcal E}({\bf H})=e_B {\rm U}_{\mathcal E}({\bf H})={\rm WAP}\, {\rm U}_{\mathcal E}({\bf H})={\rm PU}_{\mathcal E}({\bf H})$,
\item[{\rm (E)}] ${\rm U}_{\mathcal E}({\bf H})$  a {\rm WAP} group.
\end{itemize}
The uniformities, correspondent to the Roelcke, Ellis and WAP-compactifications are hereditary on a subgroup of ${\rm U}_{\mathcal E}({\bf H})$ topologically isomorphic to the permutation group ${\rm S} (\mathcal E)$. }
\end{rem}

\begin{rem}{\rm The permutation group ${\rm S}(\mathbb N)=({\rm S}(\mathbb N), \tau_{\partial})$ of naturals $\mathbb N$ is a  non-archimedean Roelcke precompact~\cite{Gau} (see also~\cite[Example 9.14]{RD}) Polish group. 

In~\cite[\S\ 12, Theorem 12.2]{GlasnerMegr2008} the description of Roelcke compactification $b_r  {\rm S}(\mathbb N)$ of ${\rm S}(\mathbb N)$ is given using its Ellis compactification from the $\tau_p$-representation of ${\rm S}(\mathbb N)$ in $\alpha\mathbb N$. It is a symmetric inverse semigroup $I_{\mathbb N}$~\cite{BITsankov} and $b_r {\rm S}(\mathbb N)$ is a WAP-compactification of ${\rm S}(\mathbb N)$. It is proved that $b_r  {\rm S}(\mathbb N)$ is homeomorphic to the Cantor set. 

In~\cite{BITsankov} it is proved that ${\rm S}(\mathbb N)$ is strongly Eberlein, $b_r {\rm S}(\mathbb N)$ is a {\it Hilbert compactification} of ${\rm S}(\mathbb N)$ and all its factors are {\it Hilbert--representable} (see definitions in~\cite{GlasnerMegr}). 

\medskip

See~\cite[\S\ 5.4]{EJMMMP} about topologies on the symmetric inverse monoid $I_X$. In~\cite[Theorem 5.12]{EJMMMP} it is proved, in fact, that the symmetric inverse semigroup $I_X$ in Fell topology is a compact semitopological monoid with continuous inversion.}
\end{rem}

\begin{que}
Let the action of $G$ on a discrete space $X$ be oligomorphic. Do the equalities $e_{\alpha X} G=b_X G={\rm WAP}\, G$ hold?
\end{que}


\subsection{Automorphism group of an ultrahomogeneous chain}\label{ultrchain}

A discrete chain $X$ is a GO-space ({\it generalized ordered space}) and is a locally compact and locally connected space. If $X$ is {\it ultrahomogeneous}  (for pairwise distinct points $x_1<\ldots<x_n$ and $y_1<\ldots< y_n$ $\exists\ g\in G$ such that $g(x_i)=y_i$, $i=1,\ldots, n$, $n\in\mathbb N$), then there exists  the smallest LOTS 
$$X\otimes_{\ell}\{-1, 0, 1\}$$
(topology of linear order is induced by the lexicographic order on $X\times\{-1, 0, 1\}$),  
in which $X$ is a dense subspace, and $X\otimes_{\ell}\{-1, 0, 1\}$ is naturally embedded in any other LOTS in which $X$ is dense~\cite{Miwa}. Hence, any linearly ordered compactification of $X$ is a linearly ordered compactification of $X\otimes_{\ell}\{-1, 0, 1\}$. 
The least linearly ordered compactification $b_m X$ of $X\otimes_{\ell}\{-1, 0, 1\}$ (and, hence, of $X$) is generated by replacing each gap (also improper) by a point with a natural continuation of the order (gaps in $X\otimes_{\ell}\{-1, 0, 1\}$ and  $X$ are naturally identified). Therefore, $b_m X=X^+\cup X^0\cup X^-\cup\Gamma$, where $X^+=X\times\{1\}$, $X^0=X\times\{0\}$ ($X$ and $X^0$ are identified), $X^-=X\times\{-1\}$, $\Gamma$ is the set of gaps. Each gap is defined by a {\it cut} $(A, B)$ ($A, B\subset X$, $A\cap B=\emptyset$, $A\cup B=X$, if $x\in A$, $y\in B$, then $x<y$)~\cite[\S\ I.3]{Engelking}. For $x\in X$ designations $x^-=(x, -1)$, $x^+=(x, 1)$ are used. $b_m X$ is a zero-dimensional compact LOTS. If $X$ is {\it continuously ordered}, then $b_m X=\{\inf\}\cup X^+\cup X^0\cup X^-\cup\{\sup\}$ is the unique linearly ordered compactification of $X$.

Let $X$ be an ultrahomogeneous chain, and $G=(\aut (X), \tau_{\partial})$ is its automorphism group in the t.p.c (permutation topology in this case). $G$ is $\tau_p$-representable in $X$, $b _m X=\beta_G X$~\cite[Proposition 5.4]{KozlovSorin2025}. There are five $G$-compactifications of $X$:  
$$\begin{array}{ccclccc}
& & b^{\leftrightarrow}X=  &  & &  & \\
& \nearrow & =b_m X/_{\inf\sim\sup} &  \searrow    & & & \\
b_m X &   &   &  & b^{+}X=  & \rightarrow  &  \alpha X \\
 & \searrow & b^{\updownarrow}X= &  \nearrow  & =b_m X/_{(x, -1)\sim (x, 1),\ x\in X,\ \inf\sim\sup}  & & \\
& & =b_m X/_{(x, -1)\sim (x, 1),\ x\in X}  &  & &  & 
\end{array}$$

The maps $b_m X\to b^{\leftrightarrow}X$, $b^{\updownarrow}X\to  b^{+}X$ and $b^{+}X\  (b_m X,\ b^{\leftrightarrow}X,\  b^{\updownarrow}X)\to \alpha X$  of compactifications are elementary~\cite[Lemma 5.6]{KozlovSorin2025}. The corresponding poset of Ellis compactifications is 
$$\begin{array}{ccccccc}
& & e_{b^{\leftrightarrow}X} G  &  & &  &  \\
& \nearrow & &  \searrow    & & &  \\
e_{b_m X} G &   &   &  & e_{b^{+}X} G  & \rightarrow  &  e_{\alpha X} G,   \\
 & \searrow & e_{b^{\updownarrow}X} G &  \nearrow  &   & &  \\
\end{array}$$
$e_{ b^{\leftrightarrow}X} G=e_{b_m X} G/I^m_{\leftrightarrow}$, $I^m_{\leftrightarrow}=\{f\in e_{b_m X} G\ |\ f(x)=\inf\,\vee\,\sup,\ x\in b_m X\}$, 
$e_{ b^+ X} G=e_{b^{\updownarrow} X} G/I^{\updownarrow}_{+}$, $I^{\updownarrow}_{+}=\{f\in e_{b^{\updownarrow} X} G\ |\ f(x)=\inf\,\vee\,\sup,\ x\in b^{\updownarrow} X\}$~\cite[Theorem 5.23, Corollary 5.17]{KozlovSorin2025}.  $e_{b_m X} G\geq b_r G$, $b_r G=e_{b_m X} G$ iff  $X$ is a continuously ordered chain~\cite[Corollary 5.16]{KozlovSorin2025}. If $e_{b_m X} G>b_r G$, then $e_{b_m X} G>b_r G>e_{\alpha X} G$ and $b_r G$ is incomparable with $e_{b^{\leftrightarrow}X} G$, $e_{b^{\updownarrow}X} G$ and $ e_{b^{+}X} G$~\cite[Theorem 5.15, Theorem 5.23]{KozlovSorin2025}. 

In~\cite[Theorem 5.15]{KozlovSorin2025} it is proved that $G$ is Roelcke precompact, $b_r G\leq e_{b_m X} G$, there are no Ellis compactifications $b G$ of $G$ such that $b_r G< b G<e_{b_m X} G$, and $e_{b_m X} G$ is the set of self-maps $f$ of $b_m X$  in the topology of pointwise convergence such that  
\begin{itemize}
\item[{\rm (i)}] $f$ is monotone {\rm (}if $x<y$, then $f(x)\leq f(y)${\rm)}, 
\item[{\rm (ii)}] $f(x)\not\in X$ if $x\in b_m X\setminus X$, 
\item[{\rm (iii)}] if $f(x)=f(y)$,  $x\ne y$, then $f(x)\in b_m X\setminus X$, 
\item[{\rm (iv)}] either $f((x, -1))=f((x, 0))=f((x, 1))\not\in X$, or $f((x, -1))=(y, -1)$, $f((x, 0))=(y, 0)$,  $f((x, 1))=(y, 1)$  and 
\item[{\rm (v)}] $f(\inf)=\inf$,  $f(\sup)=\sup$.
\end{itemize}

$e_{\alpha X} G$ is a sim-compactification of $G$ ((algebraically) isomorphic to the inverse monoid $J_X$ of all partial automorphisms of $X$)~\cite[Theorem 5.20]{KozlovSorin2025}. Remark~\ref{remdiscrcomp} yields that $e_{\alpha X} G\leq e_{\rm B} G$, where ${\rm B}$ is a unit ball in $\ell^2(X)$. 


\subsection{Graph compactifications of $(\aut (X), \tau_{\partial})$}\label{grcompautGO}

There are seven locally compact $G$-extensions of $X$:  $Y^-=b_m X\setminus\{\sup\}$, $Y^+=b_m X\setminus\{\inf\}$,  $Y=b_m X\setminus\{\inf, \sup\}$, $Z^-=b^{\updownarrow}X\setminus\{\sup\}$, $Z^+=b^{\updownarrow}X\setminus\{\inf\}$,  $Z=b^{\updownarrow}X\setminus\{\inf, \sup\}$ and $X$. 
The order on locally compact extensions of $X$ is the following:
$$\begin{array}{ccccc}
   &  & b_m X  &   & \\
   &  \  \swarrow      & \downarrow & \searrow  & \\
 &  Y^-   & b^{\updownarrow}X  &  &  Y^+, \\
 & \quad \searrow  \  \ \swarrow  &   \downarrow   &  \searrow \  \swarrow  & \\
 & \quad  Z^- &  b^{+}X  & \quad Z^+ & \\
     & \quad \searrow      &   \downarrow      &  \swarrow  & \\
  &    &  Z &  &  \\
\end{array}
\begin{array}{ccccc}
   &  & b_m X  &   & \\
   &  \  \swarrow      & \downarrow & \searrow  & \\
 &  Y^-   &  b^{\leftrightarrow}X  &  &  Y^+, \\
 & \quad \searrow   &   \downarrow   &  \swarrow  & \\
 &  & Y  & & \\
\end{array}
\begin{array}{ccc}
  b^{\leftrightarrow}X  & \rightarrow  & Y \\
 \downarrow  &        &  \downarrow \\
b^{+}X  & \rightarrow  & Z \\
\downarrow   &   &  \downarrow \\
\alpha X  & \rightarrow & X. \\
\end{array}
$$
By Lemma~\ref{lemcompemb} and Theorem~\ref{mapcompV}  the correspondent graph compactifications of $G$ are defined. 

\medskip

{\it Graph compactification $b_{b_m X} G$.} Let $\i^{\Gamma}_{b_m X}: G\to (2^{b_m X\times b_m X}, \tau_F)$, $\i^{\Gamma}_{b_m X} (g)=\{(x, g(x))\ |\ x\in b_m X\}$ be an embedding of $G$, $\pr_1$ and $\pr_2$ are projections of $b_m X\times b_m X$ on the first and the second factor respectively, $(b_m X\times b_m X, \leq_{\times})$ is a {\it cardinal product} ($(x, y)\leq (x', y')$ iff $x\leq x'$\ \&\ $y\leq y'$)~\cite[Ch.~1, item 7]{Birkhoff}. 

For $A\in 2^{b_m X\times b_m X}$ and $t\in\pr_1 (A)$, put 
$$A^-_t=\inf\{\tau\ |\ (t, \tau)\in A\},\ A^+_t=\sup\{\tau\ |\ (t, \tau)\in A\}.$$

\begin{pro}\label{graphdescrlotbeta}
The graph compactification $b_{b_m X} G$ is the set of linearly ordered {\rm(}the restriction of $\leq_{\times}${\rm)} compacta $A\in\CL (b_m X\times b_m X)$ such that 
\begin{itemize}
\item[(a)] $\pr_1 (A)=\pr_2 (A)=b_m X$, and, hence,
\begin{itemize}
\item  $A^-_{\inf}=\inf$,  $A^+_{\sup}=\sup$, 
\item $[A^-_t, A^+_t]\in A$,  $t\in b_m X$,
\item $A^-_t\leq A^+_t\leq A^-_{t'}\leq A^+_{t'}$, $t<t'$, $t, t'\in b_m X$,
\item if $A^+_t=A^-_{t'}$, $t<t'$, then $A^-_s=A^+_{s}$, $\forall\ t\leq s\leq t'$, $t, t'\in b_m X$,
\end{itemize}
\item[(b)]  $A^-_x=A^+_{x}=y$, $x\in X$, and, additionally, 
\begin{itemize}
\item if $y\in X$, then $A^+_{x^-}=y^-$,  $A^-_{x^+}=y^+$, 
\item if $y\in b_m X\setminus X$, then $A^+_{x^-}=A^-_{x^+}=y$, 
\end{itemize}
\item[(c)]  $A^-_t,\ A^+_{t}\not\in X$, $t\in b_m X\setminus X$. 
\end{itemize}
\end{pro}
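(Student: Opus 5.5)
We set $\mathcal A$ to be the family of closed sets described in (a)--(c) and prove the two inclusions $\cl\,\i^{\Gamma}_{b_m X}(G)\subset\mathcal A$ and $\mathcal A\subset\cl\,\i^{\Gamma}_{b_m X}(G)$. Since $b_m X$ is compact, $\tau_F=\tau_V$ on $2^{b_m X\times b_m X}$, so we work throughout with the Vietoris topology and its basic sets $[V_1,\ldots,V_n]$. Two facts about $b_m X$ are used repeatedly. First, every $x\in X$ is isolated, with immediate predecessor $x^-$ and immediate successor $x^+$. Second, every $g\in G$ extends to an order automorphism of $b_m X$ that maps $X$, $X^-$, $X^+$ and $\Gamma$ onto themselves and sends $x^\pm$ to $g(x)^\pm$.

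\emph{First inclusion.} For $g\in G$ the graph $\i^{\Gamma}_{b_m X}(g)$ clearly belongs to $\mathcal A$. It is a chain in $\leq_\times$ because $g$ is monotone. Both projections equal $b_m X$. Every section is a single point, $g(x)\in X$ with $g(x^\pm)=g(x)^\pm$, and $g(t)\notin X$ for $t\notin X$. We then show that each defining condition is closed in $\tau_V$.
\begin{itemize}
\item Being a $\leq_\times$-chain is closed. A closed set $A$ is a chain iff $A\times A$ misses the open set $W=\{(p,q)\ |\ p_1<q_1,\ p_2>q_2\}$. If $A\times A$ meets $W$, then a product neighbourhood $U\times U'\subset W$ gives the Vietoris-open set $U^-\cap U'^-$ of relations violating the condition.
\item $\pr_1(A)=\pr_2(A)=b_m X$ is closed by Lemma~\ref{proj}. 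The listed consequences in (a) follow formally from surjectivity of the projections together with the chain property.
\item For (b), fix $x\in X$. Since $\{x\}\times b_m X$ is clopen, $A\mapsto A\cap(\{x\}\times b_m X)$ is Vietoris continuous. Hence if $\i^\Gamma(g_\alpha)\to A$, then $(x,g_\alpha(x))\to$ the section of $A$ at $x$, which is therefore a single point $(x,y)$. Along the net the triple $(g_\alpha(x)^-,g_\alpha(x),g_\alpha(x)^+)$ converges. If the limit $y$ lies in $X$, isolatedness forces $g_\alpha(x)=y$ eventually, and so $g_\alpha(x^\pm)=y^\pm$. Otherwise all three coordinates converge to the same point of $b_m X\setminus X$, because $y^-<y<y^+$ are adjacent and the limit is a gap or a point of $X^\pm$. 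Combined with the chain property this yields the two bullets of (b).
\item For (c) we use the involution $S_X$. By Lemma~\ref{ginversg} and Proposition~\ref{homomF}, $b_{b_m X}G$ is $S_X$-invariant, so (b) also holds for $S_X(A)$. Suppose $(t,y)\in A$ with $t\notin X$ and $y\in X$. Applying (b) to $S_X(A)$ at $y$, the relation $A$ has a unique point $(z,y)$ over $y$, and then $(z^-,y^-),(z^+,y^+)$ or their gap analogues also lie in $A$. The chain property then squeezes $t$ onto $z$, which we must show lies in $X$. This is the delicate point: if $z\notin X$ the isolatedness of $y$ has to be combined with the approximating graphs to exclude it.
\end{itemize}

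\emph{Second inclusion (density).} Let $A\in\mathcal A$ and let $[V_1,\ldots,V_n]$ be a basic neighbourhood of $A$, where the $V_i$ may be taken to be products of open intervals (or isolated points) of $b_m X$. Choose points $p_i=(s_i,t_i)\in A\cap V_i$. By (a) they can be ordered as a $\leq_\times$-chain, and by (b), (c) we may replace each $p_i$ by a nearby pair of points of $X$, using $x$ when the coordinate lies in $X$ and points of $X$ close to a gap or to $x^\pm$ otherwise. This produces a finite order-preserving partial map $x_1<\dots<x_m\mapsto y_1<\dots<y_m$ on $X$. It is strictly increasing precisely because of (a) and (b): equal values can occur only at non-$X$ points, where there is room to separate them. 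Ultrahomogeneity gives $g\in G$ realizing this partial map. Monotonicity of $g$ and the fact that $\bigcup V_i$ is a neighbourhood of the whole chain $A$ force the rest of the graph of $g$ into $\bigcup V_i$, so $\i^\Gamma(g)\in[V_1,\ldots,V_n]$.

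We expect the main obstacle to be twofold. One part is closedness of (c), and of the second bullet of (b), which need the careful use of isolated points together with $S_X$. The other is choosing the finite approximating configuration so that the graph of $g$ stays inside $\bigcup V_i$ between consecutive chosen points. For the latter we would first try to refine the cover so that consecutive $V_i$ overlap along $A$ and to invoke the chain structure of (a).
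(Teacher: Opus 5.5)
\textbf{Your argument for (c) targets a false statement.} You try to show that $A$ contains no point $(t,y)$ with $t\notin X$ and $y\in X$. Equivalently, you want the unique point $(z,y)$ of $A$ over $y\in X$ to have $z\in X$, and you flag excluding $z\notin X$ as the delicate point. It cannot be excluded. Take $X=\mathbb Q$, an irrational gap $r$, and rationals $q_n\to r$. The graphs of the translations $g_n(x)=x-q_n$ have a cluster point $A\in b_{b_mX}G$. Since $(q_n,0)$ lies on the graph of $g_n$ and $(q_n,0)\to(r,0)$, we get $(r,0)\in A$, and likewise $(r,0^{\pm})\in A$.

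Condition (c) only concerns the endpoints $A^-_t,A^+_t$ of the vertical section, and for these your own ingredients suffice. Let $t\notin X$ and $y=A^-_t\in X$. Then $(t,y)$ is the unique point of $A$ in the row $b_mX\times\{y\}$. Apply (b) to $S_X(A)$ at $y$, in the case where the value $t$ is not in $X$ (your ``gap analogue''). This gives $(t,y^-)\in A$, hence $A^-_t\le y^-<y$, a contradiction. Symmetrically, $(t,y^+)\in A$ excludes $A^+_t=y$. This is in substance how the paper argues, using the transposed conditions recorded in the remark after the proposition.

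The same row-uniqueness is needed in (b) itself. For $y\in X$, the chain property together with $(x^-,y^-)\in A$ only gives $y^-\le A^+_{x^-}\le y$. You must also know that $(x,y)$ is the only point of $A$ with second coordinate $y$ to conclude $A^+_{x^-}=y^-$.

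\textbf{The density step is not yet a proof.} You assert that monotonicity of $g$ and $A\subset\bigcup V_i$ force the whole graph of $g$ into $\bigcup V_i$. This is false for a general basic neighbourhood with one prescribed point per $V_i$. Between two consecutive prescribed points the graph is only confined to the rectangle they span. For example, take $V_1=V_2$ a thin neighbourhood of the diagonal, with prescribed points near $(\inf,\inf)$ and $(\sup,\sup)$; then $g$ is arbitrary in between.

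The paper removes this by shrinking the neighbourhood to grid form.
\begin{itemize}
\item Fix $x_1<\dots<x_n$ in $X$, and let $u$ be the partition of $b_mX$ into the clopen cells $(\gets,x_1),\{x_1\},(x_1,x_2),\dots,\{x_n\},(x_n,\to)$.
\item Use the neighbourhood $[V_1,\dots,V_k]$ whose members are exactly the products of two cells of $u$ that meet $A$.
\item Prescribe $g$ only at the cut points: $g(x_i)=z_i\in X$, with $z_i$ in the cell of the unique value $y_i$ of $A$ at $x_i$, chosen strictly increasing. This is possible because $y_i=y_{i+1}$ forces $y_i\notin X$, so that cell is an infinite interval.
\end{itemize}
The same cut points are used in both coordinates, so membership of a point in a cell is decided by comparisons with $x_1,\dots,x_n$. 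Using (a) and (b), one checks that the graph of $g$ meets exactly the cells that $A$ meets. Hence $g$ lies in $[V_1,\dots,V_k]$.
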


\begin{proof} 
Take $A\in b_{b_m X} G=\cl\ \i^{\Gamma}_{b_m X} (G)$, then $\pr_1 (A)=\pr_2 (A)= b_m X$ by Lemma~\ref{proj}.  

If there exist $(x, y), (x', y')\in A$ such that $x<x'$, $y>y'$, then take  $x<t<x'$, $y'<\tau<y$. The set $W=(\pr_1^{-1}(\gets, t)\cap\pr_2^{-1}(\tau, \to))^-\cap (\pr_1^{-1}(t, \to)\cap\pr_2^{-1}(\gets, \tau))^-$ is an open nbd of $A$ and $W\cap\i^{\Gamma}_{b_m X} (G)=\emptyset$. Hence, $A\not\in\cl (i^{\Gamma}_{b_m X} (G))$. The case of $x>x'$, $y<y'$ is verified similarly. Therefore, ${\leq_{\times}}|_A$ is a linear order on $A$. 

The conditions of (a) are valid. 

\medskip

(b) If $\exists\ (x, y_1), (x, y_2)\in A$, $x\in X$, $y_1\ne y_2$, then take open disjoint nbds $U_1$ and $U_2$ of $y_1$ and $y_2$ in $b_m X$ respectively. Then the set $W=(\pr_1^{-1}(x)\cap\pr_2^{-1}U_1)^-\cap (\pr_1^{-1}(x)\cap\pr_2^{-1}U_2)^-$ is an open nbd of $A$ and $W\cap\i^{\Gamma}_{b_m X} (G)=\emptyset$. Hence, $A\not\in\cl (i^{\Gamma}_{b_m X} (G))$.

If $y\in X$, then $A^-_{x^+}=y^+$. Indeed, if $A^-_{x^+}=\tau>y^+$, then the set $W=((x, y))^-\cap (\pr_1^{-1}((x^+, \to))\cap\pr_2^{-1}((y^+, \to)))^-$ is an open nbd of $A$ and $W\cap\i^{\Gamma}_{b_m X} (G)=\emptyset$. Hence, $A\not\in\cl (i^{\Gamma}_{b_m X} (G))$. The case  $A^+_{x^-}=y^-$ is examined similarly. 

If $y\in b_m X\setminus X$, then $A^-_{x^+}=y$. Indeed, if $A^-_{x^+}>y$, then $\pr_2 (A)\ne b_m X$ if $y=z^+$, $z\in X$, or $y\in\Gamma$ and $A\not\in\cl (i^{\Gamma}_{b_m X} (G))$. If $y=z^-$, $z\in X$, then the set $W=(\{x\}\times (z^-, \to))^+\cap ((x, \to)\times (\gets, z))^+$ is an open nbd of $A$. $\forall\ g\in \i^{\Gamma}_{b_m X} (G)$ if $g(x)<z$, then $g(x^+)< z^-$ and, therefore, $W\cap\i^{\Gamma}_{b_m X} (G)=\emptyset$. Hence, $A\not\in\cl (i^{\Gamma}_{b_m X} (G))$. The case  $A^+_{x^-}=y$ is examined similarly. 

\medskip

(c) If $A^+_{x^+}=z\in X$, then $(x^+, z^+)\not\in A$. $\exists\ t>x^+$ such that $(t, z^+)\in A$ by (a). Due to the remark below $\forall\ y\in (x^+, t]$ $(y, z^+)\in A$. Hence, $(x^+, t]\times\{z^+\}\in A$, and $(x^+, z^+)\in A$. A contradiction is obtained. All the other cases are examined similarly. 

\medskip

Take $A\in\CL (b_m X\times b_m X)$ such that $(A, {\leq_{\times}}|_A)$ is a linearly ordered compacta, $\pr_1 (A)=\pr_2 (A)=b_m X$ and conditions (a) --- (c) are valid. Any nbd of $A$ in $(2^{b_m X\times b_m X}, \tau_F)$ is of the form $[V_1, \ldots, V_{k}]$. Additionally, since $\dim A=0$, $b_m X\times b_m X$ is a product,  one can consider (see, \cite{KozlovSorin2025}) that $V_i=O_i\times U_i$, $O_i$, $U_i$ are elements of the cover 
$$u=\{(\gets, x_1),\ \{x_1\},\ (x_1, x_2),\ \{x_2\},\ \ldots,\ (x_{n-1}, x_n),\ \{x_n\},\ (x_n, \to)\},$$ $x_1,\ldots, x_n\in X$, $x_1<\ldots<x_n$, $i=1,\ldots, k$, and 
$V_1=(\gets, x_1)\times (\gets, x_1)$, $V_k=(x_n, \to)\times (x_n, \to)$, and $V_i<V_{i+1}$ (order on $\{V_1, \ldots, V_{k}\}$) if either (i) $O_i=O_{i+1}$, $U_i<U_{i+1}$ are adjacent elements of the cover $u$, or (ii) $O_i<O_{i+1}$ are adjacent elements of the cover $u$, $U_i=U_{i+1}$, or (iii) $O_i<O_{i+1}$,  $U_i<U_{i+1}$ are adjacent elements of the cover $u$, $i=1,\ldots, k-1$. 

{\it The construction of $g\in i^{\Gamma}_{b_m X} (G)\cap [V_1, \ldots, V_{k}]$.} By (b) $y_i=A\cap\pr_1^{-1} (x_i)$, $y_i\in U_i$, $i=1,\ldots, n$, are correctly defined,  and $y_1\leq\ldots\leq y_n$. Due to invariance of $b_{b_m X} G=\cl\ \i^{\Gamma}_{b_m X} (G)$ with respect to the self-inverse map $S_{b_m X}$ correspondent to the symmetry $s_{b_m X}$ on $b_m X\times b_m X$, if $y_i=y_{i+1}$ for some $i=1,\ldots, n-1$, then $y_i\in b_m X\setminus X$. Choose $z_1,\ldots, z_n\in X$ such that $z_i\in U_i$, $i=1,\ldots, n$, $z_1<\ldots<z_n$. Ultrahomogeneity of $X$ yields the existence of $g\in G$ such that $g(x_i)=z_i$, $i=1,\ldots, n$. 
Obviously $\i^{\Gamma}_{c_m X}  (g)\in [V_1, \ldots, V_n]$ and, hence, $A\in\cl\ \i^{\Gamma}_{c_m X} (G)$. 
\end{proof}

\begin{rem}
{\rm Due to invariance of $b_{b_m X} G=\cl\ \i^{\Gamma}_{b_m X} (G)$ with respect to the self-inverse map $S_{b_m X}$ (correspondent to the 
symmetry $s_{b_m X\times b_m X}$ on $b_m X\times b_m X$), for $A\in b_{b_m X} G$, and the sets 
$${\bar A}^-_t=\inf\{\tau\ |\ (\tau, t)\in A\},\ {\bar A}^+_t=\sup\{\tau\ |\ (\tau, t)\in A\},\ \tau\in b_m X,$$
the conditions, analogous to (a) --- (c) are valid.}
\end{rem}

\medskip

{\it Graph compactification $b_{b^{\updownarrow}X} G$.}

\begin{lem}\label{jumps}
Let $X$ be an ultrahomogeneous chain. 

If for $A, B\in b_{b_m X} G$ $\forall\ x\in X$ either
\begin{itemize}
\item  $A^-_x(=A^+_{x})=B^-_x(=B^+_{x})$, or
\item $\exists\ y\in X$ such that $A^-_x(=A^+_{x})=y^-$ and $B^-_x(=B^+_{x})=y^+$,
\end{itemize}
then 
\begin{itemize}
\item[{\rm (i)}] $\forall\ v\in\Gamma$  $A^-_v=B^-_v$,  $A^+_v=B^+_v$, 
\item[{\rm (ii)}] if $A^-_x(=A^+_{x})=B^-_x(=B^+_{x})=y$, then $A^+_{x^-}=B^+_{x^-}$,  $A^-_{x^-}=B^-_{x^-}$,  $A^-_{x^+}=B^-_{x^+}$,  $A^+_{x^+}=B^+_{x^+}$, 
\item[{\rm (iii)}] if $\exists\ y\in X$ is such that $A^-_x(=A^+_{x})=y^-$ and $B^-_x(=B^+_{x})=y^+$, then  $A^-_{x^-}=B^-_{x^-}$,  $A^+_{x^+}=B^+_{x^+}$. 
\end{itemize}
\end{lem}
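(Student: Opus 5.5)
The plan is to derive everything from the description of $b_{b_m X} G$ in Proposition~\ref{graphdescrlotbeta}: each $A$ is a closed chain in the cardinal product with full projections, and it is determined by the monotone ``interval-valued'' map $t\mapsto [A^-_t,A^+_t]$. The hypothesis fixes the values at points of $X$ up to a $y^-/y^+$ ambiguity. The point is that the values at points of $b_m X\setminus X$ (gaps $v\in\Gamma$ and the points $x^\pm$) are determined by monotonicity and density of $X$, except exactly at the sides where the ambiguity occurs.

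For (i), I would take a gap $v\in\Gamma$ given by a cut $(L,U)$ of $X$. Since $A$ is closed and its order is the restriction of $\leq_\times$, I claim that
$$A^-_v=\sup\{A^+_x\mid x\in L\}\quad\mbox{and}\quad A^+_v=\inf\{A^-_x\mid x\in U\},$$
with the sup/inf taken in the compact LOTS $b_m X$. To justify this, note that $x\to v$ from the left along $L$ forces $x^\pm\to v$ as well. By (a), $A^+_x\leq A^-_{x^+}\leq A^-_v$, and closedness of $A$ gives the limit point $(v,\sup A^+_x)\in A$. Monotonicity then shows that no point of $A$ over $v$ lies below this supremum.

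Next, the replacement of $y^-$ by $y^+$ does not change these suprema and infima: $y^-$ and $y^+$ are adjacent to $y$, and the sup over a set containing $y^-$ versus $y^+$ can differ only if that element is the maximum. But $x\in L$ is never the maximum of $L$ when $v$ is a gap. Strictly, if $L$ had a largest element, the cut would correspond to $x^+$ rather than to a gap. Hence the value for a larger $x'\in L$ dominates both $y^-$ and $y^+$, since $A^-_{x'}\geq A^-_{x^+}\geq y^+$ by (b). So the suprema coincide, and the infima are handled symmetrically.

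For (ii), in the case where $y\in X$, Proposition~\ref{graphdescrlotbeta}(b) directly gives $A^+_{x^-}=y^-=B^+_{x^-}$ and $A^-_{x^+}=y^+=B^-_{x^+}$. In the case where $y\notin X$, it gives $A^+_{x^-}=A^-_{x^+}=y$ for both $A$ and $B$. The remaining endpoints, $A^-_{x^-}$ and $A^+_{x^+}$, I would compute as a sup (respectively inf) over $x'<x$ (respectively $x'>x$), in the same way as in (i). Here I need density: $x^-$ is the limit from the left of points of $X$ whenever $x$ has no immediate predecessor. A discrete ultrahomogeneous chain is dense, so that holds, and the argument of (i) applies.

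Part (iii) is the same one-sided computation: $A^-_{x^-}$ depends only on the values at $x'<x$, and $A^+_{x^+}$ only on the values at $x'>x$, where the previous paragraph shows the data agree.

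The main obstacle I expect is justifying the sup/inf formula rigorously. It requires using the closedness of $A$ together with the condition from (a) that $A^+_t=A^-_{t'}$ forces degeneracy in between. It also requires checking that the extreme cases, namely the values at $\inf$ and $\sup$, are consistent; these are fixed by (a) and by $f(\inf)=\inf$.
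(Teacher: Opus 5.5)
Your formulas $A^-_v=\sup\{A_x\mid x\in X,\ x<v\}$ and $A^+_v=\inf\{A_x\mid x\in X,\ x>v\}$, and their one-sided analogues at $x^-$ and $x^+$, are correct. The step that fails is the comparison of these suprema for $A$ and $B$. You claim that a larger $x'\in L$ has value dominating $y^+$, because $A^-_{x'}\geq A^-_{x^+}\geq y^+$ by (b).

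For $A$ this is false. If $A_x=y^-\notin X$, condition (b) of Proposition~\ref{graphdescrlotbeta} gives $A^-_{x^+}=y^-$; only for $B$ do you get $B^-_{x^+}=y^+$. Nothing prevents $t\mapsto A_t$ from being constant, with a value outside $X$, on a whole interval of $X$. So the fact that $x$ is not the largest element of $L$ does not stop $y^-$ from being the largest element of $\{A_{x'}\mid x'\in L\}$. If $A_{x'}=y^-$ and $B_{x'}=y^+$ for all $x'$ in a final segment of $L$, then $A^-_v=y^-\neq y^+=B^-_v$. Your one-sided computations for (ii) and (iii) break in the same way.

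This cannot be repaired, because the statement as written is false. Fix $y\in X$ and $c\in b_mX\setminus(X\cup\{\inf,\sup\})$, put
$$A=(\{\inf\}\times[\inf,y^-])\cup([\inf,c]\times\{y^-\})\cup(\{c\}\times[y^-,\sup])\cup([c,\sup]\times\{\sup\}),$$
and let $B$ be obtained by replacing $y^-$ with $y^+$.

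Both lie in $b_{b_mX}G$. They are limits of graphs of automorphisms that send a given finite set of points below $c$ into a short interval just below $y$ (for $B$, just above $y$), and the given points above $c$ close to $\sup$. Such automorphisms exist by ultrahomogeneity.

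For $x\in X$ we have $A_x=y^-$ and $B_x=y^+$ when $x<c$, and $A_x=B_x=\sup$ when $x>c$, so the hypothesis holds. Nevertheless:
\begin{itemize}
\item $A^+_{\inf}=y^-\neq y^+=B^+_{\inf}$;
\item if $c=v$ is a proper gap, then $A^-_v=y^-\neq y^+=B^-_v$;
\item $A^-_{x^-}=y^-\neq y^+=B^-_{x^-}$ for every $x\in X$ with $x<c$.
\end{itemize}
So (i) and (iii) fail. For (ii), replace $\{\inf\}\times[\inf,y^-]$ by $([\inf,x_0^+]\times\{\inf\})\cup(\{x_0^+\}\times[\inf,y^-])$ with $x_0^+<c$. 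Then $A_{x_0}=B_{x_0}=\inf$, but $A^+_{x_0^+}=y^-\neq y^+=B^+_{x_0^+}$.

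The paper's argument has the same blind spot. It picks $y\in X$ with $A^-_v<y^-$ and $(T\times(\gets,y^+])\cap B=\emptyset$, and such a $y$ cannot exist when $A^-_v=y_0^-$ and $B^-_v=y_0^+$.

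Your sup/inf argument does work under an extra hypothesis excluding such runs. For instance, require that whenever $A_x=y^-$ and $B_x=y^+$, the point $x$ is the only point of $X$ where $A$ takes the value $y^-$ and $B$ the value $y^+$. Then for $x'>x$ one gets $A_{x'}\geq y^+$, since $A_{x'}=y$ would force $B_{x'}=y<B_x$. The argument on the other side is symmetric.
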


\begin{proof} 
(i) Assume that  $A^-_v<B^-_v$. Then there exist a compact nbd $T$ of $v$ and $y\in X$ such that $A^-_v<y^-$, $(T\times (\gets, y^+])\cap B=\emptyset$. Then $\forall\ x\in X\cap T$ 
$A^+_{x}<y^-<y^+<B^-_{x}$ and the contradiction with the asumption of lemma is obtained. Other cases are examined similarly.

(ii) By Proposition~\ref{graphdescrlotbeta}  $A^+_{x^-}=B^+_{x^-}=y^{-}$,  $A^-_{x^+}=B^-_{x^+}=y^+$ if $y\in X$ and  $A^+_{x^-}=B^+_{x^-}=y$,  $A^-_{x^+}=B^-_{x^+}=y$ otherwise. The usage of the same resoning as in (i) allows one to show that  $A^-_{x^-}=B^-_{x^-}$, $A^+_{x^+}=B^+_{x^+}$.

(iii) If $A^-_x(=A^+_{x})=y^-$ and $B^-_x(=B^+_{x})=y^+$, then $A^+_{x^-}=y^-$, $B^+_{x^-}=y^+$, $A^-_{x^+}=y^-$, $B^-_{x^+}=y^+$ by Proposition~\ref{graphdescrlotbeta}.  The usage of the same resoning as in (i) allows one to show that  $A^-_{x^-}=B^-_{x^-}$, $A^+_{x^+}=B^+_{x^+}$.
\end{proof}

By Theorem~\ref{mapcompVel} the graph compactification $b_{b^{\updownarrow}X} G$ is the image of $b_{b_m X} G$ under the map ${F}^H$, $F=f\times f$, where $f: b_m X\to b^{\updownarrow}X$ is the map of compactifications of $X$.

\begin{pro}\label{graphdescrlotbupdownarrow}
The graph compactification $b_{b^{\updownarrow}X} G$ is the quotient space of $b_{b_m X} G$ with respect to the equivalence relation
$$A\sim_{\updownarrow} B\ \Longleftrightarrow\ \forall\ x\in X\ \mbox{either}\ A^-_x=B^-_x,\ \mbox{or}\ \exists\ y\in X\ \mbox{such that}\ A^-_x=y^-\ \&\  B^-_x=y^+\ \mbox{or}\ A^-_x=y^+\ \&\  B^-_x=y^-.$$
\end{pro}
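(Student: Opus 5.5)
The map $F^H|_{b_{b_m X} G}: b_{b_m X} G\to b_{b^{\updownarrow}X} G$, $F=f\times f$, where $f: b_m X\to b^{\updownarrow}X$ glues $x^-$ with $x^+$ for every $x\in X$, is a continuous surjection of compacta; this is the map of compactifications supplied by Theorem~\ref{mapcompV}(a), and by Theorem~\ref{mapcompVel} its image is $b_{b^{\updownarrow}X} G$. Such a map is closed, so $b_{b^{\updownarrow}X} G$ is homeomorphic to the quotient of $b_{b_m X} G$ by the fibre relation $\sim_{F^H}$ of Remark~\ref{erphi}. It therefore suffices to prove, for $A,B\in b_{b_m X} G$,
$$F^H(A)=F^H(B)\ \Longleftrightarrow\ A\sim_{\updownarrow} B.$$

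\emph{Necessity.} Let $F^H(A)=F^H(B)$ and fix $x\in X$. Since $f^{-1}f(x)=\{x\}$, the fibre of $F^H(A)$ over $f(x)$ is the $f$-image of the fibre of $A$ over $x$. By Proposition~\ref{graphdescrlotbeta}(b) the latter is the single point $A^-_x$, and likewise for $B$. Hence $f(A^-_x)=f(B^-_x)$. Because $f$ is injective off the pairs $\{y^-,y^+\}$, $y\in X$, either $A^-_x=B^-_x$ or $\{A^-_x,B^-_x\}=\{y^-,y^+\}$ for some $y\in X$. This is exactly the condition defining $A\sim_{\updownarrow}B$.

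\emph{Sufficiency.} Let $A\sim_{\updownarrow}B$. The plan is to compare the fibres of $A$ and $B$ over every $t\in b_m X$ after applying $f$. For $t\in X$ the fibres are $A^-_t$ and $B^-_t$, which have the same $f$-image by hypothesis. For $t\in\Gamma$, Lemma~\ref{jumps}(i) gives $A^\pm_t=B^\pm_t$; by the linear-order description in Proposition~\ref{graphdescrlotbeta}(a) each fibre is the order interval $[A^-_t,A^+_t]$ intersected with $A$. The remaining points are $t=x^\pm$. Here Lemma~\ref{jumps}(ii),(iii) combined with Proposition~\ref{graphdescrlotbeta}(b) controls the endpoints. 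In case (ii) all endpoints coincide. In case (iii) the outer endpoints $A^-_{x^-}=B^-_{x^-}$ and $A^+_{x^+}=B^+_{x^+}$ coincide, while the inner endpoints are $y^-$ and $y^+$, and these are identified by $f$. Since $f(x^-)=f(x^+)$, it is enough to check that the union of the fibres of $A$ over $x^-,x,x^+$ and the corresponding union for $B$ have the same $f$-image.

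\emph{Main obstacle.} The endpoint equalities do not by themselves pin down which points of the interval $[A^-_t,A^+_t]$ actually lie in $A$; the argument also has to use the interior of these intervals. To handle this, I would use the closedness and linearity of $A$ together with $\pr_2(A)=b_m X$ (Lemma~\ref{proj}). These show that each vertical fibre is the set of points in $[A^-_t,A^+_t]$ whose horizontal fibre contains $t$, and this set is determined by the analogous data $\bar A^\pm$ from the remark following Proposition~\ref{graphdescrlotbeta}. Applying Lemma~\ref{jumps} symmetrically, through $S_{b_m X}$, to those horizontal data then forces the fibres of $A$ and $B$ to agree up to the identifications $y^-\sim y^+$. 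Consequently $F(A)=F(B)$, which completes the equivalence.
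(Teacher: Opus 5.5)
Your overall plan matches the paper's: identify the fibre relation of $F^H|_{b_{b_m X}G}$ with $\sim_{\updownarrow}$, get necessity from the one-point vertical fibres over points of $X$, and get sufficiency from Lemma~\ref{jumps} while grouping the fibres over $x^-,x,x^+$. Your necessity half is correct. The gap is in the last step of sufficiency. To apply Lemma~\ref{jumps} to $S_{b_mX}(A)$ and $S_{b_mX}(B)$, you need its hypothesis for the transposed sets. That hypothesis says: for every $z\in X$, the horizontal values $\bar A^-_z$ and $\bar B^-_z$ either coincide or are $w^{\mp},w^{\pm}$ for some $w\in X$. In other words, you need $S_{b_mX}(A)\sim_{\updownarrow}S_{b_mX}(B)$. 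Nothing in your proposal derives this from $A\sim_{\updownarrow}B$. The natural way to obtain it is via $F(S_{b_mX}(A))=S(F(A))=S(F(B))=F(S_{b_mX}(B))$, which presupposes the conclusion. Also, the claim that the vertical fibre ``is determined by the data $\bar A^\pm$'' is not an argument. As written, the final step is unjustified and essentially circular.

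The obstacle you identify does not actually exist, and your own ingredients remove it directly. The vertical fibre is the whole segment: $A\cap(\{t\}\times b_mX)=\{t\}\times[A^-_t,A^+_t]$. This is what ``$[A^-_t,A^+_t]\in A$'' in Proposition~\ref{graphdescrlotbeta}(a) records; it does not mean ``interval intersected with $A$''. To see it, take $A^-_t<\tau<A^+_t$ and pick $s$ with $(s,\tau)\in A$, which exists because $\pr_2(A)=b_mX$. Comparability of $(s,\tau)$ with $(t,A^-_t)$ under $\leq_\times$ forces $s\geq t$. Comparability with $(t,A^+_t)$ forces $s\leq t$. Hence $s=t$.

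Consequently $F(A)=\bigcup_t\{f(t)\}\times f([A^-_t,A^+_t])$, and the endpoint equalities finish the proof.
\begin{itemize}
\item Over $f(v)$ with $v\in\Gamma$, use Lemma~\ref{jumps}(i).
\item Over $f(x^-)=f(x^+)$ in case (ii), all four endpoints coincide.
\item Over $f(x^-)=f(x^+)$ in case (iii), put $a=A^-_{x^-}=B^-_{x^-}$ and $c=A^+_{x^+}=B^+_{x^+}$. The fibres of $A$ over $x^-,x,x^+$ have union $[a,y^-]\cup[y^-,c]=[a,c]$. Those of $B$ have union $[a,y^+]\cup[y^+,c]=[a,c]$. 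So both have image $f([a,c])$.
\end{itemize}
This is exactly the paper's computation, which works throughout with the full segments $\{v\}\times[A^-_v,A^+_v]$.
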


\begin{proof} 
It is easy to check that $\sim_{\updownarrow}$ is an  equivalence relation on $b_{b_m X} G$. We shall show that the equivalence relation on $b_{b_m X} G$   induced by the map $F^H$ coincides with $\sim_{\updownarrow}$. 

If $A\sim_{\updownarrow} B$, then $\forall\ x\in X$ either $A^-_x=A^+_x=B^-_x=B^+_x$ (Proposition~\ref{graphdescrlotbeta}), or $A^-_x=y^-$ \& $B^-_x=y^+$ (or vice versa) for some $y\in X$. In the latter case $A^+_{x^-}=A^-_{x^+}=y^-$, $B^+_{x^-}=B^-_{x^+}=y^+$ and  $A^-_{x^-}=B^-_{x^-}$,  $A^+_{x^+}=B^+_{x^+}$ by Lemma~\ref{jumps}. In both cases $F ((x^-, y^-))=F ((x^-, y^+))=F ((x^+, y^-))=F ((x^+, y^+))$, $F ((x^-, y))=F ((x^+, y)$ and  $F ((x, y^-))=F ((x, y^+))$. By Lemma~\ref{jumps} $\forall\ v\in\Gamma$ $F (\{v\}\times [A^-_v, A^+_v])=F (\{v\}\times [B^-_v, B^+_v])$,  $F (\{x^-\}\times [A^-_{x^-}, y^-])=F (\{x^-\}\times [B^-_{x^-}, y^-])$ and  $F (\{x^+\}\times [y^+, A^+_{x^+}])=F (\{x^+\}\times [y^+, B^+_{x^+}])$. Therefore, $F (A)=F (B)$. 

If $F (A)=F (B)$, then for $x\in X$ either $A^-_x(=A^+_{x})=B^-_x(=B^+_{x})=y$, or $\exists\ y\in X$ such that $A^-_x(=A^+_{x})=y^-$ and $B^-_x(=B^+_{x})=y^+$ (or vice versa) and $A\sim_{\updownarrow} B$.
\end{proof}

\medskip

{\it Graph compactification $b_{b^{\leftrightarrow}X} G$.}

\begin{lem}\label{lbetabchains}
For $K=\{\inf, \sup\}$ the condition {\rm(ER)} of {\rm Theorem~\ref{restrmap}} for $b_{b_m X} G$ is valid.
\end{lem}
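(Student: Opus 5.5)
We have to verify (ER) for $Y=b_m X$ and the open invariant subset $j(X')=b_m X\setminus K$ with $K=\{\inf,\sup\}$. That is, we must show: if $A,B\in b_{b_m X} G$ and
$$A\cap (Y'\times Y')=B\cap (Y'\times Y')\ne\emptyset,\qquad Y'=b_m X\setminus\{\inf,\sup\},$$
then $A=B$. The plan is to use the structural description of Proposition~\ref{graphdescrlotbeta}. Every $A\in b_{b_m X} G$ is a linearly ordered (with respect to $\leq_\times$) closed set with $\pr_1(A)=\pr_2(A)=b_m X$, and it is determined by the intervals $\{t\}\times[A^-_t,A^+_t]$, $t\in b_m X$. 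So it suffices to prove $A^\pm_t=B^\pm_t$ for all $t$.

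First I treat $t\in X$. By (b), $A^-_x=A^+_x$ is a single point $y$. By (c) and monotonicity, $y$ could equal $\inf$ or $\sup$, and this is exactly the part invisible in $Y'\times Y'$. If $A^-_x\in Y'$ for some $x$, then the common part determines $A^-_x=B^-_x$. The problematic case is when $A$ maps a point of $X$ to $\inf$ (or $\sup$) while $B$ does not. Nonemptiness of the common trace then gives some $(t,\tau)\in A\cap(Y'\times Y')$. I intend to use monotonicity together with closedness and $\pr_2=b_m X$ to compare the two sets.

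Concretely, let $a=\sup\{t\mid A^+_t=\inf\}$. By (a) and (c), the points $t\le a$ form an initial segment $[\inf,a]$ mapped to $\inf$, and $A$ contains $\{a\}\times[\inf, A^+_a]$. Restricted to $Y'\times Y'$, $A$ has no points over $t<a$, while its points over $t>a$ carry second coordinates in $Y'$ near $A^+_a$. For $B$ the analogous parameter $b$ satisfies the same. Since the traces agree and are nonempty, the smallest first coordinate in the trace is the same for both, so $a=b$ (using that $\pr_1$ of the trace is $\{t>a\}$ together with possibly $a$ itself, according to (c)). The values $A^+_a$ and $B^+_a$ are also recovered from the trace: if $A^+_a\in Y'$ it is visible, and otherwise it equals $\inf$, which forces the whole segment. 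The symmetric argument at the top handles $\sup$, and the symmetric map $S_{b_m X}$ handles the transposed situation ($\bar A$). Hence $A^\pm_t=B^\pm_t$ for all $t$, and therefore $A=B$.

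The main obstacle is the bookkeeping at the boundary point $a$, where $A$ may contain a vertical segment $\{a\}\times[\inf,A^+_a]$ partly hidden in $\{\inf\}$-coordinates. I would handle it by closedness of $A$: the point $(a,A^+_a)$ is a limit of trace points, or lies in the trace. The nonemptiness assumption is used exactly to exclude the case where $A$ and $B$ collapse entirely to $(\{\inf\}\times b_m X)\cup(b_m X\times\{\sup\})$-type sets, which lie in $0_{Y\setminus X}$ and differ.
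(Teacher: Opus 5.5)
Your argument is essentially the paper's: both show that the part of $A$ lying on $(\{\inf,\sup\}\times b_m X)\cup(b_m X\times\{\inf,\sup\})$ is forced by the common trace, via the chain structure, full projections and closedness from Proposition~\ref{graphdescrlotbeta}. The paper does this by a four-case analysis of $\pr_1(A\cap(Y'\times Y'))$, writing $A$ explicitly as the trace plus boundary segments whose endpoints are the bounds of $\pr_1$ and $\pr_2$ of the trace.

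The bookkeeping needs a little more care than you indicate. With $a=\sup\{t\mid A^+_t=\inf\}$, suppose $A^-_x=\inf$ for some $x\in X$ and $A^+_{x^+}>\inf$. Then $a=x$, yet $(x^+,\inf)\in A$ by (b), so points of $A$ over $t>a$ need not have second coordinate in $Y'$. Defining $a$ via $A^-_t=\inf$ instead, or recovering $(x^+,\inf)$ by (b) or closedness, fixes this. Likewise, when $a=\inf$ the value $A^+_{\inf}$ is not directly visible in the trace, but it equals the greatest lower bound of $\pr_2$ of the trace, which is exactly the paper's $a'$.
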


\begin{proof}
Take $A, B\in b_{b_m X} G$ such that $A\cap (Y\times Y)=B\cap (Y\times Y)\ne\emptyset$, $Y=b_m X\setminus K$.

For the projections of $A$ (or $B$) on the factors there are four possible cases (take into consideration Proposition~\ref{graphdescrlotbeta}):
\begin{itemize}
\item[{\rm (1)}] $\pr_1 (A\cap (Y\times Y))=Y$, 

\item[{\rm (2)}] $\pr_1 (A\cap (Y\times Y))=(\gets, b)$, $b\in X^-\cup (\Gamma\setminus\{\inf, \sup\})$, or $(\gets, b]$, $b\in X^+\cup  (\Gamma\setminus\{\inf, \sup\})$,

\item[{\rm (3)}] $\pr_1 (A\cap (Y\times Y))=(a, \to)$, $a\in X^+\cup  (\Gamma\setminus\{\inf, \sup\})$, or $[a, \to)$, $a\in X^-\cup  (\Gamma\setminus\{\inf, \sup\})$,
 
\item[{\rm (4)}] $\pr_1 (A\cap (Y\times Y))=[a, b]$, $a\in X^-\cup  (\Gamma\setminus\{\inf, \sup\})$,  $b\in X^+\cup  (\Gamma\setminus\{\inf, \sup\})$, 
or $(a, b]$, $a\in X^+\cup  (\Gamma\setminus\{\inf, \sup\})$, $b\in X^+\cup  (\Gamma\setminus\{\inf, \sup\})$, or $[a, b)$, $a\in X^-\cup  (\Gamma\setminus\{\inf, \sup\})$, $b\in X^-\cup  (\Gamma\setminus\{\inf, \sup\})$, or $(a, b)$, $a\in X^+\cup  (\Gamma\setminus\{\inf, \sup\})$, $b\in X^-\cup  (\Gamma\setminus\{\inf, \sup\})$, $a\leq b$. 
\end{itemize}

Let $a'$ be the least lower bound and $b'$ is the greatest upper bound of $\pr_2 (A\cap (Y\times Y))$. 

In the case (1) $A=(\{\inf\}\times (\gets, a'])\cup (A\cap (Y\times Y)\cup (\{\sup\}\times [b', \to))$. 

In the case (2) $b'=\sup$ and $A=(\{\inf\}\times (\gets, a'])\cup (A\cap (Y\times Y)\cup ([b, \to)\times\{\sup\})$. 

In the case (3) $a'=\inf$ and $A=((\gets, a]\times\{\inf\})\cup (A\cap (Y\times Y)\cup  (\{\sup\}\times [b', \sup))$. 

In the case (4)  $a'=\inf$,  $b'=\sup$ and $A=((\inf, a]\times\{\inf\})\cup (A\cap (Y\times Y)\cup  ([b, \sup)\times\{\sup\})$. The same is true for $B$ and $A=B$.
\end{proof}

Note that $A_{\inf}=(b_m X\times\{\inf\})\cup (\{\sup\}\times b_m X)$ and $A_{\sup}=(\{\inf\}\times b_m X)\cup(b_m X\times\{\sup\})\}$ are the only sets $R\in  b_{b_m X} G$ such that $R\cap (Y\times Y)=\emptyset$.

\begin{pro}\label{leftrightarrow}
$$b_{b^{\leftrightarrow}X} G=b_{b_m X} G/0_K,\ 0_K=\{A_{\inf}, A_{\sup}\}.$$
\end{pro}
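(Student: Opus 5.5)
We combine Lemma~\ref{lbetabchains} with Theorem~\ref{mapcompVel}(B). The map $q: b_m X\to b^{\leftrightarrow}X$ is the elementary $G$-map that glues $K=\{\inf,\sup\}$ to one point $\ast$, and $G$ is $\tau_g$-representable in both compacta. By Theorem~\ref{mapcompV}, $b_{b^{\leftrightarrow}X} G$ is a graph compactification and $F^H|_{b_{b_m X}G}$, with $F=q\times q$, is a map of compactifications onto it. It therefore suffices to show that $F^H$ identifies exactly the two elements of $0_K$ and is injective elsewhere. Equivalently, we verify condition (EPM) for $q$.

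The first step is to identify $0_K$. By Lemma~\ref{proj}, every $R\in b_{b_m X}G$ has full projections. By Proposition~\ref{graphdescrlotbeta}, $R$ is a $\leq_\times$-chain containing $(\inf,\inf)$ and $(\sup,\sup)$. If $R\subset (b_mX\times K)\cup(K\times b_mX)$, then $R\cap(Y\times Y)=\emptyset$, and as noted right after Lemma~\ref{lbetabchains}, the only such $R$ are $A_{\inf}$ and $A_{\sup}$. Hence $0_K=\{A_{\inf},A_{\sup}\}$. Both lie in the closure of $\i^\Gamma_{b_mX}(G)$, as can be checked via the description in Proposition~\ref{graphdescrlotbeta} (or the note preceding the statement). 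Their images are
$$F^H(A_{\inf})=F^H(A_{\sup})=(b^{\leftrightarrow}X\times\{\ast\})\cup(\{\ast\}\times b^{\leftrightarrow}X).$$
This gives the second clause of (EPM), and shows that the class $0_K$ collapses to a point.

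The second step is the first clause of (EPM). Take $R,S\in b_{b_mX}G$ with $F^H(R)=F^H(S)$ and a common nonempty trace $R\cap(Y\times Y)=S\cap(Y\times Y)$. Lemma~\ref{lbetabchains} shows that $R$ is uniquely reconstructed from this trace, via the explicit four-case formula through $a,b,a',b'$. Hence $R=S$. In fact we will check the slightly stronger statement that $F^H(R)=F^H(S)$ with $R,S\notin 0_K$ forces equal traces. This holds because $F$ is injective off $(K\times b_mX)\cup(b_mX\times K)$, so $F^H(R)\cap(Y\times Y)=R\cap(Y\times Y)$ under the identification $Y\subset b^{\leftrightarrow}X$. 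Thus $F^H$ is injective on $b_{b_mX}G\setminus 0_K$. Also $F^H(R)\cap(Y\times Y)\ne\emptyset$ for $R\notin 0_K$, so such $R$ are never identified with $0_K$.

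Finally, Theorem~\ref{mapcompVel}(B) gives that $F^H|_{b_{b_mX}G}$ is an elementary map of compactifications with only nontrivial fiber $0_K$. Since this is a closed surjection of compacta, it is a quotient map, which yields $b_{b^{\leftrightarrow}X}G=b_{b_mX}G/0_K$. The main obstacle is the reconstruction in the second step: that a chain $R\in b_{b_mX}G$ is determined by its trace on $Y\times Y$. This is exactly Lemma~\ref{lbetabchains}, so the remaining care is confirming that $A_{\inf}\ne A_{\sup}$ are genuinely in $b_{b_mX}G$, so that the quotient is nontrivial.
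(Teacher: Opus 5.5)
Your proposal is correct and follows essentially the paper's argument. Lemma~\ref{lbetabchains} (condition (ER) for $b_mX\setminus K$), together with the fact that $A_{\inf},A_{\sup}$ are the only elements with empty trace on $Y\times Y$, gives (EPM), and Theorem~\ref{mapcompVel}(B) then yields the elementary map collapsing exactly $0_K$. The one difference is that the paper passes from (ER) to (EPM) via Remark~\ref{corrcond}, while you check (EPM) directly through $F^H(R)\cap(Y\times Y)=R\cap(Y\times Y)$. A minor citation point: that $b_{b^{\leftrightarrow}X}G$ is a graph compactification follows from Lemma~\ref{lemcompemb} (or from Theorem~\ref{mapcompVel}(B) itself), not from Theorem~\ref{mapcompV}, where it is a hypothesis.
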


\begin{proof}
From Lemma~\ref{lbetabchains} by Remark~\ref{corrcond} the condition (EPM) of Theorem~\ref{mapcompVel} holds for $b_{b_m X} G$ and the map of compactifications $b_{b_m X} G$, $b_{b^{\leftrightarrow}X} G$ which is elementary. The rest follows from Theorem~\ref{mapcompVel}. 
\end{proof}

\medskip

{\it Graph compactification  $b_{b^{+}X} G$.} Let $\pr_1$ (respectively $\pr_2$) be projection of $b^{\updownarrow}X\times b^{\updownarrow}X$ onto the first  (respectively the second) factor. 

For $R\in b_{b^{\updownarrow}X} G$ the following hold:
\begin{itemize}
\item[(a)] $\{R\in b_{b^{\updownarrow}X} G\ |\ R\cap (Z\times Z)=\emptyset\}=\{F^H(A_{\inf}),\ F^H(A_{\sup})\}$, 
\item[(b)] $\{R\in b_{b^{\updownarrow}X} G\ |\ |\pr_1 (R\cap (Z\times Z))|=1\}=\{F^H(A_x=((\gets, x]\times\{\inf\})\cup (\{x\}\times b_m X)\cup ([x, \to)\times\{\sup\})),\ x\in Z\setminus X\}$. 
\end{itemize}

\begin{lem}\label{lbetabchains1}
For $K=\{\inf, \sup\}$ the condition {\rm(ER)} of {\rm Theorem~\ref{restrmap}} for $b_{b^{\updownarrow}X} G$ 
is valid for $R\in b_{b^{\updownarrow}X} G$ such that $R\ne F^H(A_{x})$, $x\in X^-\cup X^+$. 
\end{lem}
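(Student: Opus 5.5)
The plan is to imitate the proof of Lemma~\ref{lbetabchains}, transferring the case analysis from $b_{b_m X} G$ to $b_{b^{\updownarrow}X} G$ through the map $F^H$, with $F=f\times f$ and $f: b_m X\to b^{\updownarrow}X$. By Theorem~\ref{mapcompVel} and Proposition~\ref{graphdescrlotbupdownarrow}, every $R\in b_{b^{\updownarrow}X} G$ has the form $R=F^H(A)$ with $A\in b_{b_m X} G$, and $A$ is described by Proposition~\ref{graphdescrlotbeta}. Note that $f$ is injective on $\Gamma\cup X$ (in particular at $\inf,\sup$) and glues only $x^-$ with $x^+$. Hence $f^{-1}(Z)=Y$, and $R\cap(Z\times Z)=F(A\cap(Y\times Y))$.

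So I take $R,S\in b_{b^{\updownarrow}X} G$, both different from the sets $F^H(A_x)$, $x\in X^-\cup X^+$, with $R\cap(Z\times Z)=S\cap(Z\times Z)\ne\emptyset$. I choose preimages $A,B$ and aim to show that the parts of $R$ and $S$ lying over $\{\inf,\sup\}$ are determined by $R\cap(Z\times Z)$. As in Lemma~\ref{lbetabchains}, the first step is to split into the four cases (1)--(4) for $\pr_1(R\cap(Z\times Z))$, which is an interval of $Z$ with endpoints $a,b$. Let $a',b'$ be the infimum and supremum of $\pr_2(R\cap(Z\times Z))$. Then use monotonicity (linear order under $\leq_\times$) together with $\pr_1 R=\pr_2 R=b^{\updownarrow}X$ (Lemma~\ref{proj}) to read off the formulas
$R=(\{\inf\}\times[\inf,a'])\cup (R\cap(Z\times Z))\cup(\{\sup\}\times[b',\sup])$
and its analogues in cases (2)--(4). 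Here the horizontal pieces $[\,\cdot\,,a]\times\{\inf\}$ and $[b,\,\cdot\,]\times\{\sup\}$ appear exactly when the projection misses an end. These formulas depend only on $R\cap(Z\times Z)$, and the same formulas hold for $S$, which gives $R=S$.

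The main obstacle, and the reason for the exceptional sets, is the degenerate situation in which $\pr_1(R\cap(Z\times Z))$ is a single point. In $b^{\updownarrow}X$ the points $x^-$ and $x^+$ are identified, so an endpoint $a=b=f(x^\pm)$ no longer records whether the column over it comes from a closed or an open end in $b_m X$. Concretely, for $A_{x^-}$ and $A_{x^+}$ (or their mixtures) the intersections with $Z\times Z$ coincide, while the distribution of the pieces over $\inf$ and $\sup$ may differ. I would verify this by the list (b) preceding the lemma: the sets with $|\pr_1(R\cap(Z\times Z))|=1$ are exactly $F^H(A_x)$, $x\in Z\setminus X$. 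For $x\in\Gamma$ the endpoint is still unambiguous, because $f$ is injective there, so the formula above applies. The points $x\in X^\pm$ are precisely the ones excluded.

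For the remaining nondegenerate cases I still have to check the following: if an endpoint $a$ equals $f(x^-)=f(x^+)$ and the interval is nondegenerate, then whether $a$ belongs to the projection is determined by the order. The reason is that at $a$ the column $\{a\}\times[R^-_a,R^+_a]$ must meet the $\inf$-row or the $\sup$-row on the correct side, by linearity of $\leq_\times$ on $R$. This is routine once the degenerate case is set aside.
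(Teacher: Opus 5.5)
Your overall plan matches the paper's. A singleton trace $\pr_1(R\cap(Z\times Z))=\{x^\pm\}$ occurs exactly for the excluded pair $F^H(A_{x^-})\ne F^H(A_{x^+})$, which have equal traces, and a singleton at a proper gap is harmless.

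\textbf{The gap.} It lies in the nondegenerate case, which is the real content of the lemma. You run the case analysis of Lemma~\ref{lbetabchains} directly in $b^{\updownarrow}X$, but $b^{\updownarrow}X$ has no linear order for which $f$ is monotone: $x^-<x<x^+$ together with $f(x^-)=f(x^+)$ would force $f(x)=f(x^\pm)$. The point $x\in X$ is an isolated point of $Z$ sitting ``inside'' the glued point $x^\pm$. So ``interval of $Z$ with endpoints $a,b$'', ``$a',b'$ as infimum and supremum'', the pieces $\{\inf\}\times[\inf,a']$ and $[b,\sup]\times\{\sup\}$, and above all ``linearity of $\leq_\times$ on $R$'' are not defined.

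This is exactly where the gluing loses information. Let $A\in b_{b_mX}G$ be the diagonal over $[\inf,x^-]$ together with $\{x^-\}\times[x^-,\sup]$ and $[x^-,\sup]\times\{\sup\}$. Let $A''$ be the same with $x^+$ in place of $x^-$. Both satisfy Proposition~\ref{graphdescrlotbeta}.
\begin{itemize}
\item The traces of $F^H(A)$ and $F^H(A'')$ on $Z\times Z$ have the same ``right endpoint'' $x^\pm$.
\item Yet the $\sup$-row piece contains $(x,\sup)$ in the first case and not in the second.
\item Likewise $a'=y^-$ and $a'=y^+$ in $b_mX$ both become $y^\pm$, while $(\inf,y)$ lies in the image of the $\inf$-column only when $a'=y^+$.
\end{itemize}
So the part of $R$ outside $Z\times Z$ is not a function of endpoints in $b^{\updownarrow}X$. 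What must be proved is that, for a nondegenerate trace, it is still forced whether $(x,\inf)$ or $(x,\sup)$ lies in $R$, and likewise for $(\inf,y)$ and $(\sup,y)$. That is precisely the ambiguity that produces the exceptional pair, and you must show it disappears. The check you propose, ``whether $a$ belongs to the projection'', is already part of the given data $R\cap(Z\times Z)$. ``Linearity on $R$'' cannot supply the missing argument.

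\textbf{How to close it.} Argue on preimages $A,B\in b_{b_mX}G$ of $R,S$, as the paper does: it reduces the nondegenerate case to Lemma~\ref{lbetabchains} through $F^H$. By the proof of Lemma~\ref{lbetabchains}, $A\setminus(Y\times Y)$ is determined by the endpoints in $b_mX$ of $\pr_1(A\cap(Y\times Y))$ and by $a',b'$. It therefore suffices to recover these from $F(A\cap(Y\times Y))=R\cap(Z\times Z)$ when the trace is not $\{x^\pm\}$. Proposition~\ref{graphdescrlotbeta}(b) gives what you need:
\begin{itemize}
\item Columns over points of $X$ are single points, and by the symmetry $S_{b_mX}$ so are rows.
\item $A_x=\inf$ forces $A^+_{x^-}=\inf$, hence every $t\le x^-$ is sent to $\inf$; dually for $\sup$.
\end{itemize}
Consequently, if the trace ends on the right at $x^\pm$, the preimage endpoint is $x^+$ or $x^-$ according to whether $x$ belongs to the trace; the left end is symmetric. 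When $x$ is missing, the side on which the rest of the trace lies decides $A_x\in\{\inf,\sup\}$. Similarly, whether $y\in\pr_2(R\cap(Z\times Z))$ decides between $a'=y^-$ and $a'=y^+$. Then $F(A\setminus(Y\times Y))=F(B\setminus(Y\times Y))$, and so $R=S$.
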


\begin{proof}
From the definition of map $F^H$ if $R\in b_{b^{\updownarrow}X} G$, then $\forall\ A\in  b_{b_m X} G$, $F^H (A)=R$, one has 
$$\pr_1  (R\cap (Z\times Z))=f(\pr'_1 (A\cap (Y\times Y))),\ \mbox{where}\ \pr'_1\ \mbox{is the projection of}\ b_m X\times b_m X\ \mbox{onto the first   factor}.$$ 

The case $\pr_1 (R\cap (Z\times Z))=\{t\}$. If $t\in f(\Gamma\setminus\{\inf, \sup\})$, then $f^{-1}(t)=v\in (\Gamma\setminus\{\inf, \sup\})$ and $(F^H)^{-1} R=((\gets, v]\times\{\inf\})\cup (\{v\}\times b_m X)\cup ([v, \to)\times\{\sup\})$. Therefore, the  condition {\rm(ER)} for $R$ is valid. 

If $t\in f(X^-\cup X^+)$, then $f^{-1}(t)=\{x^-, x^+\}$ for some $x\in X$, and $(F^H)^{-1} R=\{A_{x^-}\cup A_{x^+}\}$. Since $F^H (A_{x^-})\ne F^H (A_{x^+})$, the condition {\rm(ER)} for $R$ is not valid. By Proposition~\ref{graphdescrlotbeta} $t$ can't be in $f(X)$.

The case $|\pr_1 (R\cap (Z\times Z))=R_1|>1$. $f^{-1} R_1$ is an interval in $Y$ and by Lemma~\ref{lbetabchains} for any $A\in (F^H)^{-1} R$ the condition {\rm(ER)} is valid. Moreover, due to the construction in Lemma~\ref{lbetabchains} $A\setminus (Y\times Y)=B\setminus (Y\times Y)$ for all $A, B\in (F^H)^{-1} R$. 
\end{proof}

By Theorem~\ref{mapcompVel} the graph compactification $b_{b^{+}X} G$ is the image of $b_{b^{\updownarrow}X} G$ under the map induced by an   elementary map of compactifications $b^{\updownarrow}X$, $b^{+}X$ of $X$ (identification of points $\inf$ and $\sup$). 

\begin{cor}\label{corxxx}
The graph compactification $b_{b^{+}X} G$ is the quotient space of $b_{b^{\updownarrow} X} G$ with respect to the equivalence relation 
which two-points sets equivalence classes are $\{F^H(A_{\inf}),\ F^H(A_{\sup})\}$, $\{F^H(A_{x^-}), F^H(A_{x^+})\}$, $x\in X$, all other equivalence classes are one-point sets. 
\end{cor}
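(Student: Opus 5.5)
The plan is to compute the equivalence relation $\sim_{F^H}$ induced on $b_{b^{\updownarrow}X} G$ by the map $F^H$. Here $F^H$ comes from the elementary map $f\colon b^{\updownarrow}X\to b^{+}X$ that identifies $\inf$ and $\sup$; its one non-degenerate fibre is $K=\{\inf,\sup\}$. Since $b_{b^{+}X}G$ is the image of $b_{b^{\updownarrow}X}G$ under this compact map (Theorem~\ref{mapcompVel}), Remark~\ref{erphi} identifies $b_{b^{+}X}G$ with the quotient of $b_{b^{\updownarrow}X}G$ by $\sim_{F^H}$. So it suffices to show that the classes of $\sim_{F^H}$ are exactly the listed two-point sets and singletons.

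First I would record that $F^H(R)$ is determined by $R\cap(Z\times Z)$ together with the information of which points of $Z$ are $R$-related to $\inf$ or $\sup$. Indeed, $f$ is the identity on $Z$, so $F^H$ only merges $\inf$ and $\sup$ into a single point $\infty$. Two equivalent relations $R\sim_{F^H}S$ must therefore satisfy $R\cap(Z\times Z)=S\cap(Z\times Z)$.

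I would then split into cases according to the items (a) and (b) listed before Lemma~\ref{lbetabchains1}, together with that lemma.

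\textbf{Case $R\cap(Z\times Z)=\emptyset$.} By (a), $R$ is $F^H(A_{\inf})$ or $F^H(A_{\sup})$. Both are sent to $(\{\infty\}\times b^{+}X)\cup(b^{+}X\times\{\infty\})$, so they form one two-point class. This is the second clause of (EPM).

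\textbf{Case $R=F^H(A_{x^\pm})$.} Here $R\cap(Z\times Z)$ is the column over $x^\pm$, and $f(x^-)=f(x^+)$ in $b^{\updownarrow}X$. Explicitly, $A_{x^-}$ and $A_{x^+}$ differ only in whether $[x^-,x^+]$ is related to $\inf$ or to $\sup$. After identifying $\inf\sim\sup$ both become $\{t\}\times b^{+}X$ together with $b^{+}X\times\{\infty\}$, where $t=f(x^\pm)$. So they are glued, and these two are distinct in $b_{b^{\updownarrow}X}G$, as noted in the proof of Lemma~\ref{lbetabchains1}.

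\textbf{All other $R$.} Lemma~\ref{lbetabchains1} gives condition (ER) for $K$. So if $R\cap(Z\times Z)=S\cap(Z\times Z)\neq\emptyset$, then $R=S$. Combined with the first observation, every remaining class is a singleton. I also have to check that a relation of the first two cases is never equivalent to one of the third. This follows because equivalent relations have equal traces on $Z\times Z$, and these traces are empty or a single column, respectively.

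The main obstacle is the second case. It is where (ER), and hence (EPM), fails, so Theorem~\ref{mapcompVel}(B) cannot be quoted directly. I will instead verify $F^H(A_{x^-})=F^H(A_{x^+})$ by hand and invoke only Remark~\ref{erphi}, which works for any compact map and does not require elementarity.
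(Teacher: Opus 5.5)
Your argument is correct and is essentially the paper's own: $b_{b^{+}X}G$ is identified with the image of $b_{b^{\updownarrow}X}G$ under the map induced by the elementary map $b^{\updownarrow}X\to b^{+}X$ (Theorem~\ref{mapcompVel}, Remark~\ref{erphi}), and the fibres are then read off from items (a), (b) preceding Lemma~\ref{lbetabchains1} together with that lemma. You make explicit two points the paper leaves implicit: first, that $\sim_{F^H}$-equivalent relations have equal traces on $Z\times Z$; second, a direct check that $F^H(A_{x^-})$ and $F^H(A_{x^+})$ are glued, which is needed because (ER) fails exactly for these relations.
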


\medskip

{\it Graph compactification  $b_{\alpha X} G$.} By Theorem~\ref{mapcompVel}  the graph compactification $b_{\alpha X} G$ is the image of $b_{b_m X} G$  under the map induced by an elementary map of compactifications $b_m X$, $\alpha X$ of $X$.

\begin{pro}\label{alphaX}
The graph compactification $b_{\alpha X} G$ is the quotient space of $b_{b_m X} G$ with respect to the equivalence relation
$$A\sim_{\alpha} B\ \Longleftrightarrow\ \forall\ x\in X\ \mbox{if}\ A^-_x(=A^+_x),\ B^-_x(=B^+_x)\in X,\ \mbox{then}\ A^-_x=B^-_x$$
and the condition {\rm(PM)} of {\rm Theorem~\ref{mapcompVel}} is valid.  
\end{pro}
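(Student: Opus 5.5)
The statement has two parts. First, the map of graph compactifications $F^H|_{b_{b_m X} G}\colon b_{b_m X} G\to b_{\alpha X} G$, induced by the elementary map $f\colon b_m X\to\alpha X$, satisfies condition (PM) of Theorem~\ref{mapcompVel}. Second, the equivalence relation it induces on $b_{b_m X} G$ (Remark~\ref{erphi}) coincides with $\sim_\alpha$.

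Here $f$ collapses $K=b_m X\setminus X=X^-\cup X^+\cup\Gamma$ to $\infty$ and is the identity on $X$. Hence $F=f\times f$ is injective on $X\times X$ and sends every point of $(b_m X\times b_m X)\setminus(X\times X)$ into $(X\times\{\infty\})\cup(\{\infty\}\times X)\cup\{(\infty,\infty)\}$. For $A\in b_{b_m X} G$ we therefore get
$$F^H(A)\cap(X\times X)=A\cap(X\times X).$$

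\emph{Step 1: the sets $A\cap(X\times X)$.} By Proposition~\ref{graphdescrlotbeta}(b), for each $x\in X$ the fibre $\{x\}\times b_m X$ meets $A$ in the single point $(x,A^-_x)$. So $A\cap(X\times X)$ is the graph of the partial map $x\mapsto A^-_x$, defined on $\{x\in X\mid A^-_x\in X\}$.

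\emph{Step 2: the remaining part of $F^H(A)$.} I will show that $F^H(A)$ is determined by $A\cap(X\times X)$. By Lemma~\ref{proj}, $\pr_1 F^H(A)=\pr_2F^H(A)=\alpha X$. For $x\in X$ with $A^-_x\notin X$, the point $(x,A^-_x)$ maps to $(x,\infty)$. For $x\in X$ with $A^-_x=y\in X$, the fibre over $x$ is the single point $(x,y)$, so $(x,\infty)\notin F^H(A)$. Thus
$$F^H(A)\cap(X\times\{\infty\})=\{(x,\infty)\mid x\in X,\ A^-_x\notin X\}.$$
Symmetrically, applying $S_{b_m X}$ and the analogue of (b) for $\bar A$ from the Remark after Proposition~\ref{graphdescrlotbeta}, $F^H(A)\cap(\{\infty\}\times X)$ consists of the points $(\infty,y)$ with $y\in X$ not in the image of the partial bijection. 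Finally, $(\infty,\infty)\in F^H(A)$ always, since $(\inf,\inf)\in A$ by Proposition~\ref{graphdescrlotbeta}(a). Consequently $F^H(A)=F^H(B)$ if and only if $A\cap(X\times X)=B\cap(X\times X)$. By Step 1 this says exactly that for every $x\in X$, $A^-_x\in X\iff B^-_x\in X$, and when both lie in $X$ they are equal.

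\emph{Step 3: reconciling with the definition of $\sim_\alpha$.} The relation $\sim_\alpha$ as literally written only demands $A^-_x=B^-_x$ when both values lie in $X$. I expect this to be the main obstacle: the literal relation must be read (or shown equivalent to the reading) that also requires $A^-_x\in X\iff B^-_x\in X$, since otherwise it would not even be transitive. I would first try to prove the needed implication ($A^-_x\in X\Rightarrow B^-_x\in X$) directly from closedness of the graph, and failing that adopt the symmetric reading, which is what the equality $F^H(A)=F^H(B)$ yields. Under that reading the equivalence relation induced by $F^H$ is $\sim_\alpha$. Since $F^H|_{b_{b_m X} G}$ is a closed surjection onto $b_{\alpha X} G$, which is a graph compactification by Lemma~\ref{lemcompemb} since $G$ is $\tau_g$-representable in the compact $\alpha X$, the target is the quotient $b_{b_m X} G/\sim_\alpha$.

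\emph{Step 4: condition (PM).} Take $R=\i^{\Gamma}_{b_m X}(g)$. Then $A=R$ has $A^-_x=g(x)\in X$ for all $x$. If $F^H(B)=F^H(R)$, Step 2 gives $B^-_x=g(x)$ for all $x\in X$. By Proposition~\ref{graphdescrlotbeta}(b) this forces $B^\pm_{x^\pm}=g(x)^\pm$. Density of $X$ in $b_m X$, together with monotonicity from (a), then determines $B$ on $\Gamma$ exactly as in Lemma~\ref{jumps}(i), so $B=\i^{\Gamma}_{b_m X}(g)$.
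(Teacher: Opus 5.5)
Your argument is correct and has the same skeleton as the paper's proof. The paper asserts, without verification, that $A\sim_\alpha B$ iff $F^H(A)=F^H(B)$, and then justifies (PM) in one line. Your Steps 1--2 supply the verification the paper omits, by showing that $F^H(A)$ is determined by the partial bijection $A\cap(X\times X)$.

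\emph{Step 3.} Drop the attempt to derive $A^-_x\in X\Rightarrow B^-_x\in X$ from the literal relation, because that implication is false. The set $A_{\inf}=(b_mX\times\{\inf\})\cup(\{\sup\}\times b_mX)$ lies in $b_{b_mX}G$ and has $(A_{\inf})^-_x=\inf\notin X$ for every $x\in X$. So under the literal definition it is $\sim_\alpha$-related to every element, for instance to the graphs of all $g\in G$. Those graphs have $F^H$-images different from that of $A_{\inf}$ and from each other. Hence the literal relation is not transitive and does not coincide with the fibre relation of $F^H$. The symmetric reading $A\cap(X\times X)=B\cap(X\times X)$ is therefore forced, and it is what the paper tacitly uses when it identifies $\sim_\alpha$ with equality of images.

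\emph{Step 4.} Here your route differs from the paper's. The paper cites Lemma~\ref{lbetabchains}, which concerns only deleting $\{\inf,\sup\}$. You instead reconstruct $B$ on $X^-\cup X^+\cup\Gamma$ from $B\cap(X\times X)$ by monotonicity and density, and this is a valid direct proof. One correction: Proposition~\ref{graphdescrlotbeta}(b) gives only $B^+_{x^-}=g(x)^-$ and $B^-_{x^+}=g(x)^+$. The outer ends $B^-_{x^-}$ and $B^+_{x^+}$ follow from the monotonicity argument, as in Lemma~\ref{jumps}(ii).

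Step 4 is also dispensable. In Step 3 you already use Lemma~\ref{lemcompemb} to know that $b_{\alpha X}G$ is a graph compactification. The necessity part of Theorem~\ref{mapcompVel}(A) then yields (PM) immediately, since a map of compactifications sends the remainder onto the remainder.
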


\begin{proof} 
It is easy to check that $\sim_{\alpha}$ is an equivalence relation on $b_{b_m X} G$, $A\sim_{\alpha} B$ iff the images of $A, B\in b_m X$ under the map of compactifications $b_m X$, $\alpha X$ of $X$ conicide. By Lemma~\ref{lbetabchains} the condition (PM) of Theorem~\ref{mapcompVel} is valid.  
\end{proof}

\begin{thm}\label{Prodescrlotbchain} 
\begin{itemize}
\item[(a)] The map of compactifications $b_{b_m X} G$, $b_{b^{\leftrightarrow}X} G$ of $G$ is  elemantary, 
$$b_{b^{\leftrightarrow}X} G=b_{b_m X} G/0_K,\ 0_K=\{A_{\inf}, A_{\sup}\}.$$
\item[(b)] $b_{Y^-}G= b_{Y^+}G=b_{b_m X} G$, 
\item[(c)] $b_{Z^-}G= b_{Z^+}G= b_{b^{\updownarrow}X} G$, 
\item[(d)] $b_{Y} G=b_{b^{\leftrightarrow}X} G$,
\item[(e)] $b_{Z} G=b_{b^{+}X} G$,
\item[(f)] $b_{X} G=b_{\alpha X} G$.
\end{itemize}
\end{thm}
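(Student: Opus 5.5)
Item (a) is Proposition~\ref{leftrightarrow} together with Theorem~\ref{mapcompVel}(B): by Lemma~\ref{lbetabchains} and Remark~\ref{corrcond}, condition (EPM) holds for the elementary map $b_m X\to b^{\leftrightarrow}X$ collapsing $K=\{\inf,\sup\}$. Hence $F^H|_{b_{b_m X}G}$ is an elementary map of compactifications which identifies exactly the set $0_K$. We noted after Lemma~\ref{lbetabchains} that $0_K=\{A_{\inf},A_{\sup}\}$. So (a) follows.

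Items (b)--(f) all have the same shape. In each case the smaller space is an open invariant subset $j(X')$ of a compact $G$-extension: $Y^\pm,Y\subset b_m X$, $Z^\pm,Z\subset b^{\updownarrow}X$, and $X\subset\alpha X$. So I will apply Theorem~\ref{restrmap}. By Theorem~\ref{mapcompV}(b) the graph compactification of the open subset is defined, and $H_{\dashv}$ restricted to the larger graph compactification is a map of compactifications. To get equality I will verify condition (ER). Once (ER) holds, $H_{\dashv}$ is injective off $0_{Y\setminus X}$. It remains to show that $0_{Y\setminus X}$ has at most one point, so that the map is a homeomorphism.

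For (d), $Y=b_m X\setminus K$. (ER) is exactly Lemma~\ref{lbetabchains}, so $b_{b_m X}G\to b_YG$ identifies only $A_{\inf}$ and $A_{\sup}$ (both restrict to $\emptyset$). The same holds for $b_{b_m X}G\to b_{b^{\leftrightarrow}X}G$ by (a). Both are therefore the Rees-type quotient $b_{b_m X}G/0_K$, and by uniqueness of quotients of compacta $b_YG=b_{b^{\leftrightarrow}X}G$.

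For (b), take $Y^-=b_m X\setminus\{\sup\}$. Here the argument of Lemma~\ref{lbetabchains} applies verbatim with only $\sup$ removed: $A$ is recovered from $A\cap(Y^-\times Y^-)$ by adjoining $[b,\to)\times\{\sup\}$ or $\{\sup\}\times[b',\to)$. This uses Proposition~\ref{graphdescrlotbeta}, that is, linearity of $A$ in $\leq_\times$ and full projections. Moreover $0_{\{\sup\}}$ consists only of $A_{\sup}$, since any $R$ inside $(b_mX\times\{\sup\})\cup(\{\sup\}\times b_mX)$ with full projections and linearly ordered must be $A_{\sup}$. Hence $H_{\dashv}$ is a homeomorphism and $b_{Y^-}G=b_{b_mX}G$. $Y^+$ is symmetric.

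For (c) and (e), I will use Lemma~\ref{lbetabchains1} and Corollary~\ref{corxxx} in the same way, working in $b^{\updownarrow}X$. (ER) fails only for the pairs $F^H(A_{x^-}),F^H(A_{x^+})$, and those pairs are exactly what $b^{\updownarrow}X\to b^+X$ identifies. In case (e) one also needs the collapse $\{F^H(A_{\inf}),F^H(A_{\sup})\}$. So $b_ZG$ and $b_{b^+X}G$ are the same quotient of $b_{b^{\updownarrow}X}G$. For $Z^\pm$ only one endpoint is removed, and I expect the $A_{x^\pm}$ obstruction not to arise there. I will check this by noting that $F^H(A_{x^-})$ and $F^H(A_{x^+})$ differ on $Z^-\times Z^-$ through their $\inf$-rows. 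That gives (c).

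For (f), $X$ is discrete, hence locally connected. Theorem~\ref{partialcases}(b) gives (ER) for $\alpha X$, and Remark~\ref{coincompB} then gives $b_XG=b_{\alpha X}G$.

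The main obstacle is the bookkeeping in (c)/(e): showing that the fibres of $H_{\dashv}$ on $b_{b^{\updownarrow}X}G$ coincide exactly with the classes in Corollary~\ref{corxxx}.
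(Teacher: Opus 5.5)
Your proposal follows the paper's route: (a) is Proposition~\ref{leftrightarrow}, and (b)--(e) come from feeding Lemmas~\ref{lbetabchains}, \ref{lbetabchains1} and Corollary~\ref{corxxx} into Theorem~\ref{restrmap}, comparing the fibres of $H_{\dashv}$ with the quotients in (a) and Corollary~\ref{corxxx}. For (f) you use Theorem~\ref{partialcases} with Remark~\ref{coincompB}, which is the result that actually gives $b_XG=b_{\alpha X}G$; the paper's citation of Theorem~\ref{suffcond} at that point appears to be a slip.

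One correction in (b), which does not affect the argument: $0_{\{\sup\}}$ is empty, not $\{A_{\sup}\}$. Every element of $b_{b_mX}G$ contains $(\inf,\inf)$, and indeed $A_{\sup}\cap(Y^-\times Y^-)=\{\inf\}\times Y^-\neq\emptyset$. So the bound $|0_{\{\sup\}}|\le 1$ you need holds trivially, and the same applies to $Z^{\pm}$ in (c).
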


\begin{proof}
(a)  is Proposition~\ref{leftrightarrow}. (b) is a consequence of Lemma~\ref{lbetabchains}. (c) follows from Lemma~\ref{lbetabchains1} and Theorem~\ref{restrmap}. (d) follow from Lemma~\ref{lbetabchains} and Theorem~\ref{restrmap}. (e) follows from Corollary~\ref{corxxx} and Theorem~\ref{restrmap}. (f)  follows from Theorem~\ref{suffcond}. 
\end{proof}


\subsection{Order on compactifications of $(\aut (X), \tau_{\partial})$.}

\begin{thm}\label{thmOrderchain} Let $X$ be an ultrahomogeneous chain, $G=(\aut (X), \tau_{\partial})$.
\begin{itemize}
\item[I.] $$\begin{array}{ccccccc}
& & b_{b^{\leftrightarrow}X} G  &  & &  &  \\
& \nearrow & &  \searrow    & & &  \\
b_r G = b_{b_m X} G &   &   &  & b_{b^{+}X} G  & \rightarrow  &  b_{\alpha X} G.  \\
 & \searrow & b_{b^{\updownarrow}X} G &  \nearrow  &   & &  \\
\end{array}$$
\item[II.] 
\begin{itemize}
\item[{\rm(a)}]  $b_{b_m X} G\leq e_{b_m X} G$. $b_{b_m X} G=e_{b_m X} G$ iff $\Gamma=\{\inf, \sup\}$.
\item[{\rm(b)}]  $b_{b^{\leftrightarrow}X} G$ and $e_{b^{\leftrightarrow}X} G$ are incomparable if $\Gamma\ne\{\inf, \sup\}$. If $\Gamma=\{\inf, \sup\}$, then $b_{b^{\leftrightarrow}X} G>e_{b^{\leftrightarrow}X} G$.
\item[{\rm(c)}] $b_{b^{\updownarrow}X} G\leq e_{b^{\updownarrow}X} G$. $b_{b^{\updownarrow}X} G=e_{b^{\updownarrow}X} G$ iff $\Gamma=\{\inf, \sup\}$.
\item[{\rm(d)}] $b_{b^{+}X} G$ and $e_{b^{+}X} G$  are incomparable if $\Gamma\ne\{\inf, \sup\}$. If $\Gamma=\{\inf, \sup\}$, then  $b_{b^{+}X} G>e_{b^{+}X} G$. 

\item[{\rm(e)}] $b_{\alpha X} G = e_{\alpha X} G$. 
\end{itemize}
\item[III.]  $b_{\alpha X} G=e_{\alpha X} G=e_{\rm B} G$, is a sim-compactification and a {\rm WAP}-compactification of $G$. $G$ is not a {\rm WAP} group.
\end{itemize}
\end{thm}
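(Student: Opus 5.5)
The plan is to prove the three parts in order: first fix the diagram of part I, then make the comparisons of part II by explicit maps between the concrete descriptions, and finally use II(e) together with Remark~\ref{remdiscrcomp} for part III.

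\textbf{Part I.} The arrows come from Corollary~\ref{orderembed}, since the order on locally compact $G$-extensions of $X$ is transported to graph compactifications. The identifications are those of Theorem~\ref{Prodescrlotbchain}. For the equality $b_r G=b_{b_m X} G$ I would apply Theorem~\ref{suffcond} with the compactum $b_m X$, and the work is to verify condition $(\star)$. Fix an entourage ${\rm U}$, taken to be generated by a finite cover $u$ as in the proof of Proposition~\ref{graphdescrlotbeta}, determined by points $x_1<\ldots<x_n$ of $X$. Suppose the graphs of $f$ and $h$ are $2^{{\rm V}^2}$-close for a finer cover ${\rm V}$. Then $f(x_i)$ and $h(x_i)$ lie in the same cell of $u$ (equivalently, of ${\rm V}$). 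Ultrahomogeneity then gives $g\in G$ that agrees with $f$ on $\{x_i\}$ and with $h$ on $h^{-1}$ of the chosen points. This is the usual double-coset argument: $g=f\psi$ with $\psi$ fixing finitely many points. This step is the main obstacle. The delicate case is a pair $x_i<x_j$ with $f(x_i)=f(x_j)$ cell-wise, and there the monotone structure described in Proposition~\ref{graphdescrlotbeta}(b) has to be used.

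\textbf{Part II.} Both $b_{b_m X}G=b_rG$ and the cited facts $e_{b_m X}G\ge b_rG$, with equality iff $X$ is continuously ordered, i.e.\ $\Gamma=\{\inf,\sup\}$ \cite[Corollary 5.16]{KozlovSorin2025}, are already available, so (a) follows immediately. For (c) I would compose the map $e_{b_mX}G\to b_{b_mX}G$ with $F^H$. To see that this composite factors through $e_{b^{\updownarrow}X}G$, I would check that two maps of $e_{b_mX}G$ identified in $e_{b^{\updownarrow}X}G$ have $\sim_{\updownarrow}$-equivalent closures of graphs (Proposition~\ref{graphdescrlotbupdownarrow}). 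Equality in the continuous case follows from the same cardinality/injectivity argument as in (a).

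For (b) and (d), comparability would force either the Rees ideal $I^m_{\leftrightarrow}$ to be collapsed in $b_{b^{\leftrightarrow}X}G$, which only collapses $\{A_{\inf},A_{\sup}\}$, or the reverse map to exist. Given a gap $v\in\Gamma\setminus\{\inf,\sup\}$, I would exhibit the relations $A_v$ and constant-type maps that separate the two compactifications in both directions. If $\Gamma=\{\inf,\sup\}$, then $b=e$ upstairs, and the quotient by $I$ is strictly coarser than the quotient by $0_K$, which gives $b>e$.

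For (e), define $\Psi$ as in Theorem~\ref{equivcompperm}. It sends $f\in e_{\alpha X}G$ to the relation consisting of the graph of $f$ off $\infty$, together with $\{\infty\}\times(\alpha X\setminus{\rm I})$ and $(\alpha X\setminus{\rm D})\times\{\infty\}$. It is a continuous bijection of compacta that is the identity on $G$, and this gives $b_{\alpha X}G=e_{\alpha X}G$.

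\textbf{Part III.} Combine (e) with Remark~\ref{remdiscrcomp} and Theorem~\ref{compRoelcke} applied to $X\subset{\rm B}\subset\ell^2(X)$, for which $\cl X=\alpha X$, to get $e_{\alpha X}G=e_{\rm B}G$. This is a sim-compactification by \cite[Theorem 5.20]{KozlovSorin2025}. It is a WAP-compactification by maximality: any sm-compactification is $\le b_rG=b_{b_mX}G$, and by the explicit descriptions it factors through $b_{\alpha X}G$. Finally, $G$ is not WAP since $b_rG\ne b_{\alpha X}G$; witnesses are $A_{x^-}$ and $A_{x^+}$, which have the same image.
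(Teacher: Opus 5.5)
\textbf{Parts I and II.} These are acceptable, and in places more direct than the paper.

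For $b_rG=b_{b_mX}G$, your double-coset check of $(\star)$ has no delicate case. Points of $X$ are isolated in $b_mX$ and the cells of $u$ are clopen intervals, so closeness of the graphs puts $h(x_i)$ in the cell of $f(x_i)$. That is exactly what makes $x_i\mapsto f(x_i)$ and $h^{-1}(x_j)\mapsto x_j$ jointly order-preserving. Your $g$ gives $(\star)$ with $f,h$ interchanged, which suffices by symmetry.

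Two further shortcuts are legitimate alternatives to the paper's comparison of equivalence relations on $e_{b_mX}G$. One is deriving II(a) from Part I plus the known comparison of $b_rG$ with $e_{b_mX}G$. The other is proving II(e) with a direct continuous bijection $\Psi\colon e_{\alpha X}G\to b_{\alpha X}G$.

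\textbf{Part III: first gap.} Theorem~\ref{compRoelcke}, as used in Remark~\ref{remdiscrcomp}(A), only gives $e_{\alpha X}G\le e_{\rm B}G$. For ${\rm S}(X)$, equality came from $e_{\alpha X}G=b_rG$ being on top. Here $b_{\alpha X}G<b_rG$, so that shortcut is unavailable. In the paper, $e_{\rm B}G=e_{\alpha X}G$ is obtained only at the very end, from ${\rm WAP}\,G=e_{\alpha X}G$ together with $e_{\alpha X}G\le e_{\rm B}G\le{\rm WAP}\,G$.

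\textbf{Part III: the missing idea.} The claim that any sm-compactification ``factors through $b_{\alpha X}G$ by the explicit descriptions'' is precisely what has to be proved, and the descriptions cannot give it. They only show that $b_{\alpha X}G$ lies strictly below $b_rG$, with other $G^*$-compactifications (e.g.\ $b_{b^+X}G$) strictly in between. Moreover, $b_{b_mX}G$ is not even closed under composition, since $A_{\sup}A_{\inf}=b_mX\times b_mX$, so no semigroup argument can be run on it.

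What rules out an sm-compactification strictly above $e_{\alpha X}G$ is separate continuity of multiplication in ${\rm WAP}\,G$, pulled back along semigroup homomorphisms of Ellis compactifications. The paper's argument runs as follows.
\begin{itemize}
\item From $e_{\alpha X}G\le{\rm WAP}\,G\le b_rG\le e_{b_mX}G$, the maps $F\colon e_{b_mX}G\to{\rm WAP}\,G$ and $T\colon e_{b_mX}G\to e_{\alpha X}G$ are homomorphisms. The goal is to show $F$ is constant on each fibre of $T$.
\item Elements of $I^m_{\leftrightarrow}$ are right zeros, so they all go to $F(f_{\inf})$. This is the zero of ${\rm WAP}\,G$, because $f_{\inf}$ is a relative left zero for $G$.
\item $F$ then collapses $T^{-1}(0)$. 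This is an induction on $|\sigma|$ over step maps $f^s_\sigma$, using the explicit products $f^s_\sigma\circ f^{s'}_\sigma$, separate continuity, and density of step maps.
\item The same is done for $T^{-1}(id_\sigma)$, working inside the closure of the stabilizer.
\item Finite partial maps are reduced to idempotents by $G$-equivariance and ultrahomogeneity.
\item Arbitrary partial maps are handled by step-map approximation.
\end{itemize}

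Without an argument of this kind you have neither ${\rm WAP}\,G=b_{\alpha X}G$ nor $e_{\rm B}G=e_{\alpha X}G$. Your final claim that $G$ is not WAP also depends on this: it needs the upper bound ${\rm WAP}\,G\le b_{\alpha X}G$, not merely $b_rG\ne b_{\alpha X}G$.
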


\begin{proof}
Inequalities in I follow from Propositions~\ref{graphdescrlotbupdownarrow}, \ref{leftrightarrow}, \ref{alphaX} and Corollary~\ref{corxxx}. 

The proof of the equality $b_{b_m X} G=b_r G$ is the same as in Theorem~\ref{thmOrderLOTS} below. 

II. (a) The proof of the inequality $b_{b_m X} G\leq e_{b_m X} G$ is the same as in Theorem~\ref{thmOrderLOTS} below. If $\Gamma=\emptyset$, then the element $f\in e_{b_m X} G$ is uniquely defined by the values of $f$ in points $x\in X$. The same is true for the graph of $f$ in $b_{b_m X} G$ by Lemma~\ref{jumps}. Hence, $b_{b_m X} G=e_{b_m X} G$. 

The first statement of (b) is analogous to the correspondent statement in Theorem~\ref{thmOrderLOTS} below. The second is due Proposition~\ref{Prodescrlotbchain} and the description of  $e_{b^{\leftrightarrow}X} G$ in~\cite{KozlovSorin2025}. 

(c). By~\cite[Lemma 3.7]{KozlovSorin2025} the equivalence relation for the map $\Phi$ of compactifications $e_{b_m X} G$ and $e_{b^{\updownarrow}X} G$   induced by $f: b_m X\to b^{\updownarrow}X$ is
$$g\sim_{\Phi} h\ \Longleftrightarrow\ g(x)\sim_{f} h(x),\ x\in b_m X.$$
Obviuosly, if $g\sim_{\Phi} h$, then the images of $f$ and $h$ in $b_{b_m X} G$ are $\sim_{\updownarrow}$ equivalent. Hence,  $b_{b^{\updownarrow}X} G\leq e_{b^{\updownarrow}X} G$. If $\Gamma=\{\inf, \sup\}$, then $\sim_{\Phi}$ is determined by points from $X$ and the equality holds. 

(d). Follows from (a), (c) and Corollary~\ref{corxxx}.

(e). The equivalence relation for the map $\Psi$ of compactifications $e_{b_m X} G$ and $e_{\alpha X} G$ induced by $f': b_m X\to \alpha X$ is
$$g\sim_{\Psi} h\ \Longleftrightarrow\ g(x)\sim_{f'} h(x),\ x\in b_m X\ \Longleftrightarrow\ \forall\ x\in X\ \mbox{either}\ g(x)=h(x)\in X,\ \mbox{or}\ g(x), h(x)\in b_m X\setminus X$$
and coincides with composition of the equivalence relation unduced by the map of compactifications $e_{b_m X} G$ and $b_{b_m X} G$ and $\sim_{\alpha}$ on $b_{b_m X} G$ from Proposition~\ref{alphaX}.

\medskip

III. Since $e_{\alpha X} G$ is an sim-compactification of $G$~\cite{KozlovSorin2025} and $b_r G\leq e_{b_m X} G$, for the WAP-compactification ${\rm WAP}\, G$ of $G$ the following holds: 
$$e_{b_m X} G\geq {\rm WAP}\, G\geq e_{\alpha X} G,$$
and the maps $F:e_{b_m X} G\to {\rm WAP}\, G$, $T: e_{b_m X} G\to e_{ \alpha X} G$ which are homomorphisms of semigroups  are defined~\cite{KozlovLeiderman2025}. $0_{e_{\alpha X} G}$ (the map ${\rm const}_{\infty}:e_{\alpha X} G\to\{\infty\}$) is the zero (ideal) of $e_{\alpha X} G$.

\medskip

A. {\it $F(T^{-1}(0_{e_{\alpha X} G})=\{f\in e_{b_m X} G\ |\ f\in T^{-1} ({\rm const}_{\infty})\})$ is a zero $0$ {\rm(}ideal{\rm)} of ${\rm WAP}\, G$.} 

$\forall\ f\in I^m_{\leftrightarrow}$ is a right-zero (or left ideal) of $e_{b_m X} G$. Therefore, $F(f)$ is a right-zero in ${\rm WAP}\, G$.  Moreover, since $f_{\inf}$, $f_{\inf}(\inf)=\inf$, $f_{\inf}(t)=\sup$, $t>\inf$, is a relative left-zero for homeomorphisms from $G$ and ${\rm WAP}\, G$ is a semitopological semigroup, $F(f_{\inf})=0$. Hence, $\forall\ f\in I^m_{\leftrightarrow}$ $F(f)=F(f_{\inf})=0$.

\medskip

1. {\it For elementary maps $f^{\tau}_{x_1^+, x_2^+}$, $F(f^{\tau}_{x_1^+, x_2^+})=0$.}

Elementary maps 
$$f^{\tau}_{x_1^+, x_2^+}(t)=\left\{
\begin{array}{ll}
\inf, & t\leq x_1^+, \\
\tau, & x_1^+< t\leq x_2^+, \\
\sup, & x_2^+<t,  \\
\end{array}
\right. x_1<x_2\in X,\ \tau\in b_m X\setminus X,$$
are in $T^{-1}(0_{e_{\alpha X} G})$. 
The set of maps $F_{x_1^+, x_2^+}=\{f^{\tau}_{x_1^+, x_2^+}\ |\ \tau\in b_m X\setminus X\}$ is a closed subsemigroup of $e_{b_m X} G$, 
and maps 
$$f_x(t)=\left\{
\begin{array}{ll}
\inf, & t\leq x^+, \\
\sup, & x<t,  \\
\end{array}
\right. x\in X,$$
from $I^m_{\leftrightarrow}$ are in $\bigcup\limits_{x_1<x_2 \in X}F_{x_1^+, x_2^+}$. 
$$f^{\tau}_{x_1^+, x_2^+}\circ f^{\tau'}_{x_1^+, x_2^+}=\left\{
\begin{array}{ll}
f_{x_2}, & \tau'\leq x_1^+, \\
f^{\tau}_{x_1^+, x_2^+}, & x_1^+<\tau'\leq x_2^+, \\
f_{x_1}, & \tau'>x_2^+, \\
\end{array}
\right. \tau,\ \tau'\in b_m X\setminus X.$$
$$F(f^{\tau}_{x_1^+, x_2^+}\circ f^{\tau'}_{x_1^+, x_2^+})=F(f^{\tau}_{x_1^+, x_2^+}) F(f^{\tau'}_{x_1^+, x_2^+})=\left\{
\begin{array}{ll}
 0, & \tau'\leq x_1^+, \tau'>x_2^+, \\
F(f^{\tau}_{x_1^+, x_2^+}), &  x_1^+<\tau'\leq x_2^+. \\
\end{array}
\right. $$
Further, any nbd of $f^{x_2^+}_{x_1^+, x_2^+}=f_{x_2}$ contains an element $f^{\tau'}_{x_1^+, x_2^+}$ such that $F(f^{\tau}_{x_1^+, x_2^+}) F(f^{\tau'}_{x_1^+, x_2^+})=0$. Hence, since ${\rm WAP}\, G$ is a semitopological semigroup,  $F(f^{\tau}_{x_1^+, x_2^+})=0$. 

\medskip

1 (a). The same aurguments show that for the maps $f\in T^{-1}(0_{e_{\alpha X} G})$ of the form 
$$f^{\tau}_{x_1^-, x_2^+}(t)=\left\{
\begin{array}{ll}
\inf, & t< x_1^-, \\
\tau, & x_1^+\leq t\leq x_2^+, \\
\sup, & x_2^+<t,  \\
\end{array}
\right. x_1\leq x_2\in X,\ \tau\in b_m X\setminus X,$$ or
$$f^{\tau}_{x_1^+, x_2^-}(t)=\left\{
\begin{array}{ll}
\inf, & t\leq x_1^+, \\
\tau, & x_1^+< t< x_2^+, \\
\sup, & x_2^-\leq t,  \\
\end{array}
\right. x_1< x_2\in X,\ \tau\in b_m X\setminus X,$$
or
$$f^{\tau}_{x_1^-, x_2^-}(t)=\left\{
\begin{array}{ll}
\inf, & t< x_1^-, \\
\tau, & x_1^-\leq t< x_2^-, \\
\sup, & x_2^-\leq t,  \\
\end{array}
\right. x_1< x_2\in X,\ \tau\in b_m X\setminus X,$$
$F(f)=0$. 

\medskip

2. {\it For step maps $f^{s}_{\sigma}$, $F(f^{s}_{\sigma})=0$.}

Take $\sigma=\{x_1, x_2,\ldots, x_n\}$, $x_1< x_2<\ldots<x_n$, $x_1,\ldots, x_n\in X$, $s=\{\tau_1,\ldots, \tau_{n-1}\}$, $\tau_1\leq \tau_2\leq\ldots\leq \tau_{n-1}$, $\tau_1,\ldots, \tau_{n-1}\in b_m X\setminus X$.
$$f^{s}_{\sigma}(t)=\left\{
\begin{array}{ll}
\inf, & t\leq x^+_1, \\
\tau_k, & x_k^+< t\leq x_{k+1}^+,\ k=1,\ldots, n-1, \\
\sup, & x_n^+<t.  \\
\end{array}
\right.$$
The set of maps $F_{\sigma}=\{f^{s}_{\sigma}\ |\ s=\{\tau_1,\ldots, \tau_{n-1}\},\ \tau_1\leq \tau_2\leq\ldots\leq \tau_{n-1},\ \tau_1,\ldots, \tau_{n-1}\in b_m X\setminus X\}$ is a closed subsemigroup of $e_{b_m X} G$. If $\sigma'\subset\sigma$, then  $F_{\sigma'}\subset F_{\sigma}$. If $|\sigma|=2$, then  $F_{\sigma}$ is a set of elementary maps and $F(F_{\sigma})=0$.
 
Using induction on cardinality of $\sigma$ we shall prove that $\forall\ \sigma$ $F(F_{\sigma})=0$. Assume that for $\forall\ \sigma'$, $|\sigma'|<n$ $F(F_{\sigma'})=0$. Take $\sigma$, $|\sigma|=n$, and $f^{s}_{\sigma}\in F_{\sigma}$, $\sigma=\{x_1, \ldots, x_n\}$,  $s=\{\tau_1, \ldots, \tau_{n-1}\}$. Put $S=\{\{\tau, x_3^+, \ldots, x_{n}^+\}\ |\ \tau\in b_m X\setminus X,\ \tau\leq x_3^+\}$. 
$$f^{s}_{\sigma}\circ f^{s'}_{\sigma}=\left\{
\begin{array}{ll}
f_{\sigma\setminus\{x_1\}}^{s\setminus\{\tau_1\}}, & \tau\leq x_1^+, \\
f^{s}_{\sigma} & x_1^+<\tau\leq x_2^+, \\
f_{\sigma\setminus\{x_2\}}^{s\setminus\{\tau_1\}}, & x_2^+<\tau\leq x_3^+, \\
\end{array}
\right. s'\in S.$$
$$F(f^{s}_{\sigma}\circ f^{s'}_{\sigma})=F(f^{s}_{\sigma}) F(f^{s'}_{\sigma})=\left\{
\begin{array}{lll}
0, & \tau\leq x_1^+,\   x_2^+<\tau\leq x_3^+ & \mbox{by inductive assumption},\\
F(f^{s}_{\sigma}), & x_1^+<\tau\leq x_2^+. & \\
\end{array}
\right. $$
Further, any nbd of $f^{s'}_{\sigma}$, $s'=\{x_2^+, x_3^+, \ldots, x_{n}^+\}$, contains an element $f^{s''}_{\sigma}$ such that $F(f^{s}_{\sigma}) F(f^{s''}_{\sigma})=0$. Hence, since ${\rm WAP}\, G$ is a semitopological semigroup,  $F(f^{s}_{\sigma})=0$. 

\medskip

2 (a). The same aurguments show that for the maps $f$ of the form 
$f^{s}_{\sigma}(t)$, $\sigma=\{t_1, \ldots, t_n\}$, $t_k\in b_m X\setminus X$, $t_1<t_2<\ldots<t_n$ (which steps are defined with respect to the rule $f(x^-)=f(x)=f(x^+)$), such that $f(t)\in b_m X\setminus X$, $t\in X$, $F(f)=0$. 

\medskip

Since the "step" maps are dense in the set of maps such that $f(t)\in b_m X\setminus X$, $t\in X$, from 2 (a) it follows that   $F(T^{-1}(0_{e_{\alpha X} G}))=0$. 

\medskip

B. {\it $F(T^{-1}(id_{\sigma}))$ is an idempotent for idempotent $id_{\sigma}$ in $e_{\alpha X} G$.}

1. For idempotents 
$$id_x(t)=\left\{
\begin{array}{ll}
x, & t=x, \\
\infty,  & t\ne x, \\
\end{array}
\right.\ x\in X,$$
in $e_{\alpha X} G$, $T^{-1}(id_{x})$ is a set of monotone maps $f$ on $b_m X$ such that $f(t)\in b_m X\setminus X$, $t<x^-$, $t>x^+$, $f(x^-)=x^-$,  $f(x)=x$,  $f(x^+)=x^+$.   
$T^{-1}(id_{x})$ is a closed subsemigroup of $e_{b_m X} G$. 

The maps 
$$h_{t_1, t_2}(t)=\left\{
\begin{array}{ll}
-\infty, & t< (\mbox{or}\ \leq)\ t_1, \\
x^-, & t_1\leq (\mbox{or}\ <)\ t\leq x^-, \\
x, & t=x, \\
x^+, & x^+\leq t < (\mbox{or}\ \leq)\ t_2, \\
+\infty,  & t_2\leq   (\mbox{or}\ <)\  t, \\
\end{array}
\right.\ t_1, t_2\in b_m X,\ t_1\leq x^-, x^+\leq t_2,\ x\in X,$$
are right-zeros (or left ideals) of $T^{-1}(id_{x})$. Therefore, $F(h_{t_1, t_2})$ are right-zeros in $F(T^{-1}(id_{x}))$. Moreover, since $h_{x^-, x^+}$ is a relative left-zero for homeomorphisms $g\in\St_x$, $T^{-1}(id_{x})$ belongs to the closure of $\St_x$ in $e_{b_m X} G$ and $F(\cl\ 
\St_x)$ is a semitopological semigroup, $F(h_{x^-, x^+})$ is a zero in $F(T^{-1}(id_{x}))$. Hence, $\forall\ h_{t_1, t_2}$ $F(h_{t_1, t_2})=F(h_{x^-, x^+})$.  

\medskip 

1 (a). As in A (items 1, 1 (a)) one can show that for the maps 
$$h^{\tau_1, \tau_2}_{t_1, t_2, t_3, t_4}(t)=\left\{
\begin{array}{ll}
-\infty, & t< (\mbox{or}\ \leq)\ t_1, \\
\tau_1, & t_1\leq t < (\mbox{or}\ \leq)\ t_2, \\
x^-, & t_1\leq (\mbox{or}\ <)\ t\leq x^-, \\
x, & t=x, \\
x^+, & x^+\leq t < (\mbox{or}\ \leq)\ t_3, \\
\tau_2, & t_3\leq t < (\mbox{or}\ \leq)\ t_4, \\
+\infty,  & t_4\leq   (\mbox{or}\ <)\  t, \\
\end{array}
\right.\ t_1, t_2, t_3, t_4\in b_m X,\ t_1< t_2\leq x^-, x^+\leq t_3< t_4,\ x\in X,$$
$F(h^{\tau_1, \tau_2}_{t_1, t_2, t_3, t_4})$ is a one-point set. 

\medskip 

2 (a). As in A (items 2, 2 (a)) one can show that for the maps $f\in T^{-1}(id_x)$ which are finite number of "steps" on $(\gets, x^-]$ and $[x^+, \to)$ $F(f)$  is a one-point set.  

\medskip 

Since "step" maps are dense in $T^{-1}(id_{x})$, from 2 (a) it follows that $F(T^{-1}(id_{x}))$ is an idempotent.   

\medskip

The same treatment as in B (items 1, 1 (a), 2 (a)) allows one to show that for any idempotent 
$$id_{\sigma}(t)=\left\{
\begin{array}{ll}
t, & t\in\sigma, \\
\infty,  & t\ne\sigma, \\
\end{array}
\right.\ \sigma=\{x_1, \ldots, x_n\},\ x_1,\ldots, x_n\in X,$$
in $e_{\alpha X} G$, $F(T^{-1}(id_{\sigma}))$ is an idempotent.   
\medskip

C.  {\it $F(T^{-1}(f))$ is a one-point set for a finite partial map $f$ in $e_{\alpha X} G$.}

Since $X$ is ultrahomogeneous, for any finite partial map $f$ in $e_{\alpha X} G$ with finite domain ($f(x_k)=y_k$, $k=1,\ldots, n$, $f(x)=\infty$, $x\not\in\sigma=\{x_1, \ldots, x_n\}$) $\exists\ g\in G$ such that $gf=id_{\sigma}$. The map $F$ is a $G$-map. Therefore, $g (T^{-1} (f))=T^{-1} (id_{\sigma})$ and $F (T^{-1} (f))$ is a one-point set.

\medskip

D. {\it $F(T^{-1}(f))$ is a one-point set for a partial map $f$ in $e_{\alpha X} G$.}

Take a partial map $f\in e_{\alpha X} G$. Assume that $h, g\in T^{-1}(f)$ but $F(h)\ne F(g)$. Take disjoint nbd $O$ and $V$ of $F(h)$ and $F(g)$ respectively. 
Then $O_h=F^{-1} O$ and $V_g=F^{-1} V$ are disjoint nbds of $h$ and $g$ respectively. Without loss of generality, we can assume that they are of the form 
$$O_h=\{f'\in e_{b_m X} G\ |\ f' (t)\in O_t, t=x_1,\ldots, x_n\},\  V_h=\{f'\in e_{b_m X} G\ |\ f' (t)\in V_t, t=x_1,\ldots, x_n\},$$
$x_1<\ldots< x_n$. Let $Z=\{x_{m_1},\ldots, x_{m_l}\}$ be those points in which $h(x_{m_j})=g(x_{m_j})=y\in X$, $j=1,\ldots, l$. If $Z=\emptyset$, then take any $h'(x_k)\in O_k$, $g'(x_k)\in V_k$, $h'(x_k), g'(x_k)\in b_m X\setminus X$, and extend $h', g'$ to the step map on $b_m X$ with values $\inf, h'(x_k), k=1,\ldots, n,\ \sup$ and  $\inf, g'(x_k), k=1,\ldots, n,\ \sup$ respectively. Then $h'\in O_h$, $g'\in V_g$, $F(h')=F(g')$ and a contradiction with the choice of disjoint nbds of $F(h)$ and $F(g)$ is obtained. 

If $Z\ne\emptyset$, then as above one can define maps $h', g'\in b_{b_m X} G$ such that $h'(x_{m_j})=g'(x_{m_j})$, $j=1,\ldots, l$,  $h'(x_k)\in O_k$, $g'(x_k)\in V_k$, $h'(x_k), g'(x_k)\in b_m X\setminus X$ for $x_k\not\in Z$, with values  $\inf, h'(x_k), k=1,\ldots, n,\ \sup$ and  $\inf, g'(x_k), k=1,\ldots, n,\ \sup$ respectively. Then $h'\in O_h$, $g'\in V_g$, $F(h')=F(g')$ and a contradiction with the choice of disjoint nbds of of $F(h)$ and $F(g)$ is obtained. 

\medskip

By the above resonings and description of $e_{\alpha X} G$ one has ${\rm WAP}\, G=e_{\alpha X} G$.  

\medskip

The equality $e_{\alpha X} G={\rm WAP}\, G$ and the inequalities $e_{\alpha X} G\leq e_B G\leq {\rm WAP}\, G$ imply 
$$e_{\alpha X} G=e_B G= {\rm WAP}\, G.$$
\end{proof}

\begin{cor}\label{autchainWAP}
Let $X$ be an ultrahomogeneous chain, $G=(\aut (X), \tau_{\partial})$. ${\rm WAP}\, G$ is the closure of $G$ in $b_r  {\rm U}(\ell^2(X))={\rm WAP}\,  {\rm U}(\ell^2(X))$.
\end{cor}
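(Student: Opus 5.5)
The plan is to reduce Corollary~\ref{autchainWAP} to Theorem~\ref{compRoelcke} applied to the embedding of $G$ into the unitary group. Then I would identify the resulting closure with ${\rm WAP}\, G$ using item III of Theorem~\ref{thmOrderchain}.

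First, $G=(\aut (X), \tau_{\partial})$ is a subgroup of $({\rm S}(X), \tau_p)$, because the permutation topology is the t.p.c. for the discrete space $X$. As in the proof of Proposition~\ref{discrsmcomp}, take $i: X\to {\rm S}$, $i(x)=e_x$, together with the monomorphism $\varphi: G\to {\rm U}(\ell^2(X))$, $\varphi(g)(e_x)=e_{g(x)}$. The diagram (e) commutes. Since ${\rm U}(\ell^2(X))$ acts transitively on ${\rm S}$, we have ${\rm U}(\ell^2(X))\, i(X)={\rm S}$, so Lemma~\ref{comptauprep} makes $\varphi$ a topological embedding.

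To apply Theorem~\ref{compRoelcke} we need a compact space, so I would work with the unit ball ${\rm B}$ in the weak topology, which is the maximal $G$-compactification of ${\rm S}$. Here ${\rm U}(\ell^2(X))$ is $\tau_p$-representable in ${\rm B}$ by Lemma~\ref{l1}, and $e_{\rm B}{\rm U}(\ell^2(X))=b_r{\rm U}(\ell^2(X))={\rm WAP}\,{\rm U}(\ell^2(X))$ by \S\ref{unitarygroup}. The proof of Theorem~\ref{compRoelcke} then shows that $e_{\rm B} G$ is the closure of $\varphi(G)$ in $e_{\rm B}{\rm U}(\ell^2(X))$. The point needing care is the hypothesis ``$G\, i(X)=Y$''. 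That proof only uses it through Lemma~\ref{comptauprep}, to guarantee that $G$ is topologically embedded. That embedding is already established via ${\rm S}$, and the topology on $G$ does not change when passing to the $G$-extension ${\rm B}$ by Lemma~\ref{l1}. Consequently the closure of $G$ in $e_{\rm B}{\rm U}(\ell^2(X))\subset {\rm B}^{\rm B}$ is exactly $e_{\rm B}G$, since both are closures of the same set of maps in the product topology.

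Finally, item III of Theorem~\ref{thmOrderchain} gives $e_{\rm B} G={\rm WAP}\, G$. Combining this with the previous step, ${\rm WAP}\, G$ is the closure of $G$ in $b_r{\rm U}(\ell^2(X))={\rm WAP}\,{\rm U}(\ell^2(X))$.

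I expect the main obstacle to be the middle step. It requires checking that the closure of $G$ inside the Ellis compactification of the bigger group, taken in ${\rm B}^{\rm B}$, coincides with the intrinsic Ellis compactification $e_{\rm B}G$. This amounts to ensuring that ${\rm B}$ is a legitimate space for a $\tau_p$-representation of $G$, i.e. that $G$ acts on ${\rm B}$ with the right topology. I would verify it by restricting the equiuniformity on ${\rm B}$ to the $G$-action, and would fall back on Remark~\ref{remdiscrcomp}(A) if needed.
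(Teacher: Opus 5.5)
Your proposal is correct and follows the paper's own derivation, which is implicit and parallel to Corollary~\ref{wappermgr}. The paper argues that $e_{\rm B}G$ is the closure of $G$ in $e_{\rm B}{\rm U}(\ell^2(X))=b_r{\rm U}(\ell^2(X))={\rm WAP}\,{\rm U}(\ell^2(X))$ (via Proposition~\ref{discrsmcomp} and the proof of Theorem~\ref{compRoelcke}), and then uses item III of Theorem~\ref{thmOrderchain} to get $e_{\rm B}G={\rm WAP}\,G$.

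You are right that the hypothesis of Theorem~\ref{compRoelcke} fails literally for $Y={\rm B}$ (since ${\rm U}(\ell^2(X))\,i(X)={\rm S}$), and that it is only needed to make $\varphi$ a topological embedding. It is safest to verify that embedding directly rather than through the conjugation step in the proof of Lemma~\ref{comptauprep}, which is not valid in that generality. The direct check is short. If $h$ fixes a finite $F\subset X$ pointwise, then $\|\varphi(h)v-v\|\le 2\|v-P_Fv\|$ for all $v\in\ell^2(X)$, where $P_F$ is the orthogonal projection onto the span of $\{e_x\ |\ x\in F\}$. In the other direction, $\|\varphi(h)e_x-e_y\|\in\{0,\sqrt{2}\}$.
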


\begin{rem}
{\rm  The group $\aut (\mathbb Q)$ (in the topology of pointwise convergence from its $\tau_p$-representation in a discrete chain $\mathbb Q$) is a non-archimedean Roelcke precompact Polish group, its Roelcke-compactification $b_r \aut (\mathbb Q)$ is not a compact semigroup. Further, the Hilbert compactification of $\aut (\mathbb Q)$ is a semitopological inverse monoid with continuous inverse, is a WAP-compactification of $\aut (\mathbb Q)$ ($\aut (\mathbb Q)$ is {\it Eberlein group} (see definition in~\cite{GlasnerMegr})) and all its factors are Hilbert--representable~\cite {BITsankov}.

The results of section~\ref{ultrchain} hold for subgroups of $(\aut (X), \tau_{\partial})$ which {\it acts ultratransitively}~\cite{KozlovSorin2025}. These subgroups are dense subgroups of  $(\aut (X), \tau_{\partial})$. 

The  WAP-compactification of $(\aut ({\bf F}), \tau_{\partial})$ coincides with WAP-compactification of $\aut (\mathbb Q)$, where $F$ is a Thompson group~\cite{Canon} (take into account~\cite[Remark 5.3]{KozlovSorin2025} and Corollary~\ref{heredsm}).}
\end{rem}


\section{Automorphism group of ultrahomogeneous LOTS}\label{ultLOT}

Let $X$ be an ultrahomogeneous chain, $\aut X$ be automorphism group of $X$. If $X$ is equipped with the topology of linear order $\tau$ ($(X, \tau)$ is a LOTS), then the t.p.c $\tau_p$ is an admissible group topology on $\aut X$~\cite{Ovch} or~\cite{Sorin}. Hence, $G=(\aut X, \tau_p)$ is $\tau_p$-representable in $X$. The maximal equiuniformity $\mathcal U^{\max}_X$ on $X$ is totally bounded. The maximal $G$-compactification $\beta_G X$ (completion of $(X, \mathcal U^{\max}_X)$) is the least linearly ordered compactification of $X$. It is obtained by replacing every gap (also improper) in $X$ by a point with the natural extension of the order~\cite[Lemma 1.13]{Sorin2}. Let $c X$ be the compactification of $X$ obtained by identifying points $\{\inf\}$ and  $\{\sup\}$ of $\beta_G X$. 

\begin{lem}~\cite{KozlovSorin2025}\label{gluinf}
The $G$-space $(G=(\aut X, \tau_p), X=(X, \tau), \curvearrowright )$ has only two $G$-compatifications $c_m X=\beta_G X$ and $c X$. 
\end{lem}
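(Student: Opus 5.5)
The plan is to describe all $G$-compactifications of $X$ through the maximal one, $c_m X=\beta_G X$. Every $G$-compactification $bX$ is dominated by $\beta_G X$, so there is an equivariant map $\varphi:\beta_G X\to bX$ which is the identity on $X$. The task is therefore to classify the closed $G$-invariant equivalence relations $E$ on $\beta_G X$ whose quotient keeps $X$ embedded, i.e.\ no point of $X$ is identified with anything.

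First I record the structure of $\beta_G X$. It is the least linearly ordered compactification: every gap of $X$ (including the improper gaps $\inf$ and $\sup$) is replaced by a point. Ultrahomogeneity makes $G$ act on the set $\Gamma$ of proper gaps in an order-preserving way. It also gives the following: for any two proper gaps $v<w$ and any point $v'$ there are elements of $G$ that move $v$ arbitrarily close to $\inf$ or to $\sup$ while keeping $w$ nearly fixed, and similarly with the roles of the points reversed. I also use that $\varphi$ must be injective on $X$ and $\varphi(X)$ is open, which follows because $X$ is locally compact in the LOTS topology, being open in $\beta_G X$.

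Next I show that no point of $X$ is identified with a gap. Suppose $x\in X$ is identified with some $v\in\beta_G X\setminus X$ by $E$. By invariance, $gx\,E\,gv$ for all $g\in G$. I choose a net $g_\alpha$ with $g_\alpha x=x$ but $g_\alpha v\to$ a point of $X$ different from $x$. Since $E$ is closed, this forces two points of $X$ to be identified, a contradiction. Such a net exists by ultrahomogeneity: fix $x$ and slide the gap toward an interior point.

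The key step, and the main obstacle, is to show that any nontrivial identification among gaps must collapse exactly $\{\inf,\sup\}$ and nothing else. Suppose two proper gaps $v<w$ are identified. Using elements $g$ that fix a point $x\in X$ with $v<x<w$ and push $v$ to $\inf$ and $w$ to $\sup$, closedness gives $\inf\,E\,\sup$. Applying other elements that push $v$ to $\inf$ while moving $w$ toward an arbitrary point of $X$, closedness then gives $\inf\,E\,y$ for some $y\in X$, contradicting the previous step. The same argument applies if a proper gap is identified with $\inf$ or $\sup$. Hence the only possible nontrivial class is $\{\inf,\sup\}$, and $E$ is either the diagonal or the relation gluing $\inf$ and $\sup$. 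This gives exactly $c_m X$ and $cX$. It remains to check that $cX$ really is a $G$-compactification: the gluing relation is closed and invariant because $\inf$ and $\sup$ are fixed by $G$, so the quotient is a compact $G$-space containing $X$. The delicate part is constructing the required $g$ with controlled images in the topology of pointwise convergence on the LOTS. Here I would fix a finite configuration of points of $X$ between the gaps and invoke ultrahomogeneity on finite order-preserving partial maps.
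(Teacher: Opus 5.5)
The paper gives no proof of this lemma; it only cites \cite{KozlovSorin2025}, so there is no in-paper argument to compare against. Your route is a valid self-contained proof: classify the closed $G$-invariant equivalence relations $E$ on $c_mX=\beta_GX$ that are trivial on $X$.

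The core step is right. Suppose a proper gap $v$ is $E$-related to some $w\neq v$ in the remainder. Pick a point of $X$ strictly between them; two distinct cuts always differ by a point of $X$. Ultrahomogeneity on finite configurations then gives elements $g_\alpha$ with $g_\alpha v\to\inf$ and $g_\alpha w\to y\in X$. If $w\in\{\inf,\sup\}$, simply take $g_\alpha v\to y$ with $w$ fixed. In either case, closedness of $E$ relates a point of $X$ to a remainder point. Your intermediate derivation of $\inf\,E\,\sup$ is unnecessary. Hence only $\{\inf,\sup\}$ can be collapsed, which yields exactly $c_mX$ and $cX$.

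Two points need correcting.

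\emph{First}, $X$ need not be locally compact, and it need not be open in $\beta_GX$. For $X=\mathbb{Q}$ one has $c_mX=[-\infty,+\infty]$, in which $\mathbb{Q}$ is not open. This is why the paper lists only $Y,Y^{\pm},c_mX,cX$ as locally compact $G$-extensions. You never actually use openness. What you need is $\varphi^{-1}(\varphi(X))=X$, which holds for every map of compactifications \cite[Lemma 3.5.6]{Engelking}, with no local compactness assumption.

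\emph{Second}, your net argument for \emph{no point of $X$ is identified with a gap} fails when the gap is $\inf$ or $\sup$. Every $g\in G$ fixes these two points, so $g_\alpha v$ cannot converge into $X$. That case is trivial: if $x\,E\,\inf$, then $gx\,E\,\inf$ for all $g$, and $G$ is transitive on $X$, so all of $X$ would collapse. The general fact quoted above in fact disposes of this whole step at once, for every remainder point.

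You should also add the routine remark that $\mathrm{id}_G\times q$ is perfect, hence a quotient map. This ensures the action descends continuously to $cX$. With these repairs the argument is complete.
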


There are three locally compact non-compact $G$-extensions of $X$. Indeed, $Y^+=c_m X\setminus\{\inf\}$, $Y^-=c_m X\setminus\{\sup\}$ and $Y=c_m X\setminus\{\inf, \sup\}$ are the only invariant locally compact non-compact subsets of $c_m X$; $Y=c X\setminus\{\infty\}$ is the only invariant locally compact  non-compact subset of $c X$. All of them are nonequivalent locally compact $G$-extensions and they are locally connected. Hence, the group $G=(\aut X, \tau_p)$ is $\tau_p$-representable in all $Z\in\mathbb{GLC} (X)=\{Y, Y^-, Y^+, c_m X, c X\}$ by Lemma~\ref{lemcompemb}. 

The Ellis compactifications $e_{c_m X} G$ and $e_{c X} G$ of  $G$ correspondent to $G$-compactifications $c_m X$ and $c X$ of $X$ are described in~\cite[Theorem 5.8,\ Corollary 5.10,\ Proposition 5.11]{KozlovSorin2025}.  $e_{c_m X} G$ is the set of monotone self-maps $f$ of $c_m X$ {\rm(}$x<y\Longrightarrow f(x)\leq f(y)${\rm)} in the topology of pointwise convergence such that $f(\inf)=\inf$,  $f(\sup)=\sup$, 

\noindent $e_{c X} G$ is the set of all self-maps $f$ of $c X$ such that  $f(\infty)=\infty$,  the restriction of $f$ to $f^{-1} (c X\setminus\{\infty\})$ is a monotone map of $f^{-1} (c X\setminus\{\infty\})$ to $c X\setminus\{\infty\}$ {\rm(}with the linear order induced on $Y$ as a subset of $c_m X${\rm)}. 

\noindent $e_{c X} G=e_{c_m X} G/I$ ($e_{c X} G$ is a Rees quotient of $e_{c_m X} G$, where ideal 
$$I=\{f^-_y=\left\{
\begin{array}{ll}
\inf, & x\leq y, \\
\sup, & y<x,  \\
\end{array}
\right. 
 y\in Y^-\ \mbox{and}\ 
f^+_z=\left\{
\begin{array}{ll}
\inf, & x<z, \\
\sup & z\leq x, \\
\end{array}
\right. z\in Y^+\}.)
$$

$G$ is Roelcke precompact, $b_r G<e_{c_m X} G$, compactifications $b_r G$ and $e_{c X} G$ are incomparable~\cite{KozlovSorin2025}. 


\subsection{Graph  compactifications of $(\aut X, \tau_p)$} 

The orders on locally compact (and locally connected) extensions of $X$ and corresponding, correctly defined, graph compactifications of $G=(\aut X, \tau_p)$ are the following:
$$\begin{array}{ccc}
   &   c_m X    &  \\
\ \ \swarrow      &  \downarrow        &  \searrow  \\
Y^-   &   c X   &  \ \ Y^+ \\
\ \ \searrow      &  \downarrow        &  \swarrow  \\
 &   Y  &   \\
\end{array}\quad 
\begin{array}{ccc}
   &   b_{c_m X} G    &  \\
\ \ \swarrow      &  \downarrow        &  \searrow  \\
b_{Y^-}  G   &   b_{c X}  G   &  \ \ b_{Y^+}  G \\
\ \ \searrow      &  \downarrow        &  \swarrow  \\
 &   b_Y  G.  &   \\
\end{array}
$$

Let $\i^{\Gamma}_{c_m X}: G\to (2^{c_m X\times c_m X}, \tau_V)$, $\i^{\Gamma}_{c_m X} (g)=\{(x, g(x))\ |\ x\in c_m X\}$, and $\i^{\Gamma}_{c X}: G\to (2^{c X\times c X}, \tau_V)$, $\i^{\Gamma}_{c X} (g)=\{(x, g(x))\ |\ x\in c X\}$, $g\in G$, be embeddings of $G$. $b_{c_m X} G$ and  $b_{c X} G$ are the correspondent graph compactifications (closures of images of $G$ in $(2^{c_m X\times c_m X}, \tau_V)$ and $(2^{c X\times c X}, \tau_V)$ respectively).  $\pr_1$ and $\pr_2$ are projections of $c_m X\times c_m X$ (or $c X\times c X$) on the first and the second factor respectively, $(c_m X\times c_m X, \leq_{\times})$ is a {\it cardinal product} ($(x, y)\leq (x', y')$ iff $x\leq x'$\ \&\ $y\leq y'$)~\cite[Ch.~1, item 7]{Birkhoff}. 

\begin{pro}\label{Prodescrlotbeta}
Let $X$ be an ultrahomogeneous {\rm LOTS}, $G=(\aut X, \tau_p)$. Then $b_{c_m X} G$ is the set of linearly ordered {\rm(}the restriction of $\leq_{\times}${\rm)} continua $A\in\CL (c_m X\times c_m X)$ such that $\pr_1 (A)=\pr_2 (A)=c_m X$.
\end{pro}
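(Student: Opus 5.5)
We prove two inclusions, following the scheme of Proposition~\ref{graphdescrlotbeta}. Recall that $c_m X$ is a compact LOTS, connected except possibly at gaps of $X$ filled by single points; since $X$ is a LOTS in its order topology, $c_m X$ is in fact a continuum exactly when the only gaps are the filled ones. The basic tool is that each $\i^{\Gamma}_{c_m X}(g)$, $g\in G$, is the graph of an increasing homeomorphism of $c_m X$.

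\emph{Necessity.} Let $A\in b_{c_m X} G$. Lemma~\ref{proj} gives $\pr_1(A)=\pr_2(A)=c_m X$. Suppose $A$ is not $\leq_\times$-chain: take $(x,y),(x',y')\in A$ with $x<x'$ and $y>y'$. Choose $t,\tau\in X$ with $x<t<x'$ and $y'<\tau<y$; this uses density of $X$ in $c_m X$, and where $x,x'$ are adjacent we first perturb using the chain structure. Then the open set
$$(\pr_1^{-1}(\gets,t)\cap\pr_2^{-1}(\tau,\to))^-\cap(\pr_1^{-1}(t,\to)\cap\pr_2^{-1}(\gets,\tau))^-$$
contains $A$ but meets no graph of an increasing map. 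So $(A,\leq_\times|_A)$ is a chain.

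Next, $A$ is connected. Since $A$ is compact, it is not connected only if $A=A_1\cup A_2$ with $A_1,A_2$ disjoint nonempty closed sets. Because $A$ is a chain with full projections, such a splitting can be taken as a cut $A_1<A_2$ in $\leq_\times$. Let $a=\max A_1$ and $b=\min A_2$, so there is a jump with $a<b$ in the product order and no element of $A$ between them. Full projections force either $\pr_1 a=\pr_1 b$ or $\pr_2 a=\pr_2 b$, say $a=(u,v)$ and $b=(u,w)$ with $v<w$. Then there is $z\in(v,w)$: either $v,w$ are not adjacent in $c_m X$, which holds since $c_m X$ is a continuum, or we use connectedness of $c_m X$. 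The point $(u,z)$ must lie in $A$ because $z\in\pr_2 A$ and the chain structure pins its first coordinate to $u$. This contradicts the choice of $a,b$.

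\emph{Sufficiency.} Let $A$ be a linearly ordered continuum with full projections, and take a basic Vietoris neighbourhood $[V_1,\dots,V_k]$ of $A$. As in the proof of Proposition~\ref{graphdescrlotbeta}, we refine to products $O_i\times U_i$ of order-intervals with endpoints in $X$, linearly ordered along $A$. Pick points $(p_i,q_i)\in A\cap(O_i\times U_i)$ forming a fine increasing chain, taken with strict inequalities in both coordinates wherever possible. Then move them slightly into $X\times X$ with strictly increasing coordinates; where $A$ has a vertical or horizontal segment, we spread the points inside the relevant open boxes. Ultrahomogeneity gives $g\in G$ with $g(x_i')=y_i'$. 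Since each $O_i,U_i$ is an interval and $g$ is increasing, the graph of $g$ between consecutive chosen points stays in the union of the adjacent boxes, so $\i^{\Gamma}_{c_m X}(g)\in[V_1,\dots,V_k]$.

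The main obstacle is this last containment. The graph of $g$ between chosen nodes lies in the rectangle spanned by consecutive nodes, and we need this rectangle to lie inside $\bigcup V_i$. The step I would try first is a connectedness/compactness argument: choose the nodes via a Lebesgue number of the cover $\{V_i\}$ of the chain $A$, so that consecutive nodes fall in a common box or in two boxes whose union contains the spanning rectangle.
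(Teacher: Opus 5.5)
The necessity half is the same as the paper's argument. It uses Lemma~\ref{proj}, the same open set excluding $(x,y),(x',y')\in A$ with $x<x'$, $y>y'$, and the observation that a jump $a<b$ in the chain $A$ would leave a point of $c_m X$ outside $\pr_1(A)$ or $\pr_2(A)$. Your caveats about adjacent points are vacuous: $X$ is densely ordered and every gap is filled, so $c_m X$ is always a continuum here.

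The sufficiency scheme also follows the paper, but its decisive step is not done. Moreover, the sentence before your ``main obstacle'' is false as stated. If two consecutive nodes lie in different boxes $V_i$, $V_{i+1}$, an increasing $g$ through them is only confined to the rectangle they span. That rectangle can leave $V_i\cup V_{i+1}$ at a corner, and nothing keeps the graph of $g$ out of it. For example, take $V_i=(a,c)^2$, $V_{i+1}=(b,d)^2$ with $a<b<c<d$, and diagonal nodes $(p,p)$, $(q,q)$ with $a<p\le b$, $c\le q<d$. Then the corner $(q,p)$ is in neither box.

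The missing idea is to put the nodes in the \emph{overlaps}. Refine, as the paper does, to a chain of open boxes $V_i=O_i\times U_i$ with the following properties:
\begin{itemize}
\item $O_i,U_i$ are order intervals with increasing endpoints;
\item $V_i\cap V_{i+1}\neq\emptyset$;
\item $(\inf,\inf)\in V_1$ and $(\sup,\sup)\in V_n$.
\end{itemize}
Choose $(t_i,\tau_i)\in (X\times X)\cap V_i\cap V_{i+1}$, $i=1,\ldots,n-1$, strictly increasing in both coordinates, and take $g\in G$ with $g(t_i)=\tau_i$ (so $g(\inf)=\inf$, $g(\sup)=\sup$). Now consecutive nodes lie in the \emph{single} box $V_i$. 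Monotonicity and convexity then give $\{(x,g(x))\mid t_{i-1}\le x\le t_i\}\subset[t_{i-1},t_i]\times[\tau_{i-1},\tau_i]\subset O_i\times U_i$. The end pieces are handled by $(\inf,\inf)\in V_1$ and $(\sup,\sup)\in V_n$. Every $V_i$ contains a node, so the graph of $g$ lies in $[V_1,\ldots,V_n]$.

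Your Lebesgue-number variant also works, provided you require that \emph{every} consecutive pair lie in one common box. Perturb each node only inside the finite intersection of the open boxes it must belong to. Keep $(\inf,\inf)$ and $(\sup,\sup)$ as fixed end nodes; both lie in $A$ by the chain property and full projections. The alternative ``two boxes whose union contains the spanning rectangle'' is not needed.
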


\begin{proof}
Take $A\in b_{c_m X} G=\cl\ \i^{\Gamma}_{c_m X} (G)$. $\pr_1 (A)=\pr_2 (A)=c_m X$ by Lemma~\ref{proj}. 

If there exist $(x, y), (x', y')\in A$ such that $x<x'$, $y>y'$, then take  $x<t<x'$, $y'<\tau<y$. The set $W=(\pr_1^{-1}(\gets, t)\cap\pr_2^{-1}(\tau, \to))^-\cap (\pr_1^{-1}(t, \to)\cap\pr_2^{-1}(\gets, \tau))^-$ is an open nbd of $A$ and $W\cap \i^{\Gamma}_{c_m X} (G)=\emptyset$. Hence, $A\not\in\cl (\i^{\Gamma}_{c_m X} (G))$. The case of $x>x'$, $y<y'$ is verified similarly. Therefore, ${\leq_{\times}}|_A$ is a linear order on $A$. 

Since $A\in\CL (c_m X\times c_m X)$ there no jumps in $A$. Indeed, if there is a jump $(x, y)<(x', y')$ in $A$, then either $x<x'$ or $y<y'$. If  $x<x'$ there exists $x<t<x'$ such $(\{t\}\times c_m X)\cap A=\emptyset$ and, hence, $\pr_1 (A)\ne c_m X$. Therefore, $A$ is a continuum. The case $y<y'$ is verified similarly. 

\medskip

Take $A\in\CL (c_m X\times c_m X)$ such that $(A, {\leq_{\times}}|_A)$ is a linearly ordered continua, $\pr_1 (A)=\pr_2 (A)=c_m X$. Any nbd of $A$ in $(2^{c_m X\times c_m X}, \tau_V)$ is of the form $[V_1, \ldots, V_n]$ and, additionally, since $\dim A=1$, and $c_m X\times c_m X$ is a product, one can consider that $V_i=O_i\times U_i$, $O_i=(a^-_i, a^+_i)$, $U_i=(b^-_i, b^+_i)$ are open interval of $c_m X$, $i=2,\ldots, n-1$, ($V_1=O_1\times U_1$, $O_1=[a^-_1=\inf, a^+_1)$, $U_1=[b^-_1=\inf, b^+_1)$, $V_n=O_n\times U_n$, $O_n=(a^-_1, a^+_1=\sup]$, $U_n=(b^-_n, b^+_n=\sup]$) and $V_i\cap V_{i+2}=\emptyset$, $i=1,\ldots, n-2$ (the order of the family $\{V_1, \ldots, V_n\}$ is one and since $A$ is a continuum, $V_i\cap V_{i+1}\ne\emptyset$, $i=1,\ldots, n-1$). Since ${\leq_{\times}}|_A$ is a linear order on $A$, one can also assume that $a^-_i< a^-_{i+1}$,  $a^+_i\leq a^+_{i+1}$, $b^-_i< b^-_{i+1}$, $b^+_i\leq b^+_{i+1}$, $i=1,\ldots, n-1$.

Take $n-1$ points $(t_i, \tau_i)\in (X\times X)\cap (V_i\cap V_{i+1})$ such that $t_i<t_{i+1}$,  $\tau_i<\tau_{i+1}$, $i=1, \ldots, n-1$. Ultratrahomogeneity of $X$ yields that there exists $g\in G$ such that $g(t_i)=\tau_i$, $i=1, \ldots, n-1$, ($g(\inf)=\inf$, $g(\sup)=\sup$). Since $\i^{\Gamma}_{c_m X}  (g)\in [V_1, \ldots, V_n]$, $A\in\cl\ \i^{\Gamma}_{c_m X} (G)$. 
\end{proof}

\begin{rem}
{\rm $b_{c_m X} G$ is not a semigroup. Indeed, $A=(c_m X\times\{\inf\})\cup (\{\sup\}\times c_m X)$, $B=(\{\inf\}\times c_m X)\cup(c_m X\times\{\sup\})\in b_{c_m X} G$, $BA=c_m X\times c_m X\not\in b_{c_m X} G$.

The description of $b_{c_m X} G$ for $X=(0, 1)$ (the maximal $G$-compactification of $X$ is $[0, 1]$) is given in~\cite{usp2001} (see also~\cite{GlasnerMegr2008}).}
\end{rem}

By Theorem~\ref{mapcompVel} the graph compactification $b_{c X} G$ is the image of $b_{c_m X} G$ under the map $F^H$, $F=f\times f$, where $f: c_m X\to c X$ is an elementary map of compactifications of $X$ (identification of points $\inf$ and $\sup$).

\begin{lem}\label{lbetab}
For $K=\{\inf, \sup\}$ the condition {\rm(ER)} of {\rm Theorem~\ref{restrmap}} is valid.
\end{lem}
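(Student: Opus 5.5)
We have to show the following for $A, B\in b_{c_m X} G$ and $Y=c_m X\setminus K$: if $A\cap (Y\times Y)=B\cap (Y\times Y)\ne\emptyset$, then $A=B$. The plan is to follow the scheme of Lemma~\ref{lbetabchains}, which now becomes simpler. By Proposition~\ref{Prodescrlotbeta}, $A$ is a linearly ordered (for $\leq_{\times}$) continuum with $\pr_1 (A)=\pr_2 (A)=c_m X$. Consequently $A$ is the "graph" of a monotone connected relation running from $(\inf, \inf)$ to $(\sup, \sup)$. Both endpoints lie in $A$: they are the least and greatest elements of the compact chain $A$, and the projections are onto.

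First, put $A_0=A\cap (Y\times Y)$. Linearity of $\leq_{\times}$ on $A$ shows that $A_0$ is an order-convex subset of the chain $A$. To see this, suppose $p<q<r$ in $A$ with $p, r\in Y\times Y$. Then both coordinates of $q$ lie between those of $p$ and $r$, so they are strictly above $\inf$ and strictly below $\sup$.

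Next, let $a=\inf\pr_1(A_0)$, $a'=\inf\pr_2(A_0)$, $b=\sup\pr_1(A_0)$ and $b'=\sup\pr_2(A_0)$, with the infima and suprema taken in $c_m X$. The points of $A$ below $A_0$ have a coordinate equal to $\inf$, and the points above $A_0$ have a coordinate equal to $\sup$. The claim is that $A$ is uniquely determined by $A_0$:
$$A=(\{\inf\}\times[\inf, a'])\cup([\inf, a]\times\{\inf\})\cup \cl A_0\cup (\{\sup\}\times [b', \sup])\cup([b, \sup]\times\{\sup\}),$$
where in fact only one of the first two pieces is nondegenerate, and likewise only one of the last two. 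The reason is as follows. Because $A$ is a continuum and a chain, the lower part $\{p\in A\mid p<A_0\}$ is a connected monotone arc from $(\inf, \inf)$ to the infimum point $(a, a')$ of $\cl A_0$. Every point of this arc has a coordinate equal to $\inf$. Monotonicity then forces the arc to be the segment $\{\inf\}\times[\inf, a']$ when $a=\inf$, and the segment $[\inf, a]\times\{\inf\}$ when $a'=\inf$.

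The step that needs care is to check that at least one of $a=\inf$ or $a'=\inf$ holds. Suppose instead that $a>\inf$ and $a'>\inf$. Then, since $\pr_1(A)=c_m X$, the points $t<a$ with $t\in Y$ must be covered by points of $A$ outside $Y\times Y$, which must have second coordinate $\inf$. The second coordinate of such points is at most $a'$ along the chain, so the segment from $(\inf, \inf)$ has to pass horizontally to $(a, \inf)$ and then vertically up to $(a, a')$. That vertical part consists of points $(a, s)$ with $\inf<s<a'$, which lie in $Y\times Y$ when $a\in Y$. This contradicts the choice of $a'$, and the case $a=\sup$ is impossible since $A_0\ne\emptyset$. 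The upper end is handled symmetrically.

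Since these formulas depend only on $A_0$, the same description holds for $B$, and so $A=B$. This establishes (ER). The main obstacle is this endpoint analysis, in particular making the convexity and connectedness argument precise at boundary points. I would handle it through the case distinction of Lemma~\ref{lbetabchains} (cases (1)–(4) according to the shape of $\pr_1(A_0)$), which is now simplified because $A$ is connected.
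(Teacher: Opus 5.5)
Your proposal is correct and is essentially the paper's proof: both recover $A$ from $A\cap(Y\times Y)$ by adjoining a horizontal or vertical segment at $(\inf,\inf)$ and at $(\sup,\sup)$, with lengths given by the bounds of the projections of $A\cap(Y\times Y)$; your single formula in $a,a',b,b'$ just packages the paper's four cases for $\pr_1(A\cap(Y\times Y))$. Your step showing $a=\inf$ or $a'=\inf$ can be shortened: points $(\inf,s)$ and $(t,\inf)$ with $s,t>\inf$ are $\leq_{\times}$-incomparable, so the chain $A$ cannot contain both.
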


\begin{proof}
Take $A, B\in b_{c_m X} G$ such that $A\cap (Y\times Y)=B\cap (Y\times Y)\ne\emptyset$, $Y=c_m X\setminus K$.

For the projections of $A$ (or $B$) there are four possible cases:
\begin{itemize}
\item[{\rm (1)}] $\pr_1 (A\cap (Y\times Y))=Y$, 

\item[{\rm (2)}] $\pr_1 (A\cap (Y\times Y))=(\gets, b)$ or $(\gets, b]$, $b\in Y$,  

\item[{\rm (3)}] $\pr_1 (A\cap (Y\times Y))=(a, \to)$ or $[a, \to)$, $a\in Y$, 

\item[{\rm (4)}] $\pr_1 (A\cap (Y\times Y))=[a, b]$ or $(a, b]$ or $[a, b)$ or $(a, b)$, $a, b\in Y$, $a\leq b$. 
\end{itemize}

Let $a'$ be the least lower bound and $b'$ is the greatest upper bound of $\pr_2 (A\cap (Y\times Y))$. 

In the case (1) $A=(\{\inf\}\times (\gets, a'])\cup (A\cap (Y\times Y)\cup (\{\sup\}\times [b', \to))$. 

In the case (2) $b'=\sup$ and $A=(\{\inf\}\times (\gets, a'])\cup (A\cap (Y\times Y)\cup ([b, \to)\times\{\sup\})$. 

In the case (3) $a'=\inf$ and $A=((\gets, a]\times\{\inf\})\cup (A\cap (Y\times Y)\cup  (\{\sup\}\times [b', \sup))$. 

In the case (4) $A=((\inf, a]\times\{\inf\})\cup (A\cap (Y\times Y)\cup  ([b, \sup)\times\{\sup\})$. The same is true for $B$ and $A=B$.

It remains to note that $A=(b_m X\times\{\inf\})\cup (\{\sup\}\times b_m X)$ and $B=(\{\inf\}\times b_m X)\cup(b_m X\times\{\sup\})\}$ are the only sets $R\in  b_{b_m X} G$ such that $R\cap (Y\times Y)=\emptyset$.
\end{proof}

\begin{pro}\label{Prodescrlotb} 
Let $X$ be an ultrahomogeneous {\rm LOTS}, $G=(\aut X, \tau_p)$. 
\begin{itemize}
\item[(a)]  $F^H$ is an elemantary map of $G$-compactifications of $G$, 
$$b_{c X} G=b_{c_m X} G/0_K,$$
$0_K=\{A=(c_m X\times\{\inf\})\cup (\{\sup\}\times c_m X),\ B=(\{\inf\}\times c_m X)\cup(c_m X\times\{\sup\})\}$.  
\item[(b)] $b_{Y^-} G= b_{Y^+} G=b_{c_m X} G$, 
\item[(c)] $b_{Y} G=b_{\alpha Y} G=b_{c X} G$. 
\end{itemize}
\end{pro}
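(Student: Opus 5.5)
The plan is to mirror the treatment of the chain case in Lemma~\ref{lbetabchains} and Proposition~\ref{leftrightarrow}, now using the description of $b_{c_m X} G$ from Proposition~\ref{Prodescrlotbeta} together with Lemma~\ref{lbetab}.

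\emph{Item (a).} First I would identify the sets of $b_{c_m X} G$ that meet $K\times c_m X\cup c_m X\times K$, where $K=\{\inf,\sup\}$, and in particular those with $R\cap(Y\times Y)=\emptyset$. By Proposition~\ref{Prodescrlotbeta}, every $R\in b_{c_m X}G$ is a linearly ordered continuum joining $(\inf,\inf)$ to $(\sup,\sup)$ with full projections. If such a continuum misses $Y\times Y$, it must lie in the union of the four boundary edges. The only monotone edge paths from $(\inf,\inf)$ to $(\sup,\sup)$ are the two sets $A$ and $B$ listed in $0_K$. Next, Lemma~\ref{lbetab} gives (ER) for $c_m X$ and $Y=c_m X\setminus K$. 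The key step is to transfer this to (EPM) for the elementary map $f\colon c_m X\to cX$, using Remark~\ref{corrcond}:
\begin{itemize}
\item If $R\cap(Y\times Y)=S\cap(Y\times Y)\ne\emptyset$, then (ER) already gives $R=S$.
\item The second clause of (EPM) only concerns $R,S\in\{A,B\}$. Here $F^H(A)=F^H(B)$, since both images equal $(cX\times\{\infty\})\cup(\{\infty\}\times cX)$.
\end{itemize}
Then Theorem~\ref{mapcompVel}(B) yields that $F^H|_{b_{c_m X}G}$ is elementary, collapsing exactly $0_K=\{A,B\}$. Hence $b_{cX}G=b_{c_m X}G/0_K$.

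\emph{Item (b).} Apply Theorem~\ref{restrmap} to the open invariant sets $Y^{\pm}\subset c_m X$. The complement of $Y^-$ is $\{\sup\}$, and Lemma~\ref{lbetab} gives (ER) relative to $Y\subset Y^-$. This implies (ER) relative to $Y^-$: if $R\cap(Y^-\times Y^-)=S\cap(Y^-\times Y^-)\neq\emptyset$, then the traces on $Y\times Y$ coincide. They are nonempty unless $R,S\in\{A,B\}$, and $A,B$ have distinct traces on $Y^-\times Y^-$. So $R=S$.

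The set $0_{c_m X\setminus Y^-}$ consists of those $R$ contained in $(\{\sup\}\times c_m X)\cup(c_m X\times\{\sup\})$. Such an $R$ must contain $(\inf,\inf)$ by Lemma~\ref{proj}, so this set is empty. By Theorem~\ref{restrmap}(B), $H_{\dashv}$ is then a homeomorphism, which gives $b_{Y^-}G=b_{c_m X}G$. The case of $Y^+$ is symmetric.

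\emph{Item (c).} $Y$ is locally connected and locally compact, and $cX=\alpha Y$. Theorem~\ref{partialcases}(b) therefore gives (ER) for $\alpha Y$ and $b_Y G=b_{\alpha Y}G=b_{cX}G$.

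The main obstacle is the first step of (a): justifying rigorously that $A$ and $B$ are the only members avoiding $Y\times Y$. I would argue it via connectedness and the linear order on $R$ from Proposition~\ref{Prodescrlotbeta}. A continuum chain from $(\inf,\inf)$ that leaves the bottom and left edges must enter $Y\times Y$ unless it passes through $(\sup,\inf)$ or $(\inf,\sup)$. The full-projection condition then forces the whole corresponding pair of edges to lie in $R$.
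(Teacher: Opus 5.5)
Your proposal is correct.

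\textbf{Items (a) and (b).} These follow the paper's proof, with the details the paper leaves implicit made explicit. For (a): Lemma~\ref{lbetab} gives (ER) for $c_mX$ and $Y$; $A$ and $B$ are the only members of $b_{c_mX}G$ with empty trace on $Y\times Y$; and $F^H(A)=F^H(B)$. Hence (EPM) holds and Theorem~\ref{mapcompVel}(B) applies. For (b): you pass (ER) up to $Y^{\pm}$ and invoke Theorem~\ref{restrmap}.

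One justification in (b) needs repair. Lemma~\ref{proj} alone does not put $(\inf,\inf)$ into every $R\in b_{c_mX}G$. For example, $(c_mX\times\{\sup\})\cup(\{\sup\}\times c_mX)$ has full projections but misses $(\inf,\inf)$. Either of the following fixes it:
\begin{itemize}
\item Every $g\in G$ fixes $\inf$, and $\{R : (\inf,\inf)\in R\}$ is closed, as in the proof of Theorem~\ref{partialcases}.
\item Full projections force $(\inf,\sup),(\sup,\inf)\in R$. These two points are $\le_{\times}$-incomparable, contradicting Proposition~\ref{Prodescrlotbeta}.
\end{itemize}
Either way $0_{c_mX\setminus Y^{\pm}}=\emptyset$, and your conclusion stands.

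\textbf{Item (c).} Here your route differs from the paper's. The paper stays with Lemma~\ref{lbetab}. By Remark~\ref{corrcond}, (ER) for $c_mX$ and $Y$ is equivalent to (EPM) for $f$ together with (ER) for $cX$ and $cX\setminus\{\infty\}=Y$. Theorem~\ref{restrmap}, with $0_{cX\setminus Y}$ the single cross $F^H(A)$ (Remark~\ref{coincompB}), then makes $H_{\dashv}\colon b_{cX}G\to b_YG$ a homeomorphism. Equivalently, $H_{\dashv}\colon b_{c_mX}G\to b_YG$ collapses exactly $\{A,B\}$ to $\emptyset$, so (a) and (c) are two views of one computation.

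You instead observe that $cX=\alpha Y$ and apply Theorem~\ref{partialcases}(b) to the locally connected space $Y$. This is exactly what the paper records afterwards in Remark~\ref{rem2}.

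What each route buys:
\begin{itemize}
\item Yours is independent of the explicit description of $b_{c_mX}G$ and of item (a). It works for any $\tau_g$-representation in a locally compact, locally connected space.
\item The paper's needs no local connectedness, only the concrete condition (ER). That is why the same scheme also handles the zero-dimensional chain case of \S\ref{grcompautGO}, where $Y$ is not locally connected.
\end{itemize}
Both routes use that $G$ is $\tau_g$-representable in $Y$, which the paper asserts in its setup.
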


\begin{proof}
From Lemma~\ref{lbetab} $A=(c_m X\times\{\inf\})\cup (\{\sup\}\times c_m X)$ and $B=(\{\inf\}\times c_m X)\cup(c_m X\times\{\sup\})$ are the only $R\in  b_{c_m X} G$  such that $R\cap (Y\times Y)=\emptyset$ and $F^H (A)=F^H (B)$.

Theorems~\ref{mapcompVel},\ \ref{restrmap} and Remark~\ref{corrcond} finish the proof.
\end{proof}

\begin{rem}\label{rem2}
{\rm $b_{c X} G$ is not a semigroup. Indeed, for $A=(\{\infty\}\times c X)\cup (c X\times \{\infty\})$ 
$AA=c X\times c X\not\in b_{c X} G$.

Item (c) of Proposition~\ref{Prodescrlotb} serves as an example  when item (b) of Theorem~\ref{partialcases} holds, but item (a) doesn't hold.}
\end{rem}


\subsection{Order on $G$-compactifications of $(\aut X, \tau_p)$.}

\begin{thm}\label{thmOrderLOTS} Let $X$ be an ultrahomogeneous {\rm LOTS}, $G=(\aut X, \tau_p)$. 
$$\begin{array}{rcc}
b_r G = b_{c_m X} G & \longleftarrow  & e_{c_m X} G\\
  & &  \\
  \downarrow \quad\quad\quad\quad\quad & \not\searrow\ \not\nwarrow              &  \downarrow \\
  & &  \\
  b_{c X} G= b_{c_m X} G /0_K    &   \not\longleftrightarrow          &    e_{c X} G= e_{c_m X} G/I. \\
\end{array}
$$
\end{thm}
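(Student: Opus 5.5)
The diagram bundles several separate claims:
\begin{itemize}
\item[(1)] $b_r G=b_{c_m X} G$;
\item[(2)] $b_{c_m X} G\leq e_{c_m X} G$;
\item[(3)] the left vertical arrow and $b_{c X} G=b_{c_m X} G/0_K$;
\item[(4)] $e_{c X} G=e_{c_m X} G/I$;
\item[(5)] $b_{c X} G$ and $e_{c_m X} G$ are incomparable (the $\not\searrow\ \not\nwarrow$), and $b_{c X} G$ and $e_{c X} G$ are incomparable.
\end{itemize}
Item (3) is Proposition~\ref{Prodescrlotb}(a), and (4) is quoted from~\cite{KozlovSorin2025}. The plan is to prove (1), (2) and (5).

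For (1) we apply Theorem~\ref{suffcond} to the compact space $c_m X$. Given an entourage ${\rm U}$, choose finitely many points $x_1<\dots<x_n$ of $X$ so that the intervals they cut are ${\rm U}$-small. Let ${\rm V}$ be fine enough that $(\i^{\Gamma}(f),\i^{\Gamma}(h))\in 2^{{\rm V}^2}$ forces the graphs of $f$ and $h$ to pass through the same cells of the resulting grid; this is the monotone structure of Proposition~\ref{Prodescrlotbeta}. By ultrahomogeneity we then pick $g\in G$ with $g(x_i)$ close to $f(x_i)$ and $g^{-1}(x_i)$ close to $h^{-1}(x_i)$. Concretely, we interleave the points $f(x_i)$ with the points $h^{-1}$ assigns, and use the fact that monotone maps which agree on a fine grid are uniformly close. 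This yields $(\star)$, so $G$ is Roelcke precompact and $b_r G=b_{c_m X} G$. Alternatively, since $b_{c_m X} G\le b_r G$ by Corollary~\ref{closure}, it suffices to show the Roelcke uniformity is no finer than the hyperspace uniformity, and that is the same grid argument.

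For (2) we define $\Phi: e_{c_m X} G\to b_{c_m X} G$ sending a monotone $f$ to the closure of its graph with vertical segments filled in at jumps, $\Phi(f)=\bigcup_t \{t\}\times[f(t^-),f(t^+)]$. This set is a linearly ordered continuum with full projections, so it lies in $b_{c_m X} G$ by Proposition~\ref{Prodescrlotbeta}. On $G$, $\Phi$ restricts to $\i^{\Gamma}$. Continuity of $\Phi$ from the pointwise topology to the Vietoris topology is the step I expect to be the main obstacle. Pointwise convergence of monotone maps does not, in general, give Hausdorff convergence of filled graphs. The way around it is to argue via basic neighbourhoods $[V_1,\dots,V_n]$ built from products of intervals, as in the proof of Proposition~\ref{Prodescrlotbeta}: finitely many point values of $f$ on a grid control the cells the filled graph meets, and monotonicity controls everything between grid points. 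Once $\Phi$ is continuous and surjective, it is a map of compactifications.

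For (5) we use counting of fibres together with the semigroup structure.
\begin{itemize}
\item[(a)] $e_{c X} G\not\ge b_{c X} G$: the graph-side map would have to separate $A$ from $B$ modulo $0_K$. But $e_{cX}G$ collapses the whole ideal $I$, which contains the constant-type maps $f^\pm_y$ whose graph images $\Phi(f^\pm_y)$ are distinct elements of $b_{c X} G$ outside $0_K$ (they meet $Y\times Y$). A map $e_{cX}G\to b_{cX}G$ compatible on $G$ would have to agree with the composite through $\Phi$ by density, which is impossible.
\item[(b)] $b_{c X} G\not\ge e_{c X} G$, and $b_{c_m X} G=b_r G\not\ge e_{c X} G$: this follows from the already-cited incomparability of $b_r G$ and $e_{c X} G$, because $b_{c X}G\le b_rG$.
\item[(c)] $b_{c X} G\not\ge e_{c_m X} G$: this follows from $b_rG<e_{c_m X}G$.
\item[(d)] $e_{c X}G\not\ge b_{c_m X}G$: otherwise $e_{cX}G\ge b_{cX}G$, contradicting (a).
\end{itemize}
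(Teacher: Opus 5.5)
\textbf{The gap is in (1).} The equality $b_rG=b_{c_mX}G$ is the substantive part of the theorem, and your argument for it does not go through. Your route is the paper's: verify $(\star)$ of Theorem~\ref{suffcond} with a $g$ obtained from ultrahomogeneity. But the construction of $g$ is exactly what you skip, and the justification you offer is false.

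Monotone maps that agree on a fixed grid need not be uniformly close, because a map can be steep inside a single cell. Take $X=\mathbb R$, so $c_mX\cong[0,1]$, with grid $i/10$.
\begin{itemize}
\item Let $f,h\in G$ coincide outside a very short interval just to the right of $0.5$.
\item Inside it, $f$ climbs from about $0.2$ to about $0.8$ at the left end, and $h$ makes the same climb at the right end.
\item However fine ${\rm V}$ is, the interval can be taken so short that the two graphs are ${\rm V}$-close.
\end{itemize}
Then $g=h$ meets all your prescriptions: $g(i/10)=f(i/10)$ and $g^{-1}(i/10)=h^{-1}(i/10)$. Yet between the two climbs $f(x)\approx 0.8$ while $g(x)\approx 0.2$, so $(f(x),g(x))\notin{\rm U}$.

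The mirror situation also occurs: two nearly flat stretches at almost the same height that cross a grid level at opposite ends. There $g=f$ violates the condition on $g^{-1}$. Placing both situations side by side defeats both $g=f$ and $g=h$. So $g$ must follow $f$ on steep stretches and $h$ on flat ones, and you must show these prescriptions can be made order-consistent with uniform control between control points.

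The paper does this in two steps.
\begin{itemize}
\item Claim B interpolates on $[a,b]$ through the $f$-preimages of the grid points, with values in $\st(f(x),v)$.
\item An induction runs along the interleaved points $a_k,b_k$ of the cover $v$. It keeps $g(c)$ between $f(c)$ and $h(c)$ at each grid point $c$, and preserves $(g^{-1}(x),h^{-1}(x))\in{\rm V}$ on an initial segment of the range. Claim A locates $h$ relative to the stars of $f$, which makes the chosen targets order-consistent.
\end{itemize}
Without an argument of this kind, (1) is unproved.

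\textbf{The rest is essentially sound, with one misreading.} The symbol $\not\searrow\ \not\nwarrow$ says that $b_{c_mX}G$ and $e_{cX}G$ are incomparable. By contrast, $b_{cX}G$ and $e_{c_mX}G$ are comparable, since $e_{c_mX}G\geq b_{c_mX}G\geq b_{cX}G$. So (5) as you phrase it is false, your (c) is beside the point, and it is your (b) and (d) that prove the actual claim.

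Your (a) is the paper's argument: the quotient onto $e_{cX}G$ collapses $I$, while the filled graphs of $f^-_y$, $y\in Y$, are pairwise distinct points of $b_{c_mX}G\setminus 0_K$. For the other non-inequalities you use the incomparability of $b_rG$ and $e_{cX}G$ quoted from \cite{KozlovSorin2025}. The paper instead argues directly with the maps $f_a$, which $\Theta$ collapses to one point but which stay separated in $e_{cX}G$. Both routes work, and neither needs (1); $b_{cX}G\leq b_{c_mX}G\leq b_rG$ suffices.

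Finally, (2) is immediate from Corollary~\ref{closure} together with $b_rG<e_{c_mX}G$. The explicit map $\Phi$ is needed only to compute images in (5)(a). Its continuity is routine rather than an obstacle: pointwise convergence at every point of the continuum $c_mX$ gives both the upper-limit and lower-limit conditions for the filled graphs, which is Vietoris convergence.
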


\begin{proof} {\it The proof that $b_{c_m X} G<e_{c_m X} G$}. Since $b_{c_m X}  G\leq b_r  G<e_{c_m X} G$ (Corollary~\ref{closure} and~\cite[Theorem 5.8]{KozlovSorin2025}) one has $b_{c_m X} G< e_{c_m X} G$. For the description of the map of compactifications $\Theta: e_{c_m X} G\to b_{c_m X} G$ we introduce the equivalence relation $\sim_{\Theta}$ on  $e_{c_m X} G$.

Since every subset of a compact LOTS has the least upper bound (abbreviation LUB) and the greatest lower bound (abbreviation GLB), $\forall\ f\in e_{c_m X} G$ and $\forall\ x\in c_m X$ there exist $x_f^-={\rm LUB}\{f(t)\ |\ t<x\}$ ($\inf_f^-=\inf$) and $x_f^+={\rm GLB}\{f(t)\ |\ t>x\}$  ($\sup_f^+=\sup$)  and $x_f^-\leq f(x)\leq x_f^+\leq y_f^-$ if $x<y$ ($f$ is monotone). Moreover, $x_f^-=x_f^+$ iff $f$ is continuous at $x$. Evidently, 
$$f\sim_{\Theta} h\ \Longleftrightarrow x_f^+=x_h^+,\  x_f^-=x_h^-,\ \forall\ x\in c_m X$$
is an equivalence relation on $e_{c_m X} G$. If $f\in \i^{\Gamma}_{c_m X}(G)$, then $[f]=f$. Every equivalence class $[f]$, $f\in e_{c_m X} G$,  can be identified with a subset 
$A_f=\bigcup\{\{x\}\times [x^-_f, x^+_f]\ |\ x\in c_m X\}\in\CL\ (c_m X\times c_m X)$. Indeed, firstly, $A_f$ is a closed set. If $(x, y)\not\in A_f$, $y<x_f^-$, $x\ne\inf$, then $\exists$ $a<x$ and $y<b<x_f^-$ such that $b<f(a)$. Since $f$ is monotone, $((a, \to)\times (\gets, b))\cap A_f=\emptyset$ and $(a, \to)\times (\gets, b)$ is a nbd of $(x, y)$. The cases $y>x_f^+$ and $x=\inf$ or $\sup$ are examined similarly. Secondly, the restriction of order $\leq_{\times}$ to  $A_f$ is a linear order on $A_f$. Thirdly, $\pr_1 (A_f)=c_m X$ and $\pr_2 (A_f)=c_m X$ since $\forall\ y\in c_m X$ $\exists\ x\in c_m X$ such that $x_f^-\leq y\leq x_f^+$.  The  identification of equivalent classes of $e_{c_m X} G$ and $b_{c_m X} G$ is a bijection. Injectivity is evident and $A\in b_{c_m X} G$ is identified with $[f]$, where $f\in e_{c_m X} G$ is such that  $f(x)$ is an arbitrary point of $A\cap (\{x\}\times c_m X)$, $x\in c_m X$.

Let $\Theta: e_{c_m X} G\to b_{c_m X} G$ be the composition of the quotient map of $e_{c_m X} G$ onto $e_{c_m X} G/\sim_{\Theta}$ and the described identification of $e_{c_m X} G/\sim_{\Theta}$ and $b_{c_m X} G$. It remains to show that $\Theta$ is continuous. $\forall\ f\in e_{c_m X} G$ and a nbd  $(U\times V)^-$, where $U=(a, b)$ and $V=(c, d)$ are open in $c_m X$, of $A_f=\Theta (f)$ from the subbase of Vietoris topology, take arbitrary  $(x, y)\in A_f\cap (U\times V)$. 

If $f(x)\in V$, then $[x, V]$ is a nbd of $f$ and $\Theta ([x, V])\subset (U\times V)^-$. Otherwise, if $x^-_f<d\leq f(x)\leq x^+_f$, then take $a<t<x<t'<b$, $y<z<z'<d$ such that $W=[t, (\gets, z)]\cap [t', (z', \to)]$ is a nbd of $f$. Then $\Theta (W)\subset  (U\times V)^-$. The case $x^-_f\leq f(x)\leq c$ is examined similarly. 

Now let a nbd of $A_f$  from the subbase be of the form $((c_m X\times c_m X)\setminus K)^+$, where $K$ is compact. Without loss of generality, one can assume that $K=K_1\times K_2$, where $K_1=[a, b],\ K_2=[c, d]$ are compact subsets of $c_m X$. If $f(a)>d$, then $W=[a, (d, \to)]$ is a nbd of $f$ and $\Theta (W)\subset ((c_m X\times c_m X)\setminus K)^+$. If $f(b)<c$, then $W=[b, (\gets, c)]$ is a nbd of $f$ and $\Theta (W)\subset ((c_m X\times c_m X)\setminus K)^+$. 

\medskip

{\it The proof that $b_{c X} G$ and $e_{c X} G$ are incomparable}. If $e_{c X} G\geq b_{c X} G$, then the following diagram of maps of compactifications is commutative
$$\begin{array}{rcl}
 b_{c X} G  &  \stackrel{F^H\circ\Theta} {\longleftarrow}    &  e_{c_m X} G \\
 \nwarrow      &          &  \swarrow  \\
  &    e_{c X} G.   &     \\
\end{array}
$$
The map $e_{c_m X} G\to e_{c X} G$ of compactifications sends ideal $I$ to the one-point set. The map $F^H\circ\Theta$ sends $I$ to the set $\{(c X\times\{\infty\})\cup (\{x\}\times c X)\ |\ x\in c X\}$. Hence, $e_{c X} G\not\geq b_{c X} G$. 

If $e_{c X} G< b_{c X} G$, then the following diagram of maps of compactifications is commutative
$$\begin{array}{rcl}
 b_{c X} G   &  \stackrel{F^H\circ\Theta} {\longleftarrow}   &    e_{c_m X} G \\
 \searrow      &          &  \swarrow  \\
  &    e_{c X} G.  &     \\
\end{array}
$$
For arbitrary $y\in X$ the map $F^H\circ\Theta$ sends points
$$f_a=f_a(x)=\left\{
\begin{array}{ll}
\inf, & x<y, \\
a, & x=y, \\
\sup, & y<x,  \\
\end{array}
\right. a\in c_m X, 
$$
of $ e_{c_m X} G$ to the one-point set  $(c X\times\{\infty\})\cup (\{y\}\times c X)$. The map $e_{c_m X} G\to e_{c X} G$ of compactifications sends $f_a$ to the point  
$$f'_a=f'_a(x)=\left\{
\begin{array}{ll}
\infty, & x\ne y, \\
a, & x=y. \\
\end{array}
\right. 
$$
Hence, images of $f_a$ and $f_b$ are distinct if $a\ne b$ and $b_{c X} G \not\geq e_{c X} G$. 

\medskip

{\it The proof that $b_{c_m X} G$ and $e_{c X} G$ are incomparable}. The previous resonings show that $b_{c_m X} G \not\leq e_{c X} G$. The map $\Theta: e_{c_m X} G\to b_{c_m X} G$ sends points $f_a$, $a\in c_m X$,  to the one-point set  $((\gets, y)\times\{\inf\})\cup (\{y\}\times c_m X)\cup ((y, \to)\times\{\sup\})$. The map $e_{c_m X} G\to e_{c X} G$ of compactifications sends points $f_a$ to the points $f'_a$, $a\in X$. 
Hence, $b_{c_m X} G \not\geq e_{c X} G$. 

\medskip

{\it The proof that $b_{c_m X}=b_r G$}. The inequality $b_{c_m X} G\leq b_r G$ follows from Corollary~\ref{closure}. To prove the inequality $b_{c_m X} G\geq b_r G$ we verify the fullfilment of condition $(\star)$ of Theorem~\ref{suffcond}. 

Fix an entourage ${\rm U}$ from the unique uniformity on $c_m X$. Let 
$$v=\{J^a_1=[a_0=\inf, a_1), J^a_{n}=(a_{n-1}, a_{n}=\sup]\}\bigcup\{J^a_k=(a_k, a_{k+1})\ |\ k=1,\ldots, n-2\}\bigcup$$
$$\{J^b_k=(b_{k}, b_{k+1})\ |\ k=1,\ldots, n-1\}$$ 
$$\mbox{where}\ a_k\in X,\ k=1,\ldots, n-1,\ b_k\in X,\ k=1,\ldots, n,\ a_{k-1}<b_{k}<a_{k},\ \ k=1,\ldots, n.$$
be the cover of $c_m X$ by intevals (points $a_0$ and $a_n$ are included in $J^a_1$ and $J^a_n$ respectively) which corresponds to the entourage ${\rm V}\subset {\rm U}$ (this can be done using star refinement of covers and one-dimensionality of LOTS $c_m X$). 

The following equality $\st ((x, y), v\times v)=\st (x, v)\times\st (y, v)$, $(x, y)\in c_m X\times c_m X$, holds. 

Let us show that if $(i^{\Gamma}_{c_m X} (f), i^{\Gamma}_{c_m X} (h))\in 2^{V\times V}$, $f, g\in G$ ($i^{\Gamma}_{c_m X} (f)\in\st (i^{\Gamma}_{c_m X} (h), v\times v)$ and $i^{\Gamma}_{c_m X} (h)\in\st (i^{\Gamma}_{c_m X} (f), v\times v)$), then $\exists\ g\in G$ such that $(f(x), g(x))\in {\rm U},\  (g^{-1}(x), h^{-1}(x))\in {\rm U}\ \forall\ x\in X$.
This yields by Theorem~\ref{suffcond} that the restriction of the unique uniformity on $2^{c_m X\times c_m X}$ to $i^{\Gamma}_{c_m X} (G)$ is greater than  the Roelcke uniformity on $G$. 

Let us note that $\st (a_k, v)=J^b_k$, $k=1,\ldots, n-1$; $\st (b_k, v)=J^a_k$, $k=1,\ldots, n$; 

$\st (x, v)=J^a_1$ if $x\in [a_0, b_1)$; $\st (x, v)=[a_0, b_2)$ if $x\in (b_1, a_1)$; 

$\st (x, v)=(a_{k-1}, b_{k+1})$ if $x\in (b_k, a_k)$, $k=2,\ldots, n-1$;  

$\st (x, v)=(b_{k}, a_{k+1})$ if $x\in (a_k, b_{k+1})$, $k=1,\ldots, n-2$;  

$\st (x, v)=(b_{n-1}, a_n]$ if $x\in (a_{n-1}, b_n)$; $\st (x, v)=J^a_n$ if $x\in (b_n, a_n]$. 

\medskip

{\it Claim A.} If $(i^{\Gamma}_{c_m X} (f), i^{\Gamma}_{c_m X} (h))\in 2^{V\times V}$, $f, g\in G$, then $\forall\ x\in c_m X$ the following hold 
$$\forall\ y\in (c_m X\setminus (\st (x, v)\cup [x, a_n]))\ h(y)\in ([a_0, f(x)]\cup\st (f(x), v)),$$
$$\forall\ y\in (c_m X\setminus ([a_0, x]\cup\st (x, v)))\ h(y)\in (\st (f(x), v)\cup [f(x), a_n]).$$

{\it Claim B.} Let $f\in G$, $[a, b]$ is either segment $[a_i, b_{i+1}]$ or $[b_i, a_i]$, $y'>y$, $y\in\st (f(a), v)$ and $y'\in\st (f(b), v)$. 
Then $\exists\ \varphi\in G$ such that $\varphi (a)=y$, $\varphi (b)=y'$ and $\varphi (x)\in\st (f(x), v)$, $x\in [a, b]$.

{\it Proof.} Let $T=\{t\in (a, b)\ |\ f(t)=a_k\vee b_m,\ k=1,\ldots, n-1,\ m=1, \dots, n\}$. If $T=\emptyset$, then any $\varphi\in G$,  $\varphi (a)=y$, $\varphi (b)=y'$, satisfies the condition of the Claim. 

Otherwise, order $T=\{t_1,\ldots, t_s\}$. Put $t_0=a$, $t_{s+1}=b$ and take $\tau_i\in\st (f(t_i), v)$, $i=1,\ldots, s$,  such that $y<\tau_1<\ldots<t_s<y'$.

Let $\varphi\in G$ sends points $t_0< t_1<\ldots< t_s<t_{s+1}$ to the points $y< \tau_1<\ldots< \tau_s< y'$ respectively. Since $\st (f(x), v)\supset (\st (f(t_i), v)\cup\st (f(t_{i+1}), v))$, $x\in (t_i, t_{i+1})$, $i=1,\ldots, s$, $\varphi$ satisfies the condition of the Claim. $\Box$

\medskip

The construction of $g$ will be done inductively on the end points of intervals of the cover $v$. Without loss of generality let $h(b_1)\geq f(b_1)$. 

On $[a_0, b_1]$ put $g|_{[a_0, b_1]}=f|_{[a_0, b_1]}$. Evidently, $(f(x), g(x))\in V$, $x\in [a_0, b_1]$ and $(g^{-1}(x), h^{-1}(x))\in V$, $x\in [a_0, f(b_1)]$.

\medskip

{\it Base of induction}: $g|_{[a_0, a_1]}$. By Claim A $h(a_1)\in (\st (f(b_1), v)\cup [f(b_1), a_n])$. 

\medskip

(A) If $f(a_1)\leq h(b_1)$, then by Claim A $h(b_1)\in\st (f(a_1), v)$. Take: 
$$y\in X\ \mbox{such that}\ y\in\st (f(a_1), v),\ h(b_1)<y<h(a_1).$$
By Claim B $\exists\ \varphi\in G$ such that $\varphi ([b_1, a_1])=[f(b_1), y]$, $\varphi (x)\in\st (f(x), v)$, $x\in [b_1, a_1]$.  
Let $g|_{[a_0, a_1]}$ be a combination of maps $f|_{[a_0, b_1]}$ and $\varphi|_{[b_1, a_1]}$. 
$$f(a_1)<g(a_1)<h(a_1),\ g(a_1)\in\st (f(a_1), v).$$
Evidently, $(f(x), g(x))\in V$, $x\in [a_0, a_1]$ and $(g^{-1}(x), h^{-1}(x))\in V$, $x\in [a_0, g(a_1)]$.

\medskip

(B) If $h(b_1)<f(a_1)\leq h(a_1)$. 

Put $g|_{[a_0, a_1]}=f|_{[a_0, a_1]}$. 
$$f(a_1)=g(a_1)\leq h(a_1).$$
Evidently, $(f(x), g(x))\in V$, $x\in [a_0, a_1]$ and $(g^{-1}(x), h^{-1}(x))\in V$, $x\in [a_0, g(a_1)]$.

\medskip

(C) If $h(a_1)< f(a_1)\leq h(b_2)$.

Put $g|_{[a_0, a_1]}=f|_{[a_0, a_1]}$. 
$$h(a_1)<g(a_1)=f(a_1)\leq h(b_2).$$
Evidently, $(f(x), g(x))\in V$, $x\in [a_0, a_1]$ and $(g^{-1}(x), h^{-1}(x))\in V$, $x\in [a_0, h(a_1)]$.

\medskip

(D) If $h(b_2)<f(a_1)$, then by Claim A $h(b_2)\in\st (f(a_1), v)$. Take 
$$y\in X\ \mbox{such that}\ y\in\st (f(a_1), v),\ h(a_1)<y<h(b_2).$$
By Claim B $\exists\ \varphi\in G$ such that $\varphi ([b_1, a_1])=[f(b_1), y]$, $\varphi (x)\in\st (f(x), v)$, $x\in [b_1, a_1]$.  
Let $g|_{[a_0, a_1]}$ be a combination of maps $f|_{[a_0, b_1]}$ and $\varphi|_{[b_1, a_1]}$. 
$$h(a_1)<g(a_1)<f(a_1),\ g(a_1)\in\st (f(a_1), v).$$
Evidently, $(f(x), g(x))\in V$, $x\in [a_0, a_1]$ and $(g^{-1}(x), h^{-1}(x))\in V$, $x\in [a_0, h(a_1)]$.

\medskip

The map $g|_{[a_0, a_1]}$ is such that $(f(x), g(x))\in V,\ x\in [a_0, a_1]$, 
$$f(a_1)\leq g(a_1)\leq h(a_1)\ \mbox{and}\ (g^{-1}(x), h^{-1}(x))\in V,\ x\in [a_0, g(a_1)],\ \mbox{or}$$
$$h(a_1)\leq g(a_1)\leq f(a_1)\ \mbox{and}\ (g^{-1}(x), h^{-1}(x))\in V,\ x\in [a_0, h(a_1)].$$

\medskip

{\it Step of induction.} Let $g|_{[a_0, c]}$ be such that $(f(x), g(x))\in V$, $x\in [a_0, c]$,
$$f(c)\leq g(c)\leq h(c)\ \mbox{and}\ (g^{-1}(x), h^{-1}(x))\in V,\ x\in [a_0, g(c)],\ \mbox{or}\leqno{\rm (I)}$$
$$h(c)\leq g(c)\leq f(c)\ \mbox{and}\ (g^{-1}(x), h^{-1}(x))\in V,\ x\in [a_0, h(c)]\leqno{\rm (II)}$$
$c=a_k$, $k\leq n-1$ or $c=b_k$, $k\leq n-1$. Without loss of generality let $c=a_k$. 

Construction of $g|_{[a_0, b_{k+1}]}$. 

\medskip

(I)  If $f(a_k)\leq g(a_k)\leq h(a_k)$, $g(a_k)\in\st (f(a_k), v)$. 

(a) If $f(b_{k+1})\leq h(a_k)$, then by Claim A $h(a_k)\in\st (f(b_{k+1}), v)$. Take 
$$y\in X\ \mbox{such that}\  y\in\st (f(b_{k+1}), v),\ h(a_k)<y<h(b_{k+1}).$$ 
By Claim B $\exists\ \varphi\in G$ such that $\varphi ([a_k, b_{k+1}])=[g(a_k), y]$ and $\varphi (x)\in\st (f(x), v),\ x\in [a_k, b_{k+1}]$. 
Let $g|_{[a_0, b_{1+1}]}$ be a combination of maps $g|_{[a_0, a_k]}$ and $\varphi|_{[a_k, b_{k+1}]}$. 
$$f(b_{k+1})<g(b_{k+1})<h(b_{k+1}),\ y=g(b_{k+1})\in\st (f(b_{k+1}), v).$$
Evidently, $(f(x), g(x))\in V$, $x\in [a_0, b_{k+1}]$ and $(g^{-1}(x), h^{-1}(x))\in V$, $x\in [a_0, g(b_{k+1})]$.

\medskip

(b) If $h(a_k)<f(b_{k+1})\leq h(b_{k+1})$, then by Claim B
$$\exists\ \varphi\in G\ \mbox{such that}\ \varphi ([a_k, b_{k+1}])=[g(a_k), f(b_{k+1})]\ \mbox{and}\ \varphi (x)\in\st (f(x), v),\ x\in [a_k, b_{k+1}].$$ 
Let $g|_{[a_0, b_{1+1}]}$ be a combination of maps $g|_{[a_0, a_k]}$ and $\varphi|_{[a_k, b_{k+1}]}$. 
$$f(b_{k+1})=g(b_{k+1})\leq h(b_{k+1}).$$
Evidently, $(f(x), g(x))\in V$, $x\in [a_0, b_{k+1}]$ and $(g^{-1}(x), h^{-1}(x))\in V$, $x\in [a_0, g(b_{k+1})]$.

\medskip

(c) If $h(b_{k+1})<f(b_{k+1})\leq h(a_{k+1})$, then by Claim B
$$\exists\ \varphi\in G\ \mbox{such that}\ \varphi ([a_k, b_{k+1}])=[g(a_k), f(b_{k+1})]\ \mbox{and}\ \varphi (x)\in\st (f(x), v),\ x\in [a_k, b_{k+1}].$$ 
Let $g|_{[a_0, b_{1+1}]}$ be a combination of maps $g|_{[a_0, a_k]}$ and $\varphi|_{[a_k, b_{k+1}]}$. 
$$h(b_{k+1})<g(b_{k+1})=f(b_{k+1}).$$
Evidently, $(f(x), g(x))\in V$, $x\in [a_0, b_{k+1}]$ and $(g^{-1}(x), h^{-1}(x))\in V$, $x\in [a_0, h(b_{k+1})]$.

\medskip 

(d) If $h(a_{k+1})<f(b_{k+1})$, then by Claim A $h(a_{k+1})\in\st (f(b_{k+1}), v)$. Take 
$$y\in X\ \mbox{such that}\  y\in\st (f(b_{k+1}), v),\ h(a_k)\leq y<h(a_{k+1}).$$ 
By Claim B $\exists\ \varphi\in G$ such that $\varphi ([a_k, b_{k+1}])=[g(a_k), y]$ and $\varphi (x)\in\st (f(x), v),\ x\in [a_k, b_{k+1}]$. 
Let $g|_{[a_0, b_{1+1}]}$ be a combination of maps $g|_{[a_0, a_k]}$ and $\varphi|_{[a_k, b_{k+1}]}$. 
$$h(b_{k+1})\leq g(b_{k+1})<f(b_{k+1}),\ y=g(b_{k+1})\in\st (f(b_{k+1}), v).$$
Evidently, $(f(x), g(x))\in V$, $x\in [a_0, b_{k+1}]$ and $(g^{-1}(x), h^{-1}(x))\in V$, $x\in [a_0, h(b_{k+1})]$.

\medskip 

(II) $h(a_k)\leq g(a_k)\leq f(a_k)$,  $g(a_k)\in\st (f(a_k), v)$.

\medskip

(a') If $h(b_{k+1})\leq f(b_{k+1})\leq h(a_{k+1})$, then by Claim B
$$\exists\ \varphi\in G\ \mbox{such that}\ \varphi ([a_k, b_{k+1}])=[g(a_k), f(b_{k+1})]\ \mbox{and}\ \varphi (x)\in\st (f(x), v),\ x\in [a_k, b_{k+1}].$$ 
Let $g|_{[a_0, b_{1+1}]}$ be a combination of maps $g|_{[a_0, a_k]}$ and $\varphi|_{[a_k, b_{k+1}]}$. 
$$h(b_{k+1})<g(b_{k+1})=f(b_{k+1}).$$
Evidently, $(f(x), g(x))\in V$, $x\in [a_0, b_{k+1}]$ and $(g^{-1}(x), h^{-1}(x))\in V$, $x\in [a_0, h(b_{k+1})]$.

\medskip 

(b') If $h(a_{k+1})<f(b_{k+1})$, then by Claim A $h(a_{k+1})\in\st (f(b_{k+1}), v)$. Take 
$$y\in X\ \mbox{such that}\  y\in\st (f(b_{k+1}), v),\ h(a_k)\leq y<h(a_{k+1}).$$ 
By Claim B $\exists\ \varphi\in G$ such that $\varphi ([a_k, b_{k+1}])=[g(a_k), y]$ and $\varphi (x)\in\st (f(x), v),\ x\in [a_k, b_{k+1}]$. 
Let $g|_{[a_0, b_{1+1}]}$ be a combination of maps $g|_{[a_0, a_k]}$ and $\varphi|_{[a_k, b_{k+1}]}$. 
$$h(b_{k+1})\leq g(b_{k+1})<f(b_{k+1}),\ y=g(b_{k+1})\in\st (f(b_{k+1}), v).$$
Evidently, $(f(x), g(x))\in V$, $x\in [a_0, b_{k+1}]$ and $(g^{-1}(x), h^{-1}(x))\in V$, $x\in [a_0, h(b_{k+1})]$.

\medskip

(c') If $f(b_{k+1})\leq h(b_{k+1})$, then by Claim B
$$\exists\ \varphi\in G\ \mbox{such that}\ \varphi ([a_k, b_{k+1}])=[g(a_k), f(b_{k+1})]\ \mbox{and}\ \varphi (x)\in\st (f(x), v),\ x\in [a_k, b_{k+1}].$$ 
Let $g|_{[a_0, b_{1+1}]}$ be a combination of maps $g|_{[a_0, a_k]}$ and $\varphi|_{[a_k, b_{k+1}]}$. 
$$f(b_{k+1})=g(b_{k+1})\leq h(b_{k+1}).$$
Evidently, $(f(x), g(x))\in V$, $x\in [a_0, b_{k+1}]$ and $(g^{-1}(x), h^{-1}(x))\in V$, $x\in [a_0, g(b_{k+1})]$.

\medskip 

As a result of using induction $g|_{[a_0, b_n]}$ such that $(f(x), g(x))\in V$, $x\in [a_0, b_n]$,
$$f(b_n)\leq g(b_n)\leq h(b_n)\ \mbox{and}\ (g^{-1}(x), h^{-1}(x))\in V,\ x\in [a_0, g(b_n)],\ \mbox{or}$$
$$h(b_n)\leq g(b_n)\leq f(b_n)\ \mbox{and}\ (g^{-1}(x), h^{-1}(x))\in V,\ x\in [a_0, h(b_n)]$$
is constructed. 

Construction of $g|_{[b_n, a_n]}$. In both cases by Claim B
$$\exists\ \varphi\in G\ \mbox{such that}\ \varphi ([b_n, a_n])=[g(b_n), a_n]\ \mbox{and}\ \varphi (x)\in\st (f(x), v),\ x\in [b_b, a_n].$$ 
Let $g$ be a combination of maps $g|_{[a_0, b_n]}$ and $\varphi|_{[b_n, a_n]}$. 
$$h(a_n)=g(a_n)=f(a_n).$$
Evidently, $(f(x), g(x))\in V$, $x\in [a_0, a_n]$ and $(g^{-1}(x), h^{-1}(x))\in V$, $x\in [a_0, a_n]$.
\end{proof}

\begin{rem}
{\rm The equality  $b_{c_m X} \aut X=b_r \aut X$  when $X=(0, 1)$ ($c_m X=[0, 1]$) is stated in~\cite{usp2001} (see, also, \cite{GlasnerMegr2008}).

In the  same manner as above one can show that $b_{Y^+} \aut X$, $b_{Y} \aut X$, $b_{Y^-} \aut X$ are not semigroups.

The same results are true for a subgroup $G$ of $\aut X$  which acts ultrtratransitively on $X$ and the actions 
$$\St_u\curvearrowright (X\cap (\gets, u)),\ \St_u\curvearrowright (X\cap (u, \to))\ \mbox{are ultratransitive for any proper gap}\ u\ \mbox{in}\  X.$$}
\end{rem}


\begin{rem} {\rm Let $X$ be an ultrahomogeneous {\rm LOTS}, $G=(\aut X, \tau_p)$. There exists $b G\in\mathbb E (G)$ such that $b_{c_m X} G<b G <e_{c_m X} G$. 

Let $\sim$ be an equivalence relation on $e_{c_m X} G$: $f^-_x\sim f^+_x$, $x\in Y$, all other equivalence classes are one-point sets. $e_{c_m X} G/\sim$ is a compactification of $G$. Let us check that $f\sim h$ implies $fg\sim hg$ and $gf\sim gh$ $\forall\ g\in e_{c_m X} G$. 

If $f=h$ then the statement is true. Otherwise, $f=f^-_x$, $h=f^+_x$, $x\in Y$. $gf^-_x=f^-_x$, $gf^+_x=f^+_x$ $\forall\ g\in  e_{c_m X} G$ and $gf\sim gh$ holds. 

Fix $g\in  e_{c_m X} G$. Put $y=\sup\{t\in c_m X\ |\ g(t)<x\}$. If $g(y)\leq x$, then $f^-_xg=f^-_y$, $f^+_xg=f^+_y$. If $g(y)> x$, then $f^-_xg=f^+_y$, 
$f^+_xg=f^-_y$. Hence $fg\sim hg$ holds and $\sim$ is a {\it conguence} on the semigroup $e_{c_m X} G$. Since the equivalence classes are fimite sets, $b G=e_{c_m X} G/\sim$ is a right topological semigroup and $b G\in\mathbb E (G)$.

The congruence (as equivalence relation) $\sim$ is a subset of $\sim_{\Theta}$, where $\Theta: e_{c_m X} G\to  b_{c_m X} G$. Hence, $b_{c_m X} G<b G<e_{c_m X} G$. Moreover, $e_{c X} G<b G<e_{c_m X} G$.}
\end{rem}

\begin{thm}\label{WAPtrivial}
Let $X$ be an ultrahomogeneous {\rm LOTS}, $G=(\aut X, \tau_p)$. 
There are no sm-compactifications of $G$. Moreover, the {\rm WAP}-compactification of $G$ is trivial. 
\end{thm}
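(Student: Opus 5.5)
The plan is to run the same ideal-collapsing argument as in part III of Theorem~\ref{thmOrderchain}, now in the continuous setting where it collapses everything. Because $G=(\aut X,\tau_p)$ is Roelcke precompact and $b_r G<e_{c_m X} G$, any sm-compactification $bG$ satisfies ${\rm WAP}\,G\le b_r G\le e_{c_m X} G$. Let $F\colon e_{c_m X} G\to {\rm WAP}\,G$ be the map of compactifications. By \cite{KozlovLeiderman2025} it is a homomorphism of semigroups. The space ${\rm WAP}\,G$ is a semitopological semigroup, possibly not proper. I will show that $F$ is constant. Then ${\rm WAP}\,G$ is a point, which proves the second claim. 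Since a proper sm-compactification would lie below ${\rm WAP}\,G$ and would separate points of $G$, the first claim follows as well.

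\emph{Step 1: the ideal $I$ goes to a zero.} Elements of the ideal $I$ (the maps $f^\pm_y$) are right zeros of $e_{c_m X}G$: indeed $gf=f$ for all $g$, since $g(\inf)=\inf$ and $g(\sup)=\sup$. Hence each $F(f^\pm_y)$ is a right zero in ${\rm WAP}\,G$. Next, $f^-_{\inf}$ is a left zero with respect to $G$, because $f^-_{\inf}g=f^-_{\inf}$. Since $G$ is dense and right multiplication in ${\rm WAP}\,G$ is continuous, $F(f^-_{\inf})$ is a genuine zero. Therefore $F(I)=\{0\}$, since two right zeros $r,r'$ with $r$ a two-sided zero satisfy $r=rr'=r'$.

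\emph{Step 2: every monotone map goes to $0$.} Here continuity of $X$ helps. Take the step maps of $e_{c_m X}G$ taking values $\inf,\tau,\sup$ on three intervals, with $\tau$ an arbitrary point of $c_m X$ (in the LOTS case $\tau$ ranges over all of $c_m X$). I will use the same product computation as in A.1 of Theorem~\ref{thmOrderchain}. The product $f^\tau\circ f^{\tau'}$ is either in $I$ or equals $f^\tau$, depending on the position of $\tau'$. Moreover, $f^{\tau'}$ with $\tau'$ at an endpoint is a limit of those $\tau'$ for which the product lies in $I$. Separate continuity, together with $F(I)=0$, then gives $F(f^\tau)=0$. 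Induction on the number of steps, as in A.2, extends this to all finite step maps. These are dense in $e_{c_m X}G$, because every monotone map is a pointwise limit of step maps. Hence $F\equiv 0$.

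The main obstacle is the induction in Step 2. One must check that the auxiliary products genuinely land in configurations with fewer steps, which is covered by the induction hypothesis. One must also check that the required limit in the right variable is realised inside $e_{c_m X}G$. Unlike the discrete chain case, the middle values $\tau$ may now lie in $X$. If that causes trouble, I would first conjugate by elements of $G$ and use ultrahomogeneity to move any step configuration to a standard one. Note that $F$ is a $G$-map, and so is its version for the inverse action, since ${\rm WAP}\,G\le b_rG$ is a $G^*$-compactification. This reduces everything to a single representative per number of steps.
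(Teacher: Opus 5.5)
Your proposal is correct, but it takes a different route from the paper's proof of this theorem. It is essentially the argument the paper uses for discrete chains in part III of Theorem~\ref{thmOrderchain}, transplanted to the LOTS setting.

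\emph{Your route.} You produce a zero $0=F(f^-_{\inf})$ in ${\rm WAP}\,G$ and collapse $I$ onto it. You then kill step maps by varying the right-hand factor $f^{\tau'}$ and passing to the limit $\tau'\downarrow x_2$ using separate continuity in ${\rm WAP}\,G$; this limit is not available in $e_{c_mX}G$ itself. In the LOTS case this is cleaner than for chains. Jump values and breakpoints may be arbitrary points of $c_mX$, and $c_mX$ has no jumps, so the required limits exist directly. The inductive products also land where they should: right-multiplying $f^s_\sigma$ by $f^{(\tau,x_3,\ldots,x_n)}_\sigma$ with $\tau\in(x_2,x_3]$ gives a step map with one breakpoint fewer, and $\tau=x_2$ returns $f^s_\sigma$. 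So your worry about $\tau\in X$ is unfounded, and the conjugation fallback is not needed.

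\emph{The paper's route.} The paper works with the kernel congruence $\sim$ of the factor map $e_{c_mX}G\to bG$. Since $bG\le b_rG=b_{c_mX}G$ (Theorem~\ref{thmOrderLOTS}), $\sim$ contains $\sim_\Theta$. This relation already identifies the maps $f^z_y$ for all jump values $z$, because they have the same image under $\Theta$. Composing these with step maps and letting $z$ run to $\inf$ and $\sup$ links $f^-_{x_1}$ with $f^-_{x_2}$, and the maps $h_z$ carry out the induction. Closed classes and density then finish the proof.

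\emph{What each buys.} The paper imports the hard equality $b_rG=b_{c_mX}G$ and uses only that the factor map is a homomorphism with closed fibres. It needs no zero element and no separate-continuity limit. Consequently its argument shows more: every Ellis compactification of $G$ dominated by $b_rG$ is trivial. Your route needs only $b_rG\le e_{c_mX}G$, ${\rm WAP}\,G\le b_rG$ and the description of $e_{c_mX}G$, and it bypasses the identification of $b_rG$. However, it genuinely relies on separate continuity, so it is confined to semitopological factors.

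A minor slip: the chain you want at the start is $bG\le{\rm WAP}\,G\le b_rG\le e_{c_mX}G$.
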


\begin{proof}
Assume that there is an sm-compactification $b G$ of $G$. Then $b G\leq b_r G<e_{c_m X} G$ by~\cite{KozlovLeiderman2025} and Theorem~\ref{thmOrderLOTS}. Therefore, the congruence $\sim$ on $e_{c_m X} G$ which corresponds to the map of compactifications $e_{c_m X} G\to b G$ (homomorphism of monoids) is stronger than the equivalence relation $\sim_{\Theta}$ from Theorem~\ref{thmOrderLOTS}. Let us show that $\sim$ is trivial (there is the only equivalence class). For this purpose it is enough to show that for any $\sigma=\{x_1,\ldots, x_n\}$, $x_1,\ldots, x_n\in Y$, $x_1<\ldots< x_n$, $\sigma'=\{y_1,\ldots, y_{n-1}\}$, $y_1,\ldots, y_{n-1}\in Y$, $y_1<\ldots<y_{n-1}$, $n\in\mathbb N$, $n\geq 2$, 
the map
$$f_{\sigma}^{\sigma'}(t)=\left\{
\begin{array}{ll}
\inf, &  t\leq x_1\\
y_k, & x_k< t\leq x_{k+1},\ k=1,\ldots, n-1, \\
\sup, & x_n<t, \\
\end{array}
\right.$$
belongs to the equivalence class $[f_{\inf}^-]$ of $f_{\inf}^-$ (all equivalence classes are closed). If this holds, then in any nbd of any homeomorphism $g\in G$ (due to the topology of pointwise convergence on $e_{c_m X} G$) there is an element from  $[f_{\inf}^-]$. Since the equivalent class of $\sim$ is closed, $g\in [f_{\inf}^-]$.

{\it Base of induction.} $\sigma=\{x_1, x_2\}$, $\sigma'=\{y_1\}$, $x_1, x_2, y_1\in Y$.

For $y\in Y$, $z\in c_m X$ put 
$$f^z_y(t)=\left\{
\begin{array}{ll}
\inf, &  t<y,\\
z, & t=y, \\
\sup, & t>y. \\
\end{array}
\right.$$ 
$f^z_{y_1}\sim f^{z'}_{y_1}$ since $f^z_{y_1}\sim_{\Theta} f^{z'}_{y_1}$, $z, z'\in c_m X$. Further, $f^z_{y_1}\circ f^{y_1}_{\sigma}=f^{z}_{\sigma}$,  $z\in c_m X$, and $f^{z}_{\sigma}\sim f^{z'}_{\sigma}$ for all $z, z'\in c_m X$. 

$f^{\inf}_{\sigma}=f^-_{x_2}$,  $f^{\sup}_{\sigma}=f^-_{x_1}$. Therefore, $f^z_{x_1}\sim f^{z'}_{x_2}$, $z, z'\in c_m X$, $x_1, x_2\in Y$. It is easy to check that $f^-_{\inf}$ belongs to the closure of the set $\{f^z_y(t)\ |\ y\in Y,\ z\in c_m X\}$. Hence, $\{f^z_y(t)\ |\ y\in Y,\ z\in c_m X\}\subset [f^-_{\inf}]$ and  $\{f^{\sigma'}_{\sigma}(t)\ |\ \sigma'=\{y_1\},\ y_1\in c_m X,\ \sigma=\{x_1, x_2\},\ x_1, x_2\in Y\}\subset [f^-_{\inf}]$.

\medskip

{\it Step of induction.} Let $f_{\sigma}^{\sigma'}\in [f_{\inf}^-]$ $\forall\ \sigma=\{x_1,\ldots, x_n\}$, $\sigma'=\{y_1, \ldots, y_{n-1}\}$, $|\sigma|\leq n$.  

We shall examine the case $\hat\sigma=\{x_1, a, x_2, \ldots, x_n\}$, $x_1<a<x_2<\ldots<x_n$,  ${\hat\sigma}'=\{y_1, b, y_2, \ldots, y_{n-1}\}$, 
$y_1<b<y_2<\ldots<y_{n-1}$ (all the other cases can be examined analogously). Take $z\in c_m X$,  $z<b$, and the map
$$h_z(t)=\left\{
\begin{array}{ll}
\inf, &  t< y_1,\\
z, &  t=y_1,\\
b, & y_1< t\leq b, \\
y_2, & b< t\leq y_2, \\
y_k, & y_{k-1}< t\leq y_k,\ k=3,\ldots, n-1, \\
\sup, & t>y_{n-1}. \\
\end{array}
\right.$$
$h_z\sim h_{z'}$, $z, z'\in (\gets, b]$, since $h_z\sim_{\Theta} h_{z'}$. Put ${\hat\sigma}''=\{z, b, y_2, \ldots, y_{n-1}\}$. Then $h^z\circ f_{\hat\sigma}^{{\hat\sigma}'}=f_{\hat\sigma}^{{\hat\sigma}''}$,  $z\in (\gets, b]$, and $f_{\hat\sigma}^{{\hat\sigma}''}$ belong to the one  equivalence class $\forall\ z\in  (\gets, b]$. 

If $z=\inf$, then $f_{\hat\sigma}^{{\hat\sigma}''}\in  [f_{\inf}^-]$ by the inductive assumption. If $z=y_1$, then $f_{\hat\sigma}^{{\hat\sigma}''}= f_{\hat\sigma}^{{\hat\sigma}'}$. Hence, $f_{\hat\sigma}^{{\hat\sigma}'}\in  [f_{\inf}^-]$ and the WAP-compactification of $G$ is trivial. 
\end{proof}

\begin{rem}
{\rm 
\begin{itemize}
\item The  WAP-compactification of $\aut ((0, 1), \tau_p)$ is trivial~\cite{Megr2001}. From~\cite[Lemma 6.3]{Megr2001} it follows that  $\aut (\mathbb Q, \tau_p)$ has no sm-compactifications. 
\item The WAP-compactifications of $\aut (\mathbb Q, \tau_p)$ and $\aut ({\bf F}, \tau_p)$, where ${\bf F}$ is a Thompson group~\cite{Canon}, are trivial. It also follows from Corollary~\ref{heredsm}. 
\item If $G$ is a universal group for a class of groups which contains a group without sm-compactifications, then by Corollary~\ref{heredsm} $G$ has no sm-compactifications. 

The group ${\rm Hom}\, {\rm Q}$ of homeomorphisms of the Hilbert cube ${\rm Q}$ (in the c.o-t)~\cite{usp1986} and the group ${\rm Iso}(\mathbb U)$ of isometries of the Urysohn universal space (or of the Urysohn sphere)  (in the t.p.c)~\cite{usp1990} (\cite{usp2008}) are universal for the class of topological groups with a countable base. They contains, for example, the group $\aut (\mathbb Q, \tau_p)$. Hence, they have no sm-compactifications. The WAP-compactification of ${\rm Iso}\,(\mathbb U)$ is trivial~\cite{GlasnerMegr2008}. 
\end{itemize}}
\end{rem}

\begin{que}
Is the {\rm WAP}-compactification of ${\rm Hom}\, {\rm Q}$ trivial?
\end{que}


\section{Remainders of group compactifications}\label{remainders}\label{remainders}

Let $G$ be a non-locally compact topological group. Then the {\it remainder} $b G\setminus G$ of $G$ in any compactification $b G$ of $G$ is 
\begin{itemize}
\item either Lindel\"of or pseudocompact~\cite{Arh2008}, 
\item either $\sigma$-compact or Baire~\cite{Arh2009}.
\end{itemize}

\begin{rem}\label{rem61}
{\rm Let $G$ be a non-locally compact topological group. 
\begin{itemize}
\item[{\rm (a)}] Recall that a space $X$ is  {\it weakly pseudocompact} if $X$ is {\it $G_{\delta}$-dense} in some compactification $b X$ of $X$~\cite{GarMay}. 

The first dichotomy yields that if in some compactification $b G$ of $G$ the remainder $b G\setminus G$ is $G_{\delta}$-dense in $b G$, then the remainder $b G\setminus G$ is pseudocompact, and the remainder $b' G\setminus G$ of $G$ in any compactification $b' G$ of $G$ is pseudocompact. Moreover,  the remainder $b' G\setminus G$ of $G$ in any compactification $b' G$ of $G$ is not $\sigma$-compact, and, hence, has the Baire property.
\item[{\rm (b)}] The second dichotomy yields that the remainder $b G\setminus G$ of $G$ has the Baire property (equivalently, is not $\sigma$-compact) in some compactification $b G$ of $G$ iff $G$ is not \v{C}ech-complete~\cite{Arh2009}.
\end{itemize}}
\end{rem}

\begin{thm}\label{remainderstp}
Let $X$ be a discrete space.
\begin{itemize}
\item[{\rm (1)}] Every remainder of $G={\rm U}\, (\ell^2(X))$ {\rm(}in strong operator topology{\rm)}, $({\rm S}(X), \tau_p)$ or $(\aut X, \tau_{\partial})$ {\rm(}$X$ is ultrahomogeneous chain{\rm)} is Lindel\"of, $\sigma$-compact and $G$ is \v{C}ech-complete iff $X$ is countable. 
\item[{\rm (2)}] Every remainder of $G={\rm U}\, (\ell^2(X))$ {\rm(}in strong operator topology{\rm)}, $({\rm S}(X), \tau_{\partial})$ or $(\aut X, \tau_{\partial})$ {\rm(}$X$ is ultrahomogeneous chain{\rm)} is pseudocompact, Baire and $G$ is not \v{C}ech-complete iff $X$ is uncountable. 
\end{itemize}

Let $X$ be an ultrahomogeneous {\rm LOTS}. 
\begin{itemize}
\item[{\rm (1')}] Every remainder of $(\aut X, \tau_p)$ is Lindel\"of, $\sigma$-compact and  $(\aut X, \tau_p)$ is \v{C}ech-complete iff $X$ is continuously dense and separable. 
\item[{\rm (2')}]  Every remainder of $(\aut X, \tau_p)$ is Lindel\"of,  Baire and  $(\aut X, \tau_p)$ is not \v{C}ech-complete iff $X$ is not continuously dense and separable. 
\item[{\rm (3')}]  Every remainder of $(\aut X, \tau_p)$ is pseudocompact, Baire and $(\aut X, \tau_p)$ is not \v{C}ech-complete iff  $X$ is not separable. 
\end{itemize}
\end{thm}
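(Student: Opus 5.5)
The plan is to reduce every item to the two dichotomies of Arhangel'skii recalled in Remark~\ref{rem61}, plus one description of a concrete compactification of each group. First I check that all groups involved are non-locally compact, so the dichotomies apply. For a given $G$, Remark~\ref{rem61}(b) says: all remainders are $\sigma$-compact iff some remainder is, iff $G$ is \v{C}ech-complete; otherwise all remainders are Baire. Remark~\ref{rem61}(a) says: if the remainder is $G_\delta$-dense in one compactification, then every remainder is pseudocompact (and so not Lindel\"of, since a Lindel\"of pseudocompact Tychonoff space is compact, which a remainder of a non-locally compact group cannot be). If $G$ is \v{C}ech-complete, the remainder is $\sigma$-compact, hence Lindel\"of, and then by the first dichotomy Lindel\"of holds for every remainder. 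So each item reduces to two questions about $X$: when is $G$ \v{C}ech-complete, and when is the remainder $G_\delta$-dense in a chosen compactification?

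\emph{\v{C}ech-completeness.} For $X$ countable, ${\rm U}(\ell^2(X))$, ${\rm S}(X)$ and $\aut X$ (a closed subgroup of ${\rm S}(X)$) are Polish, hence \v{C}ech-complete. For uncountable $X$ I use the descriptions $b_rG=e_{\alpha X}G$ (the partial bijections $I_X$, Theorem~\ref{equivcompperm}) and $e_{\alpha X}G$ (the partial automorphisms, Theorem~\ref{thmOrderchain}), together with the weak-operator description of $b_r{\rm U}$. In each case, any $G_\delta$ subset of $b_rG$ containing a group element depends on only countably many coordinates. Fixing a countable $C\subset X$, one can choose a partial map that agrees with that group element on $C$ but is undefined somewhere off $C$, and such maps lie in the remainder. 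Hence the remainder is $G_\delta$-dense, which gives pseudocompactness and non-\v{C}ech-completeness, and item (2) follows. For the unitary group I do the same with the weak operator topology on contractions, using operators that agree with the given unitary on a countable-dimensional subspace but are non-isometric on its complement.

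\emph{LOTS case.} I work with $b_{c_mX}G=b_rG$ from Proposition~\ref{Prodescrlotbeta}, which consists of monotone continua $A$ with full projections.
\begin{itemize}
\item If $X$ is not separable, Vietoris $G_\delta$ conditions involve only countably many intervals. Since $X$ has a non-separable region, I can insert a vertical segment (a non-graph continuum) elsewhere, so the remainder is $G_\delta$-dense and (3') follows.
\item If $X$ is separable, $X$ embeds in $\mathbb R$ and $G$ has a countable network. The remainder is not $G_\delta$-dense, so every remainder is Lindel\"of.
\item \v{C}ech-completeness then holds iff $G$ is a $G_\delta$ in $b_rG$. When $X$ is continuously dense, $c_mX\cong[0,1]$, $G\cong \aut_+[0,1]$ is Polish, and (1') follows. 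When $X$ has gaps, I show $G$ is not $G_\delta$: a graph "jumping" at a gap point is a limit, in the $G_\delta$ sense, of elements of $G$ along countably many dense points. This gives (2').
\end{itemize}

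\textbf{Main obstacle.} The hardest step is the careful $G_\delta$-density and non-$G_\delta$ arguments, especially distinguishing (2') from (1') via gaps. I would first try exhibiting an explicit countable dense subset of $X$ and a limiting jump relation at a gap.
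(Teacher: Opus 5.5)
Your handling of (1), (2) and (1') agrees with the paper. You use Polishness for countable $X$ (resp.\ $X\cong\mathbb R$). For uncountable $X$ you get $G_\delta$-density of the remainder in $e_{\rm B}{\rm U}(\ell^2(X))$, $b_r{\rm S}(X)=I_X$ and $e_{\alpha X}\aut X$ by restricting a group element to a countable set, and then apply Remark~\ref{rem61}.

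The genuine gap is the separable, not continuously dense LOTS case (2'), where neither conclusion is actually established.

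\emph{Lindel\"of part.} The step ``$G$ has a countable network, so the remainder is not $G_\delta$-dense'' does not follow, because a countable network gives no control over remainders. For example, the integers with the Bohr topology are countable, yet their remainder in the Bohr compactification is $G_\delta$-dense, hence pseudocompact and not Lindel\"of. The missing step is this: for separable $X$ the Dedekind completion of $X$ is $\mathbb R$, so $c_mX\cong[0,1]$ and $b_{c_mX}G\subset 2^{[0,1]\times[0,1]}$ is a metrizable compactum. Its remainder is then second countable, hence Lindel\"of, and Lindel\"ofness transfers to every remainder. This is the paper's argument. Your inference ``not $G_\delta$-dense $\Rightarrow$ Lindel\"of'' also silently uses the converse of Remark~\ref{rem61}(a), namely that a pseudocompact dense subspace is $G_\delta$-dense.

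\emph{Non-\v{C}ech-completeness.} The ``jumping graph'' argument proves nothing. In the metrizable compactum $b_{c_mX}G$ every point is a $G_\delta$, and showing that particular remainder points are limits of group elements says nothing about whether $G$ itself is a $G_\delta$. What is needed is the paper's observation that each automorphism of $X$ extends uniquely to the completion $\mathbb R$. Hence $G$ is a proper dense subgroup of the Polish group $(\aut\mathbb R,\tau_p)$, with density coming from ultrahomogeneity. A \v{C}ech-complete group is Raikov complete and hence closed in any group containing it, so $G$ is not \v{C}ech-complete. Equivalently, $G$ and a coset $hG$ with $h\in\aut\mathbb R\setminus G$ would be disjoint dense $G_\delta$ subsets of a Polish space, contradicting Baire.

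\emph{Item (3').} Your sketch in the Vietoris hyperspace is workable but imprecise. A lone vertical segment is not an element of $b_{c_mX}G$, since projections must be full and the set monotone. You must replace the graph over an interval $[a,b]$ by the staircase $([a,b]\times\{g(a)\})\cup(\{b\}\times[g(a),g(b)])$. Here $[a,b]$ and $[g(a),g(b)]$ must be chosen to miss the countably many endpoints of the basic boxes defining the $G_\delta$, which non-separability permits. The paper avoids this by working in $e_{c_mX}G\subset(c_mX)^{c_mX}$. There a $G_\delta$ around $g$ contains $\bigcap_n[x_n,W_n]$, and one simply flattens $g$ on an interval missing all $x_n$. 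If you prefer the graph compactification, $G_\delta$-density passes down to it through the map $\Theta$ of compactifications.
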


\begin{proof}
(1) and (2). If $X$ is countable, then $G={\rm U}\, (\ell^2(X))={\rm U}\,(\ell^2)$ and $G={\rm S}\,(X)={\rm S}\,(\mathbb N)$ are Polish groups. G.~Cantor's characterization of $\mathbb Q$ implies that an ultrahomogeneous countable chain $X$ is order isomorphic to $\mathbb Q$ and $G=(\aut\, \mathbb Q, \tau_{\partial})$ is a Polish group. Hence, $G$ is \v{C}ech-complete and every remainder of $G$ is $\sigma$-compact and Lindel\"of.

\medskip

Let $X$ be uncountable. 

(a) The Roelcke compactification of ${\rm U}\, (\ell^2(X))$ is an Ellis compactification $e_{\rm B}\, {\rm U}\, (\ell^2(X))$ where ${\rm B}$ 
is the unit ball in $\ell^2(X)$ in the weak topology by results from \S\ \ref{unitarygroup}. Take any $G_{\delta}$-set $T$ in $e_{\rm B}\, {\rm U}\, (\ell^2(X))\subset {\rm B}^{\rm B}$ which contains element $g\in {\rm U}\, (\ell^2(X))$. $\exists$ a countable subset $Y\subset {\rm B}$ such that $\pr^{-1} (\pr\, T)=T$ where $\pr$ is the restriction to $e_{\rm B}\, {\rm U}\, (\ell^2(X))$ of the projection ${\rm B}^{\rm B}\to {\rm B}^Y$. A {\it closed linear span  $\overline{\rm Span}\, (Y)$ of $Y$} in $\ell^2(X)$ is a proper linear subspace of $\ell^2(X)$. Then 
$$h (x)=\left\{
\begin{array}{ll}
g(x), &  x\in \overline{\rm Span}\, (Y)\cap {\rm B}, \\
0, & x\in \overline{\rm Span}\, (Y)^{\perp}\cap {\rm B} \\
\end{array}
\right.\in T\setminus {\rm U}\, (\ell^2(X)),$$ 
$e_{\rm B}\, {\rm U}\, (\ell^2(X))\setminus {\rm U}\, (\ell^2(X))$ is $G_{\delta}$-dense in $e_{\rm B}\, {\rm U}\, (\ell^2(X))$ and Remark~\ref{rem61} finishes the proof in this case. 

\medskip

(b) $b_r {\rm S}(X)\setminus {\rm S}(X)$ is $G_{\delta}$-dense in $b_r {\rm S}(X)$. Indeed, a $G_{\delta}$-set $T$ in $b_r {\rm S}(X)$  which contains element $g\in {\rm S}(X)$ is, without loss of generality, of the form $\bigcap\limits_{n=1}^{\infty}[x_n, y_n]$, $x_n, y_n\in X$. The map $x_n\to y_n$, $n\in\mathbb N$, with domain $\{x_n\ |\ n\in\mathbb N\}$ is a partial bijection of $X$ and results from \S\ \ref{ultdiscrete} and Remark~\ref{rem61} finishes the proof  in this case. 

\medskip

(c) For $(\aut X, \tau_{\partial})$ apply results from \S\ \ref{ultrchain} (instead of Roelcke compactification  examine the Ellis compactification $e_{\alpha X} \aut X$) and the same resonings as in (b).

\medskip

(1') --- (3'). For ultrahomogeneous LOTS $X$ examine Ellis compactification $e_{c_m X} G$ of $G=(\aut X, \tau_p)$ (see Section~\ref{ultLOT}). A  $G_{\delta}$-set $T$ in $e_{c_m X} G$  which contains element $g\in G$ is, without loss of generality, of the form $\bigcap\limits_{n=1}^{\infty}[x_n, W_n]$, $x_n\in X$, $W_n$ is a $G_{\delta}$-set in $c_m X$. 

(a) If $X$ is not separable, then $\exists$ an open interval $(a, b)\subset X$, $a< b\in X$, such that $(a, b)\subset X\setminus\{x_n\ |\ n\in\mathbb N\}$. Since $g\in G$, $g(a)<g(b)$. Then 
$$h (x)=\left\{
\begin{array}{ll}
g(x), &  x\leq a, \\
g(a), & a<x\leq b, \\
g(x), & b< x \\
\end{array}
\right.\in T\setminus G,$$ 
$e_{c_m X} G\setminus G$ is $G_{\delta}$-dense in $e_{c_m X} G$ and Remark~\ref{rem61} finishes the proof of sufficiency in (3'). 

\medskip

(b) If LOTS $X$ is ultrahomogeneous, continuously dense and separable, then by G.~Cantor's characterization of $\mathbb R$, $X$ is order isomorphic to $\mathbb R$. $(\aut\, \mathbb R, \tau_p)$ is a Polish group and sufficiency in (1') is proved. 

\medskip

(c) If LOTS $X$ is ultrahomogeneous, not continuously dense and separable, then its Dedekind completion is order isomorphic to $\mathbb R$. Hence, $G$ is a proper dense subgroup of $(\aut\, \mathbb R, \tau_p)$ and $G$ is not \v{C}ech-complete. Hence, the remainder $b_{[0, 1]} \aut\, [0, 1]\setminus G$ is  Baire. The Roelcke compactification of $(\aut\, \mathbb R, \tau_p)$ which coincides with the graph compactification $b_{[0, 1]} \aut\, [0, 1]$ is metrizable. Hence, the remainder $b_{[0, 1]} \aut\, [0, 1]\setminus G$ is Lindel\"of. 
\end{proof}

\begin{rem}
{\rm If $X$ is uncountable (respectively, $X$ is an uncountable ultrahomogeneous chain), then the remainder of $b_r {\rm S}(X)\setminus {\rm S}(X)$,  (respectively, $e_{\alpha X}\aut X\setminus \aut X$) contains a dense $\sigma$-compact subset (union of compact ideals)~\cite{KozlovSorin2025}. Therefore, the remainder $b_r {\rm S}(X)\setminus {\rm S}(X)$ (respectively, $e_{\alpha X}\aut X\setminus\aut X$) is {\it weakly  Lindel\"of} (a space $Y$ is weakly  Lindel\"of if $\forall$ open cover $u$ of $Y$ $\exists$ a countable family $u'\subset u$ such that $\cup u'$ is a dense subset of $Y$) and can serve as an example of a pseudocompact weakly  Lindel\"of space which is not compact. Moreover, remainders of all compactifications of ${\rm S}(X)$ (respectively, $\aut X$) which are less or equal than $b_r {\rm S}(X)$ (respectively, $e_{\alpha X}\aut X$) are the same.

The same observation is true for a non-separable Hilbert space ${\bf H}$. One can examine, for example, the set of compact ideals $I_n=\{A\in {\rm U}\,({\bf H})\ |\ ||A||\leq 1-\frac1n\}$, $n\in\mathbb N$, in $b_r {\rm U}\,({\bf H})$.}
\end{rem}


\end{document}